\documentclass[a4paper,a4wide,11pt]{article}

\usepackage[usenames,dvipsnames]{pstricks}
\usepackage{epsfig}
\usepackage{pst-grad} 
\usepackage{pst-plot} 

\usepackage{pgf,tikz}
\usepackage{verbatim}
\usepackage{a4,amsmath,amssymb,amsthm,euscript}
\usepackage{subfigure}
\usetikzlibrary{arrows}

\setlength{\topmargin}{-15mm}
\setlength{\textheight}{240mm}
\setlength{\oddsidemargin}{-1mm}
\setlength{\textwidth}{160mm}

\newtheorem{theorem}{Theorem}[section]
\newtheorem{proposition}[theorem]{Proposition}

\newtheorem{definition}[theorem]{Definition}
\newtheorem{lemma}[theorem]{Lemma}
\newtheorem{corollary}[theorem]{Corollary}

\newtheorem{problem}[theorem]{Problem}

\theoremstyle{remark}
\newtheorem{remark}[theorem]{Remark}

\definecolor{ag1}{rgb}{0.93, 0.57, 0.13}
\definecolor{or}{rgb}{0.93, 0.57, 0.13}
\definecolor{ag}{rgb}{0.5, 0.0, 0.13}
\definecolor{ag2}{rgb}{0.66, 0.66, 0.66}

\newcommand\N{\mathbb{N}}
\newcommand\R{\mathbb{R}}

\newcommand{\St}{{\mathcal St}}

\newcommand{\Card}{\#}
\newcommand\C{\mathcal{C}}
\newcommand{\eps}{\varepsilon}
\newcommand{\HH}{{\mathcal H}}
\renewcommand{\H}{\HH^1}

\newcommand{\forget}[1]{}

\def\dist{\textup{dist}\,}
\def\diam{\textup{diam}\,}

\def\Lip{\textup{Lip}}
\def\Om{\Omega}

\makeatletter
\renewcommand{\thefigure}{\ifnum \c@section>\z@ \thesection.\fi
 \@arabic\c@figure}
\@addtoreset{figure}{section}
\makeatother

\begin{document}

\title{Regularity for the optimal compliance problem with length penalization}
\author{Antonin Chambolle\footnote{CMAP, Ecole Polytechnique, CNRS, 91128 Palaiseau Cedex, France}\and Jimmy Lamboley\footnote{Universit\'e Paris-Dauphine, PSL Research University, UMR 7534, CEREMADE, Place du Mar\'echal de Lattre de Tassigny, 75775 Paris, France}\and Antoine Lemenant\footnote{Universit\'e Paris Diderot - LJLL CNRS UMR 7598 - Paris 7
75205 Paris Cedex 13, France}\and Eugene Stepanov\footnote{St.Petersburg Branch
of the Steklov Mathematical Institute of the Russian Academy of Sciences,
Fontanka 27,
191023 St.Petersburg,%
Russia
and
Department of Mathematical Physics, Faculty of Mathematics and Mechanics,
St. Petersburg State University, Universitet\-skij pr.~28, Old Peterhof,
198504 St.Peters\-burg, Russia
and ITMO University, Russia
}
\thanks{
This work was partially supported by the projects ANR-12-BS01-0007 OPTIFORM and ANR-12-BS01-0014-01 GEOMETRYA  financed by the French Agence Nationale de la Recherche (ANR). The authors acknowledge the support of the project MACRO (Mod\`eles d'Approximation Continue de R\'eseaux Optimaux), funded by the Programme Gaspard Monge pour l'Optimisation of EDF and the Fondation Math\'ematiques Jacques Hadamard.
The fourth author also acknowledges the support of the St.Petersburg State University grants \#6.38.670.2013 and
\#6.38.223.2014, 
the Russian government grant NSh-1771.2014.1,
the RFBR grant \#14-01-00534 and
the project
2010A2TFX2 ``Calcolo delle variazioni''
 of the
Italian Ministry of Research. 
}}

\date{\today}


\maketitle
\begin{abstract}

We study the regularity and topological structure of a compact connected set $S$ minimizing the ``compliance" functional with a length penalization. The compliance is, here, the work of the force applied to a membrane which is attached along the set $S$.

This shape optimization problem, which can be interpreted as that of finding the best location for attaching a membrane subject to a given external force, can be seen as an elliptic PDE version of the minimal average distance problem.

We prove that minimizers in the given region consist of a finite number of smooth curves which meet only at triple points with angles of 120 degrees, contain no loops, and possibly touch the boundary of the region only tangentially. The proof uses, among other ingredients,  some  tools from the theory of free discontinuity problems (monotonicty formula, flatness improving estimates, blow-up limits), but   adapted to the  specific problem of min-max type studied here, which constitutes a notable difference with the classical setting and  may be useful also for similar other problems.

\end{abstract}

\newpage

\tableofcontents

\newpage

\section{Introduction and main results}

Consider a bounded open set $\Omega\subset \R^2$.
Let $\Sigma\subset\overline{\Omega}$ a closed set, and
for $f\in L^2(\Omega)$ and $u\in H_0^1(\Omega\setminus\Sigma)$ denote
\[
E(u):=\frac{1}{2} \int_{\Omega\setminus\Sigma} |\nabla u|^2\,dx -\int_\Omega fu\,dx.
\]
Letting $u_\Sigma\in H_0^1(\Omega\setminus\Sigma)$ stand for the unique minimizer of $E(\cdot)$ over $H_0^1(\Omega\setminus\Sigma)$,  it is classical that
\begin{equation}\label{eq_DeltSigf1}
-\Delta u_\Sigma=f
\end{equation}
in the weak sense in $\Omega\setminus\Sigma$.
From physical point of view we can think of $\Omega$ as a membrane, and $\Sigma$ as
the ``glue line'' attaching it to some fixed base
(i.e.\ preventing the displacement). Then $u_\Sigma$
is the displacement of the membrane with the boundary fixed
subject to force field $f$.
The compliance of the membrane can then be defined
as
\[
\C(\Sigma):=-E(u_\Sigma)= \frac{1}{2} \int_{\Omega\setminus\Sigma} |\nabla u_\Sigma|^2\,dx= \frac{1}{2} \int_\Omega fu_\Sigma\,dx,
\]
the equivalence of the two above expressions for $\C$ being due to~\eqref{eq_DeltSigf1}.
It can be seen as a measure of how much the membrane resists to the force
field $f$, and is proportional to the work of the force.
In this paper we study the following problem.

\begin{problem}\label{pb_compl_pen1}
Given $\lambda>0$, find a set $\Sigma \subset \overline{\Om}$ minimizing the functional $\mathcal{F}_\lambda$ defined by
\[
\mathcal{F}_\lambda(\Sigma'):= \C(\Sigma')+\lambda\H(\Sigma')=-E(u_{\Sigma'})+\lambda \H(\Sigma')
\]
among all closed connected sets $\Sigma'\subset\overline{\Om}$.
\end{problem}

A similar problem, previously introduced and studied in the literature (see
e.g.~\cite{BuSan07,ButMaiSte09,Til12}) is that of minimizing the compliance $\C$ over
closed connected sets $\Sigma$ subject to the constraint on the length $\H(\Sigma)\leq \ell$ instead of the length penalization. In this paper we however concentrate our attention exclusively on the penalized Problem~\ref{pb_compl_pen1}, whose physical interpretation
is as follows: we are trying to find the best location $\Sigma$ for the glue to put on a membrane  in order to minimize the compliance of the latter, subject to the force $f$, while the penalization by $\lambda \H$ takes into account, for instance, the quantity (or cost) of the glue.

The existence of minimizers of Problem~\ref{pb_compl_pen1}
follows in a more or less standard way from Blaschke, \v{S}ver\'ak, and Go\l ab theorems, see Proposition~\ref{existence}  (or~\cite{BuSan07}).

A   problem similar to  Problem~\ref{pb_compl_pen1}  has been studied in~\cite{Zucco}, where the compliance $\mathcal{C}(\Sigma)$
has been replaced by $\lambda_1(\Omega\setminus \Sigma)$, the first  eigenvalue of the Laplace operator  with Dirichlet condition on
$\Sigma \cup \partial \Omega$, in a constrained version $\H(\Sigma)\leq \ell$.  In~\cite{Zucco}, the asymptotic behavior of solutions to this problem
 when $\ell$ goes to $+\infty$ has been identified.
On the other hand,    the regularity of minimizers is not known for this problem.  Since $\lambda_1(\Omega\setminus \Sigma)$ is yet an energy of elliptic type, we believe that the technics introduced in this paper
for the compliance could also serve to study its eigenvalue version, though
do not pursue this analysis here.

Another version of shape optimization problem similar to Problem~\ref{pb_compl_pen1} may be obtained substituting the standard Laplacian by the
$p$-Laplace operator.
In this case, letting $p\to \infty$, one obtains~\cite[Theorem~3]{BuSan07} in
the limit the so-called
average distance   minimization problem of purely geometric nature
\[
\min_{\Sigma \subset \overline{\Omega} \text{ closed connected }} \int_{\Omega} \dist(x,\Sigma) d\mu + \lambda \H(\Sigma).
\]
 This problem is related to a Monge-Kantorovitch problem with a
``free Dirichlet region'' which was introduced as a model
for an  optimal urban traffic network~\cite{ButOudSte02,ButSte03}.
 The topological and geometric properties of minimizers of this problem (mostly in its constrained version but also in the easier penalized one)
were studied by several authors  (see~\cite{lreview1} for a review on this problem, and~\cite{S1,S2,S3,S4,l1,st,t000} for related results on this and similar problems).

 Problem~\ref{pb_compl_pen1} is much different from the average distance minimization problem, and  to certain extent
is  closer to the Mumford-Shah problem.
We will indeed prove at the end of this paper that it 
is in a sense dual to the Mumford-Shah problem, and a
certain amount of tools used in this paper
(monotonicity formula, scheme of proof for the $C^1$ result) is inspired by
the arguments developed for the Mumford-Shah functional (we refer to~\cite{afp, d,lreview2} for  reviews on the latter). Here,  we show that the minimizers are locally smoooth inside $\Omega$.
This is in sharp contrast with the average distance problem:
 in fact, for the latter Slep\v{c}ev~\cite{slepcev} (see also~\cite{LuSlep13}) 
has shown that minimizers are, in general, not $C^1$ and using his construction one may find minimizers
with infinite and possibly not closed set of points with lack of regularity~\cite{Lu_RendPadova16}.

\subsubsection*{Main results}

The following theorem sums up the main  results of this paper.

\begin{theorem}\label{TheMainTheorem}
Let $\Omega\subset \R^2$ be a  $C^1$ domain   (i.e.\ an open bounded connected set with the boundary locally, up to rotation, the graph of a $C^1$ function),  and $\lambda\in(0,\infty)$.  If $f\in L^p(\Om)$ with $p>2$, then every solution $\Sigma$ of
Problem~\ref{pb_compl_pen1} has the following properties.

\medskip
\noindent {\sc Part I: qualitative properties.}
\begin{itemize}
\item[(i)] $\Sigma$ contains no loops (homeomorphic images of $S^1$), hence $\R^2\setminus\Sigma$ is connected.
\item[(ii)] $\Sigma$ is Ahlfors regular.
\item[(iii)] $\Sigma$ is a chord-arc set, i.e.\ it satisfies
\begin{eqnarray}
d_{\Sigma}(x,y)\leq C|x-y|,   \label{chordarc0}
\end{eqnarray}
for some constant $C>0$, where $d_\Sigma$ denotes
the geodesic distance in $\Sigma$.
\end{itemize}

\noindent {\sc Part II: regularity.}
\begin{itemize}
\item[(iv)] $\Sigma$ consists of a finite number of embedded curves, possibly intersecting at ``triple points'' where the curves are meeting by $3$ at $120$ degrees
angles. In particular, it has finite number of endpoints and finite number of
branching points, all of which are triple points as described.
\item[(v)] Such curves are locally $C^{1,\alpha}$ regular for some $\alpha\in (0,1)$ inside $\Omega$ and possibly touch $\partial \Omega$ only  tangentially.
Moreover, if $\Omega$ is a convex domain, then they are locally $C^{1,\alpha}$ in $\overline{\Omega}$.
\item[(vi)] If, further,
$f\in H^1(\Om)\cap L^p(\Om)$   with $p>2$,
then the curves of $\Sigma$ are locally $C^{2,\alpha}$ inside $\Omega$ for $\alpha=1-2/p$, and
if $f$, moreover, is locally $C^{k,\beta}$ regular for some $k\in\N$ and $\beta\in(0,1)$ then the latter curves are locally $C^{k+3,\beta}$.
\end{itemize}
\end{theorem}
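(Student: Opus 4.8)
The plan is to transpose the free-discontinuity toolbox---competitor/comparison estimates, an almost-monotonicity formula, blow-up analysis and an $\varepsilon$-regularity (flatness decay) scheme---to the compliance functional, the essential difference being that $u_\Sigma$ is not an independent field but is slaved to $\Sigma$ through~\eqref{eq_DeltSigf1}. Writing the compliance in dual form, $\C(\Sigma)=\max_{u\in H_0^1(\Om\setminus\Sigma)}\big(\int f u-\tfrac12\int|\nabla u|^2\big)$, exhibits Problem~\ref{pb_compl_pen1} as a genuine min--max and shows at once that $\C$ is monotone non-increasing under inclusion (more glue lowers the compliance). The first task is a pair of quantitative comparison estimates controlling the variation of $\C$ under local surgery of $\Sigma$: roughly, re-pinning a ball $B_r(x)$ by adding an arc of $\partial B_r(x)$ and deleting $\Sigma\cap B_r(x)$ changes $\C$ by at most a positive power of $r$, while freeing a subset raises $\C$ by an amount governed by its planar (logarithmic) capacity. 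The decisive regularity input is that $f\in L^p(\Om)$, $p>2$, forces $u_\Sigma\in W^{2,p}_{\mathrm{loc}}$, hence $u_\Sigma\in C^{1,1-2/p}_{\mathrm{loc}}$; this pointwise control is what converts capacity bounds into usable energy estimates.

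Part~I is then obtained by competitor arguments. For the Ahlfors upper bound in (ii) I would, whenever $\H(\Sigma\cap B_r(x))$ is too large, compare $\Sigma$ with $(\Sigma\setminus B_r(x))\cup\partial B_r(x)$: the circle re-pins the disk so the compliance increase is only a power of $r$, which at small scales is beaten by the length saved; the lower bound $\H(\Sigma\cap B_r(x))\ge r$ is free from connectedness, since any connected set joining $x$ to $\partial B_r(x)$ has length at least $r$. The chord--arc estimate (iii) follows by combining Ahlfors regularity, connectedness and minimality to bound the geodesic distance by the Euclidean one. For the absence of loops (i) I would remove an arc of a putative loop while keeping $\Sigma$ connected; here one cannot cut an infinitesimal arc, because in the plane freeing a short segment costs a compliance increase of order $1/\log(1/\varepsilon)$, which dominates the length gain $\lambda\varepsilon$, so the surgery must remove an arc of definite length and exploit that the region enclosed by the loop can be opened to the exterior, the comparison estimate then bounding the compliance increase below the length saving; the equivalence with connectedness of $\R^2\setminus\Sigma$ is topological.

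The technical core behind Part~II is an almost-monotonicity formula for a scale-invariant energy of the form $r\mapsto r^{-1}\big(\int_{B_r(x)}|\nabla u_\Sigma|^2+\lambda\,\H(\Sigma\cap B_r(x))\big)$, up to error terms (arising from $f$ through a Pohozaev/Rellich-type identity) that are lower order because $f$ is subcritical. Deriving this formula is the step I expect to be the main obstacle: unlike in Mumford--Shah, $u_\Sigma$ cannot be varied independently, so the first-variation identities must be produced within the min--max structure while tracking the forcing term. Granting it, I would blow up at each $x\in\Sigma$; since $u_\Sigma(x)=0$ and $\nabla u_\Sigma$ is bounded, the Dirichlet term scales like $r$ and thus disappears in the limit, so blow-up limits are $1$-homogeneous length-minimizing cones. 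Classifying planar minimal cones (a line at regular points, a half-line at an endpoint, and the triple junction at $120^\circ$, loops being excluded by (i)) yields the structure in (iv): finitely many endpoints and branch points, all triple.

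Finally, an $\varepsilon$-regularity statement upgrades flatness to smoothness. Defining a flatness excess measuring the deviation of $\Sigma$ from a line in $B_r(x)$, I would prove its geometric decay under minimality (via the comparison estimates and the optimality condition), giving $C^{1,\alpha}$ arcs away from the triple points, and applying the decay to each of the three branches near a triple junction; this is (v) inside $\Om$. The boundary assertions follow from a comparison/reflection argument using that $\Om$ is $C^1$: at a boundary contact the blow-up must be a length-minimizing configuration touching the tangent line, which can only happen tangentially, and convexity upgrades this to $C^{1,\alpha}$ up to $\partial\Om$. For the higher regularity (vi) I would write the Euler--Lagrange condition on a smooth arc---a curvature equation balancing the length term against the jump of the energy density of $u_\Sigma$ across $\Sigma$, schematically $\lambda\kappa=\tfrac12\big[(\partial_\nu u_\Sigma)^2\big]$---and bootstrap: once $\Sigma\in C^{1,\alpha}$, Calder\'on--Zygmund and trace estimates up to the curve give $\partial_\nu u_\Sigma\in C^{0,\alpha}$ with $\alpha=1-2/p$, hence $\kappa\in C^{0,\alpha}$ and $\Sigma\in C^{2,\alpha}$; iterating with Schauder estimates, $f\in C^{k,\beta}$ yields $u_\Sigma\in C^{k+2,\beta}$ up to $\Sigma$, so $\partial_\nu u_\Sigma\in C^{k+1,\beta}$ and, integrating the curvature equation, $\Sigma\in C^{k+3,\beta}$.
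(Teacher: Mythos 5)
There is a genuine gap, and it sits at the foundation of your Part~II. Your claim that $f\in L^p(\Om)$, $p>2$, forces $u_\Sigma\in W^{2,p}_{\mathrm{loc}}$ and hence $u_\Sigma\in C^{1,1-2/p}_{\mathrm{loc}}$ with bounded gradient is only true locally in the \emph{open} set $\Om\setminus\Sigma$; it fails up to $\Sigma$, which a priori is just a closed connected set, and near $\Sigma$ the gradient is in general unbounded (at an endpoint it blows up like $\dist(\cdot,\Sigma)^{-1/2}$). Consequently your assertion that ``the Dirichlet term scales like $r$ and thus disappears in the limit'' is false: under the natural scaling $u_n=r_n^{-1/2}u(x_n+r_n\,\cdot)$ the normalized energy $r^{-1}\int_{B_r}|\nabla u_\Sigma|^2\,dx$ has a finite but generally \emph{nonzero} limit, and blow-up limits are not $1$-homogeneous length-minimizing cones. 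Your own list betrays this: a half-line is \emph{not} length-minimizing under the connectedness constraint (delete its portion inside a small ball around the tip --- the remainder is still connected and strictly shorter), so endpoints could not exist in your scheme; it is precisely the cracktip energy $\sqrt{r/2\pi}\cos(\theta/2)$ that balances the length there. The paper keeps the energy in the blow-up (Proposition~\ref{blowuplimit}), passes to the dual divergence-constrained formulation (Proposition~\ref{duality1}) to localize the min--max, shows the limit is a compliance global minimizer (Definition~\ref{global}, Proposition~\ref{global1}), proves connectedness of blow-ups via the chord-arc estimate (Proposition~\ref{chordarc}) --- which in your proposal is listed but never used --- and only then reaches the line/propeller/cracktip trichotomy through a harmonic conjugate and Bonnet's classification (Propositions~\ref{bonnetM} and~\ref{corr}). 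Also, the monotonicity you flag as ``the main obstacle'' needs no Pohozaev-type first variation: the paper's Lemma~\ref{lm_compl_monot1} concerns the energy alone and follows elementarily from integration by parts plus the Poincar\'e inequality on circular arcs, giving almost-monotonicity of $r^{-2\pi/\gamma}\int_{B_r}|\nabla u_\Sigma|^2\,dx$, where $\gamma$ measures the longest arc of $\partial B_r\setminus\Sigma$; flatness improves $\gamma$ to nearly $\pi$ and hence the exponent to nearly $2$.

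Your no-loop argument also fails, and for an instructive reason: the capacity heuristic is backwards. At a flat point, opening a gap of size $r$ costs not $1/\log(1/r)$ but $O(r^{2\pi/\gamma})=o(r)$ with $\gamma\approx\pi$, because $u_\Sigma$ vanishes on the whole remaining curve on either side of the gap, so the released energy is controlled by the improved monotonicity exponent; this is exactly how Theorem~\ref{th_compl_noloop1} cuts an \emph{arbitrarily small} arc $D_n$ at a flat noncut point and beats the cost $o(r_n)$ with the linear length gain $\lambda r_n$. Your alternative --- removing an arc of definite length and ``opening the enclosed region'' --- has no smallness parameter at all: a macroscopic surgery raises the compliance by $O(1)$ with no reason for this to lie below $\lambda$ times the removed length, so no contradiction follows. (By contrast, your Ahlfors-regularity competitor, chord-arc outline, higher-regularity bootstrap in (vi), and the boundary heuristic are essentially the paper's arguments; at the boundary the energy does vanish, by Lemma~\ref{decayboundary}, so pure length minimization in the half-plane is there the correct limit problem, classified in Proposition~\ref{classification1}.)
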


We emphasize that~$(i)$ and~($iv$)-($v$) are probably the most interesting items of this result, while~($ii$) and~($iii$) might at first glance look technical.
However, besides being interesting on their own, more remarkably, the results of Part~I are  
needed to prove Part~II.
Let us stress moreover that several of the announced results still hold
without assuming  $C^1$ regularity of $\partial \Omega$. For instance, for a generic bounded open $\Om\subset \R^2$ we show in fact that
every minimizer $\Sigma$ of Problem~\ref{pb_compl_pen1} satisfies~$(i)$ and~$(ii)$, i.e.\
has no loops and is Ahlfors regular,  and for every compact $\Omega'\Subset \Omega$, the estimate~\eqref{chordarc0} still holds for $\{x,y\}\subset \Sigma\cap \Omega'$   (with the constant $C$  depending on $\Omega'$),
has a finite number of branching points and endpoints in $\Sigma \cap \Omega'$,
all the branching points are triple points where the curves are meeting by $3$ at $120$ degrees
angles,
while
the curves composing $\Sigma\cap \Omega'$ are  locally $C^{1,\alpha}$ regular.

With the developed technique at hand, it would be
also possible to get some finer results for just Lipschitz domains,
as, for instance, the fact that the optimal set $\Sigma$ will never touch a convex corner of $\partial \Omega$, but we do not pursue this analysis in details.

We finally mention that the $C^{1,\alpha}$ regularity up to
the tip of an endpoint, or up to  the branching point for a triple point, is not proven in this paper. We only know that any blow-up sequence is converging to the  global minimizer of corresponding type at the limit. However, concerning the triple points, it is quite likely that one could adapt our $\varepsilon$-regularity result (Theorem \ref{C1reg}) to obtain a similar statement,
as David did in \cite[Section 53]{d} for  Mumford-Shah minimizers.


Theorem~\ref{TheMainTheorem} is the concatenation of several results contained in the paper. Namely,~$(i)$ is Theorem~\ref{th_compl_noloop1},~$(ii)$ is Theorem~\ref{th_compl_ahl1}, ~$(iii)$ is Proposition~\ref{chordarc}.
Assertions~$(iv)$ and~$(v)$ are given by Theorem~\ref{main}, in particular,
the finite number of curves comes from Theorem~\ref{finite}, the characterization of branching by $120$ degrees comes from the classification of blow-up limits (Proposition~\ref{corr}) together with the uniqueness of the type of the blow-up
(Proposition~\ref{typeS}).
The property~$(v)$ for a generic (that is, not necessarily convex)
$\Omega$, i.e.\
the local $C^{1,\alpha}$ regularity of the curves has two ingredients, namely,
the classification of blow-up limits
(Proposition~\ref{corr}) which together with Proposition~\ref{typeS}  says that any point which is not a triple point neither an endpoint is necessarily a ``flat'' point, i.e.\ blows-up as a line), and then Theorem~\ref{C1reg} which says that $\Sigma$ is   $C^{1,\alpha}$ around any ``flat'' points.
The fact that $\Sigma$ touches $\partial \Omega$ tangentially is a consequence of Theorem~\ref{global2} and Proposition~\ref{classification1}. The case of $\Omega$ convex in~$(v)$ is Remark~\ref{rem_regconvOm1}. Finally,~$(vi)$ is Proposition~\ref{prop_furtherReg1}.

\subsubsection*{Basic techniques and background idea}


  The  proof of the  local $C^{1,\alpha}$ regularity result is contained in Sections~\ref{sectionEpsReg} and~\ref{dual}. The main strategy  
is inspired by the regularity theory for the Mumford-Shah functional in dimension $2$, more precisely by the approach of Bonnet~\cite{b} and David~\cite{d}. 
The rough idea that first comes in mind is
to show that every minimizer $\Sigma$ of Problem~\ref{pb_compl_pen1} is an \emph{almost minimizer} for the length, i.e.\ to prove that for any competitor $\Sigma'$ satisfying $\Sigma\triangle \Sigma'\subset B_r$, where $B_r$ stands for a ball of radius $r>0$, one has
\[\H(\Sigma \cap B_r)\leq \H(\Sigma'\cap B_r) +  Cr^{1+\alpha},\]
and then
to apply the regularity theory for almost minimal sets.

In our situation, the error term $Cr^{1+\alpha}$ may only come from the energy (or, equivalently, compliance) part
of the functional, namely, we need to prove an estimate of type
\[|\mathcal{C}(\Sigma)-\mathcal{C}(\Sigma')|\leq Cr^{1+\alpha}, \quad \text{ if  } \quad  \Sigma\triangle \Sigma'\subset B_r.\]
To obtain the latter we have to overcome several difficulties  which are due to
essential differences between Problem~\ref{pb_compl_pen1} and  more classical free boundary or free discontinuity problems. The  first one is a
substantially nonlocal behavior of the compliance functional, in the sense that
small perturbations of $\Sigma$ affects the potential $u_{\Sigma}$ in the
whole $\Omega$. This can be overcome by a simple cut-off argument (Lemma~\ref{lm_compl_loc1}). It shows that if $\Sigma'$ is a competitor for $\Sigma$ in $B_r$, then the defect of minimality is controlled by  the inequality
\[|\mathcal{C}(\Sigma)-\mathcal{C}(\Sigma')|\leq C\int_{B_r}|\nabla u_{\Sigma'}|^2 \, dx.\]
Here, the right hand side depends on the competitor $\Sigma'$,
which leads us to
introduce the quantity
\[  \omega_\Sigma(x,r) := \max_{\Sigma' \text{ connected } ;  \Sigma' \Delta \Sigma \subset \overline{B}_r(x) } \left(\frac{1}{r} \int_{B_r(x)} |\nabla u_{\Sigma'} |^2 \, dx \right),
 \]
 and we arrive to the estimate
  \[|\mathcal{C}(\Sigma)-\mathcal{C}(\Sigma')|\leq Cr \omega_\Sigma(x,r).\]
To obtain $C^{1,\alpha}$ regularity, one has to prove that $\omega_\Sigma(x,r)\leq Cr^{\alpha}$ for some $\alpha>0$. This is done via a classical monotonicity formula, which is a version of the one of Bonnet~\cite{b}, but adapted for Dirichlet boundary conditions.
It implies a suitable decay for $\omega_\Sigma(x,r)$, provided that
$\Sigma$ is flat enough in the neighborhood of $x$.
On the other hand the flatness decays suitably fast provided that
$\omega_\Sigma$ is small enough. In other words, we cannot apply directly the theory of almost minimal sets but we have to reproduce some of its arguments, with the additional difficulty that we have to control both the quantity $\omega_\Sigma$ and flatness of $\Sigma$ at the same time. All this leads to an ``$\varepsilon$-regularity" result  (Corollary~\ref{TheCor}). 
At the end we can bootstrap all the estimates and so follows the $C^{1,\alpha}$ regularity.


To get the  full regularity result (i.e.\ to show that every minimizer $\Sigma$ consists of finite number of smooth injective curves),
we perform  a blow-up analysis. Again, the main difficulty comes from the nonlocal behavior of the compliance functional. To bypass it,
we consider the dual formulation of Problem~\ref{pb_compl_pen1} (Proposition~\ref{duality1}), proving that Problem~\ref{pb_compl_pen1} is equivalent   to the following one
\begin{align*}
\min_{(\sigma, \Sigma) \in \mathcal{B}} &\frac{1}{2}\int_{\Omega }|\sigma|^2 \, dx +  \H(\Sigma), 
\quad\text{where }\\
\mathcal{B} &:=\{(\sigma,\Sigma) \; : \; \Sigma\subset \overline{\Omega} \text{ compact connected},  \sigma \in L^2(\Omega,\R^2),\, {\rm div}\, \sigma = f \text{ in  } \mathcal{D}'(\Omega \setminus \Sigma) \}.
\end{align*}
The latter problem is now localizable. As a matter of a fact,
it turns out that this dual problem is very close to the Mumford-Shah problem, and more remarkably, the dual formulation of the minimizing problem satisfied by the blow-up limits is exactly the same as the  characterization of
global minimizers for  Mumford-Shah problem in~\cite{b}.
This gives a possibility to characterize completely the possible
blow-up limits and conclude the proof of full regularity.
This is done in Section~\ref{dual}. Note however, that the blow-up
at the boundary  of $\partial \Omega$ is much different than what usually happens in the Mumford-Shah problem. Here the blow-up limits at the boundary are tangent to the boundary, whereas for the Mumford-Shah functional
they are transversal.


Let us furthermore mention that we first prove several other qualitative properties on minimizers, like Ahlfors-regularity, absence of loops, chord-arc estimate (Part I of Theorem~\ref{TheMainTheorem}). It is worth mentioning that, curiously enough, all of theses properties are needed to characterize the blow-up limits.
More precisely, one of the key ingredients that leads to the classification of the global minimizers for the Mumford-Shah functional
is the fact that the blow-up limits are connected in the limit, as required by Bonnet~\cite{b} to get the classification of blow-up limits. Although here
we deal with a priori connected sets, in general, the blow-up of a connected set may not be connected in the limit, and this is why one needs to use the optimality. To this aim, we prove that every minimizer $\Sigma$ of  Problem~\ref{pb_compl_pen1} is a chord-arc set (Proposition~\ref{chordarc}).
It is not difficult to see then that the  blow-up limit of a chord-arc set is connected.
A similar chord-arc estimate has been  already proven for each connected component of a Mumford-Shah minimizer, but the proof  presented  here  for the compliance minimizers is much different:
here we need to preserve connectedness on competitors.
Our strategy is as follows. Let us look closer at the rough idea of
proving  that a minimizer does  not contain loops.
The argument is by cutting the loop at a
flat point by a piece of set of size $r$,
and estimate the loss in the compliance term in terms of $r^{1+\alpha}$, provided that we have found a suitable point where to cut.
The proof of~\eqref{chordarc0} is a sort of quantitative version of this argument: if $x$ and $y$ are very close to each others and the curve connecting them in $\Sigma$ is going far away, then one can cut this curve at some place where it is flat, and add the little segment connecting $x$ to $y$ to preserve connectedness. To perform it, one needs the radius of the ball where $\Sigma$ is flat to be controlled from below, uniformly with respect to the distance from $x$ to $y$. This is obtained from the uniform rectifiability of $\Sigma$, which follows from the Ahlfors-regularity of the latter (Theorem~\ref{th_compl_ahl1}).

\section{Preliminaries}

\subsection{Notation}

We introduce the following notation.

\begin{itemize}
\item For $ \{x,y\}\subset \R^2$, $|x|$ denotes the Euclidean norm and $d(x,y):=|x-y|$ the Euclidean distance, $x \cdot y$ the usual inner product.
\item (${\bf e}_1, {\bf e}_2)$ denotes the canonical basis of $\R^2$.
\item For $D\subset \R^2$, $\mathbf{1}_D$ is the characteristic function of $D$, $D^c:=\R^2\setminus D$, $\overline{D}$ and $\partial D$ are the closure and the topological boundary of $D$ respectively,
     $\HH^1(D)$ is the $1$-dimensional Hausdorff measure of $D$, $\diam\, D$ the diameter of $D$, and $\dist(x,D):=\inf\{d(x,y)\,:\, y\in D\}$ whenever $x\in \R^2$.
\item  We let
$B_{r}(x):=\{y\in \R^2, d(x,y)<r\}$ stand for the open ball of radius  $r\in[0,\infty)$ and center $x\in \R^2$; if $\alpha>0$ then we use also the notation
$\alpha B_r(x):= B_{\alpha r}(x)$.
\item $d_H(A,B)$ is the Hausdorff distance between the sets
$A$ and $B$ defined by
$$d_H(A,B)=\max(\sup_{x\in A} \dist(x,B), \sup_{x\in B}\dist(x,A)).$$
\end{itemize}

Let now $\Om\subset \R^2$.
\begin{itemize}
\item If $\Omega$ is measurable, then $L^p(\Omega)$ stands for the Lebesgue space of $p$-integrable real functions for $p\in [1, +\infty)$ and measurable essentially bounded functions for $p=+\infty$, $\|\cdot\|_p$ starnding for
its standard norm and $p'$ for the conjugate exponent $1/p+1/p'=1$;  $L^p(\Omega,\R^2)$ is the respective Lebesgue space of 
functions with values in $\R^2$; $L^p_{loc}(\Omega)$ is the space of functions $u$ such that $u\in L^p(K)$ for all compact $K\subset \Omega$. The convergence in $L^p_{loc}(\Omega)$ means the convergence for the norm $\|\cdot\|_p$ on every compact $K\subset \Omega$.
\end{itemize}

If $\Omega$ is open, then
\begin{itemize}
\item $\mathcal{K}(\Omega)$ denotes the set of all compact and connected sets $\Sigma \subset \overline{\Omega}$;
\item $C^\infty_0(\Omega)$ stands for the class of infinitely differentiable functions with
compact support in $\Omega$;
\item $C^k(\Omega)$ (resp.\ $C^{k,\alpha}(\Omega)$) stands for the class of $k$ times differentiable functions (resp.\  $k$ times differentiable with $\alpha$-H\"{o}lder $k$-th derivative) in $\Omega$, where $k\in \N$, $\alpha\in (0,1)$;
\item $\mathcal{D}'(\Omega)$ stands for the usual space of distributions
 in $\Omega$;
\item $\Lip(\overline{\Om})$  stands for the class of all Lipschitz maps
$f\colon \overline{\Om}\to \R$;
\item $H^1(\Omega)$ is the standard Sobolev space of functions $u\in L^2(\Omega)$ having
  (distributional) derivative in $L^2(\Omega)$; $H^1_{loc}(\Omega)$ is its local version akin to $L^p_{loc}(\Omega)$;
\item $H_0^1(\Omega)$ stands for the usual Sobolev space defined by the closure
of $C^\infty_0(\Omega)$ for the norm $\|u\|_{H_0^1(\Omega)}:= \int_\Omega |\nabla u|^2\,dx$;
if necessary, the functions in $H_0^1(\Omega)$ will be silently assumed to be extended to the whole
$\R^n$ by zero over $\Omega^c$. This gives a natural embedding of $H_0^1(\Omega)$ into $H^1(\R^n)$;
\item  Given $\Omega'\subset \Omega$ and $\Sigma \in \mathcal{K}(\Omega)$ we will also denote by
$$H^1_{0,\Sigma}(\Omega' \setminus \Sigma)=\bigcup_{g \in H^1_0(\Omega \setminus \Sigma)}
\left(g+H^1_0(\Om'\setminus \Sigma)\right).$$
\end{itemize}

An open, bounded, and connected $\Omega \subset \R^2$ will be called a
$C^1$ domain, if $\partial \Omega$ is locally up to rotation
the graph a $C^1$ function.

\subsection{Estimate of $u_{\Sigma}$}


Let $\Om\subset \R^2$ be an open set.
When $\Sigma\subset \overline{\Om}$ is closed, then
$\Om\setminus\Sigma$ is open, and
it is well-known that if $f\in L^2(\Om)$, then there is a unique function $u_{\Sigma}$ that minimizes $E$ over $H^1_{0}(\Om\setminus\Sigma)$.
The dependence of $u$ and hence of compliance on $\Sigma$ is related to the fact that a $1$-dimensional set in $\R^2$ has   a non-trivial capacity: in fact,
if $\Sigma$ has zero Hausdorff dimension, then $\C(\Sigma)=\C(\emptyset)$.


In this paper, we need the following estimate, which is a direct consequence of~\cite[Theorem~8.16]{gt}.

\begin{proposition}\label{boundus} If $f \in L^p(\Omega)$ for some $p>2$ and $\Sigma$ is a closed subset of $\overline{\Om}$, then $u_\Sigma$ is bounded with
\begin{equation}\label{eq_boundus1}
\|u_\Sigma\|_\infty \leq C\|f\|_p.
\end{equation}
for some $C=C(p, |\Omega|)>0$.
\end{proposition}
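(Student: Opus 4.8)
The plan is to reduce the statement \eqref{eq_boundus1} to a direct application of~\cite[Theorem~8.16]{gt}. That classical result gives a global $L^\infty$ bound for weak subsolutions (and hence solutions) of uniformly elliptic equations $-\Delta u = f$ on a bounded open set $U$, with zero Dirichlet data, of the form $\|u\|_{L^\infty(U)} \leq C(n,|U|)\,\|f\|_{L^{p/2}}$ in its general form, but specialized to the Laplacian and to the right-hand side lying in $L^{p}$ with $p>n/2$. Here $n=2$, so the threshold $p>n/2 = 1$ is comfortably met by our hypothesis $p>2$; what matters is that the regularity threshold $p>2=n$ ensures we are safely in the range where the bound holds, and the constant depends only on $p$ and the measure of the domain. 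The main conceptual point is simply to identify $u_\Sigma$ as such a solution on the right open set and to check that the Dirichlet boundary condition is of the type covered by the theorem.

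First I would set $U := \Omega \setminus \Sigma$, which is open since $\Sigma$ is closed, and recall from \eqref{eq_DeltSigf1} that $u_\Sigma \in H_0^1(U)$ satisfies $-\Delta u_\Sigma = f$ weakly in $U$, i.e.\ $\int_U \nabla u_\Sigma \cdot \nabla \varphi\,dx = \int_U f\varphi\,dx$ for all $\varphi \in H_0^1(U)$. Since $u_\Sigma \in H_0^1(U)$ vanishes (in the trace sense) on $\partial U$, it is in particular a weak solution of the Dirichlet problem to which~\cite[Theorem~8.16]{gt} applies. The theorem then yields
\begin{equation*}
\|u_\Sigma\|_{L^\infty(U)} \leq C\bigl(\sup_{\partial U}|u_\Sigma|^+ + \|f\|_{L^q(U)}\bigr),
\end{equation*}
with $q=p/2>1$ and $C=C(n,q,|U|)$; the boundary term vanishes because $u_\Sigma\in H_0^1(U)$, so we are left with $\|u_\Sigma\|_\infty \leq C\|f\|_{L^{q}(U)}$.

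To finish I would clean up the dependence of the constant and the exponent so as to match the stated form \eqref{eq_boundus1}. By H\"older's inequality on the bounded set $U\subset\Omega$, $\|f\|_{L^{p/2}(U)} \leq |\Omega|^{\theta}\|f\|_{L^p(\Omega)}$ for an appropriate $\theta=\theta(p)\geq 0$ coming from $p/2 \leq p$, which absorbs into the constant and gives $\|u_\Sigma\|_\infty \leq C\|f\|_p$ with $C=C(p,|\Omega|)$, exactly as claimed. Crucially, since $\Sigma$ enters only through the domain $U=\Omega\setminus\Sigma\subset\Omega$ with $|U|\leq|\Omega|$, and since~\cite[Theorem~8.16]{gt} gives a constant that is monotone in the measure of the domain, the resulting constant is uniform over all closed $\Sigma\subset\overline{\Omega}$ and depends only on $p$ and $|\Omega|$.

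The only mildly delicate point, which I expect to be the main (though minor) obstacle, is the invariance of the constant as $\Sigma$ varies: one must ensure that replacing $\Omega$ by the possibly very irregular open set $\Omega\setminus\Sigma$ does not degrade the constant. This is handled by the fact that the $L^\infty$ estimate in~\cite[Theorem~8.16]{gt} depends on the domain \emph{only} through its Lebesgue measure (and the ellipticity constants, here trivial), and not through any regularity of $\partial U$; since $|\Omega\setminus\Sigma|\leq|\Omega|$ for every admissible $\Sigma$, the bound $C(p,|\Omega\setminus\Sigma|)\leq C(p,|\Omega|)$ is uniform. No further structure of $\Sigma$ is needed.
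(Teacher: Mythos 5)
Your proposal is correct and is essentially the paper's own argument: the paper proves this proposition simply by invoking~\cite[Theorem~8.16]{gt}, relying, exactly as you do, on the facts that the boundary condition there is meant in the $H^1_0$ sense (so no regularity of $\partial(\Omega\setminus\Sigma)$ is needed) and that the constant depends on the domain only through its Lebesgue measure, hence is uniform in $\Sigma$. Your only slip is notational: in the notation of~\cite{gt} one should take $q=p$ (so the right-hand side enters as $\|g\|_{q/2}=\|f\|_{p/2}$ with $q=p>2=n$) rather than ``$q=p/2$'', but your H\"older step absorbing $\|f\|_{p/2}\leq |\Omega|^{\theta}\|f\|_p$ renders this harmless.
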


\begin{remark}\label{rem_boundus2} Under conditions of the above Proposition~\ref{boundus} we have further that
\begin{equation}\label{eq_boundus2}
\begin{aligned}
\|\nabla u_\Sigma\|_2^2 & = 2\mathcal{C}(u_\Sigma)\leq 2 \mathcal{C}(\emptyset) = 2\int_\Om |\nabla u_\emptyset|^2\,dx 
\\
& =  2\int_\Om u_\emptyset f\,dx \leq C(p, |\Omega|)\|f\|_p |\Om|^{1/p'}\quad\mbox{by H\"{o}lder inequality and~\eqref{eq_boundus1}.}
\end{aligned}
\end{equation}
\end{remark}

\subsection{Existence}

Here we state a particular case of \v{S}ver\'ak's theorem, which will be used several times in this paper.

\begin{theorem}[\v{S}ver\'ak~\cite{sverak}]\label{th_Sverak1}
Let $\Om$ an open bounded set in $\R^2$, and $f\in L^2(\Om)$. Let $(\Sigma_{n})_{n}$ be a sequence of connected set in $\Om$, converging to $\Sigma \subset \overline{\Omega}$ in the Hausdorff distance. Then
\[u_{\Sigma_{n}}\mathop{\longrightarrow}_{n\to\infty}u_{\Sigma}\;\;\;\textrm{ strongly in }H^1(\Om).\]
\end{theorem}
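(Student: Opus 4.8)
The plan is to derive the convergence from the variational structure, by showing that the spaces $H^1_0(\Om\setminus\Sigma_n)$ converge in the sense of Mosco to $H^1_0(\Om\setminus\Sigma)$ and then transferring this to the minimizers of the fixed energy $E$. First I would record the a priori bound: since $u_{\Sigma_n}$ minimizes $E$ over $H^1_0(\Om\setminus\Sigma_n)$ we have $E(u_{\Sigma_n})\le E(0)=0$, so that $\frac{1}{2}\|\nabla u_{\Sigma_n}\|_2^2\le \int_\Om f u_{\Sigma_n}\,dx\le \|f\|_2\|u_{\Sigma_n}\|_2$, and Poincar\'e's inequality on the bounded set $\Om$ yields a bound on $\|\nabla u_{\Sigma_n}\|_2$ independent of $n$. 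Extending each $u_{\Sigma_n}$ by zero outside $\Om\setminus\Sigma_n$, we may extract a subsequence with $u_{\Sigma_n}\rightharpoonup u$ weakly in $H^1_0(\Om)$ and, by Rellich, strongly in $L^2(\Om)$. It then remains to identify $u=u_\Sigma$ and to upgrade to strong convergence.

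The two Mosco conditions form the core of the argument. The easy half is the recovery property: given $v\in H^1_0(\Om\setminus\Sigma)$, one approximates it in $H^1_0$ by functions vanishing in a neighborhood of $\Sigma$; since Hausdorff convergence forces $\Sigma_n$ into any fixed neighborhood of $\Sigma$ for $n$ large, such functions automatically lie in $H^1_0(\Om\setminus\Sigma_n)$, producing the required strongly convergent recovery sequence. The delicate half, and the real obstacle, is the closedness property: if $v_n\in H^1_0(\Om\setminus\Sigma_n)$ and $v_n\rightharpoonup v$ weakly in $H^1_0(\Om)$, one must show that $v\in H^1_0(\Om\setminus\Sigma)$, i.e.\ that $v=0$ quasi-everywhere on $\Sigma$. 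This is exactly the step where the two-dimensional connectedness of the $\Sigma_n$ is indispensable, and where the statement would fail for general closed sets or in higher dimension.

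To handle closedness I would invoke the capacitary lower bound that is the heart of \v{S}ver\'ak's theorem: in the plane, a continuum of diameter at least $\rho$ contained in a ball $B_{2\rho}$ has relative capacity $\mathrm{cap}(\cdot,B_{2\rho})$ bounded below by a universal constant, independent of $\rho$. Fixing $x_0\in\Sigma$, Hausdorff convergence provides $x_n\in\Sigma_n$ with $x_n\to x_0$, and for each small $r$ the connectedness of $\Sigma_n$ produces, for $n$ large, a continuum $\Gamma_n\subset\overline{B_r(x_0)}$ joining $x_n$ to $\partial B_r(x_0)$, hence of diameter at least $r/2$ and of relative capacity in $B_{2r}(x_0)$ bounded below uniformly in $n$. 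Since $v_n=0$ quasi-everywhere on $\Gamma_n$, a capacitary Poincar\'e inequality controls $v_n$ on $B_{2r}(x_0)$ by its Dirichlet energy there, and passing to the weak limit (using the strong $L^2$-convergence) forces $v$ to vanish at $x_0$; carrying this out for capacity-almost every $x_0\in\Sigma$ gives $v=0$ q.e.\ on $\Sigma$. This capacitary density step is the main difficulty — equivalently, it amounts to the convergence of the capacitary potentials of $\Sigma_n$ towards that of $\Sigma$ — while everything else is soft functional analysis.

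Finally I would assemble the conclusion. Mosco convergence of the domains implies $\Gamma$-convergence of the (weakly and strongly) restricted quadratic energies, so the weak limit $u$ is the minimizer over the limit space, i.e.\ $u=u_\Sigma$, and $E(u_{\Sigma_n})\to E(u_\Sigma)$. Since $\int_\Om f u_{\Sigma_n}\,dx\to\int_\Om f u_\Sigma\,dx$ by the strong $L^2$ convergence, the convergence of energies gives $\|\nabla u_{\Sigma_n}\|_2\to\|\nabla u_\Sigma\|_2$; combined with weak convergence of the gradients in the Hilbert space $L^2(\Om,\R^2)$, this upgrades to strong convergence $u_{\Sigma_n}\to u_\Sigma$ in $H^1(\Om)$. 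As the limit is always the same $u_\Sigma$, a standard subsequence argument shows that the convergence holds for the full sequence.
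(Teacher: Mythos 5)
Note first that the paper does not prove this statement: it is imported verbatim from \v{S}ver\'ak's paper \cite{sverak} (with the Mosco-convergence byproduct recorded separately in Remark~\ref{mosco}, citing \cite{hp}), so your proposal can only be compared with the standard proof, not with an internal one. Your outline does follow that standard route: the a priori bound, the recovery half of Mosco convergence via Hausdorff convergence of the $\Sigma_n$, the identification $u=u_\Sigma$, and the upgrade from weak to strong $H^1$ convergence through convergence of energies are all correct, and you rightly isolate the closedness property and the scale-invariant lower bound for the relative capacity of planar continua as the crux --- this is exactly where connectedness and dimension two enter.

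There is, however, a genuine gap in the closedness step as written. The capacitary Poincar\'e inequality yields, after passing to the limit, an estimate of the form $r^{-2}\int_{B_{2r}(x_0)}v^2\,dx\le C\,\liminf_n\int_{B_{4r}(x_0)}|\nabla v_n|^2\,dx$, and to conclude that the quasi-continuous representative of $v$ vanishes at $x_0$ you must send $r\to 0$; but nothing forces the right-hand side to tend to $0$, because weak $H^1$ convergence allows the Dirichlet energies to concentrate at points --- in the plane, $v_n(x)=\phi\bigl((x-x_0)/\eps_n\bigr)$ with $\phi$ fixed has scale-invariant energy and converges weakly to $0$. So ``passing to the weak limit forces $v$ to vanish at $x_0$'' is not justified as stated. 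The standard repairs are: (a) extract a subsequence with $|\nabla v_n|^2\,dx\rightharpoonup\mu$ weakly-$*$ as measures, and observe that the atoms of $\mu$ form an at most countable, hence polar, set; at every non-atom $x_0\in\Sigma$ your estimate gives $\lim_{r\to 0}r^{-2}\int_{B_r(x_0)}v^2\,dx=0$, whence $\tilde v=0$ quasi-everywhere on $\Sigma$ and $v\in H^1_0(\Om\setminus\Sigma)$ by the Havin--Bagby characterization of $H^1_0$ of an open set; or (b) bypass the weak-limit argument entirely, as \v{S}ver\'ak and the proof in \cite{hp} essentially do, by using the uniform capacitary density of the $\Sigma_n$ to obtain uniform Wiener-type boundary H\"older estimates for the potentials, hence equicontinuity and locally uniform convergence, from which the vanishing of the limit on $\Sigma$ is immediate. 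A minor omission in the same step: when $\diam\Sigma=0$ your continuum $\Gamma_n$ joining $x_n$ to $\partial B_r(x_0)$ need not exist for small $r$; in that case $\Sigma$ is a single point, which is polar in $\R^2$, so $H^1_0(\Om\setminus\Sigma)=H^1_0(\Om)$ and closedness is vacuous --- but this case should be noted, since it is precisely the one your construction does not cover.
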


\begin{remark} \label{mosco}Notice that, a byproduct of Sver\'ak's theorem is the so called Mosco-convergence of $H^1_0(\Omega \setminus \Sigma_n)$ that will be used later in the paper. More precisely, for
every $u \in H^1(\Omega\setminus \Sigma)$ there exists a sequence $\{u_n\}$
such that $u_n\in H^1_0(\Omega \setminus \Sigma_n)$  for all $n\in \N$ and $u_n\to u$ strongly in $H^1(\Omega)$, and moreover  any sequence of $\{v_n\}$ satisfying $v_n\in H^1_0(\Omega \setminus \Sigma_n)$  for all $n\in \N$ and $v_n \rightharpoonup v$ weakly in $H^1(\Omega)$ satisfies   $v\in H^1_0(\Omega \setminus \Sigma)$ (see~\cite[Proposition 3.5.4]{hp}).
\end{remark}

The following simple assertion gives existence of solutions to the penalized optimal compliance problem.

\begin{proposition}\label{existence} Problem~\ref{pb_compl_pen1} admits a minimizer.
\end{proposition}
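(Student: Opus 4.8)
The plan is to use the direct method in the calculus of variations, exploiting the compactness results for compact connected sets in the plane together with \v{S}ver\'ak's theorem (Theorem~\ref{th_Sverak1}) to control the compliance term and Go\l ab's theorem to control the length term. First I would take a minimizing sequence $\{\Sigma_n\}\subset\mathcal{K}(\overline{\Omega})$ for $\mathcal{F}_\lambda$, so that $\mathcal{F}_\lambda(\Sigma_n)\to\inf\mathcal{F}_\lambda$. Since the empty set (or any single point) is an admissible competitor with finite energy by Remark~\ref{rem_boundus2}, the infimum is finite, hence we may assume $\mathcal{F}_\lambda(\Sigma_n)\leq M<\infty$ for all $n$. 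As $\lambda>0$ and $\mathcal{C}\geq 0$, this immediately gives the uniform length bound $\H(\Sigma_n)\leq M/\lambda$.

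Next I would extract a convergent subsequence. All the $\Sigma_n$ are compact subsets of the fixed compact set $\overline{\Omega}$, so by the Blaschke selection theorem a (not relabeled) subsequence converges in the Hausdorff distance to some compact set $\Sigma\subset\overline{\Omega}$. The key point is that the limit $\Sigma$ remains admissible and that the functional does not increase in the limit. For admissibility, connectedness is preserved under Hausdorff convergence of compact connected sets (a standard fact), so $\Sigma\in\mathcal{K}(\overline{\Omega})$. For the length term, Go\l ab's semicontinuity theorem applies precisely to Hausdorff-converging sequences of compact connected sets and yields
\[
\H(\Sigma)\leq \liminf_{n\to\infty}\H(\Sigma_n).
\]

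For the compliance term I would invoke \v{S}ver\'ak's theorem directly: since $\Sigma_n\to\Sigma$ in the Hausdorff distance and $f\in L^2(\Omega)$, Theorem~\ref{th_Sverak1} gives $u_{\Sigma_n}\to u_{\Sigma}$ strongly in $H^1(\Omega)$. Consequently
\[
\mathcal{C}(\Sigma_n)=\tfrac12\int_{\Omega\setminus\Sigma_n}|\nabla u_{\Sigma_n}|^2\,dx \longrightarrow \tfrac12\int_{\Omega\setminus\Sigma}|\nabla u_{\Sigma}|^2\,dx=\mathcal{C}(\Sigma),
\]
so the compliance is in fact continuous (not merely lower semicontinuous) along the subsequence. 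Combining the two,
\[
\mathcal{F}_\lambda(\Sigma)=\mathcal{C}(\Sigma)+\lambda\H(\Sigma)\leq \liminf_{n\to\infty}\bigl(\mathcal{C}(\Sigma_n)+\lambda\H(\Sigma_n)\bigr)=\inf \mathcal{F}_\lambda,
\]
which forces $\Sigma$ to be a minimizer.

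The main obstacle, and the only place requiring genuine care, is the lower semicontinuity of the length term: Hausdorff convergence alone does not control $\H$ for arbitrary sets, and one genuinely needs the connectedness hypothesis to invoke Go\l ab's theorem. The compliance term, by contrast, is handled cleanly by the already-quoted \v{S}ver\'ak theorem, which is why the proof is labeled ``more or less standard'' in the introduction. I would therefore spend the bulk of the written argument stating Blaschke's and Go\l ab's theorems precisely (or citing them, e.g.\ via~\cite{BuSan07}) and verifying that the limit set is nonempty and connected, and treat the energy convergence as an immediate corollary of Theorem~\ref{th_Sverak1}.
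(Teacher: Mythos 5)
Your proof is correct and follows exactly the paper's own argument: a minimizing sequence, Blaschke selection for Hausdorff compactness, \v{S}ver\'ak's Theorem~\ref{th_Sverak1} for (continuity of) the compliance term, and Go\l ab's theorem for lower semicontinuity of $\H$. The additional details you supply (the uniform length bound, preservation of connectedness under Hausdorff limits) are accurate refinements of the same standard direct-method scheme.
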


\begin{proof} Let $\Sigma_n$ be a minimizing sequence for Problem~\ref{pb_compl_pen1}. From Blaschke's selection principle~\cite[Theorem~6.1]{afp}
there exists a subsequence converging for the Hausdorff distance to some closed $\Sigma\subset \overline{\Om}$. By Theorem~\ref{th_Sverak1}
 $u_{\Sigma_n}$ converges strongly in $H^1(\Om)$ to $u_{\Sigma}$ and   Go\l ab's theorem~\cite[Theorem~3.18]{falconer} gives the lower-semicontinuity of $\H$, which implies that $\Sigma$ must be a minimizer.
\end{proof}

\begin{remark} In~\cite{BuSan07} the assumption $f\geq 0$ is added in the statement of existence but it is actually unnecessary.
\end{remark}

\begin{remark} If we drop the connectedness assumption on $\Sigma$, then the existence of minimizer for  $\mathcal{F}_\lambda$ fails. Indeed, it is not difficult to construct an example of (highly disconnected) set $\Sigma\subset \Omega$ with $\H(\Sigma)$ arbitrary small which spreads into $\Omega$ so that  $u_{\Sigma}$ has very small energy, leading to
\[\inf_{\Sigma \subset \Omega, closed} \mathcal{F}_\lambda(\Sigma)=0.\]
\end{remark}

\section{Estimates of the variations of the compliance}\label{sect:variationcompliance}

  Our goal in this section is to obtain an estimate of the variation of the compliance between $\Sigma$ and $\Sigma'$ when $\Sigma\Delta\Sigma'$ is localized, without assuming any regularity for $\Sigma$ and $\Sigma'$.

\subsection{Localization lemma}

We first prove the following localization lemma.

\begin{lemma}\label{lm_compl_loc1}
Let $f\in L^2(\Omega)$, $\Sigma$ and $\Sigma'$ be closed subsets of $\overline{\Omega}$, and let $x_{0} \in \R^2$. We consider $0<r_{0}<r_1$ and assume
$\Sigma'\Delta\Sigma\subset B_{r_0}(x_0)$.
Then  for
every $\varphi\in \Lip(\R^2)$ such that 
$\varphi=1$ over $B_{r_1}^c(x_0)$,
$\varphi=0$ over $B_{r_0}(x_0)$, and $\|\varphi\|_{\infty}\leq 1$ on $\R^2$,
one has
\begin{align*}
E(u_{\Sigma})- E(u_{\Sigma'})
\leq  & \int_{B_{r_1}(x_0)} u_{\Sigma'} f(1-\varphi)\, dx + \int_{B_{r_1}(x_0)} u_{\Sigma'}^2 |\nabla \varphi|^2\, dx  + \int_{ B_{r_1}(x_0)} u_{\Sigma'} \varphi\nabla u_{\Sigma'}\cdot\nabla\varphi\, dx
\end{align*}
\end{lemma}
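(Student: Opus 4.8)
The plan is to exploit the fact that $u_\Sigma$ minimizes $E$ over $H^1_0(\Omega\setminus\Sigma)$, so the proof should start by constructing a suitable competitor for $u_\Sigma$ built out of $u_{\Sigma'}$ and the cutoff $\varphi$. The natural candidate is $v:=\varphi\, u_{\Sigma'}$. Let me verify this is admissible. Since $\varphi=0$ on $B_{r_0}(x_0)$ and $\Sigma'\Delta\Sigma\subset B_{r_0}(x_0)$, outside $B_{r_0}(x_0)$ we have $\Sigma=\Sigma'$, so on the support of $v$ (where $\varphi\neq 0$, contained in $B_{r_0}(x_0)^c$) the two sets agree. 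Hence $v=\varphi u_{\Sigma'}\in H^1_0(\Omega\setminus\Sigma)$: it vanishes on $\Sigma'$ (through $u_{\Sigma'}$) away from $B_{r_0}$, and $\varphi$ kills everything inside $B_{r_0}$, so it correctly avoids the part of $\Sigma$ lying inside $B_{r_0}$ where $u_{\Sigma'}$ has no reason to vanish. This is precisely the role of the cutoff: to erase the "wrong" Dirichlet behavior of $u_{\Sigma'}$ inside the perturbation ball.

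**The core computation.**

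By minimality, $E(u_\Sigma)\le E(v)=E(\varphi u_{\Sigma'})$, so
\[
E(u_\Sigma)-E(u_{\Sigma'})\le E(\varphi u_{\Sigma'})-E(u_{\Sigma'}).
\]
I would then expand the right-hand side. Writing $E(w)=\frac12\int|\nabla w|^2-\int fw$, the gradient term uses $\nabla(\varphi u_{\Sigma'})=\varphi\nabla u_{\Sigma'}+u_{\Sigma'}\nabla\varphi$, giving
\[
|\nabla(\varphi u_{\Sigma'})|^2=\varphi^2|\nabla u_{\Sigma'}|^2+2\varphi u_{\Sigma'}\nabla u_{\Sigma'}\cdot\nabla\varphi+u_{\Sigma'}^2|\nabla\varphi|^2.
\]
Since $\varphi$ differs from $1$ only on $B_{r_1}(x_0)$, all the integrals reduce to integrals over $B_{r_1}(x_0)$. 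The difference $E(\varphi u_{\Sigma'})-E(u_{\Sigma'})$ then contains the term $\tfrac12\int(\varphi^2-1)|\nabla u_{\Sigma'}|^2$, the cross term $\int\varphi u_{\Sigma'}\nabla u_{\Sigma'}\cdot\nabla\varphi$, the quadratic term $\tfrac12\int u_{\Sigma'}^2|\nabla\varphi|^2$, and the force term $-\int f u_{\Sigma'}(\varphi-1)=\int f u_{\Sigma'}(1-\varphi)$. The only term not appearing with a clean sign in the target inequality is $\tfrac12\int(\varphi^2-1)|\nabla u_{\Sigma'}|^2$; since $\|\varphi\|_\infty\le1$ we have $\varphi^2-1\le0$, so this term is $\le0$ and may simply be dropped. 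After also halving: note the stated inequality has $\int u_{\Sigma'}^2|\nabla\varphi|^2$ and $\int\varphi u_{\Sigma'}\nabla u_{\Sigma'}\cdot\nabla\varphi$ without the factor $\tfrac12$ on the first and with coefficient $1$ on the second, so I would keep track of the factors carefully; the negative term absorbs any slack.

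**Anticipated obstacle and rigor.**

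The computational part is routine, so the genuine care is in justifying that $v=\varphi u_{\Sigma'}$ is a legitimate element of $H^1_0(\Omega\setminus\Sigma)$ — i.e.\ the membership argument above — and in checking the integration-by-parts/expansion is valid for merely $H^1$ functions (it is, since $\varphi$ is Lipschitz and $u_{\Sigma'}\in H^1$, so products and chain rule hold in the Sobolev sense). The main conceptual obstacle, which is really the point of the lemma, is recognizing that the cutoff confines the entire defect of minimality to the annular/ball region $B_{r_1}(x_0)$ and produces only terms controllable by $\int_{B_{r_1}}|\nabla u_{\Sigma'}|^2$ together with lower-order force and gradient-of-cutoff contributions. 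Once the admissibility of $v$ is secured, the stated inequality follows by collecting terms and discarding the nonpositive contribution $\tfrac12\int(\varphi^2-1)|\nabla u_{\Sigma'}|^2\le0$.
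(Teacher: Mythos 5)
Your proposal is correct and follows essentially the same route as the paper's own proof: take $\varphi\, u_{\Sigma'}\in H_0^1(\Omega\setminus\Sigma)$ as a competitor for $u_\Sigma$, expand $E(\varphi u_{\Sigma'})-E(u_{\Sigma'})$, and discard the nonpositive term $\tfrac12\int(\varphi^2-1)|\nabla u_{\Sigma'}|^2$. The factor-of-$\tfrac12$ discrepancy you flagged on $\int u_{\Sigma'}^2|\nabla\varphi|^2$ is resolved simply because that integrand is nonnegative, so $\tfrac12\int \le \int$ and the stated (slightly weaker) inequality follows.
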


\begin{proof}
Since $u_{\Sigma'}\varphi\in H_0^1(\Omega\setminus\Sigma)$ and $u_\Sigma$ is a minimizer of $E$ over $H^1_{0}(\Om\setminus\Sigma)$, we have
\[
E(u_{\Sigma})\leq E(u_{\Sigma'}\varphi),
\]
and hence,
\begin{eqnarray*}
E(u_{\Sigma}) -E(u_{\Sigma'})&\leq&
E(u_{\Sigma'}\varphi) -E(u_{\Sigma'})
\leq  \frac 1 2 \int_\Omega  |\nabla(u_{\Sigma'}\varphi)|^2\, dx - \int_\Omega u_{\Sigma'}\varphi f\, dx\\
&& \qquad \qquad- \frac 1 2 \int_\Omega  |\nabla u_{\Sigma'}|^2\, dx  + \int_\Omega u_{\Sigma'} f\, dx \\
&  = &\frac 1 2 \int_\Omega  |\nabla u_{\Sigma'}|^2\varphi^2\, dx  +
 \frac 1 2 \int_\Omega  |\nabla \varphi|^2u_{\Sigma'}^2\, dx +
 \int_\Omega u_{\Sigma'} \varphi\nabla u_{\Sigma'}\cdot\nabla\varphi\, dx
 - \int_\Omega u_{\Sigma'}\varphi f\, dx\\
&&\qquad\qquad- \frac 1 2 \int_\Omega  |\nabla u_{\Sigma'}|^2\, dx  + \int_\Omega u_{\Sigma'} f\, dx \\
&=&\frac 1 2 \int_\Omega  |\nabla u_{\Sigma'}|^2\underbrace{(\varphi^2-1)}_{\leq 0}\, dx  +
 \frac 1 2 \int_\Omega  |\nabla \varphi|^2u_{\Sigma'}^2\, dx  + \int_\Omega u_{\Sigma'} (1-\varphi)f\, dx\\
&&\qquad\qquad+
 \int_\Omega u_{\Sigma'} \varphi\nabla u_{\Sigma'}\cdot\nabla\varphi\, dx
\end{eqnarray*}
Therefore,
recalling that $\varphi=1$  over $B_{r_1}^c(x_0)$, we conclude the proof.
\end{proof}

\begin{remark}
Note that if  $\Sigma'\subset\Sigma$  then
\[
0\leq E(u_{\Sigma})-E(u_{\Sigma'}) ,
\]
since $H_0^1(\Omega\setminus\Sigma) \subset H_0^1(\Omega\setminus\Sigma')$.
\end{remark}

\subsection{Monotonicity formula and decay of energy}

The monotonicity of energy will be one of our main tool in all the sequel. We start with a  general statement which says that the maximum length for $\partial B_r \setminus \Sigma$ gives the power of decay for the function $u_\Sigma$.   In what follows we assume that $u_\Sigma$ is extended by $0$ outside $\Omega$.

\begin{lemma}\label{lm_compl_monot1}
Let  $\Sigma\subset\overline{\Om}$  be a closed 
 set, $f\in L^p(\Omega)$, where $p>2$,
and
$ x_0\in{\overline{\Omega}} $.
Let $0  \leq   r_0< r_1$, and $\gamma \in [\gamma_{\Sigma}(x_0, r_{0},r_{1}),2\pi]\setminus\{0\}$, where we denote
\begin{equation}\label{eq:defgamma}
\gamma_{\Sigma}(x_0,r_{0},r_{1}):=\sup \left\{\frac{\H(S)}{r}\colon r\in (r_0,r_1)\textrm{ and } S  \textrm{ connected component of } \partial B_r(x_{0})\setminus (\Sigma\cup\partial\Omega) \right\},
\end{equation}
We assume
\begin{equation}\label{eq:geomass}
(\Sigma\cup\partial\Om)\cap\partial B_{r}(x_{0})\neq\emptyset\text{ for all $r\in[r_{0},r_{1}]$},
\end{equation}
and finally we suppose that $1/p'>\pi/\gamma$. Then the function

\begin{align*}
 r\in [r_0,r_1]\mapsto \frac{1}{r^{\alpha}}\int_{B_{r}(x_0)} |\nabla u_{\Sigma}|^2\, dx + Cr^{\frac{2}{p'}-\alpha},
\end{align*}
is nondecreasing, where $\alpha:=2\pi/\gamma$ and
$C=C(|\Omega|, p, \|f\|_p, \gamma)>0$.
\end{lemma}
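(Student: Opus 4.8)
The plan is to prove the monotonicity formula by deriving a differential inequality for the quantity
\[
\theta(r) := \int_{B_r(x_0)} |\nabla u_\Sigma|^2\,dx,
\]
and then integrating it. The heart of the matter is a Poincaré-type inequality on the circular slices $\partial B_r(x_0)\setminus(\Sigma\cup\partial\Omega)$, which controls the energy inside $B_r$ by the angular derivative of $u_\Sigma$ on the boundary circle $\partial B_r$. First I would note that by the coarea formula $\theta$ is absolutely continuous with $\theta'(r) = \int_{\partial B_r(x_0)} |\nabla u_\Sigma|^2\,d\HH^1$ for a.e.\ $r$. Writing the gradient in polar coordinates as $|\nabla u_\Sigma|^2 = (\partial_\rho u_\Sigma)^2 + \rho^{-2}(\partial_\tau u_\Sigma)^2$, the essential step is to bound $\theta(r)$ itself from above in terms of the tangential-derivative integral $\int_{\partial B_r}(\partial_\tau u_\Sigma)^2\,d\HH^1$.

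To obtain this bound I would argue as follows. On each connected arc $S$ of $\partial B_r(x_0)\setminus(\Sigma\cup\partial\Omega)$ of length $\HH^1(S)=r\beta$ (angular opening $\beta\le \gamma$ by definition of $\gamma_\Sigma$), the trace of $u_\Sigma$ vanishes at the endpoints of $S$ (since $u_\Sigma\in H^1_0(\Omega\setminus\Sigma)$ vanishes on $\Sigma\cup\partial\Omega$), so a one-dimensional Wirtinger/Poincaré inequality on the arc gives $\int_S u_\Sigma^2\,d\HH^1 \le (r\beta/\pi)^2\int_S(\partial_\tau u_\Sigma)^2\,d\HH^1$, i.e.\ the first Dirichlet eigenvalue of the arc scales like $(\pi/(r\beta))^2 \ge (\pi/(r\gamma))^2$. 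Summing over arcs yields a trace estimate controlling $u_\Sigma$ on $\partial B_r$, and then a comparison argument on $B_r$ — replacing $u_\Sigma$ inside by the harmonic-type extension of its boundary trace, or more directly invoking the minimality of $u_\Sigma$ against a competitor supported in $B_r$ together with the forcing term $f$ — produces an inequality of the shape
\[
\theta(r) \;\le\; \frac{r\gamma}{2\pi}\,\theta'(r) \;+\; (\text{error from }f).
\]
The constant $\gamma/(2\pi) = 1/\alpha$ is exactly where the exponent $\alpha = 2\pi/\gamma$ enters. Crucially, the geometric assumption~\eqref{eq:geomass} guarantees that each slice $\partial B_r\setminus(\Sigma\cup\partial\Omega)$ is genuinely cut (not a full circle), so the trace really does vanish somewhere and the Poincaré constant is finite.

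The error term from the source $f$ is where the hypothesis $1/p' > \pi/\gamma$ and the power $2/p'$ come in. Using $\|u_\Sigma\|_\infty \le C\|f\|_p$ from Proposition~\ref{boundus} and Hölder's inequality with exponent $p$ on $B_r$, the contribution of $\int_{B_r} f u_\Sigma$ is controlled by $C\|f\|_p\,|B_r|^{1/p'} \le C r^{2/p'}$. Folding this into the differential inequality and rearranging, one gets
\[
\theta(r) - C' r^{2/p'} \;\le\; \frac{r}{\alpha}\,\frac{d}{dr}\!\left(\theta(r) - C' r^{2/p'}\right),
\]
for a suitable constant $C'$, after absorbing the derivative of the $r^{2/p'}$ term — this absorption is exactly feasible because $2/p' > 2\pi/\gamma = \alpha$, i.e.\ the assumption $1/p' > \pi/\gamma$, which ensures the penalty term decays slower than the natural scaling $r^\alpha$ and so has the correct sign. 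This differential inequality $g(r) \le (r/\alpha) g'(r)$ is equivalent to $\frac{d}{dr}\big(r^{-\alpha} g(r)\big) \ge 0$, which is precisely the asserted monotonicity of $r^{-\alpha}\theta(r) + C r^{2/p'-\alpha}$ once we identify $g(r)=\theta(r)-C'r^{2/p'}$ and match constants.

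I expect the \textbf{main obstacle} to be making the comparison step rigorous at the level of the slices, in particular justifying that the trace of $u_\Sigma$ on $\partial B_r$ lies in $H^1$ of the relevant arcs for a.e.\ $r$ (a Fubini/coarea argument) and that it vanishes at the arc endpoints in the $H^1_0(\Omega\setminus\Sigma)$ sense even though $\Sigma$ is merely closed with no regularity assumed. Getting the correct eigenvalue constant $(\pi/(r\gamma))^2$ uniformly — and handling the interaction with $\partial\Omega$ through the extension by zero of $u_\Sigma$ outside $\Omega$ — is the delicate geometric point; the rest is a careful but standard ODE integration.
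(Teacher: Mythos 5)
Your outline follows the paper's own proof almost step for step: the paper likewise (i) works with $G(r)=\int_{B_r(x_0)}|\nabla u_\Sigma|^2\,dx$, absolutely continuous with $G'(r)=\int_{\partial B_r}|\nabla u_\Sigma|^2\,d\H$ a.e.; (ii) tests the equation $-\Delta u_\Sigma=f$ against $u_\Sigma$ cut off at $\partial B_r(x_0)$ (its Lemma~\ref{lm_compl_intparts1}) to write $G(r)=\int_{\partial B_r}u\,\frac{\partial u}{\partial\nu}\,d\H+\int_{B_r}fu\,dx$; (iii) applies the arc-wise Poincar\'e inequality with eigenvalue constant $(\pi/\H(S))^2\geq(\pi/(\gamma r))^2$ (its Lemma~\ref{lm_compl_poincare1}, including exactly the point you flag, that the trace of $u_\Sigma$ lies in $H^1_0$ of the arcs for a.e.\ $r$ thanks to~\eqref{eq:geomass}), then balances the normal and tangential contributions by Young's inequality with the optimal weight $\delta=\pi/(\gamma r)$ to reach $G(r)\leq\frac{\gamma}{2\pi}rG'(r)+\int_{B_r}fu\,dx$; and (iv) bounds the source term by $Cr^{2/p'}$ via $\|u_\Sigma\|_\infty\leq C\|f\|_p$ (Proposition~\ref{boundus}) and H\"older. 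Your write-up is vaguer in (ii)--(iii) — you never say how the normal derivative gets absorbed into $\theta'(r)$, which is precisely this Young-inequality balancing, and the alternative "harmonic replacement'' comparison you mention is not needed — but the target differential inequality and the constant $\gamma/(2\pi)$ are correct.

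The one step that fails as written is the final absorption. You set $g(r)=\theta(r)-C'r^{2/p'}$ and claim $g\leq(r/\alpha)g'$. Computing, this would require $\theta-C'r^{2/p'}\leq(r/\alpha)\theta'-\frac{2C'}{p'\alpha}r^{2/p'}$, and after inserting $\theta\leq(r/\alpha)\theta'+Cr^{2/p'}$ you would need $C\leq C'\bigl(1-\frac{2}{p'\alpha}\bigr)$, which is impossible since the hypothesis $1/p'>\pi/\gamma$ says exactly $\frac{2}{p'\alpha}>1$. Moreover, with your $g$ the quantity $r^{-\alpha}g(r)$ equals $r^{-\alpha}\theta(r)-C'r^{2/p'-\alpha}$, which is not the asserted monotone quantity: the correction carries the wrong sign. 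The fix is to flip the sign: with $g=\theta+C'r^{2/p'}$ the absorption demands $C+C'\leq\frac{2C'}{p'\alpha}$, i.e.\ $C'\geq C\alpha/\bigl(\frac{2}{p'}-\alpha\bigr)$, which is feasible precisely because $\frac{2}{p'}>\alpha$ — and this is the paper's constant $C_0$; equivalently, differentiate $r^{-\alpha}G(r)+C_0r^{2/p'-\alpha}$ directly, as the paper does. So your mechanism and your reading of where $1/p'>\pi/\gamma$ enters are both correct, but the correction term must be added, not subtracted, for the ODE step to close.
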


\begin{remark}\label{remYeah}
In particular, setting $\gamma:=2\pi$ in the above Lemma, we get that
the function
\begin{align*}
 r\in [r_{0},r_1]\mapsto \frac{1}{r}\int_{B_{r}(x_0)} |\nabla u_{\Sigma}|^2\, dx + Cr^{\frac{2}{p'}-1},
\end{align*}
is nondecreasing, where   $C=C(|\Omega|, p, \|f\|_p)>0$, once $x_0$ satisfies~\eqref{eq:geomass}.
\end{remark}


\begin{proof}
We denote $u=u_{\Sigma}$ and extend $u$ by zero outside $\Omega$, {so that $u\in H^1(\R^2)$}. One has,   for a.e. $r \in (r_{0},r_{1}) $,
\begin{equation}\label{eq_compl_monot1}
\begin{aligned}
G(r) &:=\int_{B_{r}(x_0)} |\nabla u|^2\, dx \\
& = \int_{\partial B_{r}(x_0)} u\frac{\partial u}{\partial\nu}\,d\H
+  \int_{B_{r}(x_0)} f u\, dx \quad \mbox{ by Lemma~\ref{lm_compl_intparts1}}\\
&
\leq \int_{\partial B_{r}(x_0)} \frac \delta {2} u^2 \, d\H
 +\int_{\partial B_{r}(x_0)} \frac 1 {2\delta} \left(\frac{\partial u}{\partial\nu}\right)^2 \,
d\H
+  \int_{B_{r}(x_0)} f u\, dx
\end{aligned}
\end{equation}
for every $\delta>0$.
By Poincar\'{e} inequality (Lemma~\ref{lm_compl_poincare1})  that can be applied
because of~\eqref{eq:geomass},
 we get
\[
\int_{\partial B_{r}(x_0)} u^2 \,d\H=\int_{\partial B_{r}(x_0)\setminus (\Sigma\cup\partial\Omega)} u^2 \,d\H
\leq    \left(\frac{\gamma r}{\pi}\right)^2  \int_{\partial B_{r}(x_0)\setminus (\Sigma\cup\partial\Omega)} \left|\nabla_\tau u\right|^2\, d\H,
\]

 Notice indeed that the trace of $u$ on $\partial B_r$ belongs to $H^1_0(\partial B_r \setminus \Sigma \cup \partial \Omega) $ for a.e. $r$.  Thus choosing $ \delta:= \pi / (\gamma r)$, we get
\begin{equation}\label{eq_compl_monot2}
\begin{aligned}
G(r)& \leq
\frac{1}{2}\frac{\gamma r}{\pi}  \int_{\partial B_{r}(x_0)}
\left(\left|\nabla_\tau u\right|^2 + \left(\frac{\partial u}{\partial\nu}\right)^2\right) \,d\H
+  \int_{B_{r}(x_0)} f u\, dx\\
&=
\frac{\gamma}{2\pi} r \int_{\partial B_{r}(x_0)}
|\nabla u|^2 \,d\H
+  \int_{B_{r}(x_0)} f u\, dx\\
&= \frac{\gamma}{2\pi} r G'(r)
+  \int_{B_{r}(x_0)} f u\, dx.
\end{aligned}
\end{equation}
 The function $r\mapsto G(r)$ is indeed absolutely continuous, and its derivative is a.e. equal to $r\mapsto \int_{\partial B_{r}(x_0)}
|\nabla u|^2 \,d\H$. 

Recalling that
 $u\in L^\infty(\Omega)$  by Proposition~\ref{boundus}
and using the H\"{o}lder inequality together with~\eqref{eq_boundus1}
to estimate
\[
\left|\int_{B_{r}(x_0)} f u\, dx\right| \leq C  r^{2/p'}  
\]
with some constant $C>0$ depending only on  $\|f\|_p$, $p$, $|\Omega|$,
we get
\[
G(r)\leq \frac{\gamma}{2\pi} r G'(r)
+    C r^{2/p'}.
\]
This gives that
\[r\mapsto \frac{G(r)}{r^\alpha} + C_0 r^{\frac{2}{p'}-\alpha}\]
is non decreasing, with
\[C_0:= \frac{C\alpha}{\frac{2}{p'}-\alpha}  \quad  \text{ and }\quad \alpha=\frac{2\pi}{\gamma}\]
 (notice that $\frac{2}{p'}-\alpha>0$ under our assumptions).
Indeed,
\begin{eqnarray}
\frac{d}{dr}\left(\frac{G(r)}{r^\alpha} + C_0 r^{\frac{2}{p'}-\alpha}\right)&=& \frac{G'(r)r^\alpha - G(r)\alpha r^{\alpha-1}}{r^{2\alpha}} + C\alpha r^{\frac{2}{p'}-\alpha -1} \notag \\
&=&  \frac{G'(r)r^\alpha - G(r)\alpha r^{\alpha-1}+ C\alpha r^{\frac{2}{p'}+\alpha -1}}{r^{2\alpha}} \notag \\
&=&  \alpha \frac{\frac{1}{\alpha}G'(r)r - G(r) + Cr^{\frac{2}{p'}}}{r^{\alpha+1}} \geq 0. \notag
\end{eqnarray}
\end{proof}

In particular, for $C^1$ domains we obtain the following useful decay at the boundary.

\begin{lemma} \label{decayboundary} Assume that  $\Omega$ is a $C^1$ domain,  let $\Sigma \subset \overline{\Omega}$ be a closed set and $p>2$. Then there exists an $r_0=r_0(\partial \Omega,p)>0$ and a $\nu=\nu(p)>0$   such that
\begin{align*}
 \int_{B_{r}(x)} |\nabla u_{\Sigma}|^2\, dx  \leq  C_1\left(\frac{r}{r_0}\right)^{1+\nu} + C_2r^{\frac{2}{p'}} \quad \text{for all } x \in \partial \Omega \text{ and for all }r\in(0,r_0),
\end{align*}
with positive constants $C_1$ and $C_2$ depending only on $|\Omega|$, $p$, $\|f\|_p$.
\end{lemma}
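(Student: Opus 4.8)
The plan is to read the decay straight off the monotonicity formula of Lemma~\ref{lm_compl_monot1}, the only genuinely new ingredient being a geometric estimate on the free arcs of $\partial B_r(x)$ that exploits the $C^1$ flatness of $\partial\Om$ at $x$. Throughout I write $u=u_\Sigma$, extended by $0$ outside $\Om$, and $G(r):=\int_{B_r(x)}|\nabla u|^2\,dx$. First I would establish the geometric claim: there is $\delta(r_0)\to 0$ as $r_0\to 0$, uniform in $x\in\partial\Om$ and in $\Sigma$, such that for every $r\in(0,r_0)$ one has $\partial B_r(x)\cap(\Sigma\cup\partial\Om)\neq\emptyset$ (so that \eqref{eq:geomass} holds) and $\gamma_\Sigma(x,0,r)\le\pi+\delta(r_0)$. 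Since adding points of $\Sigma$ can only split the components of $\partial B_r(x)\setminus(\Sigma\cup\partial\Om)$ into shorter ones, it suffices to treat $\Sigma=\emptyset$, i.e.\ to bound the length of the largest arc of $\partial B_r(x)$ lying in $\Om$. As $\partial\Om$ is, up to rotation, locally the graph of a $C^1$ function through $x$, its tangent line at $x$ cuts $\partial B_r(x)$ into two semicircles of length $\pi r$, and the $C^1$ modulus of continuity forces $\partial\Om\cap\partial B_r(x)$ to lie within $o(r)$ of the two points where this line meets $\partial B_r(x)$; hence the arc of $\partial B_r(x)$ inside $\Om$ has length at most $(\pi+\delta)r$. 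Compactness of $\partial\Om$ makes the $C^1$ modulus, hence $\delta(r_0)$, uniform in $x$.

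Next I would fix the exponent. Since $p>2$ we have $\pi p'<2\pi$, so I may choose $\gamma\in(\pi p',2\pi)$ depending only on $p$ (for instance $\gamma:=(\pi p'+2\pi)/2$) and set $\alpha:=2\pi/\gamma$, $\nu:=\alpha-1$; then $1<\alpha<2/p'$, so $\nu=\nu(p)>0$ and $2/p'-\alpha>0$, while $\pi/\gamma<1/p'$ is precisely the hypothesis of Lemma~\ref{lm_compl_monot1}. Taking $r_0=r_0(\partial\Om,p)$ small enough that $\pi+\delta(r_0)\le\gamma$, the geometric step guarantees $\gamma\in[\gamma_\Sigma(x,0,r),2\pi]$ and \eqref{eq:geomass} for all $r<r_0$, so Lemma~\ref{lm_compl_monot1} applies on $[0,r_0]$: the map $r\mapsto G(r)/r^\alpha+C_0 r^{2/p'-\alpha}$ is nondecreasing, with $C_0=C_0(|\Om|,p,\|f\|_p)$.

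Finally I would compare the two scales $r<r_0$. Monotonicity gives
\[
\frac{G(r)}{r^\alpha}\le\frac{G(r_0)}{r_0^\alpha}+C_0\big(r_0^{2/p'-\alpha}-r^{2/p'-\alpha}\big),
\]
and multiplying by $r^\alpha$, using $r_0^{2/p'-\alpha}r^\alpha=r_0^{2/p'}(r/r_0)^\alpha$, yields
\[
G(r)\le\big(G(r_0)+C_0 r_0^{2/p'}\big)\Big(\frac{r}{r_0}\Big)^{1+\nu}-C_0 r^{2/p'}.
\]
Bounding the total energy $G(r_0)\le\|\nabla u_\Sigma\|_2^2\le C(p,|\Om|)\|f\|_p|\Om|^{1/p'}$ by Remark~\ref{rem_boundus2}, independently of $\Sigma$, the bracket becomes a constant $C_1$ depending only on $|\Om|,p,\|f\|_p$; dropping the nonpositive term $-C_0 r^{2/p'}$ (or retaining it as the forcing contribution $C_2 r^{2/p'}$) gives exactly the asserted estimate.

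The main obstacle is the uniform geometric estimate $\gamma_\Sigma(x,0,r)\le\pi+o(1)$: the whole gain rests on the largest free arc being close to a half-circle rather than nearly the full circle, which is the effect of $u$ vanishing on the asymptotically flat boundary and is what produces the crucial exponent $\alpha>1$. Securing it uniformly in $x$ and in $\Sigma$ requires the compactness of $\partial\Om$ together with the uniformity of its $C^1$ parametrization; once this flatness is available, the conclusion is a direct application of the already-proven monotonicity formula and the global energy bound.
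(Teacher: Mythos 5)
Your proposal is correct and follows essentially the same route as the paper's proof: pick $\gamma_p\in(\pi p',2\pi)$ depending only on $p$, use compactness and the uniform $C^1$ modulus of $\partial\Omega$ to get $\gamma_\Sigma(x,0,r_0)\le\gamma_p$ uniformly in $x\in\partial\Omega$ and $\Sigma$ (a step the paper states without your explicit tangent-line argument), then apply the monotonicity formula of Lemma~\ref{lm_compl_monot1} between the scales $r$ and $r_0$ and bound $\|\nabla u_\Sigma\|_2^2$ via Remark~\ref{rem_boundus2}. One micro-point: the sup in~\eqref{eq:defgamma} also runs over arcs of $\partial B_r(x)$ lying outside $\overline{\Omega}$, not only those inside $\Omega$ as you phrase it, but your strip argument (at least one intersection point of $\partial\Omega\cap\partial B_r(x)$ near each of the two diametric points of the tangent line) bounds those by $(\pi+\delta)r$ just as well.
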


\begin{proof} Since $p>2$, one has  $1/p'>1/2$, so that one can find $\gamma_p<2\pi$ such that
$$\frac{1}{p'}>\frac{\pi}{\gamma_p}>\frac{1}{2}.$$
Now since $\partial \Omega$ is locally $C^1$ and compact,
there exists a threshold $r_0$ depending on $\partial \Omega$ (and also on $p$) such that $\partial \Omega$ is flat enough in all the balls
$B_r(x)$ for $r\leq r_0$, uniformly in $x \in \partial \Omega$, in other words, such that
$\gamma_{\Sigma}(x,0,r_0)\leq \gamma_p$ for all $x \in \partial \Omega$,  so that Lemma~\ref{lm_compl_monot1} applies and for all $x \in \partial \Omega$
and $r\in(0,r_0)$ giving
\begin{eqnarray*}
 \frac{1}{r^\alpha}\int_{B_{r}(x)} |\nabla u_{\Sigma}|^2\, dx + Cr^{\frac{2}{p'}-\alpha}&\leq&  \frac{1}{r_0^\alpha}\int_{B_{r_0}(x)} |\nabla u_{\Sigma}|^2\, dx + Cr_0^{\frac{2}{p'}-\alpha}\\
 &\leq & \frac{1}{r_0^\alpha} \|\nabla u_\Sigma\|_2^2  + Cr_0^{\frac{2}{p'}-\alpha},
 \end{eqnarray*}
where  $C=C(|\Omega|, p, \|f\|_p)$,  
and $\alpha=2\pi/\gamma_p$. We conclude multiplying by $r^\alpha$, defining $\nu:=\alpha-1=(2\pi/\gamma_p)-1$ and estimating $\|\nabla u_\Sigma\|_2$ by Remark~\ref{rem_boundus2}.
 \end{proof}

\subsection{Estimate of $|\C(\Sigma)-\C(\Sigma')|$}\label{ssect:estimate}

We collect now all the necessary estimates on minimizers of $E$ over $H_0^1(\Omega\setminus\Sigma)$ in the following statement.

\begin{lemma}\label{lm_compl_estabove1a}
Let  $\Sigma\subset\overline{\Om}$  be a closed  connected
 set, $f\in L^p(\Omega)$, where $p>2$,
and
$x_0\in \R^2$.
Let
$r_0,r_1$ satisfy
\begin{equation}\label{eq:geomass2}
\Sigma\cap B_{r_{0}}(x_{0})\neq\emptyset, \;\;\;\;\;\;\Sigma\setminus B_{r_{1}}(x_{0})\neq\emptyset.
\end{equation}
Then  for any  $r\in [r_0,r_1/2]$, for any  $\varphi\in \Lip(\R^2)$ such that $\|\varphi\|_\infty\leq 1$ and $\varphi=1$ over $B_{2r}^c(x_0)$,
$\varphi=0$ over $B_{r}(x_0)$ and
$\|\nabla\varphi\|_{\infty}\leq 1/r$,
the following assertions hold.
\begin{itemize}
  \item[(i)] There exists $C>0$ depending only on $|\Omega|$, $p$, $\|f\|_p$ such that:
 \begin{equation}\label{eq_compl_loc2a1}
    \left|\int_{B_{2r}(x_0)} {u_{\Sigma}} f(1-\varphi)\, dx\right|\leq  C r^{\frac{2}{p'}}.
\end{equation}
  \item[(ii)]
There exists $C>0$ universal such that:
  \begin{equation}\label{eq_compl_loc2a2}
\begin{aligned}
\left|\int_{B_{2r}(x_0)} {u_{\Sigma}}^2 |\nabla \varphi|^2\, dx \right| & \leq   C  \int_{B_{2r}(x_0)\setminus B_r{(x_{0})}} |\nabla {u_{\Sigma}} |^2 \, dx.
\end{aligned}
\end{equation}
  \item[(iii)] There exists $C>0$ universal such that:
  \begin{equation}\label{eq_compl_loc2c2}
\begin{aligned}
\left|\int_{B_{2r}(x_0)} {u_{\Sigma}} \varphi\nabla u_{\Sigma}\cdot\nabla\varphi\, dx\right| &\leq
C \int_{B_{2r}(x_0)} |\nabla u_{\Sigma}|^2\, dx.
\end{aligned}
\end{equation}
\end{itemize}
\end{lemma}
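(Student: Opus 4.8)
The plan is to treat the three estimates separately, exploiting that (iii) follows from (ii) by Cauchy--Schwarz, so that the genuine work lies in (i) and (ii), and of these (ii) is the crux. Throughout I abbreviate $u=u_\Sigma$.

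For (i), I would simply combine the uniform bound $\|u\|_\infty\leq C\|f\|_p$ from Proposition~\ref{boundus} with H\"older's inequality. Since $\varphi=1$ on $B_{2r}^c(x_0)$, the factor $1-\varphi$ vanishes outside $\overline{B_{2r}}(x_0)$ and satisfies $|1-\varphi|\leq 2$, so
\[
\left|\int_{B_{2r}(x_0)} u f(1-\varphi)\, dx\right|\leq 2\|u\|_\infty\,\|f\|_{L^p(B_{2r}(x_0))}\,|B_{2r}(x_0)|^{1/p'}\leq C\|f\|_p^2\, r^{2/p'},
\]
which is the claimed bound after absorbing the powers of $\|f\|_p$ into the constant $C=C(|\Omega|,p,\|f\|_p)$.

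For (ii), the essential ingredient is an annular Poincar\'e inequality. Since $\nabla\varphi$ is supported in the annulus $B_{2r}(x_0)\setminus B_r(x_0)$ with $|\nabla\varphi|\leq 1/r$, it suffices to establish
\[
\int_{B_{2r}(x_0)\setminus B_r(x_0)} u^2\, dx\leq Cr^2\int_{B_{2r}(x_0)\setminus B_r(x_0)} |\nabla u|^2\, dx.
\]
This is where the connectedness hypothesis enters, and it is the main obstacle. By~\eqref{eq:geomass2}, $\Sigma$ has a point inside $B_{r_0}(x_0)$ and a point outside $B_{r_1}(x_0)$; since $\Sigma$ is connected it must therefore meet every circle $\partial B_\rho(x_0)$ with $\rho\in(r_0,r_1)$, and in particular for every $\rho\in(r,2r)\subseteq(r_0,r_1)$ (using $r\geq r_0$ and $2r\leq r_1$). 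Consequently, for a.e.\ such $\rho$ the trace of $u$ on $\partial B_\rho(x_0)$ lies in $H^1_0(\partial B_\rho(x_0)\setminus(\Sigma\cup\partial\Omega))$, i.e.\ it vanishes at a point of the circle, so the one-dimensional Poincar\'e inequality of Lemma~\ref{lm_compl_poincare1} gives
\[
\int_{\partial B_\rho(x_0)} u^2\, d\H\leq C\rho^2\int_{\partial B_\rho(x_0)} |\nabla_\tau u|^2\, d\H\leq C\rho^2\int_{\partial B_\rho(x_0)} |\nabla u|^2\, d\H.
\]
Integrating over $\rho\in(r,2r)$, applying the coarea formula to both sides, and bounding $\rho\leq 2r$ yields the annular inequality above, and hence (ii) with a universal constant.

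Finally, for (iii) I would apply Cauchy--Schwarz together with $\|\varphi\|_\infty\leq 1$ to obtain
\[
\left|\int_{B_{2r}(x_0)} u\,\varphi\,\nabla u\cdot\nabla\varphi\, dx\right|\leq\left(\int_{B_{2r}(x_0)} u^2|\nabla\varphi|^2\, dx\right)^{1/2}\left(\int_{B_{2r}(x_0)} |\nabla u|^2\, dx\right)^{1/2},
\]
and then estimate the first factor by part (ii). Using $\sqrt{ab}\leq (a+b)/2$ (or Young's inequality) to combine the two factors produces the desired bound $C\int_{B_{2r}(x_0)}|\nabla u|^2\, dx$ with universal $C$. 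Thus the entire lemma reduces to the circular Poincar\'e estimate, whose whole force comes from the connectedness of $\Sigma$ forcing $u$ to have a zero on each intermediate circle, which is precisely what makes the Poincar\'e constant scale linearly in $\rho$.
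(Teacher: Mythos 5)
Your proof is correct and follows essentially the same route as the paper: H\"older plus the $L^\infty$ bound of Proposition~\ref{boundus} for (i), an annular Poincar\'e inequality for (ii), and Cauchy--Schwarz combined with (ii) for (iii). The only cosmetic difference is that the paper invokes its Lemma~\ref{lm_compl_poincare2} directly, whereas you re-derive it by slicing the annulus into circles that $\Sigma$ must meet by connectedness and integrating the one-dimensional Poincar\'e inequality of Lemma~\ref{lm_compl_poincare1} --- which is precisely the appendix proof of that lemma.
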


\begin{proof} The estimate~\eqref{eq_compl_loc2a1} is directly coming from H\"older inequality together with Proposition~\ref{boundus}.

Next, { we can use Poincar\'e inequality (Lemma~\ref{lm_compl_poincare2}), since $\Sigma\cap\overline{B}_{r}(x_{0})\neq\emptyset$ and $\Sigma\setminus{B}_{2r}(x_{0})\neq\emptyset.$
We obtain}
\begin{eqnarray}
\int_{B_{2r}(x_0)\setminus B_{r}{(x_{0})}} u_{\Sigma}^2 \, dx \leq C_P r^2 \int_{B_{2r}(x_0)\setminus B_r{(x_{0})}} |\nabla u_{\Sigma} |^2 \, dx  .\label{decay3}
\end{eqnarray}
{where $C_P$ is a universal constant, and in particular does not depend on the geometry of $\Sigma$}.  Using that $|\nabla \varphi| \leq \frac{1}{r} . {\bf 1}_{B_{2r}\setminus B_r} $, we directly get~\eqref{eq_compl_loc2a2}.

It remains to prove~($iii$). To this aim, we estimate
\begin{equation}\label{eq_compl_loc2c}
\begin{aligned}
\left|\int_{B_{2r}(x_0) } u_{\Sigma} \varphi\nabla u_{\Sigma}\cdot\nabla\varphi\, dx\right| &\leq
\frac{1}{r} \int_{B_{2r}(x_0)\setminus B_r(x_0)} |u_{\Sigma}|\cdot |\nabla u_{\Sigma}|\, dx \\
& \leq \frac{1}{r}\left(\int_{B_{2r}(x_0)\setminus B_{r}(x_0)} u_{\Sigma}^2\, dx\right)^{\frac{1}{2}} \left(\int_{B_{2r}(x_0)} |\nabla u_{\Sigma}|^2\, dx\right)^{\frac{1}{2}},
\end{aligned}
\end{equation}
the latter by H\"{o}lder inequality. As in~\eqref{decay3}, from Poincar\'{e} inequality in the
annulus $B_{2r}(x_0)\setminus B_r(x_0)$ (Lemma~\ref{lm_compl_poincare2})
we get
\begin{align*}
    \int_{B_{2r}(x_0)\setminus B_r(x_0)} u_{\Sigma}^2 \, dx  & \leq
    C_P r^2 \int_{B_{2r}(x_0)} |\nabla u_{\Sigma}|^2\,dx,
   \end{align*}
so that~\eqref{eq_compl_loc2c} becomes
\begin{equation}\label{eq_compl_loc2c1}
\begin{aligned}
\left|\int_{B_{2r}(x_0)} u_{\Sigma} \varphi\nabla u_{\Sigma}\cdot\nabla\varphi\, dx\right| &\leq
\sqrt{C_P} \left(\int_{B_{2r}(x_0)} |\nabla u_{\Sigma}|^2\, dx\right),
\end{aligned}
\end{equation}
as desired.
\end{proof}

\begin{proposition}\label{prop_compl_estabove1}
Let  $\Sigma\subset\overline{\Om}$  be a closed   connected
 set, $f\in L^p(\Omega)$, where $p>2$,
and
$x_0\in{\Sigma}$.
Let  $r_0,r_1$ be satisfying
 \[  0<2r_0<r_1<  \min(1, \diam(\Sigma)/2),\]
 and $\gamma\in[\gamma_{\Sigma}(x_0,r_{0},r_{1}),2\pi]\setminus\{0\}$ (see~\eqref{eq:defgamma} for a definition of $\gamma_{\Sigma}$).
Then for any $r\in [r_0,r_1/2]$, and for any closed connected set $\Sigma' \subset \overline{\Omega}$ satisfying $\Sigma \Delta \Sigma'\subset B_r(x_0)$ we have
\begin{equation}\label{eq_compl_estabove1mainN}
{|\C(\Sigma')-\C(\Sigma)|}=|E(u_{\Sigma'})- E(u_{\Sigma})|\leq C  \left(\frac{r}{ r_1}\right)^{\frac{2\pi}{\gamma}}  + C r^{\frac{2}{p'}}
\end{equation}
where $C>0$ depends on $|\Omega|$, $\|f\|_p$, $\gamma$, $p$.
\end{proposition}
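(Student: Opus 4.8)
The plan is to control the signed quantity $E(u_\Sigma)-E(u_{\Sigma'})$ from above, and then to control $E(u_{\Sigma'})-E(u_\Sigma)$ from above by the same type of expression, exchanging the roles of $\Sigma$ and $\Sigma'$; the two one-sided bounds together give the estimate on $|\C(\Sigma')-\C(\Sigma)|=|E(u_{\Sigma'})-E(u_\Sigma)|$. So it suffices to establish the one-sided inequality $E(u_\Sigma)-E(u_{\Sigma'})\le C(r/r_1)^{2\pi/\gamma}+Cr^{2/p'}$.

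First I would fix a cut-off $\varphi$ adapted to the annulus $B_{2r}(x_0)\setminus B_r(x_0)$ (so $\varphi=0$ on $B_r(x_0)$, $\varphi=1$ outside $B_{2r}(x_0)$, $\|\varphi\|_\infty\le 1$, $\|\nabla\varphi\|_\infty\le 1/r$) and apply the localization Lemma~\ref{lm_compl_loc1} with inner and outer radii $r$ and $2r$. This bounds $E(u_\Sigma)-E(u_{\Sigma'})$ by a sum of three integrals involving $u_{\Sigma'}$ only, which I would then estimate with Lemma~\ref{lm_compl_estabove1a} \emph{applied to the competitor $\Sigma'$ rather than to $\Sigma$}. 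The point to verify is that $\Sigma'$ meets the nonemptiness conditions used in the proof of that lemma, namely $\Sigma'\cap\overline B_r(x_0)\ne\emptyset$ and $\Sigma'\setminus B_{2r}(x_0)\ne\emptyset$: this holds because $\Sigma'=\Sigma$ outside $B_r(x_0)$, while $\Sigma$, being connected, containing $x_0\in B_{r_0}(x_0)$ and extending beyond $B_{r_1}(x_0)$ by~\eqref{eq:geomass2}, must cross $\partial B_r(x_0)$ and reach past $B_{2r}(x_0)\subset B_{r_1}(x_0)$. The outcome is
\[
E(u_\Sigma)-E(u_{\Sigma'})\le Cr^{2/p'}+C\int_{B_{2r}(x_0)}|\nabla u_{\Sigma'}|^2\,dx,
\]
the term $Cr^{2/p'}$ coming from~\eqref{eq_compl_loc2a1}.

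The heart of the proof is then to show $\int_{B_{2r}(x_0)}|\nabla u_{\Sigma'}|^2\,dx\le C(r/r_1)^{2\pi/\gamma}$, for which I would invoke the monotonicity formula (Lemma~\ref{lm_compl_monot1}) \emph{for $u_{\Sigma'}$} on the radial range $[2r,r_1]$. The crucial observation making this possible is again that $\Sigma'=\Sigma$ outside $B_r(x_0)$: for every $\rho\in(r,r_1)$ one has $\partial B_\rho(x_0)\setminus(\Sigma'\cup\partial\Omega)=\partial B_\rho(x_0)\setminus(\Sigma\cup\partial\Omega)$, so that $\gamma_{\Sigma'}(x_0,2r,r_1)=\gamma_\Sigma(x_0,2r,r_1)\le\gamma$, and likewise condition~\eqref{eq:geomass} transfers from $\Sigma$ to $\Sigma'$; the monotonicity formula thus applies with the same $\gamma$ and the same constants. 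Writing $\alpha:=2\pi/\gamma$ and evaluating the monotone quantity at $2r$ and $r_1$, then discarding a nonnegative term (recall that $2/p'-\alpha>0$, which is precisely the hypothesis $1/p'>\pi/\gamma$ required by Lemma~\ref{lm_compl_monot1}) and multiplying by $(2r)^\alpha$, I obtain
\[
\int_{B_{2r}(x_0)}|\nabla u_{\Sigma'}|^2\,dx\le \Big(\tfrac{2r}{r_1}\Big)^\alpha\|\nabla u_{\Sigma'}\|_2^2+C(2r)^\alpha r_1^{\frac{2}{p'}-\alpha}.
\]
Using the energy bound $\|\nabla u_{\Sigma'}\|_2^2\le C$ of Remark~\ref{rem_boundus2} in the first term, and rewriting the second as $C(2r)^\alpha r_1^{2/p'-\alpha}=C2^\alpha r_1^{2/p'}(r/r_1)^\alpha\le C'(r/r_1)^\alpha$ (since $r_1<1$ forces $r_1^{2/p'}<1$), both terms are $\le C(r/r_1)^{2\pi/\gamma}$, as desired.

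Finally I combine $E(u_\Sigma)-E(u_{\Sigma'})\le C(r/r_1)^{2\pi/\gamma}+Cr^{2/p'}$ with the reverse inequality obtained by the identical argument with $\Sigma$ and $\Sigma'$ interchanged; note that the reverse direction estimates $\int_{B_{2r}(x_0)}|\nabla u_\Sigma|^2\,dx$ directly, and that the geometric data $\gamma$ and the validity of~\eqref{eq:geomass} are symmetric in the pair precisely because the two sets coincide outside $B_r(x_0)$. This yields~\eqref{eq_compl_estabove1mainN}. The main obstacle, and the genuinely non-classical feature, is that the localization lemma forces us to estimate the energy of the \emph{competitor} $u_{\Sigma'}$ rather than of $u_\Sigma$; the whole argument goes through only because $\Sigma'$ coincides with $\Sigma$ outside $B_r(x_0)$, which is what allows both the Poincar\'e-type bounds of Lemma~\ref{lm_compl_estabove1a} and, more importantly, the cone-opening $\gamma_{\Sigma'}$ and the non-degeneracy hypothesis~\eqref{eq:geomass} governing the monotonicity formula to be transferred from $\Sigma$ to $\Sigma'$.
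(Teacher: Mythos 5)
Your proposal is correct and takes essentially the same route as the paper's own proof: a cut-off adapted to the annulus $B_{2r}(x_0)\setminus B_r(x_0)$ fed into Lemma~\ref{lm_compl_loc1} in both directions, the estimates of Lemma~\ref{lm_compl_estabove1a} applied to $\Sigma$ and to $\Sigma'$, and the monotonicity formula of Lemma~\ref{lm_compl_monot1} transferred to $u_{\Sigma'}$ via $\gamma_{\Sigma'}(x_0,r,r_1)=\gamma_{\Sigma}(x_0,r,r_1)\le\gamma$ together with the energy bound of Remark~\ref{rem_boundus2}. Your explicit verification that \eqref{eq:geomass} and \eqref{eq:geomass2} pass from $\Sigma$ to $\Sigma'$ because the two sets coincide outside $B_r(x_0)$, and your remark that $2/p'-2\pi/\gamma>0$ is exactly the hypothesis $1/p'>\pi/\gamma$ of the monotonicity lemma, are precisely the observations the paper makes.
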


\begin{proof}
There exists $\varphi\in \Lip(\R^2)$
as in the statement of Lemma~\ref{lm_compl_estabove1a}, i.e.\
such that $\|\varphi\|_\infty\leq 1$, $\varphi=1$ over $B_{2r}^c(x_0)$,
$\varphi=0$ over $B_{r}(x_0)$ and
$\|\nabla\varphi\|_{\infty}\leq 1/r$.
From Lemma~\ref{lm_compl_loc1} (applied with $r$ and $2r$ instead of $r_0$ and $r_1$ respectively) 
we have
\begin{equation}\label{eq_EminE'}
\begin{aligned}
E(u_{\Sigma'})- E(u_{\Sigma})
\leq  & \int_{B_{2r}(x_0)} u_{\Sigma'} f(1-\varphi)\, dx + \int_{B_{2r}(x_0)\setminus (\Sigma'\cup\partial\Omega)} u_{\Sigma'}^2 |\nabla \varphi|^2\, dx \\
& + \int_{B_{2r}(x_0)\setminus (\Sigma'\cup\partial\Omega)} u_{\Sigma'} \varphi\nabla u_{\Sigma'}\cdot\nabla\varphi\, dx,
\end{aligned}
\end{equation}
and, interchanging $\Sigma$ with $\Sigma'$,
\begin{equation}\label{eq_E'minE}
\begin{aligned}
E(u_{\Sigma})- E(u_{\Sigma'})
\leq  & \int_{B_{2r}(x_0)} u_{\Sigma} f(1-\varphi)\, dx + \int_{B_{2r}(x_0)\setminus (\Sigma\cup\partial\Omega)} u_{\Sigma}^2 |\nabla \varphi|^2\, dx \\
& + \int_{B_{2r}(x_0)\setminus (\Sigma\cup\partial\Omega)} u_{\Sigma} \varphi\nabla u_{\Sigma}\cdot\nabla\varphi\, dx.
\end{aligned}
\end{equation}

Observe that we can apply Lemma~\ref{lm_compl_monot1} and Lemma~\ref{lm_compl_estabove1a} for both $\Sigma$ and $\Sigma'$. For $\Sigma$, we use the connectedness of $\Sigma$ and $r_{1}<\diam(\Sigma)/2$ to obtain~\eqref{eq:geomass} and~\eqref{eq:geomass2}.
For $\Sigma'$ we apply Lemma~\ref{lm_compl_monot1} and Lemma~\ref{lm_compl_estabove1a}
with $r$ instead of $r_0$: indeed, $\Sigma \Delta \Sigma'\subset B_r(x_0)$ implies that~\eqref{eq:geomass} and~\eqref{eq:geomass2}
are also valid for $\Sigma'$ for $[r,r_{1}]$ instead of $[r_{0},r_{1}]$,
and we notice that
\begin{equation}
\gamma_{\Sigma'}(x_0,r,r_{1})=\gamma_{\Sigma}(x_0,r,r_{1})\leq \gamma_{\Sigma}(x_0,r_{0},r_{1})\leq\gamma.
\end{equation}
Therefore,
\begin{equation*}\label{decay1}
\begin{aligned}
\frac{1}{r^{\alpha}}\int_{B_{r}(x_0)} |\nabla u_\Sigma|^2\, dx  & \leq  \frac{1}{{r_1}^{\alpha}}\int_{B_{{r_1}}(x_0)} |\nabla u_\Sigma|^2\, dx + C{r_1}^{\frac{2}{p'}-\alpha} \\
& \leq  \frac{1}{{r_1}^{\alpha}}\|\nabla u_\Sigma\|_2^2 + C{r_1}^{\frac{2}{p'}-\alpha}
\end{aligned}
\end{equation*}
with $\alpha=2\pi/\gamma$ and $C=C(|\Omega|, p, \|f\|_p, \gamma)$, and
estimating $\|\nabla u_\Sigma\|_2$ by Remark~\ref{rem_boundus2}, we get
\begin{eqnarray}
\int_{B_{r}(x_0)} |\nabla u_\Sigma|^2\, dx  \leq  \left(\frac{r}{{r_1}}\right)^\alpha (C_1 + C{r_1}^{\frac{2}{p'}}) \leq C_2 \left(\frac{r}{{r_1}}\right)^\alpha\label{decay2}
\end{eqnarray}
where $C_1=C_1(|\Omega|,\|f\|_p,p,\gamma)$, $C_2:=C_1+C$ (we used $r_1\leq 1$ in the last inequality).
Since the same holds true also for $u_{\Sigma'}$,
we get
\begin{eqnarray}
\int_{B_{r}(x_0)} |\nabla u_{\Sigma'}|^2\, dx  \leq C_2 \left(\frac{r}{{r_1}}\right)^\alpha.\label{decay2a}
\end{eqnarray}
Applying to~\eqref{eq_EminE'} and~\eqref{eq_E'minE} the estimates of Lemma~\ref{lm_compl_estabove1a} for $u_\Sigma$ and $u_{\Sigma'}$
respectively (combined with~\eqref{decay2} and~\eqref{decay2a} respectively),
we arrive at~\eqref{eq_compl_estabove1mainN},
concluding the proof.
\end{proof}

\section{First qualitative properties}

In this section, we prove geometrical properties of the minimizers for Problem~\ref{pb_compl_pen1}. 

\subsection{Absence of loops}\label{ssect:noloop}

The following result holds true.

\begin{theorem}\label{th_compl_noloop1}
Let $f\in L^p(\Omega)$, $p>2$.
Then every solution $\Sigma$ of the penalized Problem~\ref{pb_compl_pen1} contains no closed curves
(homeomorphic images of $S^1$), hence $\R^2\setminus\Sigma$ is connected.
\end{theorem}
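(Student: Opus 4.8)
The plan is to argue by contradiction. Suppose a minimizer $\Sigma$ contains a loop $\Gamma$, i.e.\ a homeomorphic image of $S^1$. Since $\mathcal{F}_\lambda(\Sigma)<\infty$ forces $\H(\Sigma)<\infty$, the continuum $\Sigma$ has finite length and is therefore rectifiable, and $\Gamma\subset\Sigma$ is a rectifiable Jordan curve. The guiding heuristic is that a loop carries \emph{redundant} connectivity: if we excise from $\Gamma$ a short sub-arc contained in a ball $B_r(x_0)$ centered on the loop, the two resulting ends stay joined through the complementary arc of $\Gamma$, so $\Sigma$ remains connected. Cutting a near-diametral arc saves a length of order $r$, whereas the compliance only changes by $o(r)$ by virtue of the decay estimate of Proposition~\ref{prop_compl_estabove1}. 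For $r$ small this makes $\mathcal{F}_\lambda$ strictly decrease, contradicting minimality.

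The first step is to select a good cutting point $x_0\in\Gamma$. Because $\Gamma$ is rectifiable, at $\H$-a.e.\ point it possesses an approximate tangent line; moreover one wants $x_0$ to lie on a portion of $\Gamma$ where the remainder $\Sigma\setminus\Gamma$ does not interfere, so that for small $r$ one has $\Sigma\cap B_r(x_0)=\Gamma\cap B_r(x_0)$, a single arc through the center. Here I would use that $\Sigma\setminus\Gamma$, having finite $\H$-measure, consists of at most countably many branches of positive diameter attached along a controlled subset of $\Gamma$; away from this set, and at a tangent point, both requirements hold. At such a point the tangent controls the geometric quantity $\gamma_\Sigma(x_0,r_0,r_1)$ of~\eqref{eq:defgamma}: for small radii each component of $\partial B_r(x_0)\setminus(\Sigma\cup\partial\Omega)$ has length close to $\pi r$, hence $\gamma_\Sigma(x_0,r_0,r_1)\le\gamma_0<2\pi$ strictly.

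I then take the competitor $\Sigma':=\Sigma\setminus B_r(x_0)$, which is closed, satisfies $\Sigma\Delta\Sigma'=\Sigma\cap B_r(x_0)\subset B_r(x_0)$, and is connected since the complementary arc of $\Gamma$ reconnects the two ends while every branch of $\Sigma$ stays attached to $\Gamma\setminus B_r(x_0)$. The removed arc runs from one boundary point through $x_0$ to another, so $\H(\Sigma\cap B_r(x_0))\ge 2r$, giving $\H(\Sigma')\le\H(\Sigma)-2r$. Since $\gamma_\Sigma(x_0,r_0,r_1)<2\pi$ and $p>2$ (so that $p'<2$ and the window $(\pi p',2\pi)$ is nonempty), one may choose $\gamma\in[\gamma_\Sigma(x_0,r_0,r_1),2\pi)\cap(\pi p',2\pi)$; this respects the standing hypothesis $1/p'>\pi/\gamma$ of Lemma~\ref{lm_compl_monot1} while yielding a decay exponent $2\pi/\gamma>1$. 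Fixing $r_1<\min(1,\diam(\Sigma)/2)$ and applying Proposition~\ref{prop_compl_estabove1} with $r=r_0\to 0$ gives
\[
|\C(\Sigma')-\C(\Sigma)|\le C\Big(\frac{r}{r_1}\Big)^{2\pi/\gamma}+Cr^{2/p'}=o(r),
\]
both exponents being strictly larger than $1$. Hence $\mathcal{F}_\lambda(\Sigma')-\mathcal{F}_\lambda(\Sigma)\le -2\lambda r+o(r)<0$ for $r$ small, the desired contradiction.

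The main obstacle is the point selection combined with the cut: one must guarantee a scale at which the excision simultaneously (a) preserves connectedness and (b) produces a net length gain of order $r$. The clean situation is exactly when the loop is locally \emph{free} of the rest of $\Sigma$, so that the loop itself furnishes the reconnection at no length cost; the delicate case is side-branches accumulating onto $\Gamma$, where a naive cut could both disconnect floating pieces and spoil the length balance. Controlling this via rectifiability and the finite-length structure of $\Sigma\setminus\Gamma$ is the crux of the argument. Finally, the topological conclusion ``$\R^2\setminus\Sigma$ is connected'' follows from the absence of loops: a planar continuum of finite length that separated the plane would bound a complementary component and hence contain a simple closed curve, which we have just excluded.
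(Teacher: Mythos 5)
Your overall strategy is the paper's: cut the loop at a flat point, gain length of order $r$, and bound the change in compliance by $Cr^{2\pi/\gamma}+Cr^{2/p'}=o(r)$ via Proposition~\ref{prop_compl_estabove1}. Your handling of the exponent is fine, indeed slightly more careful than the paper's own proof: choosing $\gamma\in(\pi p',2\pi)$ with $\gamma\geq\gamma_\Sigma(x_0,r_0,r_1)$ guarantees both the hypothesis $1/p'>\pi/\gamma$ of Lemma~\ref{lm_compl_monot1} and a decay exponent $2\pi/\gamma>1$ (the paper takes $\gamma=(1+\eps)\pi$ with $\eps<1/2$, which satisfies $1/p'>\pi/\gamma$ only when $p'<1+\eps$, a small slip your choice avoids). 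The genuine gap is in the point-selection/cutting step, which you yourself flag as ``the crux'' but do not resolve. You want a point $x_0\in\Gamma$ and a scale $r$ with $\Sigma\cap B_r(x_0)=\Gamma\cap B_r(x_0)$, justified by the claim that the branches of $\Sigma\setminus\Gamma$ are ``attached along a controlled subset'' of $\Gamma$. This structural claim is false in general: the components of $\Sigma\setminus\Gamma$ are indeed countably many, of positive diameter, with summable lengths, but their attachment points may be \emph{dense} in $\Gamma$ (attach to each point of a countable dense subset of $\Gamma$ a small segment, with lengths summing to something finite). Then no point of $\Gamma$ has any neighborhood in which $\Sigma$ coincides with $\Gamma$, so the set of ``free'' points you propose to select from can be empty; and for your competitor $\Sigma'=\Sigma\setminus B_r(x_0)$ connectedness genuinely fails as soon as some branch attached inside $B_r(x_0)$ exits $B_r(x_0)$. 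So as written the proof does not go through.

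The paper closes exactly this hole with topological machinery (Lemma~\ref{lem:cut+flat}): $\H$-a.e.\ point of a loop is a \emph{noncut} point of $\Sigma$ (Lemma~5.6 of~\cite{PaoSte13-steiner}), and at a noncut point one can remove small relatively open \emph{connected} neighborhoods $D_n$, with $\diam D_n\to0$, keeping $\Sigma\setminus D_n$ connected (Lemma~6.1 of~\cite{ButSte03}); note that these $D_n$ are not of the form $\Sigma\cap B_r$, which is what lets the argument survive accumulating side branches, and $\H(D_n)\geq\diam D_n$ still gives the length gain. Alternatively, your route could be repaired by hand: at $\H$-a.e.\ $x\in\Gamma$ the set $\Sigma\setminus\Gamma$ has vanishing upper $\H$-density (disjoint measurable sets of finite length) and $\Gamma$ has density $1$, so $\H((\Sigma\setminus\Gamma)\cap B_r(x))\leq\eps r$ and $\H(\Gamma\cap B_r(x))\leq(2+\eps)r$ for small $r$; an Eilenberg/coarea selection then yields $s\in(r/2,r)$ with $\Sigma\cap\partial B_s(x)=\Gamma\cap\partial B_s(x)$ consisting of exactly two points, which forces every branch meeting $B_s(x)$ to lie entirely inside it and makes $\Sigma\setminus B_s(x)$ connected with length gain of order $r$. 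Either way, this step requires an actual argument, and it is precisely the one missing from your proposal. Your concluding step (no loops implies $\R^2\setminus\Sigma$ connected, since a bounded complementary component would have boundary in $\Sigma$ containing a simple closed curve) is the paper's, though the paper justifies the existence of that curve via Kuratowski's theorem, which deserves a citation rather than a bare assertion.
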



{  The idea of the proof of this result is to see that if $\Sigma$ has a loop, then we can cut a small piece of this loop and remain connected; this cut decreases the length and increases the energy. If we choose this cut properly, namely where the curve is ``flat'', then the energy estimate from Section~\ref{ssect:estimate} shows that these two variations are not of the same order, which leads to a contradiction, see the proof below. Before this proof, we give a geometric lemma asserting that such a cut can be done:

\begin{lemma}\label{lem:cut+flat}
Let $\Sigma$ be a closed connected set in $\R^2$, containing a simple closed curve $\Gamma$ (a homeomorphic image of $S^1$) and such that
$\H(\Sigma)<\infty$. Then $\H$-a.e.\
point $x\in \Gamma$ is such that
\begin{itemize}
\item ``noncut'': there is a sequence of (relatively) open sets $D_n\subset\Sigma$ satisfying
       \begin{itemize}
        \item[(i)]   $x\in D_n$ for all sufficiently large $n$;
        \item[(ii)] $\Sigma\setminus D_n$ are connected for all
                   $n$;
        \item[(iii)]  $\diam D_n\searrow 0$ as $n\to \infty$;
        \item[(iv)]  $D_n$ are connected for all
                   $n$.
       \end{itemize}
\item ``flatness'':
 there exists a ``tangent'' line $P$ to $\Sigma$ at $x$ in the sense that
$x\in  P$ and
\[
        \lim_{r\to 0^+}\beta_{\Sigma,P}(x,r)=0\quad\mbox{where }
           \beta_{\Sigma,P}(x,r)
         :=
        \sup_{y\in\Sigma\cap B_r(x)} \frac{\dist(y,P)}{r}.
\]
\end{itemize}
\end{lemma}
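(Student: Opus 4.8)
The plan is to establish the two properties separately, treating the ``noncut'' assertion by a purely topological/measure-theoretic argument on the curve $\Gamma$, and the ``flatness'' assertion by invoking rectifiability of $\Sigma$.

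For the ``noncut'' property, I would argue as follows. Since $\Gamma$ is a simple closed curve inside $\Sigma$, parametrize it by $\gamma\colon S^1\to\Sigma$. The key observation is that a point $x=\gamma(t_0)$ is a good ``noncut'' point precisely when a small arc of $\Gamma$ around $x$ can be excised while keeping $\Sigma$ connected. The obstruction to this is that $x$ might be a point through which other parts of $\Sigma$ attach, i.e.\ $x$ is essential for connectivity. My approach would be to take $D_n$ to be small relatively open connected arcs of $\Gamma$ shrinking to $x$; removing such an arc disconnects $\Gamma$ into a single arc, but since $\Gamma$ is a \emph{loop}, $\Gamma\setminus D_n$ remains connected (it is still an arc), and hence $\Sigma\setminus D_n$ remains connected provided no other component of $\Sigma$ was attached to $\Sigma$ \emph{only} through $D_n$. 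The set of points $x\in\Gamma$ where some other piece of $\Sigma$ attaches in an essential way should be at most countable (these are ``branch points'' on $\Gamma$), since each such attachment contributes a definite amount to $\H(\Sigma)<\infty$ via a connected piece of positive length emanating from $x$; by a counting/summability argument using finiteness of length, only countably many such points can exist. Thus $\H$-a.e.\ $x\in\Gamma$ admits the arcs $D_n$ as required, giving (i)--(iv).

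For the ``flatness'' property, the plan is to use that $\H(\Sigma)<\infty$ together with connectedness implies (by the structure theory of connected sets of finite $\H^1$ measure) that $\Sigma$ is rectifiable, in fact it is an $\H^1$-rectifiable set; indeed any compact connected set of finite length is arcwise connected and rectifiable. Rectifiability yields the existence of an approximate tangent line at $\H$-a.e.\ point. What I must upgrade is from an \emph{approximate} tangent to the stronger flatness statement $\lim_{r\to 0^+}\beta_{\Sigma,P}(x,r)=0$, which controls \emph{all} points of $\Sigma\cap B_r(x)$, not merely those of density one. For a connected set of finite length this upgrade is standard: at $\H$-a.e.\ point the length density equals one (the set looks like a single arc through $x$), and combined with connectedness one deduces that no mass can escape far from the tangent line without adding length, forcing the $\beta$-number to vanish. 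Restricting to $x\in\Gamma$ and intersecting the two full-measure sets gives the conclusion.

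\textbf{The main obstacle} I anticipate is the flatness claim, specifically the passage from approximate tangent to the uniform flatness coefficient $\beta_{\Sigma,P}(x,r)\to 0$ controlling every point of $\Sigma$ near $x$. The delicate point is ruling out thin filaments of $\Sigma$ that stay close to $x$ in Hausdorff distance but deviate from the tangent line; this is exactly where finiteness of $\H^1(\Sigma)$ and connectedness must be used quantitatively, via a lower bound on the length contributed by any portion of $\Sigma$ straying a distance $\delta r$ from $P$. The ``noncut'' part is comparatively soft, but I would be careful to ensure the excised arcs $D_n$ are genuinely relatively open in $\Sigma$ (not just in $\Gamma$), which requires choosing $x$ to be a point where $\Sigma$ coincides locally with $\Gamma$—again an $\H$-a.e.\ condition guaranteed by the same countability argument.
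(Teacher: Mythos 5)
Your ``flatness'' half is essentially correct and matches the paper, which simply invokes the standard fact that closed connected sets of finite length are rectifiable and admit true tangent lines $\H$-a.e.\ (see~\cite[Proposition~3.4]{MirPaoSte04} or~\cite[Proposition~2.2]{Fil}); your upgrade from approximate tangents to $\beta_{\Sigma,P}(x,r)\to 0$ via density one plus the length cost of stray connected filaments is exactly the standard mechanism behind those references.

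The ``noncut'' half, however, contains a genuine gap. Taking $D_n$ to be small open arcs of $\Gamma$ fails on two counts: such arcs are in general not relatively open in $\Sigma$, and, more seriously, removing an arc of $\Gamma$ disconnects from $\Sigma$ every filament of $\Sigma\setminus\Gamma$ whose only attachment point lies inside that arc, so $\Sigma\setminus D_n$ need not be connected. Your proposed repair --- restrict to points $x$ where $\Sigma$ coincides locally with $\Gamma$ --- is \emph{not} an $\H$-a.e.\ condition: let $\Gamma$ be the unit circle and attach radial segments of summable lengths at a countable \emph{dense} set of points of $\Gamma$; then $\H(\Sigma)<\infty$, but no point of $\Gamma$ has a neighbourhood in which $\Sigma=\Gamma$, so your good set is empty, even though the conclusion of the lemma does hold at every non-attachment point, provided $D_n$ is taken to be a small connected relatively open neighbourhood of $x$ \emph{in $\Sigma$}, which swallows the nearby small hairs instead of severing them. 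This is precisely the paper's route: first, $\H$-a.e.\ point of $\Gamma$ (in fact all but countably many) is a noncut point of $\Sigma$, i.e.\ $\Sigma\setminus\{x\}$ stays connected~\cite[Lemma~5.6]{PaoSte13-steiner} --- here your counting heuristic is in the right spirit, since distinct cut points on the loop carry pairwise disjoint connected pieces of positive length --- and second, at any noncut point one can excise small connected relatively open $\Sigma$-neighbourhoods $D_n$ leaving $\Sigma\setminus D_n$ connected~\cite[Lemma~6.1]{ButSte03}. That second step, which your arcs-of-$\Gamma$ construction does not deliver, is a genuine lemma about noncut points and not a soft observation; without it (or a substitute argument at the level of $\Sigma$-neighbourhoods) the proposal does not prove properties (i)--(iv).
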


\begin{proof}
First, we apply~\cite[Lemma~5.6]{PaoSte13-steiner}, stating that
under the hypotheses on $\Sigma$,
$\H$-a.e.\ point $x\in \Gamma$ is a noncut point for $\Sigma$ (i.e. a point such that $\Sigma\setminus\{x\}$ is connected).
Then~\cite[Lemma~6.1]{ButSte03} affirms that
for every noncut point there are connected neighborhoods that
can be cut leaving the set connected, i.e.~ $(i)$-$(iv)$ are satisfied for a suitable sequence $D_{n}$.
The second requirement relies on the standard fact that closed connected sets with finite length are rectifiable (see, e.g.,~\cite[Proposition~3.4]{MirPaoSte04} or~\cite[Proposition~2.2]{Fil}).
\end{proof}

\begin{proof}[Proof of Theorem~\ref{th_compl_noloop1}]
Assume by contradiction that for some $\lambda>0$ a minimizer $\Sigma$ of $\mathcal{F}_\lambda$ over closed connected subsets of $\overline{\Om}$ contains a closed curve $\Gamma\subset \Sigma$. From Lemma~\ref{lem:cut+flat}, we have that there exists a point $x\in \Gamma$ which is a noncut point and such that $\Sigma$ is differentiable at $x$. Therefore there exist the sets
$D_n\subset\Sigma$   and a straight line $P$ as in Lemma~\ref{lem:cut+flat}. We denote $r_n:=\diam D_n$ so that $D_n\subset \Sigma\cap B_{r_n}(x)$.
The flatness  of $\Sigma$ at  $x$ implies that 
for some given $\varepsilon>0$, there exists an $r_{1}>0$
such that
\[
\arcsin \beta_{\Sigma,P}(x,r) \leq \varepsilon\pi/2
\quad\text{for all }r\in(0,r_{1}].
\]
Thus  every
connected $S\subset \partial B_r(x)\setminus\Sigma$ satisfies
\[
\H(S)\leq (\pi + 2\arcsin \beta_{\Sigma,P}(x,r))r=(1+\varepsilon) \pi r, \qquad\qquad r\in (0,r_{1}],
\]
and hence using Proposition~\ref{prop_compl_estabove1} with $\gamma:=(1+\varepsilon)\pi $ and $\Sigma'=\Sigma\setminus D_n$ when $r_n\leq R$,
we get
\begin{align*}
|E(u_{\Sigma\setminus D_n})- E(u_{\Sigma})| &\leq Cr_n^{\frac{2\pi}{\gamma}}+Cr_n^{\frac{2}{p'}}.
\end{align*}
But since
\[
\H(\Sigma\setminus D_n)= \H(\Sigma)-\H(D_n)\leq \H(\Sigma)-r_n,
\]
we get
\[
\mathcal{F}_\lambda(\Sigma\setminus D_n)\leq \mathcal{F}_\lambda(\Sigma) - \lambda r_n + Cr_n^{\frac{2\pi}{\gamma}}+Cr_n^{\frac{2}{p'}}.
\]
Recalling that $2/p'>1$ and
choosing $\varepsilon <1/2$   (of course, this affects the value of $C$ and
of $r_{1}$), then one has
\[
\mathcal{F}_\lambda(\Sigma\setminus D_n)\leq \mathcal{F}_\lambda(\Sigma) - \lambda r_n + o(r_n)
\]
as $r_n\to 0^+$, which for sufficiently small $r_n$
contradicts the minimality of $\Sigma$ and hence
proving that $\Sigma$ does not contain closed curves.
About the last assertion in Theorem~\ref{th_compl_noloop1}, we use theorem~II.5 of~\cite[\S~61]{Kur},
stating that if $D\subset \R^2$ is a bounded connected set with
locally connected boundary, then
there is a simple closed curve
$S\subset \partial D$. Then, if $\R^2\setminus\Sigma$ were
disconnected, there would exist a bounded
connected component $D$ of $\R^2\setminus\Sigma$
such that $\partial D\subset \Sigma$, and hence $\Sigma$
would contain a simple closed curve, contrary to what we proved just before.
\end{proof}

\subsection{Ahlfors regularity}

It is not difficult to show that the minimizers of Problem~\ref{pb_compl_pen1}  are
Ahlfors regular under quite nonrestrictive conditions on the data. Recall that a set $\Sigma\subset \R^2$
is called Ahlfors regular, if
there exist  some constants $c>0$, $r_0>0$ and $C>0$
such that for every
$r\in (0,r_0)$ and for every $x\in \Sigma$ one has
\begin{equation}\label{eq_M2Ahldef}
cr\leq \H(\Sigma\cap B_r(x))\leq Cr,
\end{equation}
a singleton being considered Ahlfors regular by definition.
The constants $C$ and $c$ will be further referred to as \emph{upper and lower Ahlfors regularity constants} of $\Sigma$ respectively.
It is known
that Ahlfors regularity of a closed connected set $\Sigma$ implies
{\em uniform rectifiability} (a kind of ``quantitative rectifiability'' which is somewhat stronger than
the classical rectifiability used in
geometric measure theory)
of $\Sigma$, which
provides several nice analytical properties of $\Sigma$, see for example~\cite{DavSemm93}, and will be used later several times.


If $\Sigma$ is closed and connected, then
the lower estimate in~\eqref{eq_M2Ahldef} is trivial: in fact, for
all $r <\diam \Sigma/2$ one has
$\Sigma\cap \partial B_r(x)\neq \emptyset$, and hence
\[
\H(\Sigma\cap B_r(x))\geq r,
\]
when $x\in \Sigma$. Hence, for such $\Sigma$ the proof
of Ahlfors regularity
reduces to verifying that
for every $x\in \Sigma$
and for all $r\in (0,r_0)$  with some $r_0>0$ independent of $x$
one has
\begin{equation}\label{eq_M2Ahldef2}
\frac{\H(\Sigma\cap B_r(x))}{r}\leq C.
\end{equation}

\begin{theorem}\label{th_compl_ahl1}
Let $f\in L^p(\Omega)$, $p>2$. 
Then every solution $\Sigma$ of Problem~\ref{pb_compl_pen1} is Ahlfors regular.
\end{theorem}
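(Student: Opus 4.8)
The plan is to prove only the upper bound~\eqref{eq_M2Ahldef2}, since, as already observed, for a closed connected $\Sigma$ the lower estimate is automatic once $r<\diam\Sigma/2$. Fix a minimizer $\Sigma$ (non-degenerate, otherwise it is a singleton and there is nothing to prove), set $r_1:=\tfrac12\min(1,\diam\Sigma/2)$, and take $x\in\Sigma$ and $r\in(0,r_1/2)$. The idea, classical for length almost-minimizers, is to exhibit a connected competitor that erases $\Sigma$ inside a ball and replaces it by a circle of length comparable to $r$: if $\H(\Sigma\cap B_r(x))$ were too large, such a replacement would strictly decrease $\mathcal F_\lambda$.

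Concretely, for $\rho\in(0,r)$ I would consider
\[
\Sigma':=(\Sigma\setminus B_\rho(x))\cup\partial B_\rho(x).
\]
This set is closed and satisfies $\Sigma\triangle\Sigma'\subset\overline B_\rho(x)\subset B_r(x)$. For connectedness, the key observation is that, since $\Sigma$ is connected and $x\in\Sigma\cap B_\rho(x)$, every connected component $K$ of the closed set $\Sigma\setminus B_\rho(x)=\Sigma\cap\{|\cdot-x|\ge\rho\}$ must meet $\partial B_\rho(x)$: otherwise $K$ would lie at positive distance from $\overline B_\rho(x)$ and hence be clopen in $\Sigma$, contradicting connectedness. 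As $\partial B_\rho(x)$ is connected and is touched by every component of $\Sigma\setminus B_\rho(x)$, the competitor $\Sigma'$ is connected. A direct computation then gives the length budget
\[
\H(\Sigma')-\H(\Sigma)=2\pi\rho-\H(\Sigma\cap\partial B_\rho(x))-\H(\Sigma\cap B_\rho(x))\le 2\pi\rho-\H(\Sigma\cap B_\rho(x)).
\]

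For the compliance I would invoke Proposition~\ref{prop_compl_estabove1} with perturbation radius $\rho$, with $\gamma:=2\pi$ (admissible because $\gamma_\Sigma\le 2\pi$ always, $\gamma\ne 0$, and $p>2$ gives $1/p'>1/2=\pi/\gamma$) and with the fixed $r_1$ above; the geometric hypotheses~\eqref{eq:geomass} and~\eqref{eq:geomass2} hold since $\Sigma$ is connected with $x\in\Sigma$ and $r_1<\diam\Sigma/2$. This yields $|\C(\Sigma')-\C(\Sigma)|\le C(\rho/r_1)+C\rho^{2/p'}\le C'r$, the last step using $\rho\le r\le r_1$ together with $2/p'>1$, with $C'$ depending only on $|\Omega|$, $\|f\|_p$, $p$ and $r_1$. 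Combining this with minimality $\mathcal F_\lambda(\Sigma)\le\mathcal F_\lambda(\Sigma')$ gives
\[
\lambda\big(\H(\Sigma\cap B_\rho(x))-2\pi\rho\big)\le\C(\Sigma')-\C(\Sigma)\le C'r,
\]
whence $\H(\Sigma\cap B_\rho(x))\le(2\pi+C'/\lambda)\,r$. Letting $\rho\uparrow r$ and using continuity from below of $\H\res\Sigma$ (so that $\H(\Sigma\cap B_\rho)\uparrow\H(\Sigma\cap B_r)$) yields~\eqref{eq_M2Ahldef2} with $r_0:=r_1/2$ and a constant uniform in $x$.

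The step requiring the most care — and the one I expect to be the main obstacle — is admissibility near $\partial\Omega$: the circle $\partial B_\rho(x)$ may leave $\overline\Omega$, so one should replace $\Sigma'$ by the clipped set $\tilde\Sigma:=\Sigma'\cap\overline\Omega$. Clipping is harmless for the estimates, since $\H(\tilde\Sigma\cap\overline B_\rho)\le 2\pi\rho$ and $\C(\tilde\Sigma)=\C(\Sigma')$ (the space $H^1_0(\Omega\setminus\Sigma')$ only sees $\Sigma'\cap\Omega$). The delicate issue is that $\partial B_\rho(x)\cap\overline\Omega$ may split into several arcs, so the connectedness argument above must be reinforced to ensure $\tilde\Sigma$ stays connected. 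For interior balls $B_\rho(x)\subset\Omega$ no clipping is needed and the proof is complete exactly as written; I expect the reconnection near an irregular boundary to be the only genuinely technical point, everything else being a routine pairing of the length budget with the compliance decay furnished by Proposition~\ref{prop_compl_estabove1}.
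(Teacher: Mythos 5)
Your proposal is correct and is essentially the paper's own proof: the paper uses the very same competitor $\Sigma_r:=(\Sigma\setminus B_r(x))\cup(\partial B_r(x)\cap\overline{\Omega})$, bounds the length change by $2\pi r-\H(\Sigma\cap B_r(x))$, invokes Proposition~\ref{prop_compl_estabove1} with $\gamma=2\pi$ to get $|E(u_{\Sigma_r})-E(u_\Sigma)|\le Cr+Cr^{2/p'}\le 2Cr$ for $r\le r_0:=\min\{\diam\Sigma/4,1\}$, and concludes from minimality exactly as you do. The two points where you diverge are immaterial: your $\rho\uparrow r$ limiting device merely sidesteps the fact that the symmetric difference touches $\partial B_r(x)$ (the paper applies the proposition directly at radius $r$), and the boundary-clipping connectedness issue you flag as the delicate point is not elaborated in the paper either, which simply asserts that $\Sigma_r\subset\overline{\Om}$ is ``clearly'' closed and connected.
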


\begin{remark}
Looking closer at the proof of the above Theorem~\ref{th_compl_ahl1}, we observe that the upper Ahlfors regularity constant  may possibly depend only on $|\Omega|$, $\|f\|_p$, $p$. 
\end{remark}

\begin{proof}
We show that for every $\lambda>0$, every minimizer $\Sigma$ of $\mathcal{F}_\lambda$
among closed connected subsets of $\overline{\Om}$
satisfies~\eqref{eq_M2Ahldef2} with some $C> 2\pi$ and $r_0\in (0,\diam \Sigma/4)$.

Let $x\in \Sigma$ be arbitrary and set $R:=\diam \Sigma/4$, so that $\partial B_{2r}(x)\cap\Sigma\neq\emptyset$ for all $r\in (0, R]$. For every $r\in (0,R]$ we set $\Sigma_r:=(\Sigma\setminus B_r(x))\cup (\partial B_r(x)\cap\overline{\Omega})$.
Clearly, $\Sigma_r\subset\overline{\Om}$ is still closed, connected and
\begin{equation}\label{eq_compl_ahl1}
\H(\Sigma_r)\leq \H(\Sigma)-\H(\Sigma  \cap B_r(x)) + 2\pi r.
\end{equation}
Using Proposition~\ref{prop_compl_estabove1} with $\gamma=2\pi$ and $\Sigma_r$  instead of $\Sigma'$,
we get
\begin{align*}
|E(u_{\Sigma_r})- E(u_{\Sigma})| &\leq Cr +Cr^{2/p'}\leq 2Cr.
\end{align*}
for all $r\in (0, r_0]$ for   $r_0= \min\{R,1\}$ and $C>0$ independent on $x\in \Sigma$ (recall that $2/p'\geq 1$).
But in view of~\eqref{eq_compl_ahl1}
we get
\[
\mathcal{F}_\lambda(\Sigma_r)\leq \mathcal{F}_\lambda(\Sigma) - \lambda \H(\Sigma \cap B_r(x)) + 2\pi \lambda r + 2Cr.
\]
Therefore the optimality of $\Sigma$ implies
\begin{equation}\label{eq_compl_ahl2}
-\lambda \H(\Sigma  \cap  B_r(x)) + 2\pi \lambda r + 2Cr \geq 0,
\end{equation}
for all $r\in(0,r_{0}]$,
Clearly, then~\eqref{eq_compl_ahl2} gives~\eqref{eq_M2Ahldef2} and hence Ahlfors regularity of $\Sigma$.
\end{proof}

\section{Flatness and small energy implies $C^{1}$ regularity}

 \label{sectionEpsReg}

In this section we find sufficient conditions in a ball $B_{r}(x)$ which imply that $\Sigma\cap B_{r/2}(x)$ is a nice $C^1$ curve. The strategy vaguely follows the approach of Guy David in~\cite{d} where a similar work is done for Mumford-Shah minimizers, but adapted with the specificities of our problem which is a  Dirichlet problem of min-max type instead of a Neumann problem of min-min type in~\cite{d}.
The connectedness constraint makes also a big difference with the Mumford-Shah problem. The general tools are  a decay of energy provided that $\Sigma$ stays ``flat'' thanks to the monotonicity formula, and on the other hand, a control on the flatness when the energy is small. We then conclude by bootstraping both estimates.

In this section and all the next ones, we will always assume $\lambda=1$ for simplicity. Of course this is not restrictive regarding to the regularity theory.



\subsection{Control of the energy when $\Sigma$ is flat}

For any $x \in   \Omega $ and $r>0$ such that $B_{r}(x) \subset \Omega$ we denote by $\beta_\Sigma(x,r)$ the flatness of $\Sigma$ in $B_{r}(x)$ defined through
\[\beta_\Sigma(x,r):=\inf_{ P \ni x} \frac{1}{r}d_H(\Sigma\cap B_r(x), P\cap B_r(x)),\]
where $d_H$ is the Hausdorff distance and where the infimum is taken over all affine lines $P$  passing through $x$. 
Notice that the $\inf$ above is attained, i.e.\ is actually a minimum.

We shall also need a variant where the proximity to affine lines is checked only in an annulus and not in the whole ball. Precisely, for $0<s<r$ we denote
\[\beta_\Sigma(x,r,s):=\inf_{ P\ni x} \frac{1}{r}d_H(\Sigma\cap B_r(x)\setminus B_s(x), P\cap B_r(x)\setminus B_s(x)),\]
where again the $\inf$ is taken over all affine lines $P$ passing through $x$.

Observe that for all $0<s<r$, it directly comes from the definition that
\[\beta_\Sigma(x,r,s)\leq \beta_\Sigma(x,r),\]
and the main point about $\beta_\Sigma(x,r,s)$ is the following obvious fact
\[
\beta_\Sigma(x,r)\leq \varepsilon  \text{ and } \Sigma \Delta \Sigma'  \subset B_s(x) \text{ imply }
\beta_{\Sigma'}(x,r,s)\leq \varepsilon.
\]
Observe also that for all $a\in (0,1)$ we have
\begin{equation}\label{eq_beta_ar1}
\beta_\Sigma(x,ar)\leq \frac{1}{a}\beta_\Sigma(x,r).
\end{equation}

Our first aim is to use the monotonicity formula to control the energy of $u_\Sigma$ around points where $\beta_{\Sigma}$ is small: this is done in Proposition~\ref{prop_compl_monot4} below.  For this purpose we seek for a lemma similar to Lemma~\ref{lm_compl_monot1}, but where the assumption~\eqref{eq:geomass} is replaced by an assumption on $\beta_\Sigma(x,r)$ only. This leads to new difficulties, because even if $\Sigma$ is connected and flat around a point $x\in\Sigma$, it may happen that $\Sigma\cap B_{r}(x)$ is not connected (see Figure \ref{fig:flat1}).
We state the lemma when { $\gamma=\pi+\pi/3$ (so that $2\pi/\gamma=3/2$); we could have chosen any $\gamma=\pi+\eps$ with $\eps$ small (leading to $2\pi/\gamma$ as close as 2 as required), though this specific choice of $\gamma$ is sufficient for our purpose, since our main goal is to allow an exponent strictly bigger than 1.}
Moreover, it is more convenient to work on harmonic functions, and then compare $u_\Sigma$ with its harmonic replacement to obtain a similar statement for $u_\Sigma$. Therefore the following lemmas are stated first in this more convenient framework of harmonic functions.

We
call then \emph{Dirichlet minimizer}  $u\in g+H^1_0(D\setminus \Sigma)$ any  function that minimizes the Dirichlet energy
$w\mapsto \int_{D} |\nabla w |^2\, dx$ over $g+H^1_0(D\setminus \Sigma)$.

\begin{lemma}\label{lm_compl_monot13}
Let  $\Sigma\subset\overline{\Om}$  be a closed and  connected  set,  $g\in H^1_{0}(\Om\setminus\Sigma)$, $x_{0}\in\Sigma$,
$2r_1 \varepsilon<r_{0}<r_1 < \diam(\Sigma)/2$
and
\begin{eqnarray}
\beta_\Sigma(x_0,r_1,r_{0}) \leq \varepsilon \label{betaestimm3}
\end{eqnarray}
for some $0<\varepsilon\leq 1/10$.
Then for every Dirichlet minimizer $u\in g+H_0^1(B_{r_1}(x_{0})\setminus\Sigma)$ and for every
$r\in [ 
r_{0}, r_1 ]$ one has
\[
\int_{B_{r}} |\nabla u|^2\, dx \leq 
4\left(\frac{r}{r_1}\right)^{\frac{3}{2}} \int_{B_{r_1}} |\nabla u|^2 \, dx .
\]
\end{lemma}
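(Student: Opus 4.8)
The plan is to reduce the statement to the monotonicity computation already performed in Lemma~\ref{lm_compl_monot1}, the only genuinely new ingredient being a purely geometric control of the quantity $\gamma_\Sigma(x_0,r_0,r_1)$ from~\eqref{eq:defgamma} extracted from the flatness hypothesis~\eqref{betaestimm3}. Since $u$ is a Dirichlet minimizer, it is harmonic in $B_{r_1}(x_0)\setminus\Sigma$ and vanishes on $\Sigma\cap B_{r_1}(x_0)$ (because $u-g\in H^1_0(B_{r_1}(x_0)\setminus\Sigma)$ and $g$ vanishes on $\Sigma$), so the source term is absent and the computation is cleaner than the one for $u_\Sigma$.

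First I would establish the geometric core: for a.e.\ $\rho$ in the bulk of $(r_0,r_1)$, every connected component $S$ of $\partial B_\rho(x_0)\setminus(\Sigma\cup\partial\Omega)$ satisfies $\H(S)\le(\pi+\pi/3)\rho$, i.e.\ $\gamma_\Sigma(x_0,r_0,r_1)\le \pi+\pi/3$. Let $P\ni x_0$ realize $\beta_\Sigma(x_0,r_1,r_0)\le\varepsilon$, so that $\Sigma\cap(B_{r_1}(x_0)\setminus\overline{B_{r_0}}(x_0))$ lies in the tube $T:=\{\dist(\cdot,P)\le\varepsilon r_1\}$. As $2\varepsilon r_1<r_0$, the tube is thinner than the inner radius, hence $T\cap(B_{r_1}(x_0)\setminus\overline{B_{r_0}}(x_0))$ splits into two disjoint pieces $T_1,T_2$, one around each ray of $P$. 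The \emph{two-sided} Hausdorff closeness forces $\Sigma$ to shadow all of $P\cap(B_{r_1}(x_0)\setminus\overline{B_{r_0}}(x_0))$, hence to populate both $T_1$ and $T_2$ at radii ranging from near $r_0$ up to near $r_1$; together with the connectedness of $\Sigma$ (which joins $x_0\in\overline{B_{r_0}}(x_0)$ to a point at radius $>r_1$, existing because $r_1<\diam\Sigma/2$) and a boundary-bumping argument, this yields a point of $\Sigma\cap\partial B_\rho(x_0)$ in each of $T_1$ and $T_2$. These two nearly antipodal crossing points cut $\partial B_\rho(x_0)$ into two arcs, the longer subtending an angle at most $\pi+2\arcsin(\varepsilon r_1/\rho)\le\pi+2\arcsin(1/2)=\pi+\pi/3$ (using $\rho>r_0>2\varepsilon r_1$). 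Including $\partial\Omega$ can only shorten these arcs, so $\gamma_\Sigma(x_0,r_0,r_1)\le\gamma:=\pi+\pi/3$, for which $2\pi/\gamma=3/2$.

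With this bound in hand I would rerun the computation of Lemma~\ref{lm_compl_monot1} with $f\equiv 0$. Setting $G(\rho):=\int_{B_\rho(x_0)}|\nabla u|^2\,dx$, integration by parts gives $G(\rho)=\int_{\partial B_\rho(x_0)}u\,\partial_\nu u\,d\H$ for a.e.\ $\rho$; applying Cauchy--Schwarz with weight $\delta$, then the Poincaré inequality on each arc $S$ of $\partial B_\rho(x_0)\setminus\Sigma$ (whose endpoints lie on $\Sigma$, where $u=0$) in the form $\int_{\partial B_\rho(x_0)}u^2\le(\gamma\rho/\pi)^2\int_{\partial B_\rho(x_0)}|\nabla_\tau u|^2$, and choosing $\delta=\pi/(\gamma\rho)$, I obtain $G(\rho)\le\frac{\gamma}{2\pi}\rho\,G'(\rho)$, that is $(\log G)'(\rho)\ge \frac{2\pi}{\gamma}\frac1\rho=\frac{3}{2\rho}$. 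Integrating this from $r$ to $r_1$ gives $G(r)/G(r_1)\le(r/r_1)^{3/2}$.

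The factor $4$ absorbs the thin end-layers $(r_0,r_0+\varepsilon r_1)$ and $(r_1-\varepsilon r_1,r_1)$ where the two-crossing property, and hence the improved exponent, may fail: there at least one crossing still exists by the radial argument, so $\gamma\le2\pi$ and the exponent degrades only to $1$. Integrating the differential inequality with exponent $3/2$ on the bulk and $1$ on these layers, the multiplicative loss is $\exp\bigl(\tfrac12\log\tfrac{1}{1-\varepsilon}+\tfrac12\log(1+\varepsilon r_1/r_0)\bigr)$, which for $\varepsilon\le1/10$ and $r_0>2\varepsilon r_1$ stays well below $4$; this yields $\int_{B_r(x_0)}|\nabla u|^2\,dx\le 4(r/r_1)^{3/2}\int_{B_{r_1}(x_0)}|\nabla u|^2\,dx$ for all $r\in[r_0,r_1]$. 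The main obstacle is precisely the geometric lemma of the second paragraph: although $\Sigma$ is connected, $\Sigma\cap B_\rho(x_0)$ need not be, so one cannot merely follow a single component across the annulus, and one must exploit the two-sided flatness together with the separation of the tube into $T_1,T_2$ to guarantee a crossing near \emph{both} antipodal points, rather than just one.
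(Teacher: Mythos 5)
Your overall architecture is the right one (bound $\gamma_\Sigma$ by $\pi+\pi/3$ on ``good'' radii, rerun the monotonicity computation of Lemma~\ref{lm_compl_monot1} with $f\equiv 0$, and absorb the bad radii into the factor $4$), and it coincides with the paper's. But your key geometric step is wrong where it localizes the bad radii: you claim the two-sided crossing property can fail only in the thin end layers $(r_0,r_0+\varepsilon r_1)$ and $(r_1-\varepsilon r_1,r_1)$. This is false. Two-sided Hausdorff closeness guarantees, for each position $t$ along $P$, a point of $\Sigma$ within $\varepsilon r_1$ of it, but that point need not lie at radius $|t|$; consequently, on one side of the tube the set of radii actually attained by $\Sigma$ may have a gap of length up to $2\varepsilon r_1$, and this gap can sit \emph{anywhere} in $(r_0,r_1)$. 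Concretely: take $P_0$ the $x$-axis, $x_0$ the origin, and let $\Sigma$ be the union of a long right segment from $x_0$ through $\partial B_{r_1}(x_0)$, a left segment from $x_0$ covering left radii $[0,c-\varepsilon r_1]$, a left segment covering radii $[c+\varepsilon r_1, r_1+1]$, and an arc running outside $B_{r_1}(x_0)$ hooking this last piece to the right segment, where $c$ is in the middle of $(r_0,r_1)$. Then $\beta_\Sigma(x_0,r_1,r_0)\le\varepsilon$ and $\Sigma$ is closed, connected, with $x_0\in\Sigma$ and $r_1<\diam\Sigma/2$; yet for every $\rho\in(c-\varepsilon r_1,c+\varepsilon r_1)$ the sphere $\partial B_\rho(x_0)$ meets $\Sigma$ only on the right, and the long complementary arc subtends nearly $2\pi$, not $\pi+2\arcsin(\varepsilon r_1/\rho)$. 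Your boundary-bumping argument breaks precisely there (the continuum of $\Sigma$ crossing the annulus can live entirely on one side, the other side being shadowed by two disconnected stubs joined outside the ball), and with it your computation of the loss factor, which integrates the degraded exponent only over the end layers.

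The missing ingredient is exactly the paper's Lemma~\ref{topologicalfact}, which uses connectedness in a stronger way than you do: following, for each shadowing point $z(t)$, an arc of $\Sigma$ escaping $B_{r_1}(x_0)$ and recording whether it escapes ``on the right'' or ``on the left'', one shows that the bad radii form a \emph{single} interval $(t_1,t_2)$ of length less than $2\varepsilon r_1$, located anywhere in $[r_0,r_1]$. Once you have this, your scheme closes with a small modification of your last step: chain the exponent-$3/2$ monotonicity on the two good subintervals $[r_0,t_1]$ and $[t_2,r_1]$ with the exponent-$1$ monotonicity across $(t_1,t_2)$ (valid because $\Sigma$ is connected with $x_0\in\Sigma$ and $\diam\Sigma>2r_1$, so every sphere meets $\Sigma$ and~\eqref{eq:geomass} holds). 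The multiplicative loss is then $(t_2/t_1)^{1/2}\le\sqrt 2<4$, since $t_2-t_1\le 2\varepsilon r_1<r_0\le t_1$, \emph{independently of where the bad interval sits}; this is essentially how the paper concludes (it chains slightly more crudely, losing $2^{3/2}$, which still fits under $4$). The remainder of your argument --- harmonicity of $u$, vanishing on $\Sigma$, the arcwise Poincar\'e inequality with constant $(\gamma\rho/\pi)^2$, the choice $\delta=\pi/(\gamma\rho)$, and the differential inequality $G(\rho)\le\frac{\gamma}{2\pi}\rho\,G'(\rho)$ --- matches the paper and is correct.
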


To get a proof of this result, we need first the following two lemmas: the first one is a version of Lemma~\ref{lm_compl_monot1} for harmonic functions, the second one is a purely geometric statement.

\begin{lemma}\label{lm_compl_monot12}
Let  $\Sigma\subset\overline{\Om}$  be a closed set, $g\in H^1_{0}(\Om\setminus\Sigma)$, $x_{0}\in \overline{\Omega}$. Assume   that~\eqref{eq:geomass} is valid
 and $\gamma \in[\gamma_{\Sigma}(x_0,r_{0},r_{1}),2\pi]\setminus\{0\}$ (see~\eqref{eq:defgamma} for a definition of $\gamma_{\Sigma}$).
Then for every Dirichlet minimizer $u\in g+H^1_0(B_{r_1}(x_{0}) \setminus\Sigma)$
one has that
the function
\begin{align*}
 r\in [r_0,r_1] \mapsto r^{-\frac{2\pi}{\gamma}}\int_{B_{r}(x_0)} |\nabla u|^2\, dx ,
\end{align*}
is nondecreasing.
\end{lemma}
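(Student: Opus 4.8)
The plan is to follow the proof of Lemma~\ref{lm_compl_monot1} essentially verbatim, the only simplification being that the source term $f$ is now absent: since $u$ is a Dirichlet minimizer over $g+H^1_0(B_{r_1}(x_0)\setminus\Sigma)$, it is harmonic in $B_{r_1}(x_0)\setminus\Sigma$ and, because $g\in H^1_0(\Om\setminus\Sigma)$ forces $g=0$ on $\Sigma$, its trace vanishes on $\Sigma$. Consequently the inhomogeneous correction $Cr^{2/p'-\alpha}$ that appeared in Lemma~\ref{lm_compl_monot1} disappears entirely, and one obtains the cleaner conclusion that $r^{-\alpha}G(r)$ is itself nondecreasing, with $\alpha=2\pi/\gamma$.

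First I would set $G(r):=\int_{B_r(x_0)}|\nabla u|^2\,dx$. By Fubini/coarea, $G$ is absolutely continuous with $G'(r)=\int_{\partial B_r(x_0)}|\nabla u|^2\,d\H$ for a.e.\ $r\in(r_0,r_1)$, and for a.e.\ such $r$ the trace of $u$ on $\partial B_r(x_0)$ lies in $H^1(\partial B_r(x_0))$ and vanishes on $\Sigma\cup\partial\Om$, hence belongs to $H^1_0(\partial B_r(x_0)\setminus(\Sigma\cup\partial\Om))$. Integrating by parts (Lemma~\ref{lm_compl_intparts1}), using $\Delta u=0$ in $B_r(x_0)\setminus\Sigma$ together with $u=0$ on $\Sigma$, gives $G(r)=\int_{\partial B_r(x_0)}u\,\partial_\nu u\,d\H$, now with no volume contribution.

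Next, for every $\delta>0$ Young's inequality yields $G(r)\le \frac{\delta}{2}\int_{\partial B_r(x_0)}u^2\,d\H+\frac{1}{2\delta}\int_{\partial B_r(x_0)}(\partial_\nu u)^2\,d\H$. The Poincaré inequality on arcs of the circle (Lemma~\ref{lm_compl_poincare1}), applicable thanks to~\eqref{eq:geomass} and to the choice of $\gamma\in[\gamma_\Sigma(x_0,r_0,r_1),2\pi]$ bounding the length of each connected component of $\partial B_r(x_0)\setminus(\Sigma\cup\partial\Om)$, gives $\int_{\partial B_r(x_0)}u^2\,d\H\le(\gamma r/\pi)^2\int_{\partial B_r(x_0)}|\nabla_\tau u|^2\,d\H$. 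Choosing the optimal $\delta=\pi/(\gamma r)$ and recombining the tangential and normal contributions into $|\nabla u|^2$, I obtain $G(r)\le\frac{\gamma}{2\pi}\,r\,G'(r)$, that is $\alpha G(r)\le rG'(r)$ with $\alpha=2\pi/\gamma$. This is exactly $\frac{d}{dr}(r^{-\alpha}G(r))=r^{-\alpha-1}(rG'(r)-\alpha G(r))\ge 0$ for a.e.\ $r\in(r_0,r_1)$, and since $r^{-\alpha}G(r)$ is absolutely continuous on $[r_0,r_1]$ (with $r_0>0$), the asserted monotonicity follows.

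The only delicate points are the standard a.e.-in-$r$ justifications already present in the proof of Lemma~\ref{lm_compl_monot1}: that for a.e.\ $r$ the restricted trace is $H^1$ on the circle and vanishes on $\Sigma\cup\partial\Om$ (so that both the integration by parts and the Poincaré inequality apply), and that the single constant $\gamma$ controls uniformly the arc-length of the free components of $\partial B_r(x_0)$ over the whole range $[r_0,r_1]$. The absence of $f$ makes the present argument strictly simpler, so I expect no essential obstacle beyond these routine measure-theoretic verifications.
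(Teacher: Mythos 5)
Your proposal is correct and coincides with the paper's intended argument: the paper omits the proof of this lemma, stating only that it is ``exactly the same as the one of Lemma~\ref{lm_compl_monot1} (with $f=0$)'', which is precisely the verbatim adaptation you carry out. Your observation that the vanishing of $f$ removes the correction term $Cr^{2/p'-\alpha}$ and yields the clean monotonicity of $r^{-2\pi/\gamma}G(r)$, together with the routine a.e.-in-$r$ trace and integration-by-parts verifications, is exactly what the reference to Lemma~\ref{lm_compl_monot1} entails.
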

The proof  of this lemma is exactly  the same as the one of Lemma~\ref{lm_compl_monot1} (with $f=0$) so we omit it.

As we said before, to replace the assumption~\eqref{eq:geomass} 
by another one relying only on the flatness $\beta_\Sigma(x_0,r_1,r_{0})$, we face the difficulty that it may happen, even if $\Sigma$ is connected, that $\partial B_{r}(x_{0})\cap\Sigma=\emptyset$ for some $r\in(r_{0}, r_{1})$.
 To handle this difficulty,
 we establish the following topological fact.

\begin{lemma} \label{topologicalfact}
Let $\Sigma\subset \R^2$ be a closed and arcwise connected set. 
Assume that $x_0 \in \Sigma$ and $2r_1 \varepsilon<r_{0}<r_1 < \diam(\Sigma)/2$ are such that
\begin{eqnarray}
\beta_\Sigma(x_0,r_1,r_{0}) \leq \varepsilon \label{betaestimm}
\end{eqnarray}
for some $0<\varepsilon \leq 1/10$.
Let $A\subset { [r_0 , r_1]}$ be the set of all $r$ for which
\[
\sup \left\{\frac{\H(S)}{r}\colon S \textrm{ connected component of }  \partial B_r(x_0)\setminus \Sigma \right\} > \pi+2\arcsin\Big(\frac{\varepsilon r_1}{r}\Big). \]
Then $A$ is an     interval  of length less than $2\varepsilon r_1$.
\end{lemma}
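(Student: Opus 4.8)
The plan is to introduce good coordinates, rewrite the flatness hypothesis as a thin-strip condition, and then read off $A$ from the way $\Sigma$ meets the circles $\partial B_r(x_0)$. First I would place $x_0$ at the origin and rotate so that an optimal line $P$ for $\beta_\Sigma(x_0,r_1,r_0)$ is the axis $\R\mathbf{e}_1$. Writing $T:=\{y:\ |y\cdot\mathbf{e}_2|\le\varepsilon r_1\}$ for the strip of half-width $\varepsilon r_1$ around $P$, the bound \eqref{betaestimm} gives $\Sigma\cap(B_{r_1}(x_0)\setminus B_{r_0}(x_0))\subset T$, while the hypothesis $2\varepsilon r_1<r_0$ guarantees that $T\cap(\overline{B_{r_1}(x_0)}\setminus B_{r_0}(x_0))$ splits into two disjoint ``lobes'' $T^+$ (around the positive axis) and $T^-$ (around the negative axis), since a path joining them inside the strip would have to meet $\{y\cdot\mathbf{e}_1=0\}\cap T\subset B_{r_0}(x_0)$. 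For $r\in(r_0,r_1)$ the circle $\partial B_r(x_0)$ meets $T$ in exactly two (closed) arcs $S_r^{\pm}$, each of angular width $2\arcsin(\varepsilon r_1/r)$, and $\Sigma\cap\partial B_r(x_0)\subset S_r^+\cup S_r^-$.

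The first key step is a purely geometric description of $A$. If $\Sigma$ meets both $S_r^+$ and $S_r^-$, then between a point of $\Sigma$ in one arc and a point in the other, the largest component of $\partial B_r(x_0)\setminus\Sigma$ has angular width at most $\pi+2\arcsin(\varepsilon r_1/r)$, so $r\notin A$; conversely, if $\Sigma$ misses one arc, say $S_r^-$, then $\partial B_r(x_0)\setminus\Sigma$ has a component of angular width at least $2\pi-2\arcsin(\varepsilon r_1/r)$, which for $\varepsilon\le1/10$ and $r>2\varepsilon r_1$ exceeds $\pi+2\arcsin(\varepsilon r_1/r)$, so $r\in A$. Hence, setting $R^{\pm}:=\{r\in(r_0,r_1):\ \Sigma\cap S_r^{\pm}\neq\emptyset\}$, one has $A=(r_0,r_1)\setminus(R^+\cap R^-)$ up to the two endpoints.

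The second step uses connectedness. Since $\diam\Sigma>2r_1$ there is $z\in\Sigma$ with $|z-x_0|>r_1$ and, by arcwise connectedness, an arc of $\Sigma$ from $x_0$ to $z$; relabelling the direction of $\mathbf{e}_1$ if necessary, its first exit point from $B_{r_1}(x_0)$ (first time the radius equals $r_1$) lies in $\overline{T^+}$. Restricting to the portion of this arc inside $\overline{B_{r_1}(x_0)}$ up to that first exit, for every $r\in(r_0,r_1)$ the last visit to radius $r$ before the exit is a point after which the radius stays in $(r,r_1)$; such points lie in the annulus, hence in the strip, and by continuity in the same lobe as the exit point, i.e.\ in $\overline{T^+}$. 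Thus $\Sigma$ meets $S_r^+$ for every such $r$, that is $R^+=(r_0,r_1)$, and therefore $A=(r_0,r_1)\setminus R^-$ (up to endpoints). It then remains to prove that $R^-$ is an interval of the form $(r_0,m^-)$ with $m^-:=\sup R^-$: if some $r'\in(r_0,m^-)$ were missing from $R^-$, I would pick $w\in\Sigma\cap\overline{T^-}$ with $|w-x_0|>r'$ and an arc from $w$ to $x_0$; since $\Sigma\cap\partial B_{r'}(x_0)\subset S_{r'}^+$, this arc must pass from $\overline{T^-}$ to $\overline{T^+}$ while its radius stays above $r'>r_0$, which forces an excursion to radius $>r_1$ inside the left lobe, making $T^-$ an exit side as well; the spine argument above then yields $R^-=(r_0,r_1)$, contradicting $r'\notin R^-$. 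Finally the Hausdorff (``filling'') part of \eqref{betaestimm}, namely that every point of $P$ in the annulus is within $\varepsilon r_1$ of $\Sigma$, applied to the points $(-t,0)$ with $t\uparrow r_1$, gives $m^-\ge r_1-\varepsilon r_1$; consequently $A=[m^-,r_1)$ is an interval of length $r_1-m^-\le\varepsilon r_1<2\varepsilon r_1$, as claimed.

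I expect the main obstacle to be exactly this last interval property of $R^-$: the delicate point is to control arcs of $\Sigma$ that leave $B_{r_1}(x_0)$ and return, together with the topological bookkeeping of which lobe an arc currently occupies and at which radii it can switch lobes (only through $B_{r_0}(x_0)$ or across $\partial B_{r_1}(x_0)$). By contrast, the geometric characterization of $A$ in terms of the arcs $S_r^{\pm}$ and the one-sided spine estimate $R^+=(r_0,r_1)$ are comparatively routine, as is the filling estimate for $m^-$.
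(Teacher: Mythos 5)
Your strip-and-lobes setup, the characterization of $A$ as the set of radii at which $\Sigma$ misses one of the two arcs $S_r^{\pm}$, and the spine argument showing that the lobe containing the first exit point of an arc from $x_0$ is met at every radius (so, say, $R^+=(r_0,r_1)$) are all correct, and this is essentially the mechanism behind the paper's (rather sketchy) proof. The genuine gap is exactly at the step you flagged: the claim that $R^-$ is an interval of the form $(r_0,m^-)$, so that $A=[m^-,r_1)$ sits at the top of $[r_0,r_1]$. This is false. Take $r_1=1$, $\varepsilon=1/10$, $r_0=3/10$, and let $\Sigma$ be the union of the segment from $(0,0)$ to $(2,0)$, the segment from $(-2,0)$ to $(-(r_0+\varepsilon),0)$, and a half-circle of radius $2$ joining $(-2,0)$ to $(2,0)$. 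This $\Sigma$ is closed, arcwise connected, contains $x_0=(0,0)$, has $\diam \Sigma \geq 4 > 2r_1$, and satisfies $\beta_\Sigma(x_0,r_1,r_0)\leq\varepsilon$, since the uncovered axis points $(-t,0)$, $r_0\leq t<r_0+\varepsilon$, lie within $\varepsilon$ of the endpoint $(-(r_0+\varepsilon),0)$. Here $R^-=[r_0+\varepsilon, r_1]$ and $A=[r_0,\,r_0+\varepsilon)$: the hole is at the \emph{bottom}, not the top (with a left-lobe piece anchored at each circle one can also place it in the middle). The precise error is the sentence ``making $T^-$ an exit side as well; the spine argument above then yields $R^-=(r_0,r_1)$'': the spine argument requires a path that descends to radius $\leq r$ before exiting on the left, whereas an arc that merely exits through $S^-_{r_1}$ covers the left lobe only for radii between its minimal radius before exit and $r_1$. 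In the example the exiting-left arc never descends below $r_0+\varepsilon$, and your proof would conclude $m^-=r_1$, i.e.\ $A=\emptyset$, which is wrong.

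The repair keeps all your other steps but replaces the top-anchored claim by an anchoring argument. Every connected component $K$ of $\Sigma\cap(\textrm{closed left lobe})$ must meet $\partial B_{r_0}(x_0)$ or $\partial B_{r_1}(x_0)$: an arc in $\Sigma$ from $K$ to $x_0$ cannot leave the lobe through its lateral sides, because points of $\Sigma$ in the closed annulus lie in the closed strip and cannot jump to the other lobe, so the first exit from the lobe occurs on one of the two circles. Hence the radius range of each component is an interval containing $r_0$ or $r_1$, so $R^-$ has the form $[r_0,a]\cup[b,r_1]$ and its complement is the single interval $(a,b)$ -- located wherever it happens to be, not necessarily at the top. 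Your filling estimate, applied at points $(-t,0)$ with $t\in(a,b)$ rather than with $t\uparrow r_1$, produces a point of $\Sigma$ in the annulus (hence in the left lobe) at a radius within $\varepsilon r_1$ of $t$; since that radius lies in $R^-$, one gets $t-\varepsilon r_1\leq a$ or $t+\varepsilon r_1\geq b$ for every such $t$, whence $b-a\leq 2\varepsilon r_1$. Combined with your correct step $R^+=(r_0,r_1)$, this gives $A=(a,b)$, an interval of length at most $2\varepsilon r_1$, which is the lemma's conclusion and matches what the paper's $t^-,t^+$ escape-direction sketch is driving at.
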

\begin{figure}[htbp]
\begin{center}
\includegraphics*[width=0.8\textwidth]{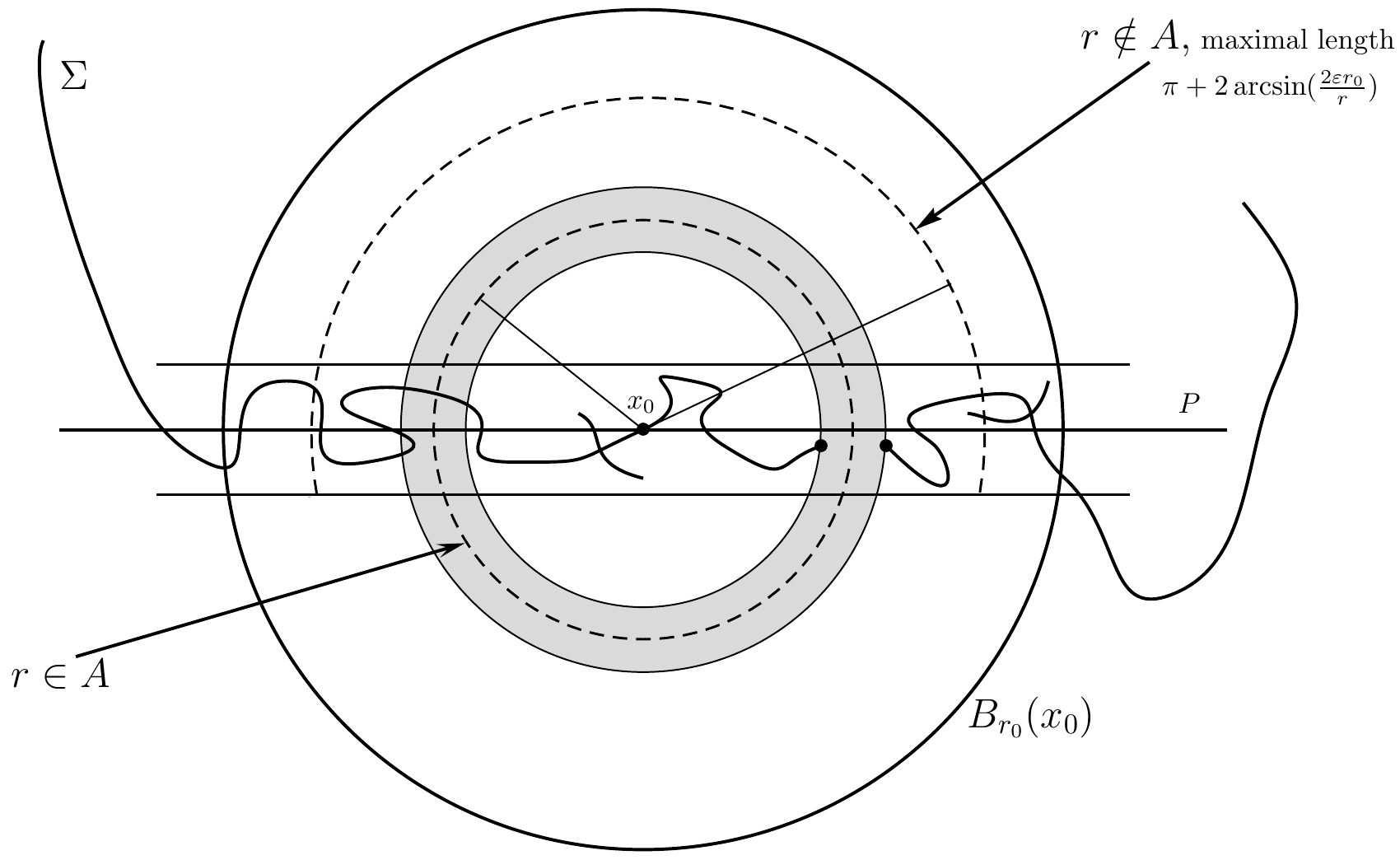}\label{fig:flat1}
\end{center}
\caption{  Situation in the proof of Lemma~\ref{topologicalfact}: $\beta_{\Sigma}(x_{0},r_{0},r_1)\leq\eps$}
\end{figure}%

\begin{remark} Let us recall that any compact and connected set $\Sigma\subset \R^2$ satisfying $\mathcal{H}^1(\Sigma)<+\infty$ is automatically arcwise connected (see for instance~\cite[Corollary~30.2, p.~186]{d}).
\end{remark}

\begin{proof}  In this proof, every ball is centered at $x_{0}$. The proof relies on the fact that $\Sigma \cap B_{r_1} \setminus B_{r_{0}}$ is localized in a strip that meets any sphere  $\partial B_{r}$ for $r\in [r_{0}, r_1]$ by two sides. Let $P_0$ be the line that realizes the infimum in the definition  of $\beta_\Sigma(x_0,r_1,r_{0})$. Let us identify $P_0\cap B_{r_{1}}$ with the segment $[-r_1,r_1]$.  By~\eqref{betaestimm} we know that for each point  $t$ such that $r_0\leq |t| \leq r_1$, there exists a point $z(t) \in \Sigma\cap B_{r_1}\setminus B_{r_0}$  which is $\varepsilon r_1$ close to $t$. The   point $z(t)$ connects a point outside $B_{r_{1}}$ via a curve, that must escape the ball $B_{r_1}$ either on the right or on the left.  Let $t^+$ be  the minimum of $t>0$ for which the curve escape from the right, and $t^-$ be the maximum of $t>0$ for which the corresponding curve escapes from the left. Then the length of the  interval $(t^-,t^+)$ must be smaller than $2\varepsilon r_1$   otherwise we would have a hole of so big size that   would contradict~\eqref{betaestimm}. And every sphere $\partial B_t$ for $t \in [r_{0}, r_1] \setminus [t^-,t^+]$ necessarily meet a point of $\Sigma$ in the $\varepsilon r_1$-strip around $P_0$. We deduce that $A$ is a interval contained in $[t^-,t^+]$ (see Figure~\ref{fig:flat1}).
\end{proof}

Now we are in position to prove Lemma~\ref{lm_compl_monot13}:

\begin{proof}[Proof of Lemma~\ref{lm_compl_monot13}.] In this proof, every ball is centered at $x_{0}$. Observe first that  $r\in [2\varepsilon r_1, r_1 ]\mapsto\pi +2\arcsin\left(\frac{\varepsilon r_1}{r}\right)$ is decreasing, and is therefore always smaller than $\pi + \frac{\pi}{3}$, achieved for $r=2\varepsilon r_1$. Let 
$A:=(t_1,t_2)$
be the set of bad radii given by Lemma~\ref{topologicalfact}.
When  $r \in [t_2,r_1]$, we apply Lemma~\ref{lm_compl_monot12} with $\gamma = \pi +\frac{\pi}{3}$ thus obtaining directly
\begin{eqnarray}
\int_{B_{r}} |\nabla u|^2\, dx \leq \left(\frac{r}{r_1}\right)^{\frac{3}{2}} \int_{B_{r_1}} |\nabla u|^2 \, dx . \label{dec1}
\end{eqnarray}
If $r \in (t_1,t_2)$. Then since~\eqref{dec1} holds for $t_2$ we can simply write
\begin{eqnarray}
\int_{B_{r}} |\nabla u|^2\, dx \leq \int_{B_{t_2}} |\nabla u|^2\, dx &\leq&  \left(\frac{t_2}{r_1}\right)^{1+\frac{1}{2}}
 \int_{B_{r_1}} |\nabla u|^2 \, dx \quad \text{by Lemma~\ref{lm_compl_monot12}}
\notag \\
&\leq & \left(\frac{r+\varepsilon r_1}{r_1}\right)^{\frac{3}{2}} \int_{B_{r_1}} |\nabla u|^2 \, dx \notag \\
&\leq & \left(\frac{2r}{r_1}\right)^{\frac{3}{2}} \int_{B_{r_1}} |\nabla u|^2 \, dx 
\leq
4\left(\frac{r}{r_1}\right)^{\frac{3}{2}} \int_{B_{r_1}} |\nabla u|^2 \, dx.
\end{eqnarray}
Finally, if  
$r \in [r_0, t_1]$,
we apply Lemma~\ref{lm_compl_monot12} on 
$ [r_0, t_1]$
which yields
\begin{eqnarray}
\int_{B_{r}} |\nabla u|^2\, dx &\leq& \left(\frac{r}{t_1}\right)^{\frac{3}{2}} \int_{B_{t_1}} |\nabla u|^2 \, dx
\leq
\left(\frac{r}{t_1}\right)^{\frac{3}{2}} \int_{B_{t_2}} |\nabla u|^2 \, dx . \notag
\end{eqnarray}
Combining this estimate with~\eqref{dec1}  applied with $r:=t_2$, we get
\begin{eqnarray}
\int_{B_{r}} |\nabla u|^2\, dx \leq  \left(\frac{r}{t_1}\right)^{\frac{3}{2}} \left( \frac{t_2}{r_1} \right)^{\frac{3}{2}} \int_{B_{r_1}} |\nabla u|^2 \, dx  \leq4 \left( \frac{r}{r_1} \right)^{\frac{3}{2}} \int_{B_{r_1}} |\nabla u|^2 \, dx,  \notag
\end{eqnarray}
because $t_2/t_1\leq 2$, concluding the proof.
\end{proof}

We are now ready to state a useful decay result on $u_\Sigma$.

\begin{proposition}\label{prop_compl_monot4}
Let $\Omega\subset\R^2$,  $f\in L^p(\Omega)$ with  $p > 2$, $\Sigma\subset \overline{\Om}$  be a closed connected set, $x_0\in \Sigma$.
Suppose
$2r_1 \varepsilon<r_{0}<r_1 < \diam(\Sigma)/2$
and
\begin{eqnarray}
\beta_\Sigma(x_0,r_1,r_{0}) \leq \varepsilon \label{betaestimm2}
\end{eqnarray}
for some $\varepsilon \in (0,1/10]$.

Then for every  $r\in   [ r_{0}, r_1 ]$ one has that
\begin{eqnarray}
\int_{B_{r}(x_0)} |\nabla u_\Sigma|^2\, dx \leq
8\left(\frac{r}{r_1}\right)^{\frac{3}{2}} \int_{B_{r_1}(x_0)} |\nabla u_\Sigma |^2 \, dx +C r_1^{\frac{2}{p'}}  \label{amontrer}
\end{eqnarray}
for some constant $C=C(|\Omega|, p, \|f\|_p)>0$.
\end{proposition}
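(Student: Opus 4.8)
The strategy is to compare $u_\Sigma$ with the Dirichlet minimizer having the same boundary data on $\partial B_{r_1}(x_0)$, exploit the flatness decay for harmonic-type minimizers proven in Lemma~\ref{lm_compl_monot13}, and control the error coming from the right-hand side $f$ via the $L^\infty$ bound on $u_\Sigma$. First I would introduce the Dirichlet minimizer $v\in u_\Sigma + H_0^1(B_{r_1}(x_0)\setminus\Sigma)$, i.e.\ the function minimizing $\int_{B_{r_1}(x_0)}|\nabla w|^2\,dx$ over $w\in u_\Sigma + H_0^1(B_{r_1}(x_0)\setminus\Sigma)$, which agrees with $u_\Sigma$ outside $B_{r_1}(x_0)$ and is harmonic in $B_{r_1}(x_0)\setminus\Sigma$. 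Since $\Sigma$ is connected and the flatness hypothesis~\eqref{betaestimm2} holds, Lemma~\ref{lm_compl_monot13} applies directly to $v$, yielding for every $r\in[r_0,r_1]$
\[
\int_{B_r(x_0)}|\nabla v|^2\,dx \leq 4\left(\frac{r}{r_1}\right)^{3/2}\int_{B_{r_1}(x_0)}|\nabla v|^2\,dx.
\]

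The next step is to quantify how far $u_\Sigma$ is from $v$. Writing $w:=u_\Sigma - v\in H_0^1(B_{r_1}(x_0)\setminus\Sigma)$ and testing the weak Euler--Lagrange equations against $w$, the Dirichlet minimality of $v$ gives $\int_{B_{r_1}}\nabla v\cdot\nabla w\,dx=0$, while $u_\Sigma$ satisfies $-\Delta u_\Sigma=f$ weakly; subtracting yields $\int_{B_{r_1}}|\nabla w|^2\,dx = \int_{B_{r_1}} f\,w\,dx$. Since $w=0$ outside $B_{r_1}$ and $w\in H_0^1$, I would bound the right-hand side by $\|f\|_{L^{p'}(B_{r_1})}\|w\|_{L^p}$ (or more directly using $\|u_\Sigma\|_\infty,\|v\|_\infty\lesssim\|f\|_p$ from Proposition~\ref{boundus}), arriving at an estimate of the form $\int_{B_{r_1}}|\nabla w|^2\,dx\leq C r_1^{2/p'}$, where the exponent $2/p'$ is exactly the scaling already seen in Lemma~\ref{lm_compl_monot1} and Proposition~\ref{prop_compl_estabove1}. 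This is the control of the ``nonharmonic defect.''

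Finally I would assemble the pieces by the triangle inequality in $L^2$ of the gradients: on $B_r(x_0)$,
\[
\int_{B_r}|\nabla u_\Sigma|^2 \leq 2\int_{B_r}|\nabla v|^2 + 2\int_{B_r}|\nabla w|^2
\leq 8\left(\frac{r}{r_1}\right)^{3/2}\int_{B_{r_1}}|\nabla v|^2 + 2\int_{B_{r_1}}|\nabla w|^2.
\]
Then I would replace $\int_{B_{r_1}}|\nabla v|^2$ by $\int_{B_{r_1}}|\nabla u_\Sigma|^2$, using that the Dirichlet minimality of $v$ forces $\int_{B_{r_1}}|\nabla v|^2\leq\int_{B_{r_1}}|\nabla u_\Sigma|^2$ (as $u_\Sigma$ is an admissible competitor with the same boundary values), and absorb $2\int_{B_{r_1}}|\nabla w|^2\leq C r_1^{2/p'}$ into the additive error term. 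This produces exactly~\eqref{amontrer} with the factor $8$ and the constant $C=C(|\Omega|,p,\|f\|_p)$.

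The step I expect to be the main obstacle is the defect estimate $\int_{B_{r_1}}|\nabla w|^2\leq C r_1^{2/p'}$: one must be careful that the constant depends only on $|\Omega|$, $p$, $\|f\|_p$ and not on the geometry of $\Sigma$, and that the Hölder pairing of $f$ against $w$ respects the scaling in $r_1$ rather than $r$. The clean way is to invoke the uniform $L^\infty$ bound $\|u_\Sigma\|_\infty+\|v\|_\infty\leq C\|f\|_p$ of Proposition~\ref{boundus} so that $\|w\|_\infty\leq C\|f\|_p$, giving $\int_{B_{r_1}} f w\,dx\leq \|w\|_\infty\|f\|_{L^1(B_{r_1})}\leq C\|f\|_p\|f\|_p|B_{r_1}|^{1/p'}=Cr_1^{2/p'}$ by Hölder, which is geometry-independent as required.
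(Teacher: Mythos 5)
Your proposal is correct and follows essentially the same route as the paper: harmonic replacement in $B_{r_1}(x_0)$, Lemma~\ref{lm_compl_monot13} applied to the replacement, a defect bound of order $r_1^{2/p'}$ via the $L^\infty$ estimate of Proposition~\ref{boundus}, and the elementary splitting $|\nabla u_\Sigma|^2\leq 2|\nabla v|^2+2|\nabla w|^2$ producing the factor $8$. The only (inessential) difference is that you obtain the defect estimate from the identity $\int_{B_{r_1}}|\nabla w|^2\,dx=\int_{B_{r_1}}fw\,dx$ by testing the two Euler--Lagrange equations, whereas the paper compares $E(u_\Sigma)\leq E(w)$ directly and uses the orthogonality of $\nabla w$ and $\nabla(u_\Sigma-w)$ in $L^2(B_{r_1}\setminus\Sigma)$ --- two computations that are equivalent.
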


\begin{proof} In this proof, every ball is centered at $x_{0}$. Let $w$ be the harmonic replacement of $u_\Sigma$ in $B_{r_1}$, i.e.\ $w$ is a Dirichlet minimizer in $B_{r_1}$: it is harmonic in $B_{r_1}\setminus \Sigma$ with Dirichlet condition $w=0$ on $\Sigma \cap B_{r_1}$ and $w=u_\Sigma$ on $\partial B_{r_1}\setminus \Sigma$. This function is found by minimizing the Dirichlet energy $\int_{B_{r_1}}|\nabla w|^2 dx$ among all $w$ satisfying $w-u_\Sigma\in H^1_0(B_{r_1}\setminus \Sigma)$. Then Lemma~\ref{lm_compl_monot13} applies to $w$ and says
\begin{eqnarray}
\int_{B_{r}} |\nabla w|^2\, dx \leq
4\left(\frac{r}{r_1}\right)^{\frac{3}{2}} \int_{B_{r_1}} |\nabla w|^2 \, dx \leq 
4\left(\frac{r}{r_1}\right)^{\frac{3}{2}} \int_{B_{r_1}} |\nabla u_\Sigma|^2 \, dx. \label{dec40}
\end{eqnarray}
The last inequality comes from the fact that $w$ minimizes the energy in $B_{r_1}$ and $u_\Sigma$ is a competitor. Now $u_\Sigma$ minimizes $E$ so that, extending $w$ by  $u_\Sigma$ outside $B_{r_1}$ we can write
\[
\int_{\Omega} |\nabla u_\Sigma|^2 \, dx  - 2\int_{\Omega} u_{\Sigma} f \, dx=2 E(u_\Sigma)
\leq 2 E(w) = \int_{\Omega} |\nabla w|^2 \, dx - 2\int_{\Omega} w_{\Sigma} f \,dx,
\]
that is,
\begin{equation}\label{decccc}
\begin{aligned}
\int_{B_{r_1}} |\nabla u_\Sigma|^2 \, dx  -  \int_{B_{r_1}} |\nabla w|^2 \, dx \leq   2\int_{B_{r_1}} (u_\Sigma-w) f \, dx \leq C r_1^{\frac{2}{p'}},
\end{aligned}
\end{equation}
where $C=C(|\Omega|, \|f\|_p, p)$,   where in the last estimate
we used H\"{o}lder inequality together with the fact that $u_\Sigma$ and hence $w$ are bounded (Proposition~\ref{boundus}). On the other hand, for any $r \in [r_{0}, r_1 ]$, we have
\begin{eqnarray}
\int_{B_{r}} |\nabla u_\Sigma|^2 \, dx &\leq & 2  \int_{B_{r}} |\nabla w|^2 \, dx + 2 \int_{B_{r}} |\nabla (w-u_\Sigma)|^2 \, dx \notag \\
&\leq & 2 \int_{B_{r}} |\nabla w|^2 \, dx + 2 \int_{B_{r_1}} |\nabla (w-u_\Sigma)|^2 \, dx \notag \\
&= & 2 \int_{B_{r}} |\nabla w|^2 \, dx + 2 \int_{B_{r_1}} |\nabla u_\Sigma|^2 \, dx  - 2 \int_{B_{r_1}} |\nabla w |^2 \, dx \label{deccc}
\end{eqnarray}
 because $\nabla w$ and $\nabla (u_\Sigma - w)$ are orthogonal in $L^2(B_{r_1}\setminus\Sigma)$. Combining~\eqref{dec40},~\eqref{decccc} and~\eqref{deccc} we arrive at~\eqref{amontrer} thus concluding the proof.
\end{proof}

Let us now recall the following notation.  For any $\Sigma \in \mathcal{K}(\Omega)$ we denote by
\begin{eqnarray}
\omega_\Sigma(x,r) = \max_{\Sigma' \in \mathcal{K}(\Omega) ;  \Sigma' \Delta \Sigma \subset \overline{B}_r(x) } \left(\frac{1}{r} \int_{B_r(x)} |\nabla u_{\Sigma'} |^2 \, dx \right). \label{MAXMAX}
\end{eqnarray}

\begin{remark} Problem~\eqref{MAXMAX} has a solution. Indeed, if $\Sigma_k$ be a maximizing sequence, then up to a subsequence $\Sigma_k \to \Sigma_0$ for the Hausdorff distance and clearly $\Sigma_{0} \Delta \Sigma \subset \overline{B}_r(x)$. But then \v{S}ver\'ak Theorem~\ref{th_Sverak1} says that $\nabla u_{\Sigma_k}$ converges strongly to $\nabla u_{\Sigma_0}$ in $L^2(\Omega)$ and $\Sigma_{0}$ is therefore a maximizer.
\end{remark}

Notice that we also have, for $a \in (0,1)$,
\begin{equation}
 \omega_\Sigma(x,a r)\leq \frac{1}{a}\omega_{\Sigma}(x,r).\label{eq_omega_compar1}
\end{equation}

The next proposition states that $\omega_{\Sigma}(x,\cdot)$ decays at smaller scales, provided that $\beta_\Sigma(x,\cdot)$ is small.

\begin{proposition}\label{decayomegaA}
Let $\Omega\subset\R^2$,  $f\in L^p(\Omega)$ with  $p > 2$, $\Sigma\subset  \overline{\Om}$  be a closed connected set, $x_0\in \Sigma$ and assume 
\[\beta_\Sigma(x_{0},r_{1})\leq \varepsilon\]
for some $0< \varepsilon < 1/10$ and $r_{1}< \diam(\Sigma)/2$.
Then for any $r \in  ( 2\varepsilon r_{1}, r_{1} )$ we have
\[
\omega_\Sigma(x_0,r) \leq  
8\left(\frac{r}{r_{1}}\right)^{\frac{1}{2}}\omega_\Sigma(x_0,r_{1}) + Cr_{1}^{\frac{2}{p'}}\frac{1}{r}
\]
for some constant $C=C(|\Omega|, p, \|f\|_p)>0$.
\end{proposition}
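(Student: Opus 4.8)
The key observation is that Proposition~\ref{decayomegaA} is essentially a reformulation of Proposition~\ref{prop_compl_monot4} for the quantity $\omega_\Sigma$ rather than the single function $u_\Sigma$. The obstacle is that $\omega_\Sigma(x_0,r)$ involves a maximum over all admissible competitors $\Sigma'$ with $\Sigma'\Delta\Sigma\subset\overline{B}_r(x_0)$, so I cannot directly apply a decay estimate that was proven for a fixed set.

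Let me think about how to structure the proof. The plan is to fix the competitor $\Sigma'$ realizing the maximum at scale $r$, then estimate $\int_{B_r}|\nabla u_{\Sigma'}|^2$ using Proposition~\ref{prop_compl_monot4}, but applied to $\Sigma'$ at scales up to $r_1$. The crucial point (already noted in the proof of Proposition~\ref{prop_compl_estabove1}) is that since $\Sigma'\Delta\Sigma\subset\overline{B}_r(x_0)$ with $r<r_1$, the flatness $\beta_{\Sigma'}(x_0,r_1,r)$ is still controlled by $\beta_\Sigma(x_0,r_1)\leq\varepsilon$. So Proposition~\ref{prop_compl_monot4} applies to $\Sigma'$ with $r$ in the role of $r_0$. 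This gives

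$$\frac{1}{r}\int_{B_r}|\nabla u_{\Sigma'}|^2\,dx \leq 8\left(\frac{r}{r_1}\right)^{\frac12}\frac{1}{r_1}\int_{B_{r_1}}|\nabla u_{\Sigma'}|^2\,dx + C r_1^{2/p'}\frac{1}{r}.$$

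The right side still depends on $\Sigma'$ through $\int_{B_{r_1}}|\nabla u_{\Sigma'}|^2$. I need to bound this by $\omega_\Sigma(x_0,r_1)$. Here is where I use the definition of $\omega_\Sigma$: since $\Sigma'\Delta\Sigma\subset\overline{B}_r(x_0)\subset\overline{B}_{r_1}(x_0)$, the set $\Sigma'$ is itself an admissible competitor at scale $r_1$, so $\frac{1}{r_1}\int_{B_{r_1}}|\nabla u_{\Sigma'}|^2\,dx\leq\omega_\Sigma(x_0,r_1)$.

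Finally I take the maximum over all admissible $\Sigma'$ on the left-hand side. Since the right-hand bound is now uniform in $\Sigma'$ (it depends only on $\omega_\Sigma(x_0,r_1)$, $r$, $r_1$, and the data), passing to the maximum yields exactly the claimed inequality. I expect the main technical point to verify carefully is the interaction of scales in the hypothesis $2\varepsilon r_1<r<r_1$ of Proposition~\ref{prop_compl_monot4}: I must check that choosing $r$ in the role of $r_0$ satisfies $2\varepsilon r_1<r$, which is guaranteed precisely by the restriction $r\in(2\varepsilon r_1,r_1)$ in the statement. I would also confirm that the flatness hypothesis transfers, namely $\beta_{\Sigma'}(x_0,r_1,r)\leq\beta_\Sigma(x_0,r_1)\leq\varepsilon$, via the elementary implication recorded just after the definition of $\beta_\Sigma(x,r,s)$. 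With these scale constraints in place, the argument is a clean two-line reduction to Proposition~\ref{prop_compl_monot4}.
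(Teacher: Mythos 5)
Your proposal is correct and follows essentially the same route as the paper: the paper also takes a maximizer $\Sigma'$ in the definition of $\omega_\Sigma(x_0,r)$, observes that $\Sigma\Delta\Sigma'\subset \overline{B}_r(x_0)$ forces $\beta_{\Sigma'}(x_0,r_1,r)\leq\varepsilon$, applies Proposition~\ref{prop_compl_monot4} with $r_0:=r$, and bounds $\frac{1}{r_1}\int_{B_{r_1}}|\nabla u_{\Sigma'}|^2\,dx$ by $\omega_\Sigma(x_0,r_1)$ exactly as you do. Your variant of proving the bound uniformly over admissible competitors and then taking the supremum is interchangeable with the paper's use of the maximizer (whose existence is noted after \eqref{MAXMAX}), and your scale check $2\varepsilon r_1<r$ is the right verification of the hypotheses.
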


\begin{proof} Every ball in this proof is centered at $x_0$. Let $\Sigma'$ be the maximizer for~\eqref{MAXMAX}. Since $\Sigma\Delta \Sigma' \subset  B_r$ we have that
\[
\beta_{\Sigma'}(x_0,r_{1},r)\leq \varepsilon
\]
and therefore Proposition~\ref{prop_compl_monot4} (applied with $r_0:=r$) gives
for every  $r\in ( 2\varepsilon r_{1}, r_{1} )$ the estimate
\begin{eqnarray}
\omega_\Sigma(x_0,r)=\frac{1}{r}\int_{B_{r}} |\nabla u_{\Sigma'}|^2\, dx &\leq& 
8\left(\frac{r}{r_{1}}\right)^{\frac{1}{2}}  \frac{1}{r_{1}}\int_{B_{r_{1}}} |\nabla u_{\Sigma'} |^2 \, dx +C r_{1}^{\frac{2}{p'}} \frac{1}{r}\notag \\
&\leq & 
8\left(\frac{r}{r_{1}}\right)^{\frac{1}{2}}  \omega_\Sigma(x_0,r_{1})   +C r_{1}^{\frac{2}{p'}} \frac{1}{r},\label{amontrer2}
\end{eqnarray}
showing the claim.
\end{proof}

We now state a first estimate on defect of minimality.

\begin{proposition}\label{lm_compl_locC}
Let $\Omega\subset\R^2$,  $f\in L^p(\Omega)$ with  $p > 2$, $\Sigma\subset \overline{\Om}$  be a closed connected set,  and assume that $x_0\in \Sigma$, and
\[\beta_\Sigma(x_0,r_1)\leq \varepsilon\]
for some $0< \varepsilon < 1/10$ and $r_1< \diam(\Sigma)/2$.
Then for any $r \in  ( 2\varepsilon r_1, r_1 /2]$ and for any closed and connected set $\Sigma' \subset \overline{\Om}$ satisfying $\Sigma \Delta \Sigma'\subset B_r(x_0)$ the estimate
\begin{equation}\label{eq_compl_estabove1main}
E(u_{\Sigma})- E(u_{\Sigma'}) \leq Cr\left(\frac{r}{r_1}\right)^{\frac{1}{2}}\omega_\Sigma(x_0,r_1) +C  r_1^{\frac{2}{p'}} \end{equation}
holds with a constant $C=C(|\Omega|, p, \|f\|_p)>0$.
\end{proposition}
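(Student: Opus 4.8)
The plan is to combine the localization lemma (Lemma~\ref{lm_compl_loc1}) with the decay estimate for $\omega_\Sigma$ from Proposition~\ref{decayomegaA}, following the same scheme as in the proof of Proposition~\ref{prop_compl_estabove1}. First I would fix a cutoff $\varphi\in\Lip(\R^2)$ with $\|\varphi\|_\infty\le 1$, $\varphi=1$ on $B_{2r}^c(x_0)$, $\varphi=0$ on $B_r(x_0)$ and $\|\nabla\varphi\|_\infty\le 1/r$, exactly as in Lemma~\ref{lm_compl_estabove1a}. Since $\Sigma\Delta\Sigma'\subset B_r(x_0)$, the competitor $\Sigma'$ is admissible in the max defining $\omega_\Sigma(x_0,r)$, so that by definition $\frac{1}{r}\int_{B_r(x_0)}|\nabla u_{\Sigma'}|^2\,dx\le \omega_\Sigma(x_0,r)$, and more generally $\frac{1}{2r}\int_{B_{2r}(x_0)}|\nabla u_{\Sigma'}|^2\,dx\le\omega_\Sigma(x_0,2r)$. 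This is the key point that lets us bound the energy of the competitor without any control on $\Sigma'$ beyond its admissibility.

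Next I would apply Lemma~\ref{lm_compl_loc1} with radii $r$ and $2r$ in place of $r_0,r_1$ to get
\[
E(u_\Sigma)-E(u_{\Sigma'})\le \int_{B_{2r}(x_0)}u_{\Sigma'}f(1-\varphi)\,dx+\int_{B_{2r}(x_0)}u_{\Sigma'}^2|\nabla\varphi|^2\,dx+\int_{B_{2r}(x_0)}u_{\Sigma'}\varphi\nabla u_{\Sigma'}\cdot\nabla\varphi\,dx.
\]
I would then estimate the three terms using Lemma~\ref{lm_compl_estabove1a} applied to $u_{\Sigma'}$: the first term is bounded by $Cr^{2/p'}$ via~\eqref{eq_compl_loc2a1}; the second and third are each controlled by a universal constant times $\int_{B_{2r}(x_0)}|\nabla u_{\Sigma'}|^2\,dx$ via~\eqref{eq_compl_loc2a2} and~\eqref{eq_compl_loc2c2}. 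The admissibility of $\Sigma'$ for Lemma~\ref{lm_compl_estabove1a} with $r$ in place of $r_0$ follows because $\Sigma\Delta\Sigma'\subset B_r(x_0)$ forces $\Sigma'\cap\overline{B}_r(x_0)\ne\emptyset$ and $\Sigma'\setminus B_{2r}(x_0)\ne\emptyset$, using $2r\le r_1/2<\diam(\Sigma)/2$.

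Collecting these, the right-hand side is bounded by $C\int_{B_{2r}(x_0)}|\nabla u_{\Sigma'}|^2\,dx+Cr^{2/p'}$. Now I invoke Proposition~\ref{decayomegaA} to control the energy term: since $\beta_\Sigma(x_0,r_1)\le\varepsilon$, that proposition gives, for the relevant radius $\rho\in\{r,2r\}\subset(2\varepsilon r_1,r_1)$, the bound $\omega_\Sigma(x_0,\rho)\le 8(\rho/r_1)^{1/2}\omega_\Sigma(x_0,r_1)+C r_1^{2/p'}/\rho$. Writing $\int_{B_{2r}(x_0)}|\nabla u_{\Sigma'}|^2\,dx\le 2r\,\omega_\Sigma(x_0,2r)$ and using~\eqref{eq_omega_compar1} to pass between scales $r$ and $2r$, the main term becomes $Cr(r/r_1)^{1/2}\omega_\Sigma(x_0,r_1)$, while the leftover contribution $2r\cdot C r_1^{2/p'}/r$ and the first-term estimate $Cr^{2/p'}$ are both absorbed into $Cr_1^{2/p'}$ (using $r\le r_1$). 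This yields exactly~\eqref{eq_compl_estabove1main}. The only mildly delicate point, and the step I expect to require the most care, is bookkeeping the scale factors of $2$ and the $1/\rho$ error term so that everything collapses cleanly into the two advertised terms $Cr(r/r_1)^{1/2}\omega_\Sigma(x_0,r_1)$ and $Cr_1^{2/p'}$; this is routine but must be done carefully to make the error term independent of $r$.
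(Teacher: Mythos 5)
Your proposal is correct and follows essentially the same route as the paper's proof: the same cutoff and localization via Lemma~\ref{lm_compl_loc1}, the three-term estimate of Lemma~\ref{lm_compl_estabove1a} applied to $u_{\Sigma'}$, the bound $\int_{B_{2r}(x_0)}|\nabla u_{\Sigma'}|^2\,dx\leq 2r\,\omega_\Sigma(x_0,2r)$ from the definition of $\omega_\Sigma$, the decay of Proposition~\ref{decayomegaA}, and absorption of the error terms using $r\leq r_1$. Your extra remarks on admissibility (that $\Sigma\Delta\Sigma'\subset B_r(x_0)$ forces $\Sigma'$ to meet $\overline{B}_r(x_0)$, since $\Sigma\cap\partial B_r(x_0)\neq\emptyset$ persists in $\Sigma'$, and that $2r\in(2\varepsilon r_1,r_1]$ so the decay estimate applies at that scale) are correct and only make explicit what the paper leaves implicit.
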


\begin{proof} Every ball in this proof is centered at $x_0$.  Take an arbitrary $\varphi\in \Lip(\R^2)$
as in the statement of Lemma~\ref{lm_compl_estabove1a}, i.e.
such that $\|\varphi\|_\infty\leq 1$, $\varphi=1$ over $B_{2r}^c$, $\varphi=0$ over $B_{r}$ and
$\|\nabla\varphi\|_{\infty}\leq 1/r$.
From Lemma~\ref{lm_compl_loc1} we have
\begin{align*}
E(u_{\Sigma})- E(u_{\Sigma'})
\leq  & \int_{B_{2r}} u_{\Sigma'} f(1-\varphi)\, dx + \int_{B_{2r}} u_{\Sigma'}^2 |\nabla \varphi|^2\, dx \\
& + \int_{B_{2r}} u_{\Sigma'} \varphi\nabla u_{\Sigma'}\cdot\nabla\varphi\, dx.
\end{align*}
We obtain then the following chain of estimates (with the constant $C$ changing from line to line)
\begin{eqnarray}
E(u_{\Sigma})- E(u_{\Sigma'})
&\leq  & C \int_{B_{2r}} |\nabla u_{\Sigma'}|^2 \, dx + C r^{\frac{2}{p'}} \quad\text{by Lemma~\ref{lm_compl_estabove1a}}
\notag \\
&\leq & Cr \omega_{\Sigma}(x_0,2r) + C r^{\frac{2}{p'}} \quad\text{by definition of $\omega_\Sigma$} \notag \\
&\leq & Cr \left(\frac{r}{r_1}\right)^{\frac{1}{2}}\omega_\Sigma(x_0,r_1) + Cr_1^{\frac{2}{p'}}+ C r^{\frac{2}{p'}}
\quad\text{by Proposition~\ref{decayomegaA}}
\notag \\
&\leq & Cr \left(\frac{r}{r_1}\right)^{\frac{1}{2}}\omega_\Sigma(x_0,r_1) + Cr_1^{\frac{2}{p'}}
\quad\text{because $r\leq r_1$},\notag
\end{eqnarray}
concluding the proof.
\end{proof}

It is worth observing (though we will not use it in the sequel) that monotonicity holds also for $\omega_\Sigma$, as the following statement asserts.

\begin{corollary}\label{corYeah2}
Let  $\Sigma\subset\overline{\Om}$  be a  connected and  closed set, $f\in L^p(\Omega)$, where $p>2$,
and
$x_0\in\Sigma $.
Then \begin{align*}
 r \mapsto \omega_{\Sigma}(x_{0},r) + Cr^{\frac{2}{p'}-1},
\end{align*}
is nondecreasing on $(0,\diam(\Sigma)/2)$, where   $C=C(|\Omega|, p, \|f\|_p)>0$.
\end{corollary}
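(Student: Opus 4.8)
The plan is to obtain the monotonicity of $\omega_\Sigma(x_0,\cdot)$ (up to the correcting power) directly from the monotonicity formula of Remark~\ref{remYeah}, applied not to $u_\Sigma$ itself but to the potential $u_{\Sigma'}$ associated with the \emph{maximizer} $\Sigma'$ realizing $\omega_\Sigma(x_0,r)$. The key structural fact to exploit is that such a $\Sigma'$ remains an admissible competitor at every larger scale. Concretely, I would fix $0<r<R<\diam(\Sigma)/2$ and, using the existence of maximizers observed after~\eqref{MAXMAX}, select a connected closed $\Sigma'\in\mathcal{K}(\Omega)$ with $\Sigma'\Delta\Sigma\subset\overline{B}_r(x_0)$ and $\omega_\Sigma(x_0,r)=\frac{1}{r}\int_{B_r(x_0)}|\nabla u_{\Sigma'}|^2\,dx$. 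The crucial observation is then that $\Sigma'=\Sigma$ outside $\overline{B}_r(x_0)$.

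Next I would verify the geometric assumption~\eqref{eq:geomass} for $\Sigma'$ on every interval $[r+\delta,R]$ with $\delta>0$. Since $\Sigma$ is connected, contains $x_0$, and (being compact with $\diam\Sigma>2R$) contains a point at distance strictly greater than $R$ from $x_0$, it meets every sphere $\partial B_\rho(x_0)$ with $\rho\in(0,\diam\Sigma/2)$. As any such intersection point lies outside $\overline{B}_r(x_0)$ whenever $\rho>r$, it belongs to $\Sigma'$ as well, so $\Sigma'\cap\partial B_\rho(x_0)\neq\emptyset$ for all $\rho\in(r,R]$; in particular~\eqref{eq:geomass} holds for $\Sigma'$ on $[r+\delta,R]$.

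Hence Remark~\ref{remYeah} (the case $\gamma=2\pi$) applies to $u_{\Sigma'}$ on $[r+\delta,R]$; letting $\delta\to 0^+$ and using absolute continuity of $\rho\mapsto\int_{B_\rho(x_0)}|\nabla u_{\Sigma'}|^2\,dx$ yields
\[
\omega_\Sigma(x_0,r)+Cr^{\frac{2}{p'}-1}=\frac{1}{r}\int_{B_r(x_0)}|\nabla u_{\Sigma'}|^2\,dx+Cr^{\frac{2}{p'}-1}\leq \frac{1}{R}\int_{B_R(x_0)}|\nabla u_{\Sigma'}|^2\,dx+CR^{\frac{2}{p'}-1}.
\]
Since $\Sigma'\Delta\Sigma\subset\overline{B}_r(x_0)\subset\overline{B}_R(x_0)$, the set $\Sigma'$ is itself admissible in the maximization defining $\omega_\Sigma(x_0,R)$, whence $\frac{1}{R}\int_{B_R(x_0)}|\nabla u_{\Sigma'}|^2\,dx\leq\omega_\Sigma(x_0,R)$; combining the two estimates gives $\omega_\Sigma(x_0,r)+Cr^{\frac{2}{p'}-1}\leq\omega_\Sigma(x_0,R)+CR^{\frac{2}{p'}-1}$, which is the asserted monotonicity (recall $2/p'-1>0$ as $p>2$).

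The main obstacle is ensuring that the additive constant $C$ in the monotonicity formula does \emph{not} depend on the competitor $\Sigma'$, since otherwise no single $C$ would serve for all pairs of scales. This is guaranteed because that constant originates solely from the uniform bound $\|u_{\Sigma'}\|_\infty\leq C\|f\|_p$ of Proposition~\ref{boundus}, which holds with the same $C=C(p,|\Omega|)$ for every closed $\Sigma'\subset\overline{\Omega}$. A minor technical point is the left endpoint $\rho=r$, where $\Sigma$ and $\Sigma'$ may differ on $\partial B_r(x_0)$; I circumvent it by applying the formula on $[r+\delta,R]$ and passing to the limit rather than arguing directly at $\rho=r$.
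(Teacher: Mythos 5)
Your proof is correct and is essentially identical to the paper's own argument: take the maximizer $\Sigma'$ realizing $\omega_\Sigma(x_0,r)$, note it satisfies \eqref{eq:geomass} on $[r,R]$ because it agrees with the connected set $\Sigma$ outside $\overline{B}_r(x_0)$, apply Remark~\ref{remYeah} to $u_{\Sigma'}$, and then use that $\Sigma'$ is still admissible in the maximization defining $\omega_\Sigma(x_0,R)$. Your extra care about the endpoint $\rho=r$ (via the $\delta$-limit, which could also be avoided by noting $\Sigma'$ is closed, so $\Sigma'\cap\partial B_r(x_0)\neq\emptyset$ by taking limits of points on nearby spheres) and about the uniformity of $C$ in the competitor, via Proposition~\ref{boundus}, merely makes explicit what the paper leaves implicit.
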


\begin{proof}
Balls in this proof are centered at $x_0$.
Let $0<r_0<r_1 <\diam(\Sigma)/2$. Since $\Sigma$ is connected,~\eqref{eq:geomass} is satisfied for $r\in(0,r_{1}]$. Let $\Sigma'$ be a maximizer in~\eqref{MAXMAX} for $\omega_{\Sigma}( x_{0},r_{0})$; then $\Sigma'$ satisfies~\eqref{eq:geomass} for $r\in[r_{0},r_{1}]$. Applying Remark~\ref{remYeah} to $u_{\Sigma'}$   we get
\begin{eqnarray*}
\omega_{\Sigma}(x_{0},r_0) + Cr_0^{\frac{2}{p'}-1}&= &  \frac{1}{r_0}\int_{B_{r}} |\nabla u_{\Sigma'}|^2\, dx + Cr_0^{\frac{2}{p'}-1}\\
&\leq &  \frac{1}{r_1}\int_{B_{r_1}} |\nabla u_{\Sigma'}|^2\, dx + Cr_1^{\frac{2}{p'}-1} \\
 &\leq &\omega_\Sigma(x_{0},r_1) + Cr_1^{\frac{2}{p'}-1},
 \end{eqnarray*}
 showing the claim.
\end{proof}



\subsection{Density  estimates}

%
%
%

We now establish our main estimates about the density of the minimizer $\Sigma$ at the point $x_0\in \Sigma$. The first one is weak in the sense that $\beta$ appears in the error terms, but  will be used to find some good radius $s$ for which $\Card \Sigma \cap \partial B_s(x_0)=2$, as stated in the second item. The third item provides a similar estimate without paying $\beta$, provided that  $\Card \Sigma \cap \partial B_s(x_0)=2$; this will be our main estimate that leads to regularity.

\begin{proposition}\label{TheProp}    Let $\Omega\subset\R^2$ be a $C^1$ domain,  $f\in L^p(\Omega)$ with  $p > 2$, $\Sigma\subset \overline{\Om}$  be a  minimizer for Problem~\ref{pb_compl_pen1},  $C_0>0$ its upper Ahlfors regularity constant. Then there exists an $ r_{\Omega} >0$ (depending on $\Omega$) such that the following holds.  Assume that $x_0\in \Sigma$ and $0<r_1<\min(r_\Omega, \diam(\Sigma)/2)$.
Then the following assertions hold.  
\begin{itemize}
\item[(i)] If
\begin{equation}\label{eq_bet104}
   \beta_\Sigma(x_0,2r_1)\leq  \frac{10^{-5}}{20+C_0},
\end{equation}
then for any $r \in  [r_1/2, r_1]$,
\begin{eqnarray}
\H(\Sigma \cap B_r(x_{0})) \leq   2r+ (20+C_0)r\beta_\Sigma(x_{0},2r)+ Cr\left(\frac{r}{r_1}\right)^{\frac{1}{2}}\omega_\Sigma(x_{0},r_1) +C  r_1^{\frac{2}{p'}} \label{densityestimateKey}
\end{eqnarray}
with the constant $C=C(|\Omega|, p, \|f\|_p)>0$
such that the estimate~\eqref{eq_compl_estabove1main} from Proposition~\ref{lm_compl_locC} holds.
\item[(ii)]
If in addition to~\eqref{eq_bet104} the estimate
\begin{equation}\label{eq_om500C}
\omega_\Sigma(x_{0},r_1)+r_1^{\frac{2}{p'}-1}\leq \frac{1}{500C}
\end{equation}
is valid with $C>0$ such that~\eqref{eq_compl_estabove1main} (Proposition~\ref{lm_compl_locC}) holds, then
there exists some  $s\in [r_1/2,r_1]$ such that $\Card \Sigma \cap \partial B_s(x_0)=2$.
\item[(iii)] If both~\eqref{eq_bet104} and~\eqref{eq_om500C} hold, and $r\in [r_1/2,r_1]$  is such that   $\Card \Sigma \cap \partial B_r(x_{0})=2$,
then
\begin{itemize}
\item[(iii-1)] the two points  of $\Sigma \cap \partial B_r(x_{0})$
belong to two different connected components of $\partial B_{r}(x_0) \cap \{{y} : \dist({y},P_0) \leq \beta_{\Sigma}(x_0,r) r\}$, where $P_0$ is a line that realizes the infimum in the definition of $\beta_{\Sigma}(x_0,r)$,
\item[(iii-2)] $\Sigma\cap \overline{ B}_{r}(x_{0})$ is connected,
\item[(iii-3)]  Assume moreover that $B_{r_1}(x_0)\subset \Omega$. Then denoting $\{a_1,a_2\}=\Sigma \cap \partial B_r(x_{0})$ we have
\[
\H(\Sigma \cap B_r(x_{0})) \leq  |a_2-a_1| + Cr\left(\frac{r}{r_1}\right)^{\frac{1}{2}}\omega_\Sigma(x_{0},r_1) +C  r_1^{\frac{2}{p'}} .
\]
\end{itemize}
\end{itemize}
\end{proposition}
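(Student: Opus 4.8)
**

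The plan is to prove the three items of Proposition~\ref{TheProp} in sequence, each building on the previous, with the central tool being the energy-defect estimate~\eqref{eq_compl_estabove1main} from Proposition~\ref{lm_compl_locC} combined with the optimality of $\Sigma$. The recurring strategy is to construct an explicit connected competitor $\Sigma'$ that agrees with $\Sigma$ outside $B_r(x_0)$, compute the change in length, and play it against the change in compliance. Since $\mathcal{F}_1(\Sigma')\geq \mathcal{F}_1(\Sigma)$, we get $\H(\Sigma\cap B_r)-\H(\Sigma'\cap B_r)\leq E(u_{\Sigma})-E(u_{\Sigma'})$, and the right-hand side is exactly what~\eqref{eq_compl_estabove1main} controls by $Cr(r/r_1)^{1/2}\omega_\Sigma(x_0,r_1)+Cr_1^{2/p'}$.

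For item~(i), the plan is to replace $\Sigma\cap B_r(x_0)$ by the ``projection'' competitor: take the line $P_0$ realizing $\beta_\Sigma(x_0,2r)$ and build $\Sigma'$ by adding the diameter $P_0\cap B_r$ together with two short connecting segments from the points of $\Sigma\cap\partial B_r$ down to $P_0$. Since $\Sigma$ lies in the strip of half-width $\beta_\Sigma(x_0,2r)\,2r$ around $P_0$ (by flatness in the larger ball, using~\eqref{eq_beta_ar1} to pass from $2r$ to $r$), these connecting segments cost at most a multiple of $\beta_\Sigma(x_0,2r)\,r$, giving $\H(\Sigma'\cap B_r)\leq 2r+(20+C_0)r\beta_\Sigma(x_0,2r)$. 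The upper Ahlfors constant $C_0$ enters when bounding how many strands of $\Sigma$ must be reconnected. Plugging into the optimality inequality and applying~\eqref{eq_compl_estabove1main} yields~\eqref{densityestimateKey}. The smallness~\eqref{eq_bet104} is what guarantees the competitor stays inside $B_r$ and remains connected.

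For item~(ii), I would argue by contradiction and averaging: suppose $\Card(\Sigma\cap\partial B_s)\neq 2$ for every $s\in[r_1/2,r_1]$. By connectedness and the flatness hypothesis each such sphere meets $\Sigma$; if the count is never exactly $2$ it must be $\geq 3$ (an odd or large number) on a set of positive measure, forcing, via the coarea/integralgeometric inequality $\int_{r_1/2}^{r_1}\Card(\Sigma\cap\partial B_s)\,ds\leq \H(\Sigma\cap B_{r_1})$, a lower bound $\H(\Sigma\cap(B_{r_1}\setminus B_{r_1/2}))\geq \tfrac{3}{2}\cdot\tfrac{r_1}{2}$. But~\eqref{densityestimateKey} from item~(i), combined with the smallness conditions~\eqref{eq_bet104} and~\eqref{eq_om500C}, forces $\H(\Sigma\cap B_{r_1})$ to be so close to $2r_1$ that this lower bound is impossible. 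The quantitative choice $1/(500C)$ in~\eqref{eq_om500C} is calibrated precisely so that the error terms cannot compensate the excess length of a third strand.

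For item~(iii), given a radius $r$ with exactly two points $a_1,a_2$ on $\partial B_r$, (iii-1) is a topological/geometric observation: if both points lay in the same component of the thin strip $\cap\partial B_r$, then $\Sigma\cap B_r$ could not connect across the ball without leaving the strip, contradicting flatness; so they sit in opposite strip-components. For (iii-2), connectedness of $\Sigma\cap\overline B_r$ follows because $\Sigma$ is a chord-arc / arcwise-connected set (finite length) and any arc leaving $B_r$ and re-entering would create a third intersection with $\partial B_r$, contradicting $\Card=2$. Finally (iii-3) is the sharp competitor estimate: since there are only the two exit points $a_1,a_2$, the natural competitor replaces $\Sigma\cap B_r$ by the single segment $[a_1,a_2]$, whose length is $|a_2-a_1|$; optimality plus~\eqref{eq_compl_estabove1main} then gives the clean bound with no $\beta$-term. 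I expect the main obstacle to be item~(ii): making the counting/contradiction argument fully rigorous requires carefully controlling the measure-theoretic structure of $s\mapsto\Card(\Sigma\cap\partial B_s)$ and ensuring the averaging inequality interacts correctly with the flatness region identified in Lemma~\ref{topologicalfact}, so that ``not exactly two'' genuinely forces enough extra length to violate the density estimate of~(i).
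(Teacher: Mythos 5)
Your global scheme---build a connected competitor $\Sigma'$ with $\Sigma\Delta\Sigma'\subset \overline B_r(x_0)$, trade length against the compliance defect via Proposition~\ref{lm_compl_locC}---is indeed the paper's, and your item (iii-3) and the counting mechanism of (ii) match the actual proof. But there are genuine gaps. First, in (i) your reconnection device fails: at a \emph{fixed} radius $r$ the set $\Sigma\cap\partial B_r(x_0)$ can contain arbitrarily many (even infinitely many) points, and Ahlfors regularity bounds $\H(\Sigma\cap B_r(x_0))$, not $\Card\,(\Sigma\cap\partial B_r(x_0))$; so ``two short connecting segments from the points of $\Sigma\cap\partial B_r$'' with a cost controlled by $C_0$ is not available. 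The paper instead adds the \emph{wall} $W=\partial B_r(x_0)\cap\{y:\ d(y,P_0)\le 2r\beta_\Sigma(x_0,r)\}$: flatness at scale $2r$ confines every exit point of $\Sigma$ to $W$, so this single pair of arcs reconnects all strands at once at cost $\le 10r\beta_\Sigma(x_0,r)$, independently of their number. Moreover you misattribute $C_0$: in the paper it enters only in the boundary case, which your proposal omits entirely. When $B_r(x_0)$ meets $\partial\Omega$ the competitor need not lie in $\overline\Omega$; the paper pushes it inside with a $2$-Lipschitz vertical shift $\Phi$, using the $C^1$ flatness of $\partial\Omega$ below the scale $r_\Omega$, and the Lipschitz factor on the annulus piece (where $\Sigma$ sits in two balls of radius $2r\beta_\Sigma(x_0,2r)$, whose length is then controlled by Ahlfors regularity) produces the term $C_0\,r\beta_\Sigma(x_0,2r)$. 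This is the only place where $r_\Omega$ and the $C^1$ hypothesis are used, and your argument has no substitute for it.

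Second, (iii-1) and (iii-2) are \emph{not} topological or flatness consequences; as you state them both claims are false without minimality. For (iii-1), a ``U-turn'' configuration---$\Sigma$ entering near one end of the strip, running $\beta r$-close along the whole diameter and back, and exiting near the same end---is Hausdorff-close to $P_0\cap B_r(x_0)$ yet has both points of $\Sigma\cap\partial B_r(x_0)$ in the same strip component. For (iii-2), an arc of $\Sigma$ may leave through $a_1$ and re-enter through $a_2$, so a disconnected $\Sigma\cap\overline B_r(x_0)$ creates no third intersection with the sphere. The paper's proofs are variational: for (iii-1) one cuts with a one-sided wall of length $\le 10r\beta_\Sigma$ and gets $r\le\H(\Sigma\cap B_r(x_0))\le 3r/100$, a contradiction; for (iii-2) one invokes Lemma~\ref{lm_SigmCapBall_conn1} (if $\Sigma\cap\overline B_r(x_0)$ is disconnected and $\Card\,\partial D=2$, then $\Sigma\setminus B_r(x_0)$ is connected, hence an admissible competitor) and concludes $r\le 2r/100$. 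Finally, in (ii) your arithmetic does not close: with only $\Card\ge 1$ on $[0,r_1/2]$ and $\Card\ge 3$ on the outer half, Eilenberg gives $\int_0^{r_1}\Card\,ds\ge 2r_1$, which does not contradict the density bound $(2+3/100)r_1$ from (i). The missing input is that $\Card\ge 2$ for a.e.\ radius outside an exceptional set of measure $O(10^{-3}r_1)$ furnished by Lemma~\ref{topologicalfact} (which you do cite, but do not exploit quantitatively); this is exactly the paper's decomposition into the sets $E_1,E_2,E_3$, which yields $\H(E_3)\le r_1/2$ and hence a radius $s\in[r_1/2,r_1]$ with $\Card\,\Sigma\cap\partial B_s(x_0)=2$.
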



\begin{remark}
It is easy to see from the proof that for an internal point $x_0\in \Omega$
one can take $r_\Omega:=+\infty$. Thus if one applies this result only to internal points of $\Omega$, then the requirement that $\Omega\subset\R^2$ be a $C^1$ domain is unnecessary.
\end{remark}

\begin{remark} \label{bothside} In the sequel, when the situation of item ($iii$-$1$) will occur we will say, in short, that the two points lie ``on both sides''.
\end{remark}
\begin{proof} Every ball in this proof is centered at $x_{0}$ unless otherwise
explicitly stated.
We first prove~($i$).
Let us construct a competitor in $B_r$ for any  fixed $r\in [r_1/2, r_1]$.
Let $x_1$ and $x_2$ be the two points of $ \partial B_r\cap P_0$ and let $W$ be the little wall  defined by
\[
W:=\partial B_r \cap \{x \colon  d(x,P_0)\leq 2r\beta_\Sigma(x_{0},r) \}.
\]

We consider two cases.

{\sc Case A}: $B_{r}\subset \Omega$. Then consider the competitor (see Figure~\ref{fig:wall})
\[\Sigma':=(\Sigma \setminus B_r ) \cup W \cup [x_1,x_2].\]
\begin{figure}[htbp]
\begin{center}
\includegraphics*[width=0.8\textwidth]{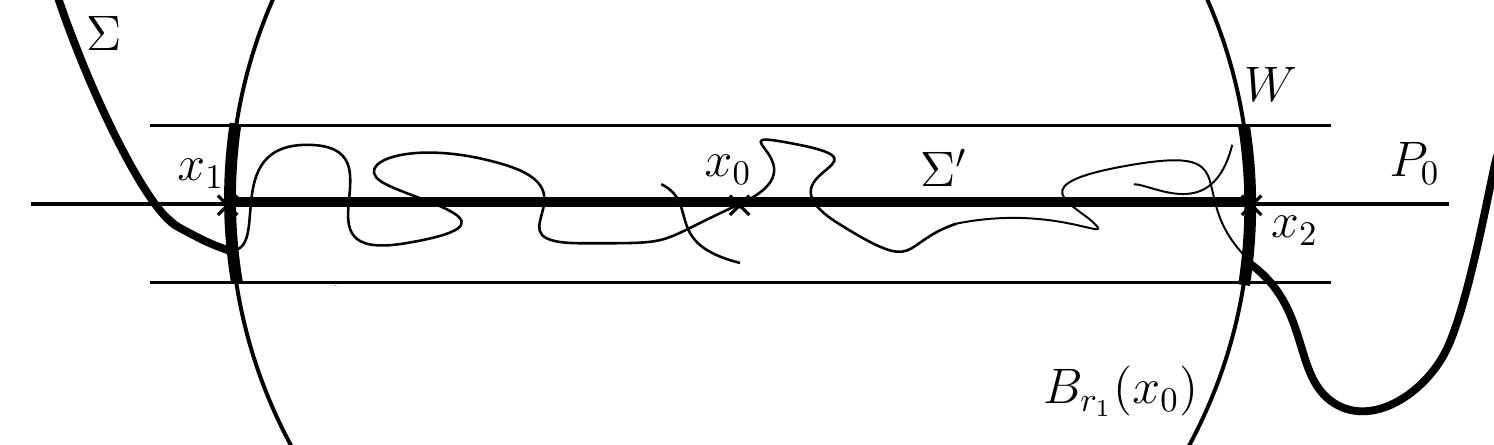}
\end{center}
\caption{The competitor $\Sigma'$ in the proof Proposition~\ref{TheProp}\,($i$) case one.}\label{fig:wall}
\end{figure}%

It is easily seen that $\Sigma'$ is a compact connected set satisfying $\Sigma\Delta \Sigma' \subset  \overline{B}_r$. Since $\Sigma$ is a minimizer, we have
\[\H(\Sigma)\leq \H(\Sigma') + E(u_\Sigma) - E(u_{\Sigma'}),\]
and thus
\begin{equation}\label{eq_PropHS1a}
\begin{aligned}
\H(\Sigma\cap B_r)& \leq 2r+ \H(W) + E(u_\Sigma) - E(u_{\Sigma'})\\
& \leq 2r+ \H(W) + Cr\left(\frac{r}{r_1}\right)^{\frac{1}{2}}\omega_\Sigma(x_0,r_1) +C  r_1^{\frac{2}{p'}}  \quad\text{by Proposition~\ref{lm_compl_locC}}.
\end{aligned}
\end{equation}
On the other hand,
\begin{equation}\label{eq_PropHS1b}
\H(W)\leq 
4r \arcsin(
\beta_{\Sigma}(x_0,r)) \leq 10r \beta_\Sigma(x_0,r),
\end{equation}
the latter inequality coming from the assumption $\beta_\Sigma(x_0,r)\leq 1/10$ minding that $|\arcsin'(z)|\leq 2$ for all $z\in [0,1/10]$.
The estimate~\eqref{eq_PropHS1a} combined with~\eqref{eq_PropHS1b} implies~($i$) in the case  $B_{r}\subset \Omega$.

{\sc Case B}: $\partial \Omega \cap B_{r} \not = \emptyset$.
Notice that, since $x_0$ may be very close to $\partial \Omega$,  the competitor $\Sigma'$  as before might not be contained in $\overline{\Omega}$   thus we need  to modify  it  in this situation. Letting $z\in \partial \Omega \cap B_{r}$ we have $B_{r}\subset B_{2r}(z)$. Moreover, since $\Omega$ is a $C^1$ domain, and in particular compact,
we can argue as in the proof of Lemma~\ref{decayboundary} to find $r_\Omega>0$ such that $\partial \Omega$ is very flat in all balls of radius less than $r_\Omega$, uniformly    on $\partial \Omega$. In other words such that $\beta_{\partial \Omega}(x,s)\leq 10^{-10}/2$  for all $x\in \partial \Omega$ and all $s\leq 2r_{\Omega}$. This means, since $2r\leq 2r_\Omega$, that   $\partial \Omega\cap B_{2r}(z)$ is localized in a very thin strip of height  $\delta:=4\cdot 10^{-10}r$ centered at $z$. Let us assume that this strip is oriented in the $\bf{e}_1$ direction and that $\Omega$ is situated below (i.e. touching the region $\{x_2<0\}$).
 Our aim is to translate locally $\Sigma'$ a little downwards, to insure that it lies in $\Omega$. For this purpose we construct a bi-Lipschitz mapping $\Phi$, equal to $\mathrm{Id}$ outside $B_{r(1+\delta)}$, and equal to $-\delta r{\bf e}_2$ in $B_{r_0}$ which will guarantee that  $\Phi(\Sigma')\subset \overline{\Omega}$ (see Figure~\ref{fig:wall2}).

More precisely, we let $\varphi :\R^+\to [0,\delta r_0]$  be a $1$-Lipschitz function, equal to $0$ on $[r(1+\delta),+\infty)$, and to $r\delta$ on $[0,r]$, and define $\Phi:\R^2\to \R^2$  as follows
$$
\Phi(x)= \mathrm{Id}- \varphi(|x-x_0|) {\bf e}_2.
$$
We notice that $\Phi$ is $2$-Lipschitz and maps $B_{r(1+\delta)}$ into itself. Next, we define as before the compact connected set
\[\Sigma':=(\Sigma \setminus B_r ) \cup (W\cap \overline{\Omega}) \cup [x_1,x_2],\]
and then the little translated one
$$\Sigma'':=\Phi(\Sigma')\subset \overline{\Omega}.$$
\begin{figure}[htbp]
\begin{center}
\includegraphics*[width=\textwidth]{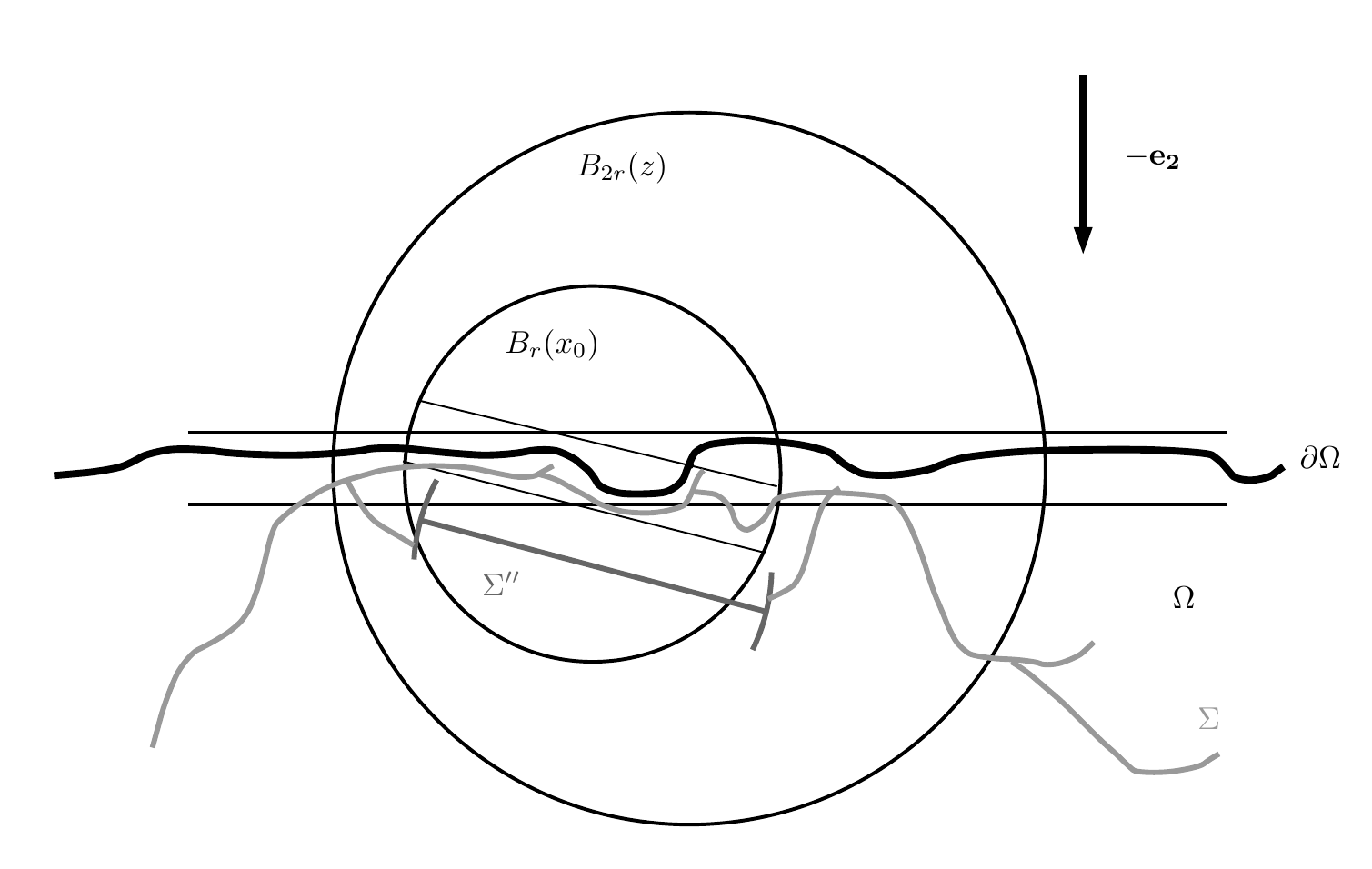}
\end{center}
\caption{The competitor $\Sigma''$ in the proof Proposition~\ref{TheProp}\,($i$), Case~B.}\label{fig:wall2}
\end{figure}%
The set $\Sigma''$ is still compact and connected and we observe
$$\Sigma''\setminus B_{r(1+\delta)} =\Sigma'\setminus B_{r(1+\delta)}=\Sigma \setminus B_{r(1+\delta)}.$$
Now we estimate $\mathcal{H}^1(\Sigma''\cap B_{r(1+\delta)})$. To this aim we decompose
$$B_{r(1+\delta)}=\Phi(\overline{B}_{r})\cup\Phi(B_{r(1+\delta)}\setminus \overline{B}_r).$$
On $B_r$, the mapping $\Phi$ is just a translation so that
$$
\mathcal{H}^1(\Sigma''\cap \Phi(B_r))=\mathcal{H}^1(\Phi(\Sigma'\cap B_r))=\mathcal{H}^1(\Sigma'\cap B_r),
$$
hence using ~\eqref{eq_PropHS1b} we get
$$
\mathcal{H}^1(\Sigma''\cap \Phi(B_r))\leq 2r+ 10r \beta_\Sigma(x_0,r).
$$
On the other hand, since $\Phi$ is $2$-Lipschitz,
\begin{eqnarray}
\mathcal{H}^1(\Sigma''\cap \Phi(B_{r(1+\delta)}\setminus \overline{B}_r))&=&\mathcal{H}^1(\Phi(\Sigma'\cap  B_{r(1+\delta)}\setminus \overline{B}_r)) \notag \\
&\leq& 2 \mathcal{H}^1(\Sigma'\cap  B_{r(1+\delta)}\setminus \overline{B}_r))   \notag \\
&=&2 \mathcal{H}^1(\Sigma \cap  B_{r(1+\delta)} \setminus \overline{B}_r))\notag \\
&\leq &  \mathcal{H}^1(\Sigma \cap  B_{r(1+\delta)} \setminus \overline{B}_r)) + 2rC_0\beta_{\Sigma}(x_0,2r), \notag
\end{eqnarray}
where $C_0$ is the upper Ahlfors regularity constant of $\Sigma$, because $\Sigma \cap  B_{r(1+\delta)}\setminus \overline{B}_r$   is localized in the union of two balls of radius bounded by $2r\beta_{\Sigma}(x_0,2r)$.

Finally,   thanks to~\eqref{eq_bet104} we still have $\beta_{\Sigma''}(x_0,r_0(1+\delta))\leq 1/10$,  so that Proposition~\ref{lm_compl_locC} applies.
 Since $\Sigma$ is a minimizer, we have
\[\H(\Sigma)\leq \H(\Sigma'') + E(u_\Sigma) - E(u_{\Sigma''}),\]
and thus using  Proposition~\ref{lm_compl_locC} we infer that
\begin{eqnarray*}
\H(\Sigma\cap B_{r})& \leq&  \H(\Sigma''\cap B_{r(1+\delta)})+ E(u_\Sigma) - E(u_{\Sigma''})\\
& \leq& 2r+ 10r \beta_\Sigma(x_0,r)+ rC_0\beta_{\Sigma}(x_0,2r) + Cr\left(\frac{r}{r_1}\right)^{\frac{1}{2}}\omega_\Sigma(x_0,r_1) +C  r_1^{\frac{2}{p'}} \\
&\leq & 2r+ (20+C_0)r \beta_\Sigma(x_0,2r) + Cr\left(\frac{r}{r_1}\right)^{\frac{1}{2}}\omega_\Sigma(x_0,r_1) +C  r_1^{\frac{2}{p'}},
\end{eqnarray*}
 where in the last inequality~\eqref{eq_beta_ar1} has been used.
This concludes the proof of~($i$) also in case~(B).


To prove~($ii$), we use Lemma~\ref{topologicalfact} (with $\varepsilon:= 10^{-4}$, and $r_0:=10^{-3}r_1$) which implies that,
\begin{eqnarray}\label{eq:doublepoints}
\H(\{s \in [10^{-3}r_1, r_1] \; : \;  \Card \Sigma \cap \partial B_{s}  < 2 \})& \leq& 2\cdot 10^{-4}r. \label{H1SS}
\end{eqnarray}
Next, applying~($i$) with $r:=r_1$, we get, thanks to our conditions on $r_1$ and $\omega_\Sigma(x,r_1)$,
\begin{eqnarray}
\H(\Sigma \cap B_{r_1}) \leq   (2+ 3/100)  r_1. \label{estimationon}
\end{eqnarray}
We shall use the following Eilenberg  inequality (\cite[2.10.25]{f}   or~\cite[Section~26, Lemma~1, p.~160]{d}),
\begin{eqnarray}
\H(\Sigma \cap B_{r_1}) \geq  \int_{0}^{ r_1} \Card \Sigma \cap \partial B_s\, ds. \label{HAA}
\end{eqnarray}
Let us define three sets
\begin{align*}
E_1& := \left\{s \in [0, r_1] \colon \Card \Sigma \cap \partial B_s  = 1 \right\},
E_2
:= \left\{s \in [0, r_1] \colon \Card \Sigma \cap \partial B_s  = 2 \right\},\\
E_3& := \left\{s \in [0, r_1] \colon\Card \Sigma \cap \partial B_s  \geq 3 \right\}.
\end{align*}
In particular,~\eqref{H1SS} says that $\H(E_1)\leq 10^{-3}r_1+2\cdot 10^{-4}r_1\leq 10^{-2} r_1$, and hence, using also~\eqref{estimationon} and~\eqref{HAA}   we deduce that
\begin{eqnarray}
 (2+ 3/100)  r_1  &\geq& \H(E_1) +2 \H(E_2) + 3\H(E_3) \notag \\
 &=  &  \H(E_1) +2 ( r_1 - \H(E_3)
 - \H(E_1) ) + 3\H(E_3) \notag \\
 &= & -\H(E_1) +2 r_1  + \H(E_3), \notag
\end{eqnarray}
from which we get
\[
\H(E_3)\leq \frac{3r_1}{100} + \H(E_1)\leq \frac{4r_1}{100}\leq \frac{r_1}{2},
\]
and therefore   $E_2 \cap [r_1/2,  r_1]\neq \emptyset$.

It remains to prove~($iii$). We argue in a similar way, but now since $\Sigma \cap \partial B_r=\{a_1,a_2\}$, i.e.\ $\Card \Sigma \cap \partial B_r=2$, the wall set $W$ is no more needed to make a competitor, so we get a better estimate by taking  as a competitor a replacement of $\Sigma$ by just a segment inside $B_r$ joining the two points $a_1$ and $a_2$, which leads to the estimate in~($iii$-3).
\begin{figure}[htbp]
\begin{center}
\includegraphics*[width=0.7\textwidth]{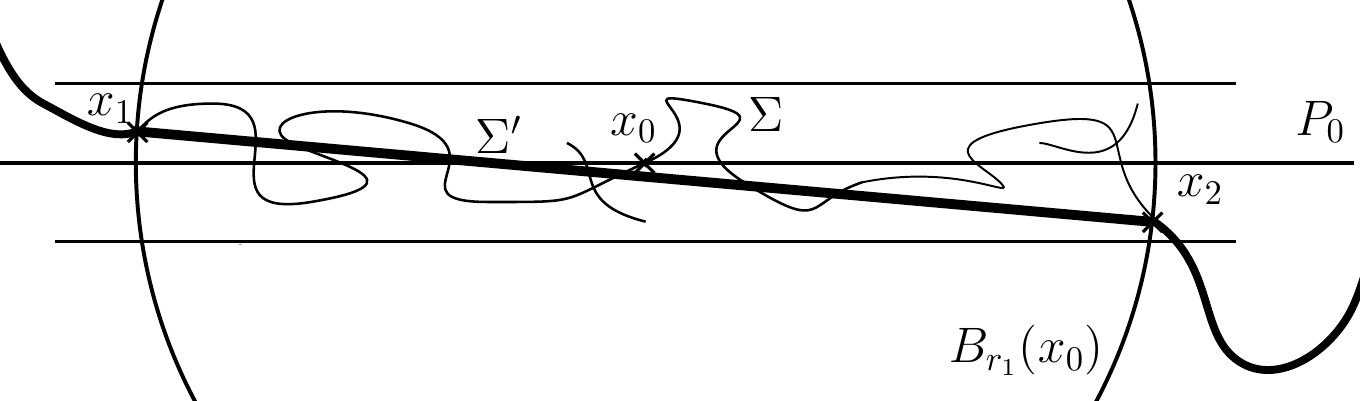}
\end{center}
\caption{The competitor $\Sigma'$ in the proof Proposition~\ref{TheProp}\,($iii$).}\label{fig:wall1}
\end{figure}%

Thus we only need to prove~($iii$-1) and~($iii$-2). Let us first prove~($iii$-1). Supposing the contrary,  we could take as a competitor the set
\[
\Sigma' := (\Sigma \setminus B_r) \cup  (W\cap \overline{\Omega})
\]
where $W$ is a little wall on one side of length less than $10r\beta_\Sigma(x,r_1)$. This would imply
\begin{eqnarray}
r\leq \H(\Sigma\cap B_r)&\leq&  10 r  \beta_\Sigma(x,r) + Cr\left(\frac{r}{r_1}\right)^{\frac{1}{2}}\omega_\Sigma(x,r_1) +C  r_1^{\frac{2}{p'}}  \notag \\
&\leq &  \frac{r}{100}   + \frac{r}{100} +\frac{r}{100}, \label{maindens}
\end{eqnarray}
hence a desired contradiction.

To prove~($iii$-2), we note that by Lemma~\ref{lm_SigmCapBall_conn1} below (with $D:=B_r\cap \Sigma$), if $\Sigma \cap \overline{B}_r$ is not connected, then  the set $\Sigma \setminus B_r$
has to be connected.
%
%
It follows that  $\Sigma \setminus B_r$ is a competitor, and as before, comparing the energy of $\Sigma$ and $\Sigma':=\Sigma\setminus B_r$ leads to
\begin{eqnarray}
r\leq \H(\Sigma\cap B_r)&\leq&    2r/100 ,
\end{eqnarray}
which is a desired contradiction.  Thus we have proven that~($iii$-2) holds true, so that the proof of~($iii$) is concluded.
\end{proof}

\begin{lemma}\label{lm_SigmCapBall_conn1}
If $\Sigma$ is an arcwise connected metric space, $D\subset \Sigma$ is its open subset, and
$\overline{D}$ is not arcwise connected, but $\Card \partial D=2$, then  the set $\Sigma \setminus D$ is arcwise connected.
\end{lemma}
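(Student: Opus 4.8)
The plan is to work throughout with path-connectedness, using the standard fact that in a metric space arcwise connectedness and path connectedness coincide (a path between two points always contains an arc between them). Write $\{a,b\}=\partial D$ with $a\neq b$, and decompose $\Sigma$ into the three disjoint pieces $D$, $\{a,b\}$ and the exterior $U:=\Sigma\setminus\overline{D}$. Setting $A:=\overline{D}=D\cup\{a,b\}$ and $B:=\Sigma\setminus D=U\cup\{a,b\}$, one has $A\cup B=\Sigma$, $A\cap B=\{a,b\}$, and both $A,B$ are closed; since $D$ is open, a boundary point of $U$ cannot lie in $D$, so $\partial U\subseteq\{a,b\}$ and $\overline{U}\subseteq B$. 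The whole argument will then reduce connectedness of $B$ to a statement about how $a$ and $b$ sit in $A$ and in $B$.

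First I would prove a ``first-exit'' observation: every point $d\in D$ can be joined to $a$ or to $b$ by a path lying entirely in $A$. Indeed, since $\partial D\neq\emptyset$ we have $\Sigma\setminus D\neq\emptyset$; taking any path $\gamma$ in $\Sigma$ from $d$ to a point of $\Sigma\setminus D$ and letting $\tau:=\inf\{t:\gamma(t)\notin D\}$, openness of $D$ forces $\gamma(\tau)\in\partial D=\{a,b\}$ while $\gamma([0,\tau])\subseteq\overline{D}=A$. Consequently every point of $A$ lies in the path component of $a$ or in that of $b$, so $A$ has at most these two path components, and $A$ is arcwise connected if and only if $a$ and $b$ lie in the same path component of $A$. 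In particular the hypothesis ``$\overline{D}$ is not arcwise connected'' is exactly the statement that $a$ and $b$ cannot be joined by an arc inside $A$. The identical argument applied to $U$ (using $\overline{U}\subseteq B$) shows that every point of $B$ is joined within $B$ to $a$ or to $b$, hence $B$ is arcwise connected if and only if $a$ and $b$ lie in the same path component of $B$.

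It then remains to prove the dichotomy that $a$ and $b$ are joined by an arc inside $A$ or inside $B$. Here I would use that $\Sigma$ itself is arcwise connected to pick a path $\gamma\colon[0,1]\to\Sigma$ with $\gamma(0)=a$ and $\gamma(1)=b$. On each connected component of $[0,1]\setminus\gamma^{-1}(\{a,b\})$ the path avoids $\{a,b\}$ and, by connectedness of the interval, stays entirely in the open set $D$ or entirely in $U$; its closure is therefore contained in $A$ or in $B$. To extract a single ``monochromatic'' crossing, set $s^*:=\sup\{t:\gamma(t)=a\}$, so that $\gamma(s^*)=a$ by closedness of $\gamma^{-1}(a)$, and let $t^*:=\inf\{t>s^*:\gamma(t)\in\{a,b\}\}$; continuity forces $\gamma(t^*)=b$ and $t^*>s^*$, and $(s^*,t^*)$ contains no preimage of $\{a,b\}$, so $\gamma|_{[s^*,t^*]}$ is a path from $a$ to $b$ lying entirely in $A$ or entirely in $B$. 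This yields the dichotomy.

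Combining everything, the hypothesis gives that $a$ and $b$ are not joined inside $A$, so by the dichotomy they are joined by an arc inside $B$; by the ``first-exit'' description of $B$ this means that $B=\Sigma\setminus D$ is arcwise connected, as claimed. I expect the only delicate point to be the monochromatic-crossing extraction of the third paragraph, namely checking that the supremum/infimum construction really produces a nondegenerate interval mapped into a single one of $D$, $U$ with both endpoints in $\{a,b\}$; the reduction in the second paragraph is the conceptual heart, since it recasts both the hypothesis and the conclusion purely in terms of whether $a$ and $b$ share a path component of $A$, respectively of $B$.
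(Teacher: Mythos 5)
Your proof is correct, but it is organized differently from the paper's. The paper fixes two arbitrary points $z_1,z_2\in\Sigma\setminus D$, joins them by an \emph{arc} $\Gamma\subset\Sigma$, and shows directly that $\Gamma$ must avoid $D$: since $\Card\,\partial D=2$ and $\Gamma$ is injective, $\Gamma$ cannot enter and leave $D$ through the same boundary point, so if it met $D$ it would enter at $a_1$ and exit at $a_2$, and the sub-arc between them would join $a_1$ to $a_2$ inside $\overline D$; together with the observation (which you also prove, as your ``first-exit'' step) that every point of $\overline D$ is joined within $\overline D$ to $a_1$ or $a_2$, this would make $\overline D$ arcwise connected, contradicting the hypothesis. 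You instead symmetrize the problem: you reduce both the hypothesis and the conclusion to whether $a$ and $b$ share a path component of $A=\overline D$, respectively of $B=\Sigma\setminus D$ (applying the first-exit argument to $D$ and to $U=\Sigma\setminus\overline D$, using $\partial U\subseteq\{a,b\}$), and then establish the dichotomy that any path from $a$ to $b$ in $\Sigma$ contains a crossing from $a$ to $b$ lying entirely in $A$ or entirely in $B$, via the $\sup/\inf$ extraction on the open partition $\Sigma\setminus\{a,b\}=D\sqcup U$. Your route never uses injectivity of the connecting curve (paths suffice, arcs being recovered at the end from the standard path-implies-arc fact in metric spaces), treats $\overline D$ and $\Sigma\setminus D$ symmetrically, and nails down the small topological points ($\partial U\subseteq\{a,b\}$, nondegeneracy of $[s^*,t^*]$, $\gamma(t^*)=b$) that the paper's compressed case analysis on $\Gamma\cap\overline D$ glosses over; the paper's route is shorter and yields the slightly stronger information that \emph{every} arc in $\Sigma$ between two points of $\Sigma\setminus D$ already avoids $D$, rather than merely producing some connecting arc inside $\Sigma\setminus D$.
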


\begin{proof}
Let $\partial  D:=\{a_1,a_2\}$, and consider an arbitrary couple of points $\{z_1, z_2\} \subset \Sigma \setminus D$
and an arc  $\Gamma\subset \Sigma$ that connects $z_1$ and $z_2$. Then either $\Gamma\cap \overline{D} =\emptyset$, in which case
$\Gamma\cap D =\emptyset$, or $\Card \Gamma\cap \overline{D} =1$,  in which case
again $\Gamma\cap D =\emptyset$ (because $\Gamma$ enters into $\overline{D}$ through, say, $a_1$, but it cannot enter $D$, since then
it must exit through the same point $a_1$, which is impossible by injectivity of $\Gamma$), or else $\Gamma\cap \overline{D} =\{a_1,a_2\}$, so that $\Gamma$ enters $\overline{D}$ at, say $a_1$ and exits at, say $a_2$. In this last case this means that
there exists a curve in $\overline{D}$ that connects $a_1$ to $a_2$, but then since other point in $\overline{D}$ is connected to  $z_1$ by some curve, that passes necessarily through either $a_1$ or $a_2$, this means that $\overline{D}$ is arcwise connected.
\end{proof}

\subsection{Flatness estimates}

We begin with a standard flatness estimate on curves coming from Pythagoras inequality.

\begin{lemma}\label{pythag} Let $\Gamma$ be an arc in $\overline{B}_r(x_0)$ satisfying   $\beta_\Gamma(x_0,r)\leq 1/10$,
and which connects two points  $x_1, x_2 \in \partial B_{r}(x_0)$ lying on both sides (as defined in Remark~\ref{bothside}). Then
\begin{eqnarray}
\dist(z,[x_1,x_2])^2 \leq 2r (\H(\Gamma)-|x_2-x_1|),  \quad \text{for all } z \in \Gamma. \label{estimgeom}
 \end{eqnarray}
\end{lemma}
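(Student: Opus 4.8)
The plan is to reduce the statement to an elementary inequality about three points lying in $\overline{B}_r(x_0)$, after using that $\Gamma$ is an arc. First I would fix $z\in\Gamma$ and exploit that, $\Gamma$ being a homeomorphic image of an interval joining $x_1$ to $x_2$, the point $z$ splits it into two subarcs $\Gamma_1$ (from $x_1$ to $z$) and $\Gamma_2$ (from $z$ to $x_2$) with $\H(\Gamma)=\H(\Gamma_1)+\H(\Gamma_2)$. Since the length of an arc dominates the distance between its endpoints, this gives $\H(\Gamma)\ge |x_1-z|+|z-x_2|$. Writing $d_1:=|x_1-z|$, $d_2:=|z-x_2|$ and $L:=|x_1-x_2|$, it therefore suffices to prove the purely geometric estimate $\dist(z,[x_1,x_2])^2\le 2r\,(d_1+d_2-L)$, since the right-hand side is then bounded by $2r(\H(\Gamma)-L)$.

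To establish this geometric estimate I would let $p$ be the orthogonal projection of $z$ onto the line through $x_1,x_2$ and $h:=|z-p|=\dist(z,\text{line})$, and distinguish two cases according to whether $p$ lies on the segment $[x_1,x_2]$ or not. If $p\in[x_1,x_2]$, then $\dist(z,[x_1,x_2])=h$, and writing $a:=|x_1-p|$, $b:=|p-x_2|$ (so that $a+b=L$) Minkowski's inequality applied to the vectors $(a,h)$ and $(b,h)$ yields $d_1+d_2=\sqrt{a^2+h^2}+\sqrt{b^2+h^2}\ge\sqrt{L^2+4h^2}$. Hence $d_1+d_2-L\ge\sqrt{L^2+4h^2}-L=\dfrac{4h^2}{\sqrt{L^2+4h^2}+L}$, and since all three points lie in $\overline B_r(x_0)$ we have $L\le 2r$ and $h\le d_1\le 2r$, so the denominator is at most $\sqrt{20}\,r+2r\le 8r$; this gives $d_1+d_2-L\ge h^2/(2r)$, as required.

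If on the other hand $p\notin[x_1,x_2]$, say $p$ lies beyond $x_2$, then the nearest point of the segment to $z$ is the endpoint $x_2$, so $\dist(z,[x_1,x_2])=d_2$. Setting $b':=|x_2-p|$ one has $|x_1-p|=L+b'$, whence $d_1=\sqrt{(L+b')^2+h^2}\ge L+b'$ and therefore $d_1+d_2-L\ge b'+d_2\ge d_2$. Using once more that $d_2\le 2r$, I conclude $\dist(z,[x_1,x_2])^2=d_2^2\le 2r\,d_2\le 2r\,(d_1+d_2-L)$, which is the claim in this case as well (the case where $p$ lies beyond $x_1$ being symmetric).

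The only genuinely delicate point is the case analysis on the foot of the perpendicular: because $\dist(z,[x_1,x_2])$ is the distance to the \emph{segment} and not to the line, the naive Pythagoras estimate applies only when the projection falls inside $[x_1,x_2]$, and one must treat the endpoint case separately — the hypotheses that $x_1,x_2$ lie on both sides and that $\beta_\Gamma(x_0,r)\le 1/10$ do not by themselves force the projection to be interior. Beyond this, the estimate is entirely elementary, and, as the computation shows, the flatness bound serves only to fix the geometric configuration rather than entering quantitatively.
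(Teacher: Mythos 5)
Your proof is correct, and it takes a genuinely different route from the paper's. The paper picks the extremal point $z\in\Gamma$ farthest from $[x_1,x_2]$, compares it with the apex $z'$ of the isosceles triangle over $[x_1,x_2]$ with the same height $h$, and uses the factorization $h^2=\ell^2-a^2=(\ell-a)(\ell+a)$ with $\ell=|z'-x_1|$ and $a=|x_1-x_2|/2$; the key inequality $\H(\Gamma)\geq 2\ell$ (which silently contains the same Minkowski-type estimate you make explicit, since among points at distance $h$ from the segment the isosceles apex minimizes $|x_1-\cdot|+|\cdot-x_2|$, including when the projection falls outside the segment) then gives $h^2\leq \frac12(\H(\Gamma)-|x_1-x_2|)(\ell+a)$, and the flatness hypothesis is invoked only to bound $\ell\leq 2r$. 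You instead work with an arbitrary $z$, split the arc at $z$ to get $\H(\Gamma)\geq d_1+d_2$, and run a direct two-case analysis on the foot of the perpendicular: Minkowski's inequality in the interior case, with the correct chain $d_1+d_2-L\geq 4h^2/(\sqrt{L^2+4h^2}+L)\geq h^2/(2r)$ (your bounds $L\leq 2r$, $h\leq d_1\leq 2r$, and $\sqrt{20}\,r+2r\leq 8r$ all check out), and the elementary estimate $\dist(z,[x_1,x_2])=d_2\leq 2r$ in the exterior case. Your route buys two things: it is uniform in $z$ rather than requiring an extremal-point selection, and it shows that the hypotheses $\beta_\Gamma(x_0,r)\leq 1/10$ and the ``both sides'' condition are not needed at all --- containment of $\Gamma$, $x_1$, $x_2$ in $\overline{B}_r(x_0)$ already yields the stated constant $2r$. (In fact even the paper's own argument could dispense with flatness: without it one still has $\ell\leq\sqrt{5}\,r$ and $a\leq r$, so $(\ell+a)/2\leq 2r$.) What the paper's approach buys is brevity and, when flatness is available, a slightly sharper constant, since then $(\ell+a)/2$ is close to $r$ rather than $2r$. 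Your closing caveat is accurate and well taken: the projection-outside-the-segment case is a real case that the naive Pythagoras estimate misses, the paper's isosceles comparison absorbs it implicitly, and your case (b) handles it correctly and explicitly.
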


\begin{proof}
 Assume that $z$ is the most distant point from the segment $[x_1,x_2]$ in $\Gamma$ and let $z'$ be the point making $(x_1,x_2,z')$ an isosceles triangle with same height  that we denote  $h=:\dist(z,[x_1,x_2])$, see Figure~\ref{figureB}.
 \begin{figure}[htbp]
\begin{center}
\includegraphics*[width=0.7\textwidth]{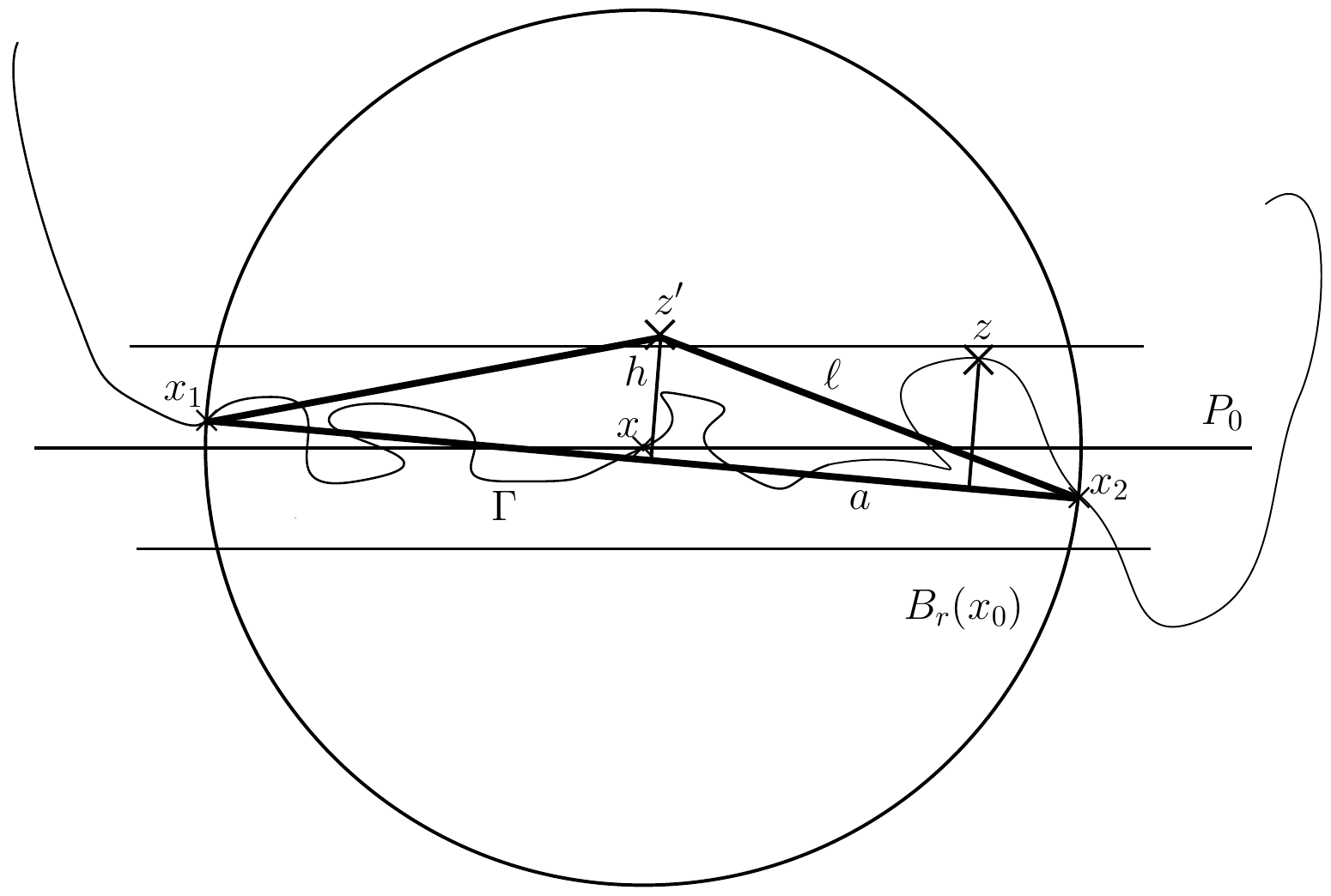}
\end{center}
\caption{Flatness estimate.}\label{figureB}
\end{figure}%
  Let $a:=|x_2-x_1|/2$ and $\ell := |z'-x_1|$. We have that
\[h^2 = \ell^2-a^2=(\ell -a) (\ell +a).\]
On the other hand $\H(\Gamma)\geq 2\ell$ so that
\[h^2\leq \frac{1}{2} (\H(\Gamma )-|x_1-x_2|) (\ell+a).\]
But now since $\beta_\Gamma(x_0,r)\leq 1/10$ it is easily seen that $\ell \leq \sqrt{101}r/10 \leq 2r$ and $a\leq r$, so follows the Lemma.
\end{proof}

\begin{remark} Notice that, contrary to the similar statements that sometimes can be found in the literature,  Lemma~\ref{pythag} would not be true replacing the curve  $\Gamma$ by an arbitrary closed connected set. Indeed, in an arbitrary connected set $\Sigma$, the curve from $x_1$ to $z$ and the curve from $z$ to $x_2$ may overlap, so that the sum of the  length of both curves may not be smaller than the length of $\Sigma$. Anyway, in the next proposition, we shall apply this lemma in an arbitrary connected set $\Sigma$ as follows: we first find an injective curve in $\Sigma$ from $x_1$ to $x_2$ and control the distance of that curve to the segment $[x_1,x_2]$. Then, this curve will have length at least $|x_1-x_2|$ and we will control the distance of the remaining parts of $\Sigma$ to $[x_1,x_2]$ by a density estimate which will say that the total length of the little forgotten pieces is very small (thus, by connectedness, very close in distance as well).
\end{remark}

We now can prove the existence of a threshold for which $\beta$ stays small at smaller scales as soon as it is small at one scale.

\begin{proposition}\label{firstdecay}  Let $\Omega\subset\R^2$,  $f\in L^p(\Omega)$ with  $p > 2$, $\Sigma\subset \overline{\Om}$  be a  minimizer, and $C_0$ be its
upper Ahlfors regularity constant.  Then there exist the numbers
$\tau_1$, $\tau_2$ with  $0<\tau_2 <\tau_1< \frac{10^{-5}}{20+C_0}$ and  $r_0>0$ such that whenever $x\in \Sigma$ and $0<r<r_0$ satisfy  $B_{r}(x) \subset \Omega$ and
\begin{eqnarray}
\beta_\Sigma(x,r)\leq \tau_1, \quad\quad \omega_\Sigma(x,r)\leq \tau_2, \label{toto1}
\end{eqnarray}
then
\begin{enumerate}
\item[(i)] ~\eqref{toto1} also holds with $r/16$ instead of $r$.
\item[(ii)] $$\omega_\Sigma(x,r/256) \leq  \frac{1}{2}\omega_\Sigma(x,r) + Cr^{\frac{2}{p'}-1}$$
\item[(iii)]  $$\beta_\Sigma(x,r/4)\leq  C (\omega_\Sigma(x,r))^{\frac{1}{2}} + Cr^{\frac{1}{p'}-\frac{1}{2}}$$
\end{enumerate}
with $C=C(|\Omega|, p, \|f\|_p)>0$.
\end{proposition}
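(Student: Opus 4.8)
The plan is to establish the three conclusions in the order (ii), (iii), (i), since (iii) is the substantial one while (i) follows by combining it with the energy decay. Throughout I fix $x\in\Sigma$, $r<r_0$ with $B_r(x)\subset\Omega$, and assume \eqref{toto1}; the thresholds will be pinned down at the end, with in particular $\tau_1<10^{-5}/(20+C_0)$ (forced by the statement) and $\tau_1<1/512$. Being at an interior point, the auxiliary results apply for $r<\min(r_0,\diam(\Sigma)/2)$. The main obstacle will be (iii): transferring the \emph{one-dimensional} Pythagoras estimate of Lemma~\ref{pythag}, which lives on a single arc, to the whole connected set $\Sigma\cap B_{r/4}(x)$.

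For (ii) I would apply Proposition~\ref{decayomegaA} with $r_1:=r$ and target radius $r/256$. Since $\beta_\Sigma(x,r)\le\tau_1<1/512$, the admissibility $r/256\in(2\tau_1 r,r)$ holds, and the proposition gives $\omega_\Sigma(x,r/256)\le 8(1/256)^{1/2}\omega_\Sigma(x,r)+Cr^{2/p'}\cdot\frac{256}{r}$. As $8\cdot(1/256)^{1/2}=\tfrac12$, this is exactly $\tfrac12\omega_\Sigma(x,r)+C'r^{2/p'-1}$ after renaming the constant. For (iii), I would first apply Proposition~\ref{TheProp} with its radius set to $\rho:=r/2$: the hypotheses \eqref{eq_bet104} and \eqref{eq_om500C} follow from \eqref{toto1} via \eqref{eq_beta_ar1} and \eqref{eq_omega_compar1} (which give $\beta_\Sigma(x,r)\le\tau_1$ and $\omega_\Sigma(x,r/2)\le 2\tau_2$) once $\tau_1,\tau_2,r_0$ are small. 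Item $(ii)$ there produces a good radius $s\in[r/4,r/2]$ with $\Card(\Sigma\cap\partial B_s(x))=2$; call the two points $a_1,a_2$. Since $s\ge r/4$ we have $\Sigma\cap B_{r/4}(x)\subset\Sigma\cap B_s(x)$, so it suffices to approximate $\Sigma\cap B_s(x)$ by a line. Items $(iii\text{-}1)$–$(iii\text{-}3)$ give, respectively, that $a_1,a_2$ lie on both sides (Remark~\ref{bothside}) of the best line $P_0$, that $\Sigma\cap\overline B_s(x)$ is connected, and the density bound
\[
\H(\Sigma\cap B_s(x))\le |a_2-a_1|+\eta,\qquad \eta:=Cs\Big(\tfrac{s}{\rho}\Big)^{1/2}\omega_\Sigma(x,\rho)+C\rho^{2/p'}\le Cr\,\omega_\Sigma(x,r)+Cr^{2/p'}.
\]

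Next I would extract from the connected finite-length set $\Sigma\cap\overline B_s(x)$ an injective arc $\Gamma$ joining $a_1$ to $a_2$ (connected continua of finite length are arcwise connected). Because $\Gamma$ stays inside the $\beta_\Sigma(x,s)\,s$-strip around $P_0$ and spans $B_s$, its flatness satisfies $\beta_\Gamma(x,s)\le C\tau_1\le 1/10$, and its endpoints lie on both sides; hence Lemma~\ref{pythag} yields $\dist(z,[a_1,a_2])^2\le 2s\big(\H(\Gamma)-|a_2-a_1|\big)\le 2s\eta$ for all $z\in\Gamma$, using $\H(\Gamma)\le\H(\Sigma\cap B_s(x))$. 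The ``forgotten'' pieces $\Sigma\cap B_s(x)\setminus\Gamma$ have total length at most $\H(\Sigma\cap B_s(x))-|a_2-a_1|\le\eta$, so by connectedness each of their points lies within geodesic—hence Euclidean—distance $\eta$ of $\Gamma$. Thus every $y\in\Sigma\cap B_{r/4}(x)$ obeys $\dist(y,[a_1,a_2])\le\eta+\sqrt{2s\eta}$. Taking $P$ to be the line through $x$ parallel to $[a_1,a_2]$ (note $x\in\Sigma$, so $\dist(x,[a_1,a_2])\le\eta+\sqrt{2s\eta}$) and using that the spanning arc $\Gamma$ covers the projection of $B_{r/4}(x)$ onto $P$ for the reverse inclusion, I obtain $d_H(\Sigma\cap B_{r/4}(x),P\cap B_{r/4}(x))\le C(\eta+\sqrt{s\eta})$. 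Dividing by $r/4$ and inserting $\eta\le Cr\,\omega_\Sigma(x,r)+Cr^{2/p'}$, the dominant contribution $\sqrt{s\eta}$ produces $\beta_\Sigma(x,r/4)\le C\,\omega_\Sigma(x,r)^{1/2}+Cr^{1/p'-1/2}$ (here $p>2$ makes $r^{1/p'-1/2}$ dominate $r^{2/p'-1}$ and $\omega^{1/2}$ dominate $\omega$), which is exactly (iii).

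Finally, (i) follows by combining the previous two. For the flatness, \eqref{eq_beta_ar1} gives $\beta_\Sigma(x,r/16)\le 4\beta_\Sigma(x,r/4)\le 4C\tau_2^{1/2}+4Cr_0^{1/p'-1/2}$, which is $\le\tau_1$ once $\tau_2$ and $r_0$ are taken small relative to $\tau_1$. For the energy smallness, I would invoke the almost-monotonicity of Corollary~\ref{corYeah2} (equivalently the decay of Propositions~\ref{prop_compl_monot4}–\ref{decayomegaA}): it bounds $\omega_\Sigma(x,r/16)$ by $\omega_\Sigma(x,r)$ up to the additive error $Cr^{2/p'-1}$, so that $\omega_\Sigma(x,r/16)\le\tau_2$ is retained for $\tau_2,r_0$ chosen appropriately small; this closes the loop and allows re-entering the proposition at scale $r/16$. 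As anticipated, the crux is (iii): the necessity of the two-step ``arc plus forgotten pieces'' argument, made possible only by the connectedness conclusions $(iii\text{-}1)$–$(iii\text{-}2)$ of Proposition~\ref{TheProp} and by the length control of the density estimate, is what distinguishes this flatness improvement from the classical curve case.
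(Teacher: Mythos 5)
Your proposal reproduces the paper's proof almost step for step: (ii) is Proposition~\ref{decayomegaA} applied at ratio $1/256$ (with $8\cdot(1/256)^{1/2}=\tfrac12$); (iii) is exactly the paper's argument --- Proposition~\ref{TheProp} to produce $s\in[r/4,r/2]$ with $\Card\,\Sigma\cap\partial B_s(x)=2$, the two points on both sides, connectedness of $\Sigma\cap\overline{B}_s(x)$, the density bound~($iii$-3), then an injective arc combined with Lemma~\ref{pythag} and the ``forgotten pieces'' of total length at most $\eta$ swept up by connectedness; and the $\beta$-half of (i) follows from (iii) via the scaling~\eqref{eq_beta_ar1}, just as in the paper. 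Your explicit choice $r_1=r/2$ in Proposition~\ref{TheProp} is in fact cleaner than the paper's parenthetical ``with $x$, $r$ instead of $x_0$, $r_1$'', which is inconsistent with its own conclusion $s\in[r/4,r/2]$; your dominance checks ($\omega\le\sqrt{\omega}$ since $\omega\le 1$, and $r^{2/p'-1}\le r^{1/p'-1/2}$ since $1/p'>1/2$) match~\eqref{etape} and the lines following it.

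The one step that does not close as written is the $\omega$-half of (i). Corollary~\ref{corYeah2} gives only $\omega_\Sigma(x,r/16)\le\omega_\Sigma(x,r)+Cr^{2/p'-1}$, and when the hypothesis is saturated, $\omega_\Sigma(x,r)=\tau_2$, the right-hand side exceeds $\tau_2$ no matter how small $\tau_2$ and $r_0$ are chosen: the error is additive on top of $\tau_2$, so it cannot be absorbed by shrinking either parameter, and Proposition~\ref{decayomegaA} at ratio $1/16$ is no better since $8(1/16)^{1/2}=2>1$. To be fair, the paper's own proof is equally silent on this half --- it verifies only the $\beta$-part of (i) --- and the self-improvement actually used downstream (Proposition~\ref{prop_compl_decay2}) runs at ratio $a=1/256$, where the issue disappears: (ii) together with the standing constraint $256Cr_0^{2/p'-1}\le\tau_2/2$ yields $\omega_\Sigma(x,r/256)\le\tfrac12\tau_2+Cr^{2/p'-1}\le\tau_2$, while $\beta_\Sigma(x,r/256)\le 64\,\beta_\Sigma(x,r/4)\le\tau_1$ by (iii), once $\tau_2$ and $r_0$ are small relative to $\tau_1$. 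So you should either restate (i) at scale $r/256$ or observe that only that version is needed for the iteration; with that repair your argument closes exactly as the paper's does.
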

\begin{proof} We first fix $r_0$ and $\tau_2$ small enough so that
\begin{eqnarray}
\tau_2+r_0^{\frac{2}{p'}-1}\leq \frac{1}{1000C}, \label{concondition}
\end{eqnarray}
 where $C$ appears in Proposition~\ref{lm_compl_locC}, and we also assume that
\begin{eqnarray}
 256Cr_0^{\frac{2}{p'}-1} \leq \frac{\tau_2}{2}.
\end{eqnarray}
Take an arbitrary $r\in (0,r_0)$ as in the statement. The control on $\omega_\Sigma$ will be achieved by use of Proposition~\ref{decayomegaA} which says that
$$\omega_\Sigma(x,r/256) \leq  8\left(\frac{1}{256}\right)^{\frac{1}{2}}\omega_\Sigma(x,r) + 256Cr^{\frac{2}{p'}-1}.$$
This proves $(ii)$. The proof of $(i)$ will be accomplished as soon as we prove $(iii)$ because
$$\beta_\Sigma(x,r/16)\leq 4\beta_{\Sigma}(x,r/4) \leq C \sqrt{\tau_2} +Cr_0^{\frac{1}{p'}-\frac{1}{2}}\leq \tau_1 ,$$
provided that we choose $\tau_2$ and $r_0$ small enough with respect to $\tau_1$.

Next, recalling~\eqref{concondition}, we apply Proposition~\ref{TheProp} ($ii$) in order to find some $s\in [r/4,r/2]$ such that ${\Card}\Sigma \cap \partial B_s =2$ (notice indeed that $\beta_\Sigma(x,r)\leq 10^{-5}/(20+C_0)$ and all the assumptions of Proposition~\ref{TheProp} are fulfilled with $x$, $r$ instead of $x_0$, $r_1$ respectively). Then assertion ($iii$) of Proposition~\ref{TheProp} says that the two points $a_1$ and $a_2$ of $\Sigma \cap \partial B_s(x)$ must lie on both sides, and $B_s(x)\cap \Sigma$ is connected. Moreover, Proposition~\ref{TheProp} ($iii$-3) says
\begin{eqnarray}
 \mathcal{H}^1(\Sigma \cap B_s(x))& \leq&  |a_2-a_1|  + Cs\left(\frac{s}{r}\right)^{\frac{1}{2}}\omega_\Sigma(x,r) +C  r^{\frac{2}{p'}}  \label{HYY} \\
 &=: & |a_2-a_1|  + R.\notag
 \end{eqnarray}
Let $\Gamma\subset \Sigma\cap \overline{B}_s(x)$ be an injective curve that connects $a_1$ and $a_2$.   Lemma~\ref{pythag} (with $s$ instead of $r$) says that
\begin{eqnarray}
\sup_{z\in \Gamma} \dist(z,[a_1,a_2])^2 &\leq& 2s (\HH^1(\Gamma)-|a_2-a_1|) \notag \\
&\leq& 2sR.
\end{eqnarray}
Next, since $\H(\Gamma)\geq |a_2-a_1|$ we also get from~\eqref{HYY} that
$$\H(\Sigma \cap B_s(x)\setminus \Gamma)\leq   R.$$
But this implies
$$
\sup_{z\in \Sigma \cap B_s(x)\setminus \Gamma} \dist(z,\Gamma) \leq R,
$$
and hence
\begin{eqnarray}
\sup_{z\in \Sigma \cap B_s(x) } \dist(z,[a_1,a_2]) \leq \sqrt{2r R}+R. \label{estimation1con}
\end{eqnarray}
From the connectedness of $\Sigma \cap B_s$ and~\eqref{estimation1con}, it is not difficult to deduce  that moreover
$$
\sup_{z\in [a_1,a_2] } \dist(z,\Sigma ) \leq \sqrt{2rR}+R,
$$
(because if it were not true we would have a too big hole in $\Sigma$) thus we have proved that
$$s\beta_\Sigma(x,s) \leq 2(\sqrt{2rR} +R )$$
(we have lost a further factor $2$ because $[a_1,a_2]$ may not be passing through $x$).
But then
$$\beta_\Sigma\left(x, \frac{r}{4}\right) \leq 2\beta_\Sigma(x,s) \leq \frac{4}{s}(\sqrt{2rR}+R).$$
Let us estimate the right hand side, with the constant $C>0$ possibly changing from line to line
\begin{eqnarray}
R&=&Cs\left(\frac{s}{r}\right)^{\frac{1}{2}}\omega_\Sigma(x,r) +C  r^{\frac{2}{p'}}\notag \\
&\leq & Cr\omega_\Sigma(x,r) + Cr^{\frac{2}{p'}} \label{etape} \quad \text{  (because } s\leq r/2\text{)}\\
&\leq&  Cr (\omega_\Sigma(x,r))^{\frac{1}{2}} + Cr^{\frac{1}{p'}+\frac{1}{2}}\notag
\end{eqnarray}
the latter inequality being due to $\omega_\Sigma(x,r)\leq 1$, $r\leq 1$ and $\frac{1}{p'}>\frac{1}{2}$. Moreover, from~\eqref{etape} we get
\begin{eqnarray}
\sqrt{2rR}&\leq & Cr (\omega_\Sigma(x,r))^{\frac{1}{2}} + Cr^{\frac{1}{p'}+\frac{1}{2}}. \notag
\end{eqnarray}
This concludes the proof of ($iii$), and so follows the proof of the Proposition.
\end{proof}

Next, we iterate the last proposition to obtain the following one.
\begin{proposition}\label{prop_compl_decay2}
Let $\Omega\subset\R^2$,  $f\in L^p(\Omega)$ with  $p > 2$, $\Sigma\subset \overline{\Om}$  be a  minimizer.  Then we can find  $0<\tau_2 <\tau_1$ and  $\bar r_0, C, \delta, \gamma>0$   such that whenever $x\in \Sigma$ and $r_0\leq \bar r_0$ are such that  $B_{r_0}(x) \subset \Omega$ and
\begin{eqnarray}
\beta_\Sigma(x,r_0)\leq \tau_1\;, \quad\quad \omega_\Sigma(x,r_0)\leq \tau_2 \;,\label{toto2}
\end{eqnarray}
then
\begin{eqnarray}
\omega_\Sigma(x,r)\leq  C \left(\frac{r}{r_0}\right)^{\frac{1}{4}} \tau_2+Cr^{\gamma}\quad  \text{for all } r \in(0,r_0/2)\label{decayome1}
\end{eqnarray}
and
\begin{eqnarray}
\beta_\Sigma(x,r)\leq  C\sqrt{\tau_2} \left(\frac{r}{r_0}\right)^{\frac{1}{8}} + Cr^{\delta}   \quad \text{for all } r\in(0,r_0/8).
\end{eqnarray}
for some $C$ depending on $|\Omega|, p, \|f\|_p$.
\end{proposition}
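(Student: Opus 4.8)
The plan is to iterate Proposition~\ref{firstdecay} over a geometric sequence of scales: first propagate the smallness hypotheses to all small scales, then extract a geometric decay for $\omega_\Sigma$ by anchoring Proposition~\ref{decayomegaA} (the decay estimate underlying Proposition~\ref{firstdecay}(ii)) at those scales, and finally convert this into a decay for $\beta_\Sigma$ via Proposition~\ref{firstdecay}(iii). Fix once and for all $N:=4096$ and set $\rho_k:=r_0 N^{-k}$, noting that $N^{1/4}=8$, that $8N^{-1/2}=\frac{1}{8}$, and that $\mu:=2/p'-1>0$ because $p>2$. I choose $\tau_1,\tau_2,\bar r_0$ no larger than the corresponding thresholds of Proposition~\ref{firstdecay} and small enough that $2\tau_1<1/N$; all the balls below stay inside $\Omega$ since $B_{r_0}(x)\subset\Omega$ and $\rho_k\le r_0$, and all radii remain below $\diam(\Sigma)/2$.

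Starting from~\eqref{toto2} at scale $r_0$, part~(i) of Proposition~\ref{firstdecay} shows that~\eqref{toto1} again holds at scale $r_0/16$, hence by induction at every scale $r_0/16^{j}$. Since $\rho_k=r_0/16^{3k}$ is one of these, the hypotheses $\beta_\Sigma(x,\rho_k)\le\tau_1<1/10$ and $\omega_\Sigma(x,\rho_k)\le\tau_2$ hold at every anchor scale, so Proposition~\ref{decayomegaA} applies there. Taking $r_1:=\rho_k$ and $r:=\rho_{k+1}=\rho_k/N$ (legitimate since $\rho_{k+1}>2\tau_1\rho_k$ by our choice $2\tau_1<1/N$) and using $8N^{-1/2}=\frac18$, I obtain the recursion
\[
a_{k+1}\le \frac{1}{8}\,a_k + C\rho_k^{\mu},\qquad a_k:=\omega_\Sigma(x,\rho_k).
\]
Solving this linear recursion with geometric forcing gives $a_k\le 8^{-k}\tau_2+C\rho_k^{\gamma}$ for some $\gamma\in(0,\mu]$, the error being controlled by whichever of the two geometric rates $8^{-k}$ and $N^{-k\mu}$ dominates. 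Since $8^{-k}=(\rho_k/r_0)^{1/4}$, this reads $\omega_\Sigma(x,\rho_k)\le(\rho_k/r_0)^{1/4}\tau_2+C\rho_k^{\gamma}$. For arbitrary $r\in(0,r_0/2)$ I pick $k$ with $\rho_{k+1}\le r<\rho_k$ and combine the comparison~\eqref{eq_omega_compar1}, namely $\omega_\Sigma(x,r)\le(\rho_k/r)\,\omega_\Sigma(x,\rho_k)\le N a_k$, with $\rho_k\le Nr$; this yields~\eqref{decayome1}.

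For the flatness, I feed the previous bound into Proposition~\ref{firstdecay}(iii) at each anchor scale, obtaining $\beta_\Sigma(x,\rho_k/4)\le C\,\omega_\Sigma(x,\rho_k)^{1/2}+C\rho_k^{1/p'-1/2}$. Using $\sqrt{a+b}\le\sqrt a+\sqrt b$ together with $\omega_\Sigma(x,\rho_k)^{1/2}\le C\sqrt{\tau_2}\,(\rho_k/r_0)^{1/8}+C\rho_k^{\gamma/2}$, this becomes $\beta_\Sigma(x,\rho_k/4)\le C\sqrt{\tau_2}\,(\rho_k/r_0)^{1/8}+C\rho_k^{\delta}$ with $\delta:=\min(\gamma/2,\,1/p'-1/2)>0$; note that halving the exponent $1/4$ of $\omega_\Sigma$ through the square root is exactly what produces the announced exponent $1/8$. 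Interpolating from these discrete radii to all $r\in(0,r_0/8)$ precisely as before, but now through~\eqref{eq_beta_ar1}, gives the second estimate.

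The routine but delicate part is the bookkeeping: one must check that the inhomogeneous term $C\rho_k^{\mu}$ never overwhelms the homogeneous geometric decay, which is exactly where $p>2$ (hence $\mu>0$) enters, and that the passage from the anchor scales $\rho_k$ to all intermediate $r$ costs only a fixed factor $N$ in the constants. I expect the only genuine obstacle to be a conceptual one rather than a computational one: Proposition~\ref{firstdecay} and Proposition~\ref{decayomegaA} may be invoked \emph{only} where the smallness conditions~\eqref{toto1} are verified, and these are guaranteed by Step~1 at the anchor scales but not necessarily in between. This is why every application of the decay estimates is anchored at the $\rho_k$, intermediate radii being reached solely through the soft, hypothesis-free comparison inequalities~\eqref{eq_omega_compar1} and~\eqref{eq_beta_ar1}.
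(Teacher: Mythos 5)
Your proof is correct and follows essentially the same route as the paper's: propagate the smallness hypotheses~\eqref{toto2} to all small scales via Proposition~\ref{firstdecay}\,($i$), run the linear recursion coming from Proposition~\ref{decayomegaA} along a fixed geometric sequence of anchor radii, convert the resulting decay of $\omega_\Sigma$ into decay of $\beta_\Sigma$ through Proposition~\ref{firstdecay}\,($iii$), and reach intermediate radii only through the soft comparisons~\eqref{eq_omega_compar1} and~\eqref{eq_beta_ar1}. The one noteworthy point is calibration: your step $N=4096$ gives per-step contraction $8N^{-1/2}=N^{-1/4}$ and hence exactly the stated exponents $1/4$ and $1/8$ (your extra requirement $2\tau_1<1/N$ being automatic since $\tau_1<10^{-5}/(20+C_0)$), whereas the paper iterates with $a=1/256$, whose contraction $1/2=256^{-1/8}$ really yields the rate $(r/r_0)^{1/8}$ --- the paper's displayed step ``$(1/2)^n\leq(1/256)^{n/4}$'' is an arithmetic slip, as is its final ``$\delta=\max(\gamma/2,\alpha')$'' where your $\min$ is the correct choice --- so your bookkeeping quietly repairs both.
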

\begin{proof}  Let $\bar r_0$, $\tau_2$, $\tau_1$, $\gamma>0$, $C>0$ be the constants given by  Proposition~\ref{firstdecay}. Fix $a:=1/256$. By multiple application of  Proposition~\ref{firstdecay} we know that
$$\beta_\Sigma(x,a^nr_0)\leq \tau_1\quad \text{ and } \quad \omega_\Sigma(x,a^nr)\leq \tau_2 \quad \text{for all }n\geq 0. $$
We first prove by induction that for all $n\in \N$ we have, for some $\gamma>0$,
\begin{eqnarray}
\omega_\Sigma(x,a^n r_0) \leq  \frac{1}{2^n}\omega_\Sigma(x, r_0)+C(a^nr_0)^{\gamma}. \label{induction}
\end{eqnarray}
Let us fix $\alpha:=\frac{2}{p'}-1>0$ and define
\begin{align*}
\gamma & := \min\left( \frac{\alpha}{2},\; \frac{-\ln(3/4)}{\ln(256)}\right), 
\quad
t_0 
:=\left(\frac{1}{4}\right)^{\frac{1}{\gamma}}.
\end{align*}
This choice  of constants $\alpha$, $\gamma$, $t_0$ has been made to  guarantee that
\begin{eqnarray}
\frac{1}{2}t^{\gamma}+t^{\alpha} \leq (at )^{\gamma}\quad  \text{for all }t \in (0,t_0). \label{derniers}
 \end{eqnarray}
To check~\eqref{derniers}, we use that $\alpha\geq 2\gamma$ by definition and   $t\in (0,t_0)$ with $t_0\leq 1$ to estimate
$$\frac{1}{2}t^{\gamma}+t^\alpha\leq   \frac{1}{2}t^{\gamma}+t_0^\gamma t^{\gamma}\leq \frac{3}{4}t^{\gamma}\leq \left({\frac{1}{256}}\right)^{\gamma} t^{\gamma}=(at)^\gamma$$
because $\gamma\leq  -\frac{\ln(3/4)}{\ln(256)}$ implies $\frac{3}{4}\leq \left(\frac{1}{256}\right)^{\gamma}$.

It is clear that ~\eqref{induction} holds for $n=0$. Assume now that~\eqref{induction} holds for some $n$ and up to choose  $\bar r_0$ smaller assume  that $\bar r_0 \leq t_0$. Applying Proposition~\ref{firstdecay} ($ii$) in $B_{a^n r_0}(x)$ yields
\begin{eqnarray}
 \omega_\Sigma(x,a^{n+1}r_0)& \leq & \frac{1}{2}\omega_\Sigma(x,a^nr_0) + C(a^n r_0)^{\alpha} \notag \\
 &\leq &  \frac{1}{2}\left( \frac{1}{2^n}\omega_\Sigma(x,r_0)+C(a^nr_0)^{\gamma} \right)+ C(a^n r_0)^{\alpha} \notag \\
 &\leq &  \frac{1}{2^{n+1}} \omega_\Sigma(x,r_0)+ C(a^{n+1} r_0)^{\gamma} \notag,
\end{eqnarray}
concluding the proof of  ~\eqref{induction}.

Now let $r \in [0,r_0/2]$. Then $a^{n+1}r_0\leq r \leq a^n r_0$ for some $n$, and  using~\eqref{eq_omega_compar1},
\begin{eqnarray}
\omega_\Sigma(x,r)&\leq&  256 \omega_\Sigma(x,a^n r_0) \leq 256 (\frac{1}{2})^n \omega_\Sigma(x,r_0)+ 256C(a^nr_0)^{\gamma}\notag \\
&\leq & 256 \left(\frac{1}{256}\right)^{\frac{n}{4}} \omega_\Sigma(x,r_0)+256^{1+\gamma}C r^{\gamma} \leq 256^2 \left(\frac{r}{r_0}\right)^{\frac{1}{4}} \omega_\Sigma(x,r_0)+256^{1+\gamma}C r^{\gamma}. \label{decayome}
\end{eqnarray}
Now that $\omega_\Sigma$ decays, we can prove a similar decay for $\beta_\Sigma$. Let   $\alpha':=\frac{1}{p'}-\frac{1}{2}$. Then applying  Proposition~\ref{firstdecay} ($iii$) in $B_{a^n r_0}(x)$ and using~\eqref{decayome1} we get
\begin{eqnarray}
\beta_\Sigma(x,a^nr_0/4) &\leq&  C (\omega_\Sigma(x,a^{n}r_0))^{\frac{1}{2}} + C(a^nr_0)^{\alpha'} \notag \\
&\leq & C\Big( 256^2 \tau_2  a^{\frac{n}{4}}+C (a^nr_0)^{\gamma}\Big)^{\frac{1}{2}} + C(a^nr_0)^{\alpha'} \notag \\
&\leq & C \sqrt{ 256^2 \tau_2}  a^{\frac{n}{8}}+ \sqrt{C} (a^nr_0)^{\frac{\gamma}{2}} + C(a^nr_0)^{\alpha'} \notag
\end{eqnarray}
Now let $r\in [0,r_0/2]$ and choose $n$ such that $a^{n+1}r_0 \leq r \leq a^nr_0$. Then
\begin{eqnarray}
\beta_\Sigma(x,r/4) &\leq& 16 \beta_\Sigma(x,a^n r_0/4)\notag \\
&\leq & C\sqrt{\tau_2} a^{\frac{n}{8}} + Cr^{\frac{\gamma}{2}} +Cr^{\alpha'} \notag \\
&\leq &C\sqrt{\tau_2} a^{\frac{n}{8}} + Cr^{\frac{\gamma}{2}} +Cr^{\alpha'} \notag \\
&\leq &C\sqrt{\tau_2} \left(\frac{r}{r_0}\right)^{\frac{1}{8}} + Cr^{\frac{\gamma}{2}} +Cr^{\alpha'}.  \notag
 \end{eqnarray}
We finish the proof by setting $\delta = \max(\frac{\gamma}{2},\alpha')$.
\end{proof}



\begin{corollary}\label{TheCor} Let $\Omega\subset\R^2$,  $f\in L^p(\Omega)$ with  $p > 2$, $\Sigma\subset \overline{\Om}$  be a  minimizer.  Then we can find  $\varepsilon_0>0$ and $\bar r_0,C, \alpha>0$  such that whenever $x\in \Sigma$ and $r_0\leq \bar r_0$ are such that  $B_{r_0}(x) \subset \Omega$ and
\begin{eqnarray*}
\beta_\Sigma(x,r_0)+ \omega_\Sigma(x,r_0)\leq \varepsilon_0, \label{toto5}
\end{eqnarray*}
then there is an $\alpha\in (0,1)$ such that
\begin{eqnarray}
\beta_\Sigma(y,r)\leq  Cr^{\alpha}   \quad  \text{for all $y \in \Sigma\cap B_{r_0/2}(x)$ and  $r \in(0,r_0/16)$}. \label{regularityY}
\end{eqnarray}
In particular,
$\Sigma\cap B_{r_0/4}(x)$ 
is a
$C^{1,\alpha}$ 
regular curve.
\end{corollary}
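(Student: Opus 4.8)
The plan is to reduce everything to a single application of the already-proven decay estimate of Proposition~\ref{prop_compl_decay2}, but \emph{recentered} at every point $y\in\Sigma\cap B_{r_0/2}(x)$. For this it suffices to propagate the two smallness conditions $\beta_\Sigma(\cdot,\cdot)\le\tau_1$ and $\omega_\Sigma(\cdot,\cdot)\le\tau_2$ (where $\tau_1,\tau_2$ are the thresholds of Proposition~\ref{prop_compl_decay2}) from the center $x$ at scale $r_0$ to an arbitrary $y$ at scale $\rho:=r_0/4$. First I would fix $\varepsilon_0$ so small that the propagated quantities stay below $\tau_1$ and $\tau_2$, and shrink $\bar r_0$ so that $r_0/4$ is admissible in Proposition~\ref{prop_compl_decay2}.

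For the energy term the propagation is immediate: if $\Sigma'$ is any competitor with $\Sigma'\Delta\Sigma\subset\overline B_\rho(y)$, then, since $|y-x|<r_0/2$ gives $\overline B_\rho(y)\subset\overline B_{r_0}(x)$, the set $\Sigma'$ is also admissible in the maximization defining $\omega_\Sigma(x,r_0)$; using $B_\rho(y)\subset B_{r_0}(x)$ one gets $\frac1\rho\int_{B_\rho(y)}|\nabla u_{\Sigma'}|^2\le\frac{r_0}{\rho}\,\omega_\Sigma(x,r_0)$, whence $\omega_\Sigma(y,r_0/4)\le4\,\omega_\Sigma(x,r_0)\le4\varepsilon_0$. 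For the flatness I would use that $\beta_\Sigma(x,r_0)\le\varepsilon_0$ is a \emph{two-sided} (Hausdorff) bound: letting $P$ be the optimal line through $x$ and $P_y$ the parallel line through $y$ (which is offset from $P$ by at most $\varepsilon_0 r_0$ because $y\in\Sigma$), every $z\in\Sigma\cap B_\rho(y)$ satisfies $\dist(z,P_y)\le\dist(z,P)+\dist(P,P_y)\le2\varepsilon_0 r_0$, while for $w\in P_y\cap B_\rho(y)$ its orthogonal projection $w'$ onto $P$ lies in $P\cap B_{r_0}(x)$ and hence $\dist(w,\Sigma)\le|w-w'|+\dist(w',\Sigma)\le2\varepsilon_0 r_0$. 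Thus $\beta_\Sigma(y,r_0/4)\le8\varepsilon_0$, and choosing $\varepsilon_0\le\min(\tau_1/8,\tau_2/4)$ validates the hypotheses of Proposition~\ref{prop_compl_decay2} at $y$ with radius $r_0/4$. Applying it yields $\beta_\Sigma(y,r)\le C\sqrt{\tau_2}\,(4r/r_0)^{1/8}+Cr^\delta$ for $r\in(0,r_0/32)$, and the missing dyadic band up to $r_0/16$ is covered by~\eqref{eq_beta_ar1}; this gives~\eqref{regularityY} with $\alpha:=\min(1/8,\delta)$.

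It remains to deduce $C^{1,\alpha}$ regularity from the uniform decay~\eqref{regularityY}, which is the main remaining obstacle although it is by now classical (see, e.g.,~\cite{d}). The idea is that for fixed $y$ the optimal lines $P(y,r)$ form a Cauchy family as $r\to0$: comparing $P(y,r)$ with $P(y,r/2)$ on the annular set they both approximate (which carries a definite amount of $\Sigma$ by Ahlfors regularity and connectedness), their angle is controlled by $\beta_\Sigma(y,r)+\beta_\Sigma(y,r/2)\le Cr^\alpha$, so summing the geometric series produces a tangent line $P(y)$ with $\angle(P(y,r),P(y))\le Cr^\alpha$; comparing the approximating lines of two base points $y,y'$ at scale $2|y-y'|$ shows in the same way that $y\mapsto P(y)$ is $\alpha$-H\"older. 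Finally, since $\beta_\Sigma$ stays below $\tau_1\ll1$ at every scale, the tangent direction never leaves a small cone, so that $\Sigma\cap B_{r_0/4}(x)$ --- which is a connected simple arc by the two-point structure of Proposition~\ref{TheProp}$(iii)$ and Proposition~\ref{firstdecay} --- is the graph of a function over $P(x)$ whose derivative equals the (H\"older) tangent direction, i.e.\ a $C^{1,\alpha}$ curve. The delicate points to check carefully are that the approximating annulus really carries enough of $\Sigma$ to compare consecutive lines (here connectedness and Ahlfors regularity are essential) and that the uniform flatness indeed forces the graph property rather than merely a Reifenberg-flat set.
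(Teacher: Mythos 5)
Your proposal is correct and follows essentially the same route as the paper: recenter at $y\in\Sigma\cap B_{r_0/2}(x)$, propagate the smallness of $\beta_\Sigma$ and $\omega_\Sigma$ with fixed multiplicative losses, apply Proposition~\ref{prop_compl_decay2} at the new center, and then invoke the classical fact that a uniform power decay of the flatness implies $C^{1,\alpha}$ regularity (which the paper simply cites from~\cite{d}, Corollary~50.33, while you sketch the standard Cauchy-sequence-of-optimal-lines argument). The only cosmetic differences are your choice of radius $r_0/4$ instead of the paper's $r_0/2$ (requiring your dyadic patch via~\eqref{eq_beta_ar1}) and your more detailed verification of the two-sided Hausdorff estimate in the recentering step, which the paper states without proof.
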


\begin{proof} To get~\eqref{regularityY} fix $\varepsilon_0:=\tau_2/4$  where $\tau_2$ is defined on Proposition~\ref{prop_compl_decay2},  and apply the Proposition~\ref{prop_compl_decay2}  with $y \in \Sigma\cap B_{r_0/2}(x)$ and $r_0/2$ instead of $x_0$ and $r_0$ respectively, 
after noticing that $\beta_\Sigma(y,r_0/2) \leq 4\beta_\Sigma (x,r_0)$ and $\omega_\Sigma(y,r_0/2) \leq 4\omega_\Sigma (x,r_0)$.
Then it is quite standard to prove that~\eqref{regularityY} implies $C^{1,\alpha}$-regularity of $\Sigma$
(see, e.g.~proof of corollary~50.33 from~\cite{d}). 
\end{proof}

Note that at this point, using the uniform rectifiability of $\Sigma$ and a standard compactness argument, it would not be very difficult to  prove that $\Sigma$ is $C^{1,\alpha}$ regular outside a set of Hausdorff dimension $d<1$. But the blow-up analysis of the next section will actually prove much more, namely that $d=0$.

We finish this section by a last statement saying that Corollary~\ref{TheCor} still holds at the boundary, if the domain is convex.

\begin{proposition}~\label{TheCor2} Let $\Omega\subset\R^2$ be convex,  $f\in L^p(\Omega)$ with  $p > 2$, $\Sigma\subset \overline{\Om}$  be a  minimizer. Then we can find  $\varepsilon_0>0$ and positive constants $\bar r_0$, $C$, $\alpha$  such that whenever $x\in \Sigma \cap \partial \Omega$ and $r_0\leq \bar r_0$ are such that, $u_{\Sigma}$ being extended by zero outside $\Omega$,
\begin{eqnarray*}
\beta_\Sigma(x,r_0)+ \omega_\Sigma(x,r_0)\leq \varepsilon_0 \label{toto5bound}
\end{eqnarray*}
then there is an $\alpha\in (0,1)$ such that
\begin{eqnarray}
\beta_\Sigma(y,r)\leq  Cr^{\alpha}\quad
\text{for all $y \in \Sigma\cap B_{r_0/2}(x)$ and  $r \in(0,r_0/16)$}. \label{regularityY_bnd}
\end{eqnarray}
In particular,
$\Sigma\cap B_{r_0/4}(x)$ 
is a
$C^{1,\alpha}$ 
regular curve.
\end{proposition}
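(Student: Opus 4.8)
The plan is to reproduce, at the boundary point $x\in\Sigma\cap\partial\Omega$, the very same decay scheme (Propositions~\ref{firstdecay} and~\ref{prop_compl_decay2}) that produced the interior Corollary~\ref{TheCor}, reading every Dirichlet/energy statement with the \emph{enlarged wall} $\Sigma\cup\partial\Omega$ (on which $u_\Sigma$, extended by zero, vanishes) and every length/competitor statement with $\Sigma$ alone. First I would fix $\bar r_0$ small enough, exactly as in Lemma~\ref{decayboundary}, so that $\partial\Omega$ is as flat as desired in $B_{\bar r_0}(x)$, and assume $\beta_\Sigma(x,r_0)+\omega_\Sigma(x,r_0)\le\varepsilon_0$. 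The output we want, the estimate~\eqref{regularityY_bnd}, is then obtained by iterating a one-scale improvement exactly as in the interior case; the only two places where the boundary is felt are the energy decay and the density/competitor estimates, which I treat next.

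For the energy decay I would note that the monotonicity Lemma~\ref{lm_compl_monot12} already allows $x_0\in\overline{\Omega}$ and measures the free arcs of $\partial B_r(x)\setminus(\Sigma\cup\partial\Omega)$ through $\gamma_\Sigma$. Since adjoining $\partial\Omega$ to $\Sigma$ only shortens these arcs, the bound on $\gamma$ furnished by Lemma~\ref{topologicalfact} (which uses only $\beta_\Sigma(x,\cdot)\le\varepsilon$) also controls $\gamma_{\Sigma\cup\partial\Omega}$. Hence Lemma~\ref{lm_compl_monot13} and Proposition~\ref{prop_compl_monot4} apply verbatim to the harmonic replacement $w$ of $u_\Sigma$ (which now vanishes on $\partial\Omega\cap B_{r_1}$ as well), yielding the same decay~\eqref{amontrer}. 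Consequently the decay of $\omega_\Sigma$ in Proposition~\ref{decayomegaA} and the defect-of-minimality estimate of Proposition~\ref{lm_compl_locC} carry over to the boundary point with no change.

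The density estimate is where convexity genuinely enters. I would re-run Proposition~\ref{TheProp}: the clean estimate~(iii-3) is obtained by the competitor that replaces $\Sigma\cap B_r$ with the \emph{segment} $[a_1,a_2]$ joining the two points of $\Sigma\cap\partial B_r(x)$, and by convexity of $\Omega$ this chord between $a_1,a_2\in\overline{\Omega}$ automatically lies in $\overline{\Omega}$, so the competitor is admissible \emph{without} the downward-translation device needed for a general $C^1$ domain (Case~B). With~(i), (ii), (iii) thus available at the boundary, Proposition~\ref{firstdecay} proceeds as before: one finds $s\in[r/4,r/2]$ with $\Card(\Sigma\cap\partial B_s)=2$, applies the Pythagoras Lemma~\ref{pythag} to an injective curve $\Gamma\subset\Sigma\cap\overline{B}_s$ joining $a_1,a_2$ and the density bound to the rest of $\Sigma\cap B_s$, and deduces that $\beta_\Sigma(x,r/4)$ is small. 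Iterating as in Proposition~\ref{prop_compl_decay2} gives $\beta_\Sigma(y,\rho)\le C\rho^\alpha$ for $y\in\Sigma\cap B_{r_0/2}(x)$, and this power decay upgrades to a $C^{1,\alpha}$ curve exactly as in Corollary~\ref{TheCor}, now up to $\partial\Omega$.

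The hard part will be the topological bookkeeping once $\partial\Omega$ participates: I must check that the conclusions ``the two points lie on both sides'' and ``$\Sigma\cap\overline{B}_s$ is connected'' (Proposition~\ref{TheProp}\,(iii-1),(iii-2)) survive when one of the intersections with $\partial B_s$ may sit on $\partial\Omega$ rather than on $\Sigma$, and that a radius $s$ with $\Card(\Sigma\cap\partial B_s)=2$ still exists (the Eilenberg inequality is applied to $\Sigma$, which is intrinsic, so this should persist). One must also confirm that at the scale considered $\partial\Omega$ is flat enough not to create spurious long free arcs nor to obstruct the competitor; this is automatic at a differentiability point of the convex boundary for $\bar r_0$ small. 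Conceptually, convexity removes exactly the two structural obstructions: chords of $\overline{\Omega}$ remain admissible competitors, and $\partial\Omega$ lies on one side of a supporting line $L$ through $x$, so that adding it as a Dirichlet wall does not spoil the flatness driving the energy decay. Equivalently, one could phrase the whole argument as an odd Schwarz reflection of $u_\Sigma$ across $L$ (legitimate because $u_\Sigma\equiv 0$ on $L\cap B_{r_0}$, since $L$ meets $\overline{\Omega}$ only along $\partial\Omega$ by convexity), reducing the boundary configuration to an interior one to which Corollary~\ref{TheCor} applies; the reflection makes the no-overlap and vanishing-trace facts transparent but leaves precisely the same counting adaptations to be verified.
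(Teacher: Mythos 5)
There is a genuine gap, and it sits exactly where you yourself flag ``the hard part''. Your scheme keeps the constraint $\Sigma'\subset\overline{\Omega}$ and must therefore re-run Proposition~\ref{TheProp}\,($i$) at the boundary: the wall competitor $(\Sigma\setminus B_r)\cup W\cup[x_1,x_2]$ is built on $P_0\cap \partial B_r$, whose endpoints $x_1,x_2$ and whose wall arcs need \emph{not} lie in $\overline{\Omega}$ (for a strictly convex boundary the supporting line exits $\overline{\Omega}$ immediately); truncating $W$ and $[x_1,x_2]$ by $\overline{\Omega}$ can disconnect the competitor, and reconnecting along $\partial\Omega$ has uncontrolled length cost near a corner. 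Your fallback --- that flatness of $\partial\Omega$ is ``automatic at a differentiability point of the convex boundary for $\bar r_0$ small'' --- does not deliver the uniform constants $\bar r_0$, $C$, $\alpha$ that the statement demands: a convex boundary is differentiable only off a countable set, has no uniform flatness scale, and may genuinely have corners; this is precisely what the translation device of Case~B in Proposition~\ref{TheProp} was designed for in the $C^1$ setting, and convexity alone does not restore it. Note that you cannot route around this either: the density estimate ($i$) with constant close to $2$ is indispensable for the Eilenberg counting in ($ii$), so the wall competitor is unavoidable in your approach. Finally, the Schwarz reflection remark does not repair the gap: the reflected configuration is not known to minimize anything, and the whole scheme (Propositions~\ref{TheProp} and~\ref{firstdecay}, and the very definition of $\omega_\Sigma$ as a maximum over set competitors) uses minimality against competitors, not just harmonicity and the vanishing trace, so Corollary~\ref{TheCor} cannot be applied to the reflected pair.

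The paper's proof avoids all of this with a single observation your proposal misses: for convex $\Omega$ the projection $P_\Omega$ onto $\overline{\Omega}$ is $1$-Lipschitz, so for any closed connected $\Sigma'\subset\R^2$ one has $\H(P_\Omega(\Sigma'))\leq\H(\Sigma')$, and since $\Sigma'\cap\overline{\Omega}\subset P_\Omega(\Sigma')$ gives $H^1_0(\Omega\setminus P_\Omega(\Sigma'))\subset H^1_0(\Omega\setminus\Sigma')$, also $\C(P_\Omega(\Sigma'))\leq\C(\Sigma')$. Hence a minimizer of Problem~\ref{pb_compl_pen1} minimizes $\mathcal{F}$ over \emph{all} closed connected subsets of $\R^2$, and with $u_\Sigma$ extended by zero every ball centered on $\partial\Omega$ is treated exactly like an interior ball: Section~\ref{sectionEpsReg} applies verbatim, with no truncation, no boundary flatness, and no case distinction. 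Parts of your proposal are sound and consistent with this reading --- the monotonicity already carries $\partial\Omega$ inside $\gamma_\Sigma$, so your energy decay with the enlarged wall is fine, and the chord $[a_1,a_2]\subset\overline{\Omega}$ in ($iii$-$3$) is a correct use of convexity --- but without the relaxation (or a complete boundary version of ($i$)) the argument is incomplete for a general convex domain.
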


\begin{proof} If $\Omega$ is convex, then Problem~\ref{pb_compl_pen1} is exactly the same if we relax the class of competitors
\[
\{\Sigma \subset \overline{\Omega}, \text{ closed and connected} \}
\]
by
\[
\{\Sigma \subset \R^2, \text{ closed and connected} \}.
\]
Indeed, the projection $P_{\Omega}$ onto $\Omega$ is 1-Lipschitz thus minimizing on the second class would lead to the same minimizers since the projection of any competitor has a  lower value of $\mathcal{F}$.
This means that, in the case of convex domains, we may consider  $\Sigma$ as a subset of $\R^2$ with competitors in $\R^2$, and consider $u_{\Sigma}\in H^1_0(\Omega\setminus \Sigma)$ as a function of $H^1(\R^2)$ extending  it by zero outside $\Omega$, and then one can follow the whole Section 5 line by line and check that it works at the boundary straight away.



\end{proof}


\begin{remark} A consequence of Proposition~\ref{TheCor2} is that, if $\Omega$ is convex and  $C^1$ but nowhere $C^{1,\alpha}$, then $\Sigma$ will never want to stay inside $\partial \Omega$  for a while because it  would contradict the $C^{1,\alpha}$ regularity. In this case it can only touch $\partial \Omega$ pointwise.
\end{remark}


\section{Blow-up limits of minimizers and first consequences}

\subsection{Convergence of blow-up sequences for interior points}
 Throughout this section, let $\Sigma$ denote a minimizer for the Problem~\ref{pb_compl_pen1}, and $u:=u_{\Sigma}$.
 Let $\{x_n\}\subset \Sigma$ be a sequence of points and $r_n \to 0^+$ as $n\to\infty$. We define the blow-up sequence by
\begin{eqnarray}
\Sigma_n:=\frac{1}{r_n}(\Sigma-x_n), \quad \Omega_n:=\frac{1}{r_n}(\Omega-x_n),  \label{blowup1}\\
u_n(x):=r_{n}^{-\frac{1}{2}}u(r_nx+x_n) \in H^1_{0}(\Omega_n\setminus \Sigma_n). \label{blowup2}
\end{eqnarray}
Our scaling implies that
\[\int_{\Omega_n}|\nabla u_n|^2 \,dx +\H(\Sigma_n)=\frac{1}{r_n}\int_{\Omega}|\nabla u|^2 \, dx+\frac{1}{r_n}\H(\Sigma).\]
It follows that, for all $n$,  $(u_n,\Sigma_n)$ is a minimizer in $\Omega_n$ for the optimal compliance problem associated to the function
\[f_n(x):=r_{n}^{3/2} f(r_n x +x_n).\]

For a given closed set $\Sigma \subset \R^2$ we consider the subspace $H^1_{0,\Sigma,loc}(\R^2)$ of $H^1_{loc}(\R^2)$ consisting of functions vanishing on $\Sigma$ defined by
\begin{eqnarray}
H^1_{0,\Sigma,loc}(\R^2):=\left \{ u \in H^1_{loc}(\R^2) \colon \varphi_R u \in H^1_0(B_{R}(0)\setminus \Sigma) \text{ for all } R>0\right\}, \label{spaceH}
\end{eqnarray}
where $\varphi_R$ is the Lipschitz cut-off function $\varphi_R(x):=\max(R-|x|,0)$.
\begin{proposition} \label{blowuplimit} Assume that  $f\in L^p(\Om)$, $p>2$,
$x_n\to x_0\in \Sigma \cap \Omega$. Then there exist some closed set $\Sigma_0 \subset \R^2$, a function $u_0 \in H^1_{0,\Sigma_0,loc}(\R^2)$, and a subsequence $r_{n_k}\to 0$ such that $(u_{n_k},\Sigma_{n_k})$ converges to $(u_0,\Sigma_0)$ in the following way: for every ball $B\subset \R^2$,
\begin{itemize}
\item[(i)] $\Sigma_{n_k}  \to\Sigma_0 $ in the Kuratowski sense in $\R^2$,
and, moreover, $(\Sigma_{n_k}\cap B)\cup \partial B\to (\Sigma_0\cap B)\cup \partial B$ in the Hausdorff distance in $\R^2$,
\item[(ii)] $u_{n_k}\to u_0$ strongly in $H^1(B)$.
\end{itemize}
In addition, $u_0$ is harmonic in $\R^2\setminus \Sigma_0$.
\end{proposition}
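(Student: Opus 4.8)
The plan is to combine a compactness argument for the sets $\Sigma_n$ with uniform local energy bounds for the rescaled potentials $u_n$, and then to upgrade the resulting weak convergence to strong convergence by exploiting the (local) minimality of each pair $(u_n,\Sigma_n)$. First I would extract the limit set: since the $\Sigma_n$ are closed subsets of $\R^2$, Blaschke's selection principle applied on an exhausting sequence of balls together with a diagonal argument produces a subsequence (not relabelled) and a closed set $\Sigma_0$ with $\Sigma_{n}\to\Sigma_0$ in the Kuratowski sense; the local Hausdorff statement $(\Sigma_{n}\cap B)\cup\partial B\to(\Sigma_0\cap B)\cup\partial B$ is then the standard reformulation of Kuratowski convergence obtained after compactifying on each ball by adding $\partial B$, giving $(i)$. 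Next I would bound the energy of $u_n$ on every fixed ball $B_R(0)$. Undoing the scaling~\eqref{blowup2} and using $\tfrac1r\int_{B_r(x)}|\nabla u_\Sigma|^2\le\omega_\Sigma(x,r)$ gives $\int_{B_R(0)}|\nabla u_n|^2\,dx=R\cdot\tfrac{1}{Rr_n}\int_{B_{Rr_n}(x_n)}|\nabla u|^2\le R\,\omega_\Sigma(x_n,Rr_n)$, and the monotonicity of $\omega_\Sigma$ (Corollary~\ref{corYeah2}) compares $\omega_\Sigma(x_n,Rr_n)$ with $\omega_\Sigma(x_n,\rho)$ at a fixed scale $\rho<\diam(\Sigma)/2$, which is controlled uniformly by the a priori bound $\omega_\Sigma(x_n,\rho)\le C/\rho$ coming from Remark~\ref{rem_boundus2}. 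Since $0\in\Sigma_n$ and $u_n$ vanishes there, a Poincar\'e-type inequality (as in Lemma~\ref{lm_compl_poincare2}) turns this gradient bound into a full $H^1(B_R)$ bound, so after a further diagonal extraction $u_n\rightharpoonup u_0$ weakly in $H^1_{loc}(\R^2)$ and strongly in $L^2_{loc}$.

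To identify $u_0$ I would use two inputs. The membership $u_0\in H^1_{0,\Sigma_0,loc}(\R^2)$ follows from the closedness half of the Mosco convergence (Remark~\ref{mosco}), which applies on each ball $B_R$ precisely because of the Hausdorff convergence in $(i)$ together with \v{S}ver\'ak's Theorem~\ref{th_Sverak1}: any weak $H^1$ limit of functions vanishing on $\Sigma_n$ vanishes on $\Sigma_0$. Harmonicity is easier: for $\phi\in C_0^\infty(\R^2\setminus\Sigma_0)$, upper Kuratowski semicontinuity forces $\mathrm{supp}\,\phi\cap\Sigma_n=\emptyset$ for $n$ large, so $\phi$ is admissible in the weak equation $\int\nabla u_n\cdot\nabla\phi=\int f_n\phi$; since $f_n(x)=r_n^{3/2}f(r_nx+x_n)\to0$ in $L^p_{loc}$ (because $p>2$ makes the scaling exponent positive), passing to the limit yields $\int\nabla u_0\cdot\nabla\phi=0$, i.e.\ $u_0$ is harmonic off $\Sigma_0$.

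The crux is the strong convergence in $(ii)$. Lower semicontinuity already gives $\int_{B_R}|\nabla u_0|^2\le\liminf\int_{B_R}|\nabla u_n|^2$, so it remains to prove the matching $\limsup$. Here I would build an admissible competitor for $u_n$: take a recovery sequence $w_n\in H^1_{0,\Sigma_n,loc}$ with $w_n\to u_0$ strongly (the other half of Mosco convergence), fix radii $R=\rho_1<\rho_2$ and a cutoff $\eta$ equal to $1$ on $B_{\rho_1}$ and $0$ near $\partial B_{\rho_2}$, and set $v_n:=\eta w_n+(1-\eta)u_n$. Then $v_n$ vanishes on $\Sigma_n$ and equals $u_n$ near $\partial B_{\rho_2}$, so the local minimality of $u_n$ gives $E_n(u_n;B_{\rho_2})\le E_n(v_n;B_{\rho_2})$, the force terms dropping out in the limit since $f_n\to0$. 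Using convexity of $|\cdot|^2$ to estimate $|\eta\nabla w_n+(1-\eta)\nabla u_n|^2\le\eta|\nabla w_n|^2+(1-\eta)|\nabla u_n|^2$ and a $(1+\varepsilon)$-Young inequality to absorb the cross term $(w_n-u_n)\nabla\eta$ (which tends to $0$ in $L^2$ by the strong $L^2_{loc}$ convergence), I would arrive at $\limsup\int_{B_{\rho_1}}|\nabla u_n|^2\le\int_{B_{\rho_1}}|\nabla u_0|^2+(1+\varepsilon)\int_{B_{\rho_2}\setminus B_{\rho_1}}\eta|\nabla u_0|^2+\varepsilon M$, where $M$ bounds the energy on $B_{\rho_2}$. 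Letting $\varepsilon\to0$ and then $\rho_2\downarrow R$ closes the estimate and yields strong $H^1(B_R)$ convergence, hence $(ii)$.

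The main obstacle, and the reason for the somewhat delicate competitor above, is that one cannot simply pass $|\nabla u_n|^2$ to the limit on the transition annulus; the convexity plus $(1+\varepsilon)$-absorption trick is exactly what prevents spurious energy from being created there, so that no energy is lost in the weak limit. The second genuinely nontrivial ingredient is the local Mosco convergence of the spaces $H^1_0(B_R\setminus\Sigma_n)$, which is where the precise form of $(i)$ (adding $\partial B$ to pass from Kuratowski to Hausdorff, hence capacitary, convergence) and \v{S}ver\'ak's theorem enter: without the connectedness-based compactness of \v{S}ver\'ak neither the recovery sequence $w_n$ nor the vanishing of $u_0$ on $\Sigma_0$ would be available.
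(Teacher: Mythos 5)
Your proposal is correct and follows essentially the same route as the paper's proof: Blaschke/Kuratowski extraction with the $\partial B$-compactified Hausdorff convergence, a monotonicity-based uniform energy bound plus Poincar\'e for weak $H^1_{loc}$ compactness, Mosco convergence (via \v{S}ver\'ak) both for the membership $u_0\in H^1_{0,\Sigma_0,loc}(\R^2)$ and for the recovery sequence, and the cutoff-interpolated competitor with convexity of $|\cdot|^2$ to rule out energy loss on the transition annulus, with $f_n\to 0$ killing the force terms. The only deviations are cosmetic: you bound the energy through $\omega_\Sigma$ and Corollary~\ref{corYeah2} where the paper invokes Lemma~\ref{lm_compl_monot1} (Remark~\ref{remYeah}) directly, and you absorb the cross term by a $(1+\varepsilon)$-Young inequality where the paper simply shows the remainder $R_n$ vanishes by strong $L^2_{loc}$ convergence.
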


\begin{proof}
For simplicity we will not relabel subsequences. We first extract a subsequence so that~($i$) holds. To this aim, let us stress that, in general, if a sequence of sets converge for  the Hausdorff distance, then their restrictions to subsets may not converge. This is why the notion of local Hausdorff converging sequence in $\R^2$ is delicate (see~\cite{d}) and we shall not try to make $\Sigma_n\cap B$ converging to $\Sigma_0\cap B$ for every ball for the  Hausdorff disance.
Instead, we start by  using Blaschke principle in the Alexandrov one-point compactification of $\R^2$, to extract a subsequence of $\Sigma_n$ converging to some set $\Sigma_0$ in the compactified space. This implies the convergence of $\Sigma_n$ in the Kuratowski sense in the compactified space, hence also in $\R^2$,  and it also implies that, for every ball $B\subset \R^2$, $(\Sigma_n\cap B)\cup \partial B\to (\Sigma_0\cap B)\cup \partial B$ for the classical Hausdorff distance (in fact, it clearly converges in Kuratowski sense, and hence in Hausdorff distance because $(\Sigma_n\cap B)\cup \partial B$ are all included in the same compact set $\overline{B}$).


We now claim that
\begin{itemize}
\item[(S)]
for every ball $B=B_R(0)\subset \R^2$ one may extract a further subsequence (depending on $B$) such that
for some $u_0\in H^1(B)$ harmonic in $B\setminus \Sigma_0$
and satisfying $\varphi u_0\in H_0^1(B\setminus\Sigma_0)$ for all $\varphi\in C_0^\infty(B)$,
 one has
$u_n\to u_0$ strongly in $L^2(B)$ and $\nabla u_n\to \nabla u_0$ strongly in $L^2(B_{R/2};\R^2)$.
\end{itemize}

To show claim~(S), use first the change of variables to get
\begin{align*}
\int_{B} |\nabla u_n|^2 \, dx=\frac{1}{r_n}\int_{B_{R r_n}(x_n)}|\nabla u_\Sigma|^2 \, dx \leq C,
\end{align*}
where $C>0$ depends only on $\|f\|_p$ and $R$,
the latter estimate being due to Lemma~\ref{lm_compl_monot1} applied with $\gamma=2\pi$ (see Remark~\ref{remYeah}).
Together with the Poincar\'e inequality (Lemma~\ref{lm_compl_poincare1}) this implies
\[
\|u_n\|_{H^1(B)}\leq C,
\]
where still  $C=C(\|f\|_p,R)>0$.
This means that, up to a subsequence, $u_n\rightharpoonup u_0$ weakly in $H^1(B)$ for some $u_0 \in H^1(B)$ (hence strongly in $L^2(B)$). This shows that $\varphi u_0\in H_0^1(B\setminus\Sigma_0)$ for any Lipschitz function $\varphi$ vanishing on $\partial B$ (see Remark~\ref{mosco}).

It is easy to verify that  $-\Delta u_0 = 0$ in $B\setminus \Sigma_0$ because, for all $\varphi \in C^\infty_0(B\setminus \Sigma_0)$, the Hausdorff convergence of $\Sigma_n$ implies $\varphi \in C^\infty_0(B\setminus \Sigma_n)$ for all sufficiently large $n\in \N$ and therefore
\begin{eqnarray}
\int_{\Omega} \nabla u_n \cdot \nabla  \varphi\, dx = \int_{\Omega} \varphi f_n\, dx. \label{test}
\end{eqnarray}
Passing to the limit in $n$ and recalling that $f_n\to 0$ strongly in $L^2(B)$, $\nabla u_n \to \nabla u$ weakly in $L^2(B)$, we infer that
\[
\int_{\Omega} \nabla u_0 \cdot \nabla \varphi = 0, \quad \text{for all  $\varphi \in C^\infty_0(B\setminus \Sigma_0)$}
\]
i.e.\ $u_0$ is harmonic in $\R^2\setminus\Sigma_0$.

Let $\varphi$
 be a $2$-Lipschitz cut-off function equal to $1$ on $B_{(3/4)R}$ and zero outside of $B$, and consider the truncated function $\varphi  u_0$, that is equal to $u_0$ on $B_{(3/4)R}$ and zero on $\partial B$.
Note that $\varphi u_0\in H_0^1(B\setminus\Sigma_0)$.
Since $\tilde{\Sigma}_n:=(\Sigma_n\cap B)\cup \partial B$ is connected for all $n\in \N$ and converges with respect to the Hausdorff distance in $\overline{B}$ to  $(\Sigma_0\cap B)\cup \partial B$, by
the
convergence of $H^1_0(B\setminus \tilde\Sigma_n)$ to $H^1_0(B\setminus \Sigma)$ in the sense of Mosco (in view of Remark~\ref{mosco}, namely, since for all
 $u\in H^1_0(B\setminus \Sigma)$ there is a sequence $\{u_n\}\subset H^1_0(B\setminus \tilde\Sigma_n)$ converging to $u$ in $H^1(B)$), we obtain the existence of a sequence $\{\tilde{u}_n\}\subset H^1_0(B\setminus \tilde{\Sigma}_n)$ converging to  $\varphi u_0$ strongly in $H^1(B)$.
 In particular, $\tilde{u}_n\to u_0$ strongly in $H^1(B_{(3/4)R})$.

To prove that $\nabla u_n\to \nabla u_0$ strongly in $L^2(B_{R/2};\R^2)$, it suffices, in view of the weak convergence of
$\nabla u_n\rightharpoonup \nabla u_0$ in $L^2(B_{R/2};\R^2)$, to prove
$\|\nabla u_n\|_{L^2(B_{R/2})}\to \|\nabla u_0\|_{L^2(B_{R/2};\R^2)}$ which in turn reduces to proving
\begin{eqnarray}
\int_{B_{R/2}}|\nabla u_0|^2 \, dx \geq  \limsup_{n}   \int_{B_{R/2}} |\nabla u_n|^2 \, dx \label{eq_limsupNablu1},
\end{eqnarray}
because the opposite inequality for $\liminf$ is automatic from weak convergence.
To prove~\eqref{eq_limsupNablu1},
consider also another cut-off function $\psi$ satisfying, for a parameter $0<s<1/2 $,
 \[
 \psi(x)=
 \left\{
 \begin{array}{l}
 1 \text{ on } B_{R/2},\\
 0 \text{ outside } (1+s) B_{R/2},\\
  \text{linear in } |x| \text{ otherwise},
 \end{array}
 \right.
 \]
and set
\[
w_n := \psi \tilde{u}_n + (1-\psi) u_n.
\]
since $u_n$ is a minimizer of
\[
u\in H_0^1(\Om_n\setminus\Sigma_n)\mapsto \int_{\Omega_n} |\nabla u|^2 \, dx -2\int_{\Omega_n} u f_n,
\]
and since $w_n$ is a competitor, we obtain
\[
\int_{\Omega_n} |\nabla u_n|^2 \, dx -2\int_{\Omega_n} u_n f_n \leq \int_{\Omega_n} |\nabla w_n|^2 \, dx -2\int_{\Omega_n} w_n f_n,
\]
thus, since $w_n=u_n$ outside $(1+s)B$,
\begin{eqnarray}
\int_{(1+s) B_{R/2}} |\nabla u_n|^2 \, dx -2\int_{(1+s) B_{R/2}} u_n f_n \leq \int_{(1+s) B_{R/2}} |\nabla w_n|^2 \, dx -2\int_{(1+s) B_{R/2}} w_n f_n. \label{corona}
\end{eqnarray}
Let us compute
\[
\nabla w_n =  \psi \nabla  \tilde{u}_n + (1-\psi) \nabla u_n + (\tilde{u}_n-u_n)\nabla \psi,
\]
which yields, denoting by $H_n:= \psi \nabla  \tilde{u}_n + (1-\psi) \nabla u_n $, the relationship
\[|\nabla w_n|^2 = |H_n|^2 +  |\tilde{u}_n-u_n|^2 |\nabla \psi|^2+2
H_n\cdot (\tilde{u}_n-u_n)\nabla \psi. \]
Integrating over $C_s:=(1+s) B_{R/2}\setminus B_{R/2}$ and recalling that $\nabla w_n=\nabla \tilde{u}_n$ in $B$, we can write~\eqref{corona}  in the form
\begin{eqnarray}
\int_{(1+s) B_{R/2}} |\nabla u_n|^2 \, dx  &\leq &  \int_{B_{R/2}} |\nabla \tilde{u}_n|^2 \, dx +   \int_{C_s}|H_n|^2 \,dx+  R_n, \label{inequ0}
\end{eqnarray}
 where
 \[R_n := 2\int_{(1+s) B_{R/2}} (u_n-w_n) f_n \,dx+ \int_{C_s} |\tilde{u}_n-u_n|^2 |\nabla \psi|^2+2 H_n\cdot (\tilde{u}_n-u_n)\nabla \psi \, dx.\]
By the convexity of $|\cdot|^2$ we get the inequality
 \[|H_n|^2\leq \psi |\nabla  \tilde{u}_n |^2+ (1-\psi) |\nabla u_n|^2,\]
 so that~\eqref{inequ0} becomes
 \begin{eqnarray*}
\int_{(1+s) B_{R/2}} |\nabla u_n|^2 \, dx  &\leq &  \int_{B_{R/2}} |\nabla \tilde{u}_n|^2 \, dx +R_n \\
&+&  \int_{C_s}\psi |\nabla  \tilde{u}_n |^2+ \int_{C_s} (1-\psi) |\nabla u_n|^2\,dx,
\end{eqnarray*}
 and in view of $\psi\leq 1$ we finally get
  \begin{equation}\label{eq_limsubNablu2}
\int_{B_{R/2}} |\nabla u_n|^2 \, dx  \leq   \int_{(1+s) B_{R/2}} |\nabla \tilde{u}_n|^2 \; dx +R_n.
\end{equation}
Notice now that $R_n\to 0$, because all the functions $\tilde u_n,  u_n, w_n$ converge strongly in $L^2((1+s)B)$ to the same function $u_0$,  the sequence $\{H_n\}$ is uniformly bounded in $L^2(C_s)\subset L^2(B)$, and $f_n\to 0$ in
 $L^2((1+s)B)\subset L^2(B)$. Therefore,
  passing to the limsup in~\eqref{eq_limsubNablu2} and using the strong convergence of $\tilde{u}_n$ to $u_0$ in $H^1((1+s)B_{R/2})\subset H^1(3/4 B)$, we get
\[
\limsup_{n} \int_{B_{R/2}} |\nabla u_n|^2 \, dx  \leq   \int_{(1+s)B_{R/2}} |\nabla u_0|^2 \, dx,
\]
which gives~\eqref{eq_limsupNablu1} by taking the limit in
$s\to 0^+$, hence completing the proof of strong convergence of $\nabla u_n$ to $\nabla u$ in $L^2(B_{R/2})$ and therefore
concluding the proof of claim~(S).

Finally, once the claim~(S) is proven, it suffices to choose for each $m\in \N$ a subsequence
$\{n(m,j)\}_j\subset \N$ such that $\{n(m+1,j)\}_j\subset \{n(m,j)\}_j$ and
each sequence $\{u_{n(m,j)}\}_j$ is convergent strongly some function in $L^2(B_{2m})$ with the sequence
of gradients $\{\nabla u_{n(m,j)}\}_j$ convergent strongly to the gradient of the same function in $L^2(B_{m}(x_0);\R^2)$, with
the limit function as in claim~(S) (with $B_{2m}$ instead of $B$). Taking the diagonal sequence
$\{u_{n(m,m)}\}_m$ we have that there is a $u_0\in H^1_{loc}(\R^2)$ harmonic in $\R^2\setminus \Sigma_0$
such that $\varphi u_0\in H_0^1(B_R\setminus\Sigma_0)$ for every $R>0$ and $\varphi$ vanishing over $\partial B_R$, with
$\{u_{n(m,m)}\}_m$ convergent strongly to $u$ in $L^2(B_R)$ and the sequence
of gradients $\{\nabla u_{n(m,m)}\}_m$  convergent strongly to $u$ in $L^2(B_R;\R^2)$. This completes the proof of~($ii$).
\end{proof}

\subsection{Two compactness estimates}

 The goal of  this section is to  prove that, as soon as $\beta_{\Sigma}(x,r)$
with $x\in \Sigma$
is small enough, then all the assumptions of the $C^1$ regularity result are satisfied. To this aim we need to control the energy $\omega_\Sigma$ of $u$ from the flatness $\beta_\Sigma$, and this is done in the following proposition via a compactness argument.

\begin{proposition}\label{gouter2}
Assume that $f\in L^p(\Om)$, $p>2$,
$\Sigma$ be a minimizer of  Problem~\ref{pb_compl_pen1} and let $x\in \Sigma\cap \Omega$ be such that  $\beta_\Sigma(x,r)\underset{r\to 0^+}{\longrightarrow}  0$. Then
\begin{eqnarray}
\frac{1}{r}\int_{B_r(x)}|\nabla u_\Sigma|^2 \, dx \underset{r\to 0^+}{\longrightarrow} 0. \label{limitingbeta}
\end{eqnarray}
\end{proposition}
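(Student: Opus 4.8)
The plan is to argue by contradiction via a blow-up at $x$, using Proposition~\ref{blowuplimit} together with the monotonicity formula. Suppose \eqref{limitingbeta} fails; then there is $\delta>0$ and a sequence $r_n\to 0^+$ with $\frac{1}{r_n}\int_{B_{r_n}(x)}|\nabla u_\Sigma|^2\,dx\ge\delta$ for all $n$. I would set $x_n\equiv x$ and introduce the blow-up sequence $\Sigma_n:=r_n^{-1}(\Sigma-x)$, $u_n(y):=r_n^{-1/2}u_\Sigma(r_ny+x)$ as in \eqref{blowup1}--\eqref{blowup2}. The change of variables gives the scaling identity $\int_{B_\rho}|\nabla u_n|^2\,dy=\frac{1}{r_n}\int_{B_{\rho r_n}(x)}|\nabla u_\Sigma|^2\,dx$ for every $\rho>0$, so in particular $\int_{B_1}|\nabla u_n|^2\ge\delta$. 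Applying Proposition~\ref{blowuplimit} I extract a subsequence along which $(u_n,\Sigma_n)\to(u_0,\Sigma_0)$, with $u_0$ harmonic in $\R^2\setminus\Sigma_0$, $u_0\in H^1_{0,\Sigma_0,loc}(\R^2)$, and $u_n\to u_0$ \emph{strongly} in $H^1(B)$ on every ball $B$. The strong convergence is crucial here: it yields $\int_{B_1}|\nabla u_0|^2=\lim_n\int_{B_1}|\nabla u_n|^2\ge\delta>0$, so that $u_0$ is nontrivial.

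Next I would identify the blow-up set $\Sigma_0$ as a straight line. For every fixed $\rho>0$ one has $\beta_{\Sigma_n}(0,\rho)=\beta_\Sigma(x,\rho r_n)\to0$ by the standing hypothesis, since $\rho r_n\to0$. Because lines through the origin form a compact family and $(\Sigma_n\cap B_\rho)\cup\partial B_\rho\to(\Sigma_0\cap B_\rho)\cup\partial B_\rho$ in the Hausdorff distance, both inequalities defining $d_H$ pass to the limit and force $\Sigma_0\cap B_\rho=P\cap B_\rho$ for a line $P\ni 0$; letting $\rho\to\infty$ gives $\Sigma_0=P$, which after a rotation I take to be $\{x_2=0\}$. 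Thus $u_0$ is harmonic on the two open half-planes $\R^2\setminus P$ and has zero trace on $P$.

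To conclude I would combine a growth bound with a Liouville argument. The monotonicity formula (Remark~\ref{remYeah}, valid at $x$ because $\Sigma$ is connected and \eqref{eq:geomass} holds for $r<\diam\Sigma/2$) gives, for $Rr_n$ below a fixed threshold, $\frac{1}{Rr_n}\int_{B_{Rr_n}(x)}|\nabla u_\Sigma|^2\le M$ with $M$ independent of $n$; hence $\int_{B_R}|\nabla u_n|^2\le MR$ and, passing to the strong limit, $\int_{B_R}|\nabla u_0|^2\le MR$ for all $R\ge1$. Odd reflection of $u_0$ across $P$ (legitimate since its trace on $P$ vanishes) extends it to an entire harmonic function $\tilde u$ with $\int_{B_R}|\nabla\tilde u|^2\le 2MR$. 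Each derivative $\partial_i\tilde u$ is harmonic, so by the mean value property and Cauchy--Schwarz, $|\partial_i\tilde u(y)|^2\le\frac{1}{|B_R(y)|}\int_{B_R(y)}|\partial_i\tilde u|^2\le \frac{2M(R+|y|)}{\pi R^2}\to0$ as $R\to\infty$; therefore $\tilde u$ is constant, and being odd it vanishes. Hence $u_0\equiv0$, so $\int_{B_1}|\nabla u_0|^2=0$, contradicting the lower bound $\ge\delta$ obtained above.

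The main obstacle, and the reason a soft compactness argument is needed rather than a direct estimate, is securing the \emph{lower} bound $\int_{B_1}|\nabla u_0|^2\ge\delta$: this requires the strong (not merely weak) $H^1$ convergence of the rescaled potentials, which is exactly what Proposition~\ref{blowuplimit}(ii) provides and which ultimately rests on the minimality of $\Sigma$ through the energy-comparison argument in that proposition. A secondary, more technical point is the correct identification of $\Sigma_0$ as the \emph{entire} line $P$ (both directions of the Hausdorff distance), which uses that the flatness is small at \emph{all} scales after blow-up and not merely at the single scale $r_n$.
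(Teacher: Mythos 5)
Your proof is correct and follows essentially the same route as the paper's: a blow-up at $x$ using the strong $H^1_{loc}$ convergence of Proposition~\ref{blowuplimit}, identification of $\Sigma_0$ as a line from the flatness hypothesis, the monotonicity formula (Remark~\ref{remYeah}) to bound the energy growth of the limit, and a reflection-plus-Liouville argument forcing $u_0\equiv 0$. The only (harmless) variations are in the finish — the paper exploits the exact constancy $\frac{1}{R}\int_{B_R}|\nabla u_0|^2\,dx=C$ together with a spherical-harmonics decomposition, while you use only the linear growth bound and the mean-value gradient estimate — and one small imprecision: the odd reflection must be performed separately on each side of $P$ (as the paper does with $u_0^+$), since the restrictions of $u_0$ to the two half-planes are a priori unrelated.
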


\begin{proof}   The balls in this proof are all centered in $x$.
By Remark~\ref{remYeah}  we know that the limit in~\eqref{limitingbeta} exists, and is finite. Assume by contradiction that the limit is equal to some $C>0$. Then by considering the blow-up sequence $u_\Sigma(r_ny+x)/\sqrt{r_n}$ we know that  it converges as $r_n\to 0$ in $\R^2$, to some harmonic function $u_0$ in the complement of a line, with $u_0=0$ on that line. By the strong convergence in $H^1_{loc}(\R^2)$ we infer that $u_0$ has constant normalized Dirichlet energy, equal to $C$, in other words
\[
\int_{B_R} |\nabla u_n|^2\, dx = \frac{1}{r_n}  \int_{B_{Rr_n}} |\nabla u_\Sigma|^2\, dx \to RC,
\]
thus
\begin{eqnarray}
\frac{1}{R}\int_{B_R} |\nabla u_0|^2 \,dx=C \quad \text{ for all } R>0.  \label{constnormen}
\end{eqnarray}
But then by using a decomposition of   $u_0$ into spherical harmonics, it is easy to see that $u_0$ must be equal to zero. Indeed, considering  $u_0^+$  the restriction of $u_0$ on one side of the line, one can extend it as a harmonic function $\tilde u_0$ in  the whole $\R^2$ using a reflexion. This function still satisfies~\eqref{constnormen}. On the other hand it is a sum of harmonic polynomials whose gradients are orthogonal in $L^2$ (the so called spherical harmonic decomposition) which contradicts the estimate~\eqref{constnormen} (for more details see for instance~\cite[Theorem~15]{a17} for a similar argument in conical domains).
\end{proof}

Then we control $\omega_{\Sigma}(x,r)$  from  $\int_{B_r(x)}|\nabla u_\Sigma|^2$ by another compactness argument.

\begin{proposition}\label{gouter1}
Assume that $f\in L^p(\Om)$, $p>2$,
and let $C>0$ be the constant of Remark~\ref{remYeah} (depending only on $|\Omega|$, $\|f\|_p$, $p$). Then for every  $r_0\in (0,\diam(\Sigma)/2)$ there exists a $\rho \in (0,1)$ such that for any $x\in \Sigma$ we have
\[\omega_\Sigma(x,\rho r_0) \leq  \frac{2}{r_0}\int_{B_{r_0}(x)}|\nabla u_\Sigma|^2 \, dx+Cr_0^{\frac{2}{p'}-1}.\]
\end{proposition}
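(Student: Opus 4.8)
The plan is to argue by contradiction, first reducing the assertion to a comparison, at the fixed scale $r_0$, between the energy of the maximizing competitor and that of $u_\Sigma$, and then combining the monotonicity formula with \v{S}ver\'ak's theorem. Fix $r_0\in(0,\diam(\Sigma)/2)$ and $x\in\Sigma$, and let $\Sigma'$ be a maximizer in~\eqref{MAXMAX} for $\omega_\Sigma(x,\rho r_0)$, which exists by the remark following~\eqref{MAXMAX}. Since $\Sigma'$ is connected and coincides with $\Sigma$ outside $\overline B_{\rho r_0}(x)$, and $\rho r_0<r_0<\diam(\Sigma)/2$, the center $x$ satisfies~\eqref{eq:geomass} for $\Sigma'$ on $[\rho r_0,r_0]$; hence Remark~\ref{remYeah} applies to $u_{\Sigma'}$, and dropping the nonnegative term $C(\rho r_0)^{2/p'-1}$ gives
\[
\omega_\Sigma(x,\rho r_0)=\frac{1}{\rho r_0}\int_{B_{\rho r_0}(x)}|\nabla u_{\Sigma'}|^2\,dx \le \frac{1}{r_0}\int_{B_{r_0}(x)}|\nabla u_{\Sigma'}|^2\,dx+Cr_0^{\frac{2}{p'}-1}.
\]
It therefore suffices to find $\rho\in(0,1)$, \emph{independent of $x$}, such that $\int_{B_{r_0}(x)}|\nabla u_{\Sigma'}|^2\le 2\int_{B_{r_0}(x)}|\nabla u_\Sigma|^2$ for the corresponding maximizer $\Sigma'$.

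Suppose no such uniform $\rho$ exists. Then there are $\rho_n\to0^+$, points $x_n\in\Sigma$ and maximizers $\Sigma'_n$ with $\Sigma'_n\Delta\Sigma\subset\overline B_{\rho_n r_0}(x_n)$ for which $\int_{B_{r_0}(x_n)}|\nabla u_{\Sigma'_n}|^2>2\int_{B_{r_0}(x_n)}|\nabla u_\Sigma|^2$. As $\Sigma$ is compact we may assume $x_n\to x_\infty\in\Sigma$, and since $\rho_n r_0\to0$ one checks $\Sigma'_n\to\Sigma$ in the Hausdorff distance, so by \v{S}ver\'ak's Theorem~\ref{th_Sverak1} we have $u_{\Sigma'_n}\to u_\Sigma$ strongly in $H^1(\Omega)$. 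In particular $\int_{B_{r_0}(x_n)}|\nabla u_{\Sigma'_n}|^2-\int_{B_{r_0}(x_n)}|\nabla u_\Sigma|^2\to0$ (using $x_n\to x_\infty$ and dominated convergence for the moving balls), whence the contradiction hypothesis forces $\int_{B_{r_0}(x_n)}|\nabla u_\Sigma|^2\to0$ and thus $\int_{B_{r_0}(x_n)}|\nabla u_{\Sigma'_n}|^2\to0$ as well. On the other hand, the violated inequality combined with the monotonicity bound above yields $\omega_\Sigma(x_n,\rho_n r_0)\ge Cr_0^{2/p'-1}>0$: the rescaled energy at the small scale stays bounded below, while the energy at the large scale $r_0$ collapses.

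The main obstacle is then to turn this into a genuine contradiction, which I would do by a normalized blow-up of the \emph{maximizers}. Setting $w_n(y):=(\rho_n r_0)^{-1/2}u_{\Sigma'_n}(x_n+\rho_n r_0\,y)$, the Dirichlet energy $\int_{B_1}|\nabla w_n|^2=\omega_\Sigma(x_n,\rho_n r_0)$ is bounded below by $Cr_0^{2/p'-1}$ and above (by the large-scale value plus the error), while $\frac{1}{T_n}\int_{B_{T_n}}|\nabla w_n|^2=\frac1{r_0}\int_{B_{r_0}(x_n)}|\nabla u_{\Sigma'_n}|^2\to0$ with $T_n=1/\rho_n\to\infty$. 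Because each $\Sigma'_n$ coincides outside $\overline B_1$ (in these coordinates) with an unbounded blow-up of the connected, Ahlfors regular minimizer $\Sigma$, one extracts, as in Proposition~\ref{blowuplimit}, a strong $H^1_{loc}$ limit $w_0$ harmonic off a closed set $\Sigma_\infty$, vanishing on $\Sigma_\infty$; the monotonicity of $r\mapsto r^{-1}\int_{B_r}|\nabla w_0|^2$ ($\gamma=2\pi$) together with the squeeze between the lower bound $Cr_0^{2/p'-1}$ and the vanishing at large scales forces this ratio to be \emph{constant} and strictly positive. This is exactly the configuration excluded in the proof of Proposition~\ref{gouter2}: constant normalized Dirichlet energy makes $w_0$ homogeneous of degree $1/2$, and the spherical-harmonic/reflection argument then gives $w_0\equiv0$, the desired contradiction. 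The delicate step to be carried out with care is precisely this last exclusion, i.e.\ verifying that $\Sigma_\infty$ is rich enough (it meets the unit circle in at least two points, being the blow-up of a connected chord-arc set and not a single ray) so that no nonzero degree-$1/2$ homogeneous harmonic function can vanish on it; this is where the connectedness and regularity of $\Sigma$ established earlier are essential.
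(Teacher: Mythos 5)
Your first half reproduces the paper's proof essentially verbatim: the paper argues by contradiction with sequences $\rho_n\to 0$, $x_n\to x_0$, takes the maximizers $\Sigma_n$ of \eqref{MAXMAX}, applies Remark~\ref{remYeah} to $u_{\Sigma_n}$ to transfer the small-scale quantity $\omega_\Sigma(x_n,\rho_n r_0)$ up to scale $r_0$, and then uses \v{S}ver\'ak's Theorem~\ref{th_Sverak1} (with the enlarged balls $B_{r_0+\varepsilon}(x_0)$ in place of your uniform-convergence remark for moving balls, which is equivalent) to pass to the limit. At that point the paper stops: the strict factor-$2$ gap survives as $\frac{1}{r_0}\int_{B_{r_0}(x_0)}|\nabla u_\Sigma|^2\,dx\geq \frac{2}{r_0}\int_{B_{r_0}(x_0)}|\nabla u_\Sigma|^2\,dx$, and this squeeze is the paper's contradiction. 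You instead judge this insufficient (on the grounds that it only forces the energies at scale $r_0$ to vanish) and append a normalized blow-up of the maximizers; it is this appended final third that does not work. A smaller repairable slip first: the lower bound $\omega_\Sigma(x_n,\rho_n r_0)\geq Cr_0^{\frac{2}{p'}-1}$ does not follow from negating your reduced energy-comparison claim (the monotonicity inequality bounds $\omega_\Sigma$ from \emph{above}, not below); you must negate the original statement of the proposition, which does give it.

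The genuine gap is in the exclusion step, for two independent reasons. First, circularity: you invoke the chord-arc property, connectedness of blow-up limits, and the classification ruling out ``a single ray'', but in the paper Proposition~\ref{chordarc}, Proposition~\ref{connected} and Proposition~\ref{corr} all rest on Theorem~\ref{C1reg}, whose proof uses precisely Proposition~\ref{gouter1} (together with Proposition~\ref{gouter2}); none of that machinery may be assumed here. Second, and independently of the logical order, the key claim is false as stated: constancy of $r\mapsto r^{-1}\int_{B_r}|\nabla w_0|^2\,dx$ does \emph{not} force $w_0\equiv 0$ unless the limit set is a full line. The Dirichlet cracktip $(r,\theta)\mapsto \sqrt{r/2\pi}\cos(\theta/2)$, which vanishes on a half-line and is harmonic off it, satisfies $|\nabla w_0|^2=c/r$, hence has exactly constant, strictly positive normalized energy at every scale. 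Nothing in your setup prevents $\Sigma_\infty$ from being a half-line: the points $x_n$ are arbitrary points of $\Sigma$, not flat points, and minimizers genuinely have endpoints, at which the blow-up is exactly this cracktip configuration with $e(x)>0$ (cf.\ Proposition~\ref{typeS}). The reflection/spherical-harmonics argument of Proposition~\ref{gouter2} succeeds only because the flatness hypothesis $\beta_\Sigma(x,r)\to 0$ there makes the blow-up set a line, so that the reflected function is entire harmonic and admits only integer homogeneities; without flatness the degree-$1/2$ mode is available and your intended contradiction evaporates. In short, your proposal is sound exactly up to the point where the paper closes the argument, and the alternative route you then take is both circular within the paper's architecture and blocked by the cracktip.
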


 \begin{proof}
If the claim is false, one can find an $r_0\in (0,\diam(\Sigma)/2)$ and two sequences $\rho_n\to 0$, $\{x_n\} \subset \Sigma$ such that
 \[\omega_\Sigma(x_n,\rho_n r_0) >  \frac{2}{r_0}\int_{B_{r_0}(x_n)}|\nabla u_\Sigma|^2 \, dx+Cr_0^{\frac{2}{p'}-1}.\]
 This implies in particular the existence of
a sequence of compact connected maximizers $\Sigma_n\subset\overline{\Om}$ for $\omega_\Sigma$  such that $\Sigma_n\Delta \Sigma \subset \overline{B}_{\rho_n r_0}(x_n)$ and
\begin{equation}\label{eq_contr_omsig1} \frac{1}{\rho_n r_0}\int_{B_{\rho_n r_0}(x_n)}|\nabla u_{\Sigma_n}|^2 \, dx >  \frac{2}{r_0}\int_{B_{r_0}(x_n)}|\nabla u_\Sigma|^2 \, dx+Cr_0^{\frac{2}{p'}-1}.
\end{equation}
From Remark~\ref{remYeah} applied with $\Sigma_n$ and $x_n$ instead of $\Sigma$ and $x_0$ respectively, we
get
\[
\frac{1}{r_0}\int_{B_ {r_0}(x_n)}|\nabla u_{\Sigma_n}|^2 \, dx  \geq
\frac{1}{\rho_n r_0}\int_{B_{\rho_n r_0}(x_n)}|\nabla u_{\Sigma_n}|^2 \, dx +C(\rho_n r_0)^{\frac{2}{p'}-1}-Cr_0^{\frac{2}{p'}-1},
\]
 which together with~\eqref{eq_contr_omsig1}
gives for an arbitrary $\varepsilon>0$ the estimate the estimate
\[
\frac{1}{r_0}\int_{B_ {r_0+\varepsilon}(x_0)}|\nabla u_{\Sigma_n}|^2 \, dx \geq
\frac{1}{r_0}\int_{B_ {r_0}(x_n)}|\nabla u_{\Sigma_n}|^2 \, dx  >  \frac{2}{r_0}\int_{B_{r_0}(x_n)}|\nabla u_\Sigma|^2 \, dx
+ C(\rho_n r_0)^{\frac{2}{p'}-1}
\]
for all sufficiently large $n$ (depending on $\varepsilon$).
Then passing to the limit (up to a subsequence) $x_n\to x_0$, $\Sigma_n \to \Sigma_0$ (which necessarily equals $\Sigma$) and applying \v{S}ver\'ak Theorem~\ref{th_Sverak1} we get
\[
\frac{1}{r_0}\int_{B_ {r_0+\varepsilon}(x_0)}|\nabla u_{\Sigma_n}|^2 \, dx \geq
\frac{2}{r_0}\int_{B_{r_0}(x_0)}|\nabla u_\Sigma|^2 \, dx,
\]
and then passing to a limit in $\varepsilon\to 0^+$ we arrive at
a contradiction.
\end{proof}

\subsection{Flat points are $C^1$-points}

A first consequence of the above compactness estimates is the following statement that we shall need later.

\begin{theorem}\label{C1reg} Assume that $f\in L^p(\Om)$, $p>2$, and let
$\Sigma$ be a minimizer of Problem~\ref{pb_compl_pen1}. There exist an $\varepsilon>0$ and an $a\in(0,1)$ such that, if  $x\in \Sigma$ is such that $B_r(x)\subset \Omega$ and
  \[\beta_\Sigma(x,r)\leq \varepsilon,\]
  then $\Sigma\cap B(x,ar)$ is a 
 $C^{1,\alpha}$ curve for some $\alpha\in (0,1)$. As a consequence, for any point $x\in \Sigma\cap \Omega$ which admits a line as blow-up limit, there exists $r>0$ such that  $\Sigma\cap B_r(x)$ is a 
 $C^{1,\alpha}$ curve.
\end{theorem}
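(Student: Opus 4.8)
The plan is to deduce the statement from the $\varepsilon$-regularity of Corollary~\ref{TheCor}, whose hypothesis asks for the smallness of \emph{both} the flatness $\beta_\Sigma$ and the energy $\omega_\Sigma$ at one scale, whereas here only $\beta_\Sigma$ is assumed small. The whole point is therefore to upgrade single-scale flatness into smallness of the energy. The two available devices are Proposition~\ref{gouter1}, which bounds $\omega_\Sigma(x,\rho r)$ by the normalized Dirichlet energy $\tfrac1r\int_{B_r(x)}|\nabla u_\Sigma|^2\,dx$ uniformly in $x$, and the rigidity behind Proposition~\ref{gouter2}, which says that a flat blow-up forces the normalized energy to vanish. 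Since the conclusion is uniform in $x$ and $r$, I would run a compactness/blow-up argument combining these.

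I would argue by contradiction. Fixing the constant $a$ (to be pinned below) and taking $\varepsilon=1/n$, the negation yields points $x_n\in\Sigma$ and radii $r_n$ with $B_{r_n}(x_n)\subset\Omega$, $\beta_\Sigma(x_n,r_n)\le 1/n$, yet $\Sigma\cap B(x_n,ar_n)$ not a $C^{1,\alpha}$ curve. If $\inf_n r_n>0$, then along a subsequence $x_n\to x_\ast$, $r_n\to r_\ast>0$ and the corresponding optimal lines converge, so that $\beta_\Sigma(x_\ast,r_\ast)=0$; hence $\Sigma\cap B_{r_\ast}(x_\ast)$ is exactly a diameter segment, and for $n$ large $B(x_n,ar_n)\subset B_{r_\ast}(x_\ast)$, making $\Sigma\cap B(x_n,ar_n)$ a subsegment, which is $C^{1,\alpha}$ --- a contradiction. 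Thus we may assume $r_n\to0$.

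I would then blow up as in~\eqref{blowup1}--\eqref{blowup2}: the sets $\Sigma_n:=r_n^{-1}(\Sigma-x_n)$ together with $u_n(y):=r_n^{-1/2}u_\Sigma(r_ny+x_n)$ minimize the rescaled problem with source $f_n$, and $\|f_n\|_p=r_n^{3/2-2/p}\|f\|_p\to0$ since $p>2$. By Proposition~\ref{blowuplimit}, up to a subsequence $\Sigma_n\to\Sigma_0$ in the Kuratowski/local Hausdorff sense and $u_n\to u_0$ in $H^1_{loc}$, with $u_0$ harmonic off $\Sigma_0$; since $\beta_{\Sigma_n}(0,1)=\beta_\Sigma(x_n,r_n)\to0$, the limit $\Sigma_0$ is a line through the origin and $u_0=0$ on it. Crucially, the monotonicity of Remark~\ref{remYeah} gives $\int_{B_R(x_n)}|\nabla u_\Sigma|^2\,dx\le C'R$ for all $R<\diam\Sigma$, which rescales to the global bound $\int_{B_R}|\nabla u_0|^2\,dx\le C'R$ for every $R>0$; reflecting $u_0$ across the line and expanding in spherical harmonics, exactly as in Proposition~\ref{gouter2}, this linear growth forces $u_0\equiv0$. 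Consequently $\int_{B_c}|\nabla u_n|^2\,dx\to0$ for each fixed $c<1$, so by Proposition~\ref{gouter1} (the $f_n$-error terms being negligible) and the vanishing of $\beta_{\Sigma_n}(0,\cdot)$, both $\omega_{\Sigma_n}(0,\cdot)$ and $\beta_{\Sigma_n}(0,\cdot)$ fall below the threshold $\varepsilon_0$ of Corollary~\ref{TheCor} at a fixed scale. That corollary then makes $\Sigma_n\cap B(0,c)$ a $C^{1,\alpha}$ curve for a universal $c>0$; rescaling back, $\Sigma\cap B(x_n,cr_n)$ is $C^{1,\alpha}$, contradicting the choice of $(x_n,r_n)$ as soon as $a\le c$. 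This fixes $a$ and proves the first assertion. The final sentence is then immediate: a point $x\in\Sigma\cap\Omega$ with a line as blow-up limit satisfies $\beta_\Sigma(x,r)\to0$, hence $\beta_\Sigma(x,r)\le\varepsilon$ for all small $r$, and the first part applies.

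I expect the main obstacle to be the rigidity step $u_0\equiv0$: it is what converts one-scale flatness into vanishing energy, and it relies on both the uniform monotonicity bound, which yields \emph{global} linear energy growth of the blow-up limit, and the Liouville-type spherical-harmonic argument of Proposition~\ref{gouter2}. A secondary, more technical point is the bookkeeping needed to guarantee that the hypotheses of Corollary~\ref{TheCor} genuinely pass to the rescaled minimizers $\Sigma_n$ --- whose sources $f_n$ have vanishing $L^p$ norm --- and then transfer back to $\Sigma$ under the dilation.
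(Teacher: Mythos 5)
Your overall architecture --- upgrade one-scale flatness into smallness of $\omega_\Sigma$ and then invoke the $\varepsilon$-regularity of Corollary~\ref{TheCor} --- is the right one, and your Liouville step (linear energy growth plus reflection and spherical harmonics forces $u_0\equiv 0$) is exactly the rigidity behind Proposition~\ref{gouter2}. But two steps fail as written. First, you apply Proposition~\ref{blowuplimit} to the contradiction sequence $(x_n,r_n)$, whereas that proposition is stated and proved for $x_n\to x_0\in\Sigma\cap\Omega$, so that $\Omega_n\to\R^2$; your hypothesis only gives $d(x_n,\partial\Omega)\geq r_n$, so $x_n$ may drift to $\partial\Omega$ with $d(x_n,\partial\Omega)\sim r_n$. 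In that regime the rescaled domains do not invade $\R^2$, the limit $u_0$ lives only on a limit domain, and the \emph{global} linear-growth rigidity is unavailable (locally, $u_0(x,y)=y^+$ on $B_1$ is harmonic off a line, vanishes on it, and has energy $\lesssim \rho^2\leq C\rho$ on $B_\rho$, so nothing forces $u_0\equiv 0$). The boundary substitute, Theorem~\ref{global2}, is no way out: it assumes a $C^1$ boundary, which Theorem~\ref{C1reg} does not, and it would be circular, resting on Proposition~\ref{connected}, hence on Proposition~\ref{chordarc} and Lemma~\ref{reg}, which themselves cite the proof of Theorem~\ref{C1reg}. Second, your appeal to Proposition~\ref{gouter1} ``at a fixed scale'' is not licensed: that proposition concerns the fixed data $(\Omega,f)$ (its proof is a compactness argument via \v{S}ver\'ak's theorem on the fixed bounded domain), and its $\rho$ depends on $r_0$. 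Applied to $\Sigma$ at the scales $r_0=cr_n\to 0$ it gives smallness of $\omega_\Sigma$ only at the scales $\rho(cr_n)\,cr_n$, with no uniform lower bound on $\rho$, which destroys the fixed fraction $a$; applied to the rescaled triples $(\Sigma_n,f_n,\Omega_n)$ it is simply outside the statement (note also $|\Omega_n|\to\infty$ in the constants).

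Both gaps can be closed, and in a way that makes the blow-up unnecessary. Observe that $\omega$ is invariant under the blow-up normalization, $\omega_{\Sigma_n}(0,s)=\omega_\Sigma(x_n,sr_n)$; that it is a priori bounded, $\omega_\Sigma(x,r)\leq M$ for all $x\in\Sigma$ and $r\leq \diam(\Sigma)/2$, by Corollary~\ref{corYeah2} (or Remark~\ref{remYeah}) together with Remark~\ref{rem_boundus2}; and that Proposition~\ref{decayomegaA} converts one-scale flatness directly into $\omega_\Sigma(x_n,cr_n)\leq 8\sqrt{c}\,M+(C/c)\,r_n^{2/p'-1}$ for any fixed $c>2\varepsilon$. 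Choosing first $c$ small and then $r_n$ small puts both $\beta_\Sigma(x_n,cr_n)\leq \varepsilon/c$ and $\omega_\Sigma(x_n,cr_n)$ below the thresholds of Corollary~\ref{TheCor}, applied to $\Sigma$ itself in $B_{cr_n}(x_n)\subset\Omega$, yielding $a=c/4$ with fixed constants and no boundary case at all (large radii reduce to small ones via $\beta_\Sigma(x,\bar r)\leq (r/\bar r)\beta_\Sigma(x,r)$). For comparison, the paper's own proof is a terse pointwise combination of Propositions~\ref{gouter2} and~\ref{gouter1} with Corollary~\ref{TheCor}, which is tailored to points where $\beta_\Sigma(x,r)\to 0$ at all scales (the theorem's second assertion); the uniform one-scale statement is precisely where your extra quantitative input is needed, and the $\omega$-decay route above supplies it, whereas your compactness scheme, as written, does not.
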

\begin{proof} From  Proposition~\ref{gouter2} and Proposition~\ref{gouter1} we deduce  that, provided $\varepsilon$ is small enough, one can find $r_0$ such that both $\beta_\Sigma(x,r_0)$ and $\omega_{\Sigma}(x,r_0)$ are small enough to apply Corollary~\ref{TheCor}.
\end{proof}

For some technical reasons we shall also need the following more precise version of the above statement, which is just  a rephrasing of  some of the   previous results,
and that will be needed only in the proof of Proposition~\ref{chordarc}.


\begin{lemma} \label{reg}
Assume that $f\in L^p(\Om)$, $p>2$ and
let  $\Sigma$ be a minimizer of Problem~\ref{pb_compl_pen1}. There exist the numbers $\varepsilon_0 \in (0,1/100)$,  $\eta>0$ and  $r_0>0$ such that whenever $x\in \Sigma$ and $10r\leq \min(r_0,d(x,\partial \Omega ))$ are such that
\[
\beta_{\Sigma}(x,10r)\leq \varepsilon_0,
\]
then $B_{r}(x)\cap \Sigma$ is a $C^{1,\alpha}$ curve for some $\alpha\in (0,1)$
satisfying in addition
\begin{eqnarray}
\beta_{\Sigma}(x,t)&\leq& \varepsilon_0,  
\label{lemmetechnique1} \\
\int_{B_t(x)}|\nabla u_{\Sigma}|^2\,dx &\leq &  \left(\frac{t}{r}\right)^{1+\eta}\left( \int_{B_r(x)}|\nabla u_{\Sigma}|^2\,dx+Cr^{\frac{2}{p'}} \right) 
\label{lemmetechnique2}
\end{eqnarray}
for all $t<r$ and
for some $C>0$ depending on $|\Omega|$, $\|f\|_p$, $p$. Moreover,  for any $t<r$
there exists an $s\in [t/2,t]$ such that $\Card \Sigma\cap \partial B_{s}(x)=2$ with the points lying on both sides.
\end{lemma}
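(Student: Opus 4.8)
The plan is to assemble the $\varepsilon$-regularity machinery already built, so that almost everything reduces to invoking Theorem~\ref{C1reg}, Proposition~\ref{prop_compl_decay2}, Corollary~\ref{TheCor} and the monotonicity formula with a careful bookkeeping of constants. First I would fix once and for all an exponent $\alpha\in(1,2/p')$ (possible since $p>2$ forces $2/p'=2-2/p>1$), and set $\gamma:=2\pi/\alpha\in(\pi,2\pi)$, $\eta:=\alpha-1>0$. Let $\tau_1,\tau_2,\bar r_0$ be the thresholds furnished by Proposition~\ref{prop_compl_decay2} and $C_0$ the upper Ahlfors constant of $\Sigma$ (Theorem~\ref{th_compl_ahl1}). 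I would choose $\varepsilon_0\le\tau_1$ and $r_0\le\bar r_0$ small, to be shrunk finitely many times in the course of the argument.

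The first step is to upgrade smallness of the flatness into smallness of the energy. Since $\beta_\Sigma(x,10r)\le\varepsilon_0$ and $B_{10r}(x)\subset\Omega$ (because $10r\le d(x,\partial\Omega)$), the compactness estimates of Propositions~\ref{gouter1} and~\ref{gouter2} -- exactly as in the proof of Theorem~\ref{C1reg} -- show that, for $\varepsilon_0$ and $r_0$ small enough, one also has $\omega_\Sigma(x,10r)\le\tau_2$. Thus the hypotheses~\eqref{toto2} of Proposition~\ref{prop_compl_decay2} are met at the scale $r_0:=10r$. Applying that proposition yields, after possibly shrinking $\varepsilon_0$ and $r_0$, that $\beta_\Sigma(x,t)\le\varepsilon_0$ for every $t<\tfrac{r_0}{8}=\tfrac{5r}{4}$, which in particular gives~\eqref{lemmetechnique1} for all $t<r$, together with a decay of $\omega_\Sigma(x,\cdot)$. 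Moreover, applying Corollary~\ref{TheCor} at $x$ with scale $10r$ (its hypothesis being satisfied after the previous paragraph) shows directly that $\Sigma\cap B_{5r/2}(x)$, hence $\Sigma\cap B_r(x)$, is a single embedded $C^{1,\alpha}$ curve passing through $x$.

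Once we know that $\Sigma\cap B_r(x)$ is a nearly flat $C^1$ curve through the centre $x$, the geometry on every sphere is completely controlled: writing the curve as a graph of small slope over the optimal line $P_0\ni x$, one checks that $\partial B_s(x)\cap\Sigma$ consists of exactly two points, one on each side of $x$ along $P_0$, for \emph{every} $s\in(0,r)$. This immediately gives the last assertion of the lemma (a fortiori the existence of $s\in[t/2,t]$ with $\Card\,\Sigma\cap\partial B_s(x)=2$, the two points lying on both sides in the sense of Remark~\ref{bothside}), and it also shows that each component of $\partial B_s(x)\setminus(\Sigma\cup\partial\Omega)$ has length at most $(\pi+o(1))s<\gamma s$, so that $\gamma_\Sigma(x,t,r)\le\gamma$ and the geometric assumption~\eqref{eq:geomass} hold for all radii in $(0,r)$.

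It then remains to establish the energy decay~\eqref{lemmetechnique2}, which is where the choice of $\alpha$ pays off. By the previous paragraph, Lemma~\ref{lm_compl_monot1} applies on $(0,r)$ with this $\gamma$ (note that $\alpha<2/p'$ is exactly the admissibility condition $1/p'>\pi/\gamma$), so that
\[
t\mapsto t^{-\alpha}\int_{B_t(x)}|\nabla u_\Sigma|^2\,dx+C\,t^{\frac{2}{p'}-\alpha}
\]
is nondecreasing. Comparing this quantity between $t$ and $r$, multiplying by $t^{\alpha}$ and discarding the nonnegative term $C\,t^{2/p'}$ from the left-hand side gives
\[
\int_{B_t(x)}|\nabla u_\Sigma|^2\,dx\le\Big(\frac tr\Big)^{\alpha}\Big(\int_{B_r(x)}|\nabla u_\Sigma|^2\,dx+C\,r^{\frac{2}{p'}}\Big),
\]
which is precisely~\eqref{lemmetechnique2} with $\eta=\alpha-1$. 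The only genuinely delicate point is the first step, namely converting smallness of $\beta_\Sigma(x,10r)$ into smallness of $\omega_\Sigma$ at a comparable scale; this is not elementary and rests on the blow-up compactness of Propositions~\ref{gouter1}--\ref{gouter2} (indeed it is already contained in Theorem~\ref{C1reg}). Everything else is routine: persistence of flatness from Proposition~\ref{prop_compl_decay2}, the elementary geometry of a flat $C^1$ curve, and a single application of the monotonicity formula.
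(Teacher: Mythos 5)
Your proposal is correct and follows essentially the same route as the paper's own (very terse) proof: the curve structure, the persistence of flatness~\eqref{lemmetechnique1} and the two-point property are all extracted from the $\varepsilon$-regularity machinery behind Theorem~\ref{C1reg} (Propositions~\ref{gouter2}, \ref{gouter1}, \ref{prop_compl_decay2}, Corollary~\ref{TheCor}), and \eqref{lemmetechnique2} then follows from a single application of the monotonicity Lemma~\ref{lm_compl_monot1}. In fact your bookkeeping at the monotonicity step is cleaner than the paper's text, which literally sets $\eta:=1-\frac{2\pi}{\gamma_{\Sigma}(x,0,r)}$ and requires $\frac{2}{p'}-\frac{2\pi}{\gamma_{\Sigma}(x,0,r)}>0$ --- impossible as written, since flatness forces $\gamma_{\Sigma}(x,0,r)\approx\pi$ and hence $2\pi/\gamma_{\Sigma}\approx 2>2/p'$; your choice of an auxiliary $\gamma=2\pi/\alpha$ with $\alpha\in(1,2/p')$ and $\gamma\geq\gamma_{\Sigma}(x,0,r)$ (which is where ``$\varepsilon_0$ small with respect to $p$'' genuinely enters), giving $\eta=\alpha-1$, is exactly the intended correct reading, and your graph-geometry argument produces the two crossing points on both sides for \emph{every} radius, slightly strengthening the ``some $s\in[t/2,t]$'' that the paper sources from Proposition~\ref{TheProp}.
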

\begin{proof} The first part of the claim,~\eqref{lemmetechnique1} and the conclusion about the existence of $s\in [t/2,t]$ such that $\Card \Sigma\cap \partial B_{s}(x)=2$, are  directly coming from the proof of Theorem~\ref{C1reg} (and from the proofs of previous propositions used for the proof of the later). But then this allows us to apply the monotonicity Lemma~\ref{lm_compl_monot1}, which says that
\begin{eqnarray*}
\int_{B_t(x)}|\nabla u_{\Sigma}|^2\,dx &\leq &  \left(\frac{t}{r}\right)^{\alpha}\left( \int_{B_r(x)}|\nabla u_{\Sigma}|^2\,dx+Cr^{\frac{2}{p'}}\right)  \quad \text{for all } t <r
\end{eqnarray*}
with $\alpha=2\pi/\gamma_{\Sigma}(x,0,r)$. Indeed, we may assume that
$\frac{2}{p'}-\frac{2\pi}{\gamma_{\Sigma}(x,0,r)}>0$  provided that $\varepsilon_0$ is small enough (with respect to $p$). Thus~\eqref{lemmetechnique2} follows by
 setting $\eta:= 1-\frac{2\pi}{\gamma_{\Sigma}(x,0,r)}$.

\end{proof}

\begin{remark}Though it will not be used, under the assumptions of Lemma~\ref{reg} we also have by Proposition~\ref{prop_compl_decay2} that
$$\frac{1}{t}\int_{B_t(x)}|\nabla u_{\Sigma}|^2\; dx\leq \omega_\Sigma(x,t) \leq C\left(\frac{t}{r}\right)^{\frac{1}{4}}+Ct^{\delta} \quad \text{ for all } t <r,$$
but in the sequel we really need the slightly different version~\eqref{lemmetechnique2}.
\end{remark}


\subsection{Chord-arc estimate and connectedness of blow-up limits}

Let $d_{\Sigma}(x,y)$ be the geodesic distance in $\Sigma$. We would like to prove the following.

\begin{proposition} \label{chordarc}
Assume that $f\in L^p(\Om)$, $p>2$.
Let $\Omega$ be a $C^1$ domain and let $\Sigma\subset \overline{\Omega}$ be a minimizer of Problem~\ref{pb_compl_pen1}. Then there exists    $C>0$ (depending on $\Sigma$) such that
\begin{eqnarray}
d_{\Sigma}(x,y)\leq C |x-y| \quad \text{ for all } x,y \in \Sigma .\label{chordA}
\end{eqnarray}
\end{proposition}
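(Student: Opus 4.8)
The plan is to prove that the geodesic distance $d_\Sigma(x,y)$ is controlled by $|x-y|$ by a cut-and-reconnect argument: given $x,y\in\Sigma$, if the geodesic $\Gamma$ joining them in $\Sigma$ were very long compared to $|x-y|$, I would cut $\Gamma$ at a well-chosen flat point and reconnect with the segment $[x,y]$ (or a short detour), obtaining a connected competitor of strictly smaller functional value, contradicting minimality. This is the quantitative version of the absence-of-loops argument in Theorem \ref{th_compl_noloop1}. The key technical tools are all already in place: the uniform rectifiability of $\Sigma$ (which follows from Ahlfors regularity, Theorem \ref{th_compl_ahl1}), and the local $C^{1,\alpha}$-regularity at flat points together with the energy-decay estimate \eqref{lemmetechnique2} from Lemma \ref{reg}.

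Here is the order in which I would carry out the steps. First I would normalize and argue by contradiction, supposing that for points $x,y\in\Sigma$ arbitrarily close to each other the geodesic $\Gamma=\Gamma_{x,y}\subset\Sigma$ has length $d_\Sigma(x,y)\geq M|x-y|$ with $M$ large. Because $\Sigma$ is uniformly rectifiable (a consequence of Ahlfors regularity), a definite proportion of $\Gamma$ consists of points where $\Sigma$ is flat at a scale comparable to $d_\Sigma(x,y)$; quantitatively, uniform rectifiability guarantees a point $z\in\Gamma$, at geodesic distance of order $d_\Sigma(x,y)$ from both endpoints, and a radius $r$ with $r\gtrsim d_\Sigma(x,y)$ (and crucially $r$ bounded below \emph{independently} of $|x-y|$, only through the ratio), such that $\beta_\Sigma(z,10r)\leq\varepsilon_0$. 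This is the step that makes the argument uniform and is the reason Ahlfors regularity is genuinely needed rather than mere rectifiability. At such a flat point I may apply Lemma \ref{reg}: there is $s\in[t/2,t]$ with $\Card(\Sigma\cap\partial B_s(z))=2$, the two points lying on both sides, so I can cut $\Sigma$ inside $B_s(z)$.

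Next I would build the competitor $\Sigma'$: remove the piece of $\Sigma$ inside $B_s(z)$ that lies on $\Gamma$ (this disconnects $\Sigma$ into two parts, one containing $x$ and one containing $y$), and reconnect by adding the segment $[x,y]$, whose length is $|x-y|\leq d_\Sigma(x,y)/M$. The length bookkeeping is the heart of the estimate: cutting removes a length of order $r\gtrsim d_\Sigma(x,y)$ from $\Sigma$, while the reconnection adds only $|x-y|\le d_\Sigma(x,y)/M$ plus possibly a short segment inside $B_s(z)$ of length comparable to $|a_2-a_1|\le 2s$. For the energy variation I would invoke Proposition \ref{lm_compl_locC} (or directly \eqref{lemmetechnique2}), which bounds $|E(u_{\Sigma'})-E(u_\Sigma)|$ by a term of the form $Cr(r/r_1)^{1/2}\omega_\Sigma+Cr_1^{2/p'}$, hence by $o(r)$ at small scales since $2/p'>1$. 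Assembling these, the net change in $\mathcal{F}_\lambda$ is at most $-c\,d_\Sigma(x,y)+|x-y|+o(r)<0$ once $M$ is large and the scale is small, contradicting the minimality of $\Sigma$.

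The main obstacle I anticipate is \emph{extracting a flat cutting point at a scale controlled from below by a fixed fraction of $d_\Sigma(x,y)$}, together with ensuring the reconnecting segment $[x,y]$ actually restores connectedness without re-using the cut portion. The connectedness subtlety is exactly the one flagged in the remark after Lemma \ref{pythag}: in a general connected set the two geodesic half-arcs from the cut point may overlap, so I must work with an injective geodesic arc $\Gamma$ and use Lemma \ref{lm_SigmCapBall_conn1} (with $\Card\partial D=2$) to control how cutting at $B_s(z)$ splits $\Sigma$; the flatness-and-both-sides conclusion of Lemma \ref{reg} is what makes $\Card(\Sigma\cap\partial B_s)=2$ available. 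Once the flat scale is secured uniformly via uniform rectifiability, the remaining estimates are routine applications of the already-established energy bounds.
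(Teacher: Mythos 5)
Your overall skeleton is the same as the paper's (argue by contradiction, use Ahlfors regularity/uniform rectifiability via Proposition~\ref{ahlfors} to locate a flat ball centered on the geodesic at scale comparable to $\Lambda R$, $R:=|x-y|$, cut there using the two-boundary-points conclusion of Lemma~\ref{reg}, reconnect with a short arc, and beat the length gain against the energy cost). But the central quantitative step fails as you have written it. You cut at a scale $s\sim r\gtrsim d_\Sigma(x,y)$, i.e.\ at essentially the \emph{same} scale at which the flatness was found, and claim the energy variation is $o(r)$ ``since $2/p'>1$''. The exponent $2/p'>1$ only disposes of the tail $Cr_1^{2/p'}$; the dominant term in Proposition~\ref{lm_compl_locC} is $Cr(r/r_1)^{1/2}\omega_\Sigma(z,r_1)$, and with $r\sim r_1$ this is $\Theta(r)\cdot\omega_\Sigma(z,r_1)$, where $\omega_\Sigma(z,r_1)$ is merely \emph{bounded} (by monotonicity), not small: a point produced by uniform rectifiability is flat at one scale only, which is not enough to invoke Propositions~\ref{gouter1}--\ref{gouter2} (those need flatness at all small scales). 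So the energy cost is of the same order as the length gain, with an uncontrolled constant, and your claimed net change $-c\,d_\Sigma(x,y)+|x-y|+o(r)$ does not follow. The paper's proof resolves exactly this by separating two scales: flatness at $s_0\geq c_0\Lambda R/20$, but the cut at the much smaller radius $t\in[2R,4R]$, so that the decay \eqref{lemmetechnique2} (exponent $1+\delta>1$) gives an energy cost $\lesssim (t/s_0)^{1+\delta}\,s_0\sim \Lambda^{-\delta}R$, while the length gain is $t-R\geq R$; the contradiction is $R\leq C\Lambda^{-\delta}R+C_\Lambda R^{2/p'}$ for $\Lambda$ large and $R$ small. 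In particular the removed length is of order $|x-y|$, \emph{not} of order $d_\Sigma(x,y)$ as your bookkeeping asserts; the largeness of the ratio $d_\Sigma/|x-y|$ enters only through the gap between the two scales.

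A second genuine omission is the boundary case, which is why the statement assumes $\Omega$ is a $C^1$ domain. Lemma~\ref{reg} requires $10r\leq d(z,\partial\Omega)$, and the flat ball given by Proposition~\ref{ahlfors} may well meet $\partial\Omega$ (the geodesic can hug the boundary). There the paper runs a separate argument: the boundary decay Lemma~\ref{decayboundary} (a consequence of the Dirichlet condition and the $C^1$ boundary) makes the energy near $\partial\Omega$ genuinely small, of order $R^{1+\nu}$, so one can afford to cut at the \emph{large} scale $s\sim s_0$; moreover the straight segment $[x,y]$ need not lie in $\overline{\Omega}$, so it is replaced by a geodesic of $\overline{\Omega}$, whose length is $R+o(R)$ again thanks to $C^1$-regularity of $\partial\Omega$. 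Finally, two small points: you should remove all of $\Sigma\cap B_s(z)$ (not only ``the piece that lies on $\Gamma$'') to keep the competitor closed and the length accounting clean, and the statement for arbitrary pairs follows from the small-distance case via the trivial bound $d_\Sigma(x,y)\leq\H(\Sigma)\leq (\H(\Sigma)/R_0)\,|x-y|$ when $|x-y|\geq R_0$, a reduction your plan leaves implicit.
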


\begin{remark}
As it will be clear from the proof, without any condition on the boundary
of $\Omega$ one would have that~\eqref{chordA} holds with $C$ depending on the distance between $\{x,y\}$ and $\partial \Omega$.
\end{remark}

The proof of Proposition~\ref{chordarc} uses the  uniform rectifiability of the minimizer $\Sigma$, and will be used later to prove that blow-up limits of minimizers are connected sets.  It  also needs some well known  facts about uniformly rectifiable sets. Here is the exact statement that we shall use.

\begin{proposition}\label{ahlfors} For any $C>0$ and $\varepsilon>0$ there exists   $c_0\in (0,1)$ depending only on $C$ and $\varepsilon$, such that the following holds. Let $K$ be a compact connected set which is Ahlfors-regular with constants  $C$, and let $\Gamma\subset K$ be a curve. Then for any $x \in \Gamma$ and $r < \min(1, \diam(\Gamma))$, there exists a ball $B_{s}(y) \subset B_{r}(x)$ such that $y \in \Gamma$, $s\geq c_0 r$ and
\[\beta_{K}(y,s)\leq \varepsilon.\]
\end{proposition}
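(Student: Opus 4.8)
The plan is to deduce the statement from the uniform rectifiability of $K$, via the \emph{bilateral} weak geometric lemma together with a contradiction argument localized at the single ball $B_r(x)$. First I would record the structural fact that a compact connected $\H$-Ahlfors regular set is uniformly rectifiable: since $K$ is connected and Ahlfors regular with constants controlled by $C$, it is (contained in) an Ahlfors regular curve and hence uniformly rectifiable, with uniform rectifiability constants depending only on $C$ (see~\cite{DavSemm93}). The sole role of the Ahlfors constant is to fix, once and for all, these constants.

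Next I would invoke the bilateral weak geometric lemma for uniformly rectifiable sets, in its Carleson packing form adapted to the $L^\infty$-flatness $\beta_K$ used here. Note that $\beta_K(y,s)$ is defined through the \emph{Hausdorff} distance between $K\cap B_s(y)$ and a line, so it is exactly a bilateral $\beta_\infty$-number. The lemma then provides, for the prescribed threshold $\varepsilon$, a constant $\Lambda=\Lambda(C,\varepsilon)>0$ such that for every $z\in K$ and every $R\in(0,\diam K]$,
\[
\frac{1}{R}\int_{K\cap B_R(z)}\int_0^R \mathbf 1\big[\beta_K(y,t)>\varepsilon\big]\,\frac{dt}{t}\,d\H(y)\le \Lambda .
\]
(The requirement that the approximating line pass through the centre $y$ rather than be free only changes $\beta_K$ by a bounded factor, since $y\in K$, so it does not affect this estimate.)

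Then comes the contradiction argument. Suppose, for a value of $c_0$ to be fixed below, that no admissible ball exists, i.e.\ $\beta_K(y,s)>\varepsilon$ for every $y\in\Gamma$ and every $s\ge c_0 r$ with $B_s(y)\subset B_r(x)$. The geometric input is that $\Gamma$ is a curve through $x$ with $\diam\Gamma>r$, so $\Gamma$ cannot be contained in $B_{r/2}(x)$; being connected, the continuous function $y\mapsto|y-x|$ takes on $\Gamma$ all values in $[0,r/2)$, and the Eilenberg inequality~\cite[Section~26, Lemma~1, p.~160]{d} yields $\H(\Gamma\cap B_{r/2}(x))\ge r/2$. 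For any $y\in\Gamma\cap B_{r/2}(x)$ and any $t\in[c_0 r,\,r/2]$ one has $B_t(y)\subset B_r(x)$, so the hypothesis forces $\beta_K(y,t)>\varepsilon$, and integrating the indicator over these scales gives $\int_{c_0 r}^{r/2}\frac{dt}{t}=\log\frac{1}{2c_0}$. Combining with the length bound,
\[
\frac r2 \log\frac{1}{2c_0}\le \int_{\Gamma\cap B_{r/2}(x)}\int_{c_0r}^{r/2}\mathbf 1[\beta_K(y,t)>\varepsilon]\,\frac{dt}{t}\,d\H(y)\le \int_{K\cap B_r(x)}\int_0^{r}\mathbf 1[\beta_K(y,t)>\varepsilon]\,\frac{dt}{t}\,d\H(y)\le \Lambda r ,
\]
so $\log\frac{1}{2c_0}\le 2\Lambda$. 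Choosing $c_0:=\tfrac12 e^{-4\Lambda}$ (which depends only on $C$ and $\varepsilon$) makes this impossible, and the contradiction produces the desired ball $B_s(y)\subset B_r(x)$ with $y\in\Gamma$, $s\ge c_0 r$ and $\beta_K(y,s)\le\varepsilon$.

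The main obstacle is the second step: selecting the correct form of uniform rectifiability. One must use the \emph{bilateral} geometric lemma, because $\beta_K$ controls flatness in both directions (points of $K$ near the line \emph{and} points of the line near $K$); the unilateral Jones $\beta$ alone would not prevent $K$ from leaving gaps along the approximating line, and so would not yield the Hausdorff-distance flatness required in the conclusion. Once the bilateral Carleson packing estimate is available with $\Lambda$ depending only on $C$ and $\varepsilon$, the remainder is the elementary scale integration above, the only genuinely geometric ingredient being the lower bound $\H(\Gamma\cap B_{r/2}(x))\ge r/2$ coming from connectedness of $\Gamma$.
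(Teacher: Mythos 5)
Your proof is correct and follows essentially the same route as the paper's: uniform rectifiability of the compact connected Ahlfors-regular set $K$, the bilateral weak geometric lemma in Carleson-packing form, the lower bound $\H(\Gamma\cap B_{r/2}(x))\geq r/2$ from connectedness of the curve through $x$, and the logarithmic scale-counting contradiction fixing $c_0$. Your explicit remark that requiring the approximating line to pass through the centre $y\in K$ only changes $\beta_K$ by a bounded factor is a small point the paper leaves implicit, but otherwise the two arguments coincide.
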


\begin{proof}  The statement is classical in uniform rectifiability theory, but here we stress that the concluding ball $B_{s}(y)$ where $\beta_{K}(y,s)$ is small is centered on $y\in \Gamma$, a given sub curve of $K$. The proof follows by the same classical argument but we prefer to give the full details since it may not be as standard as when one wants $y \in K$.

Since  $K$  is compact, connected, and Ahlfors-regular, then it is a uniformly rectifiable set, in other words contained in an Ahlfors-regular curve~\cite[Theorem~31.5]{d}. Consequently, the  set $K$ satisfies the so-called BWGL (Bilateral Weak Geometric Lemma~\cite[Definition~2.2, p.~32]{DavSemm93}), which means the following. For any $\varepsilon >0$ let us consider the bad set
\[\mathcal{B}_\varepsilon := \{(x,t) \in K\times (0,1]  \colon \beta_{K}(x,t)> \varepsilon \}.\]
Then   there exists some $C_\varepsilon>0$ such that
\begin{eqnarray}
  \int_{K\cap B_{t}(x)}  \int_{0}^t {\bf 1}_{\mathcal{B}_\varepsilon} (y,s) \frac{ds}{s}  d\H(y)  \leq C_\varepsilon t\quad \quad \text{for all $(x,t)\in K\times (0,1]$}. \label{BWGL}
\end{eqnarray}
 Now  let us consider the set
\[
\mathcal{B}'_\varepsilon := \{(x,t) \in \Gamma \times (0,1]  \colon \beta_{K}(x,t)> \varepsilon \} \subset \mathcal{B}_\varepsilon.
\]
Of course ${\bf 1}_{\mathcal{B}'_\varepsilon}\leq {\bf 1}_{\mathcal{B}_\varepsilon}$  thus we readily have from~\eqref{BWGL}
\[ 
\int_{\Gamma \cap B_{t}(x)}  \int_{0}^t {\bf 1}_{\mathcal{B}'_\varepsilon} (y,s) \frac{ds}{s}  d\H(y)  \leq C_\varepsilon t\quad \text{for all $(x,t)\in \Gamma \times (0,1]$}.
\]
Now let $r_1=\min(1,\diam(\Gamma))$ we argue by contradiction and assume, for some given $(x,t) \in \Gamma \times (0,r_1)$, that  $\beta_K(y,s)>\varepsilon$ for all  $s \in (ct, t/2)$, and for all $y \in \Gamma\cap B_{t/2}(x)$, where  $c>0$ has to be chosen later.  Since $\Gamma$ is a curve, $x\in \Gamma$ and  since $t/2<\diam(\Gamma)$ we have
\[\H(\Gamma \cap B_{t/2}(x))\geq \frac{t}{2}  . \]
Therefore,
\begin{eqnarray*}
 C_\varepsilon t &\geq& \int_{\Gamma \cap B_{t}(x)}  \int_{0}^t {\bf 1}_{\mathcal{B}'_\varepsilon} (y,s) \frac{ds}{s}  d\H(y)   \geq   \int_{\Gamma \cap B_{t/2}(x)}  \int_{ct}^{t/2}  \frac{ds}{s}  d\H(y)  \geq   \frac{t}{2} \log\frac{1}{2c},
\end{eqnarray*}
which provides a contradiction if we chose $c>0$ small enough compared to $C_{\varepsilon}$.
\end{proof}

We can now prove Proposition~\ref{chordarc}.

\begin{proof}[Proof of Proposition~\ref{chordarc}]  We argue by contradiction. We already know that $\Sigma$ is Ahlfors regular with upper constant $C_0$.
Let $r_0$, $\varepsilon_0$, $\delta>0$ be the constants of  Proposition~\ref{reg} while  $c_0$ is the constant given by Proposition~\ref{ahlfors} with the choice $\varepsilon:=\varepsilon_0$. We also denote by $r_0'$ the constant of Lemma~\ref{decayboundary}.

Assume now that for some $\{x, y\} \subset \Sigma$ with $R:=|x-y|$ small enough,  any geodesic curve $\Gamma\subset \Sigma$ connecting them satisfies
 \begin{eqnarray}
\H(\Gamma ) > \Lambda  C_0 R,\label{estimateee}
\end{eqnarray}
where $\Lambda>0$ is a large constant that will  be fixed during the proof. Namely, $\Lambda$ will first  be chosen large enough, and then $R$ will be taken sufficiently small, depending in particular on this large $\Lambda$. Let us now proceed to the proof.

First we notice that  $\Gamma\setminus B_{\Lambda R}(x)$  is not empty,  because otherwise we would have
$\Gamma\subset B_{\Lambda R}(x)$ which would imply $\H(\Gamma) \leq  C_0 \Lambda R$ by Ahlfors regularity of $\Sigma$, a contradiction with~\eqref{estimateee}. Since $\Gamma$ is a curve containing $x$, we deduce that there exists a $z \in \Gamma$ such that
\begin{eqnarray}
\Lambda R/2 \leq |x-z|\leq  \Lambda R. \label{crainte1}
\end{eqnarray}

If $R$ is small enough we have that $\Lambda R/2 < 1$ and  the first inequality in~\eqref{crainte1} implies $\Lambda R/2 < \diam(\Gamma)$, which allows us to apply Proposition~\ref{ahlfors}  to find a ball $B_{10s_0}(z')\subset B_{\Lambda R/2}(z)$ with $s_0$ satisfying  $c_0\Lambda R/20\leq s_0 \leq \Lambda R/20$ and
\[
\beta_{\Sigma}(z',10s_0)\leq \varepsilon_0.
\]

Now  we distinguish two cases.

{\sc Case~A}: $B_{10s_0}(z')\subset \Omega$.  Then, assuming  $\Lambda R <r_0$, we can apply Proposition~\ref{reg} with $z'$ and $s_0$ in place of $x$ and $r$ respectively.  This implies
\[
\beta_{\Sigma}(z',t)\leq \varepsilon_0 \quad \text{ for all } t\leq s_0
\]
and
\begin{eqnarray}
 \int_{B_{t}(z')}|\nabla u_{ \Sigma}|^2 dx &\leq&  \left(\frac{t}{s_0}\right)^{1+\delta}\left( \int_{B_{s_0}(z')}|\nabla u_{\Sigma}|^2 dx+Cs_0^{\frac{2}{p'}}\right) \quad   \text{ for all } t\leq s_0 \label{decayPA}
\end{eqnarray}
with $C>0$ independent of $t$ and $s_0$. In the sequel we will still denote by $C$ a constant that may change from line to line. Then, we consider  $t$ satisfying $2R\leq t\leq 4R$ and  such that $\sharp \Sigma \cap \partial B_t(z')=2$ with the points lying on both sides (this is possible by the last conclusion of Proposition~\ref{reg}). Then we take the competitor
\[
\Sigma':=(\Sigma \setminus B_{t}(z'))  \cup [x,y],
\]
 it is easy to verify that $\Sigma'$ is connected. In addition, one has
\begin{eqnarray}
\H(\Sigma)-\H(\Sigma') \geq \H(\Sigma\cap B_{t}(z'))- |x-y|\geq t-R\geq R. \label{8R}
\end{eqnarray}

Now let  $\tilde \Sigma=\Sigma \setminus B_{t}(z')$, so that $\tilde \Sigma \triangle \Sigma \subset \overline{B}_{s/2}(z')$ and $\tilde \Sigma\subset \Sigma'$. Since  $ H^1_0(\Omega\setminus \Sigma')\subset H^1_0(\Omega\setminus \tilde \Sigma)$ and since $u_{\tilde \Sigma}$ minimizes $E$ over $H^1_0(\Omega\setminus \tilde \Sigma)$, we have $E(u_{\tilde{\Sigma}})\leq E(u_{\Sigma'})$ which in turn says that
$$\int_{\Omega}|\nabla u_{\Sigma'}|^2 dx \leq \int_{\Omega}|\nabla u_{\tilde \Sigma}|^2 dx.$$
Therefore, using that $\Sigma$ is a compliance minimizer and that $\Sigma'$ is a competitor, we can write
\begin{eqnarray}
\H(\Sigma) &\leq&  \H(\Sigma')+ \int_{\Omega} |\nabla u_{\Sigma'}|^2 dx -\int_{\Omega} |\nabla u_\Sigma |^2 dx \notag  \\
&\leq & \H(\Sigma')+ \int_{\Omega} |\nabla u_{\tilde \Sigma}|^2 dx -\int_{\Omega} |\nabla u_\Sigma |^2 dx  \notag \\
&\leq &\H(\Sigma') + C\int_{B_{2t}(z')}|\nabla u_{ \Sigma}|^2 dx + Ct^{\frac{2}{p'}}, \text{ by Lemmas~\ref{lm_compl_loc1} and ~\ref{lm_compl_estabove1a}},   \label{8R2}
\end{eqnarray}
with $C>0$ independent of $t$.

Plugging~\eqref{8R} into~\eqref{8R2} and using~\eqref{decayPA} we obtain
\begin{eqnarray}
R \leq \H(\Sigma) - \H(\Sigma')&\leq&  C \left(\frac{t}{s_0}\right)^{1+\delta}\left( \int_{B_{s_0}(z')}|\nabla u_{\Sigma}|^2 dx+Cs_0^{\frac{2}{p'}}\right) +Ct^{\frac{2}{p'}} \notag \\
&\leq&  C \left(\frac{1}{\Lambda}\right)^{1+\delta}\left( \int_{B_{s_0}(z')}|\nabla u_{\Sigma}|^2 dx+Cs_0^{\frac{2}{p'}}\right) +CR^{\frac{2}{p'}} . \label{obtained}
\end{eqnarray}
Next we apply Remark~\ref{remYeah} (monotonicity with $\gamma=2\pi$) to get the following estimate, denoting also by $d_0=\diam(\Sigma)/2$,
\begin{eqnarray}
\int_{B_{s_0}(z')}|\nabla u_{\Sigma}|^2 dx&\leq& s_0\left(\frac{1}{d_0}\int_{B_{d_0}(z')}|\nabla u_{\Sigma}|^2 dx +Cd_0^{\frac{2}{p'}-1}\right)\\
&\leq & s_0\left(\frac{1}{d_0}\int_{\Omega}|\nabla u_{\Sigma}|^2 dx +Cd_0^{\frac{2}{p'}-1}\right)=:Cs_0.
\end{eqnarray}
Returning to~\eqref{obtained} and using that $s_0\leq \Lambda R/20$ we have now obtained
$$R \leq  C \left(\frac{1}{\Lambda}\right)^{1+\delta}\left( s_0 +s_0^{\frac{2}{p'}}\right) +CR^{\frac{2}{p'}}\leq  C \left(\frac{1}{\Lambda}\right)^{1+\delta}\left( \Lambda R +(\Lambda R)^{\frac{2}{p'}}\right) +CR^{\frac{2}{p'}}.$$
By choosing now $\Lambda$ sufficiently large so that $C\frac{1}{\Lambda^{\delta}}\leq \frac{1}{2}$ we get
$$R/2 \leq  C (\Lambda R)^{\frac{2}{p'}}.$$
Finally, recalling that $2/p'>1$ we obtain a contraction for $R$  sufficiently small.

{\sc Case B}: there exists  a  $z'' \in B_{10s_0}(z') \cap \partial  \Omega$. Then $B_{s_0}(z')\subset B_{20s_0}(z'')$  and assuming that $\Lambda R\leq r_0'$ we can apply Lemma~\ref{decayboundary} which implies
\begin{eqnarray}
 \int_{B_{s_0}(z')} |\nabla u_{\Sigma}|^2\, dx \leq  \int_{B_{20s_0}(z'')} |\nabla u_{\Sigma}|^2\, dx  \leq  C\left(\frac{R}{r_0'}\right)^{1+\nu} + CR^{\frac{2}{p'}}, \label{decaydecaydecay}
 \end{eqnarray}
where $C>0$ is independent of $R$ and $r_0':=r(\partial\Omega,p)$ in the notation of Lemma~\ref{decayboundary}.
We can also assume that $\varepsilon_0 \leq 10^{-5}/(20+C_0)$  and that $R$ is small enough so that the assumptions~\eqref{eq_bet104} and~\eqref{eq_om500C} of Proposition~\ref{TheProp} are satisfied, thanks to~\eqref{decaydecaydecay}. This allows us to find $s\in [s_0/4,s_0/2]$ such that $\sharp \Sigma \cap \partial B_s(z')=2$ with the points lying on both sides. We then continue as in Case~A, namely, we take the competitor
\[
\Sigma':=(\Sigma \setminus B_{s}(z'))  \cup \Gamma_{x,y},\]
where $\Gamma_{x,y}$ is the geodesic curve in $\overline{\Omega}$ that connects $x$ and $y$. Since $\Omega$ is a $C^1$ domain, we have
$$\mathcal{H}^1(\Gamma_{x,y})\leq R+o(R)$$
as $R\to 0^+$.
Therefore, if $R$ is small enough,
 \begin{eqnarray}
\H(\Sigma)-\H(\Sigma') \geq \H(\Sigma\cap B_{s}(z'))- R-o(R)\geq \Lambda R/40-2R\geq R. \label{8RP}
\end{eqnarray}

Now arguing exactly as for~\eqref{8R2} with $s$ instead of $t$ we get the estimate
\begin{eqnarray}
\H(\Sigma) &\leq&  \H(\Sigma')+ \int_{\Omega} |\nabla u_{\Sigma'}|^2 dx -\int_{\Omega} |\nabla u_\Sigma |^2 dx \notag  \\
&\leq & \H(\Sigma')+ \int_{\Omega} |\nabla u_{ \Sigma\setminus B_{s}(z')}|^2 dx -\int_{\Omega} |\nabla u_\Sigma |^2 dx  \notag \\
&\leq &\H(\Sigma') + C\int_{B_{2s}(z')}|\nabla u_{ \Sigma}|^2 dx + Cs^{\frac{2}{p'}}, \text{ by Lemmas~\ref{lm_compl_loc1} and ~\ref{lm_compl_estabove1a}}.  \label{8R2P}
\end{eqnarray}
Then using~\eqref{decaydecaydecay},~\eqref{8RP} and~\eqref{8R2P} we get

$$R\leq C\int_{B_{s_0}(z'')}|\nabla u_{ \Sigma}|^2 dx + Cs_0^{\frac{2}{p'}}\leq  C\left(\frac{R}{r_0'}\right)^{1+\nu} + CR^{\frac{2}{p'}},$$
which yields again a contradiction for $R$ small enough.

 This means that for some $R_0>0$, once $x,y\in  \Sigma$
satisfy $|x-y|\leq  R_0$, then the geodesic curve $\Gamma\subset \Sigma$ connecting them
does not satisfy~\eqref{estimateee}, or, in other words,
that~\eqref{chordA} holds for any such couple of points.
But if $|x-y|\geq R_0$ we obviously have that
\[d_{\Sigma}(x,y)\leq \H(\Sigma)\leq \frac{\H(\Sigma)}{R_0}|x-y|,\]
which concludes the proof.

 \end{proof}


One of the main purposes of proving the chord-arc estimate is to obtain the following consequence.

\begin{proposition}\label{connected} The set $\Sigma_0$ given by Proposition~\ref{blowuplimit} is an unbounded  arcwise connected set, and any connected component of $\R^2\setminus \Sigma_0$ is simply connected.
\end{proposition}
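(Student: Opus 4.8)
The plan is to extract the three assertions — $\Sigma_0$ is unbounded, arcwise connected, and has simply connected complementary components — from the two properties already established for the minimizer $\Sigma$, namely the chord-arc estimate of Proposition~\ref{chordarc} and the connectedness structure, together with the Kuratowski convergence $\Sigma_n\to\Sigma_0$ of Proposition~\ref{blowuplimit}. The key observation I would exploit throughout is that the chord-arc estimate $d_\Sigma(x,y)\le C|x-y|$ is \emph{scale invariant}: since geodesic and Euclidean distances scale identically, each blow-up $\Sigma_n=\frac1{r_n}(\Sigma-x_n)$ satisfies $d_{\Sigma_n}(x',y')\le C|x'-y'|$ with the \emph{same} constant $C$. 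Note also $0\in\Sigma_n$ for all $n$ (as $x_n\in\Sigma$), hence $0\in\Sigma_0$.

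First I would prove unboundedness. Fixing $R>0$, for $n$ large one has $Rr_n<\diam(\Sigma)/2$, so from any point of $\Sigma$ there is a point of $\Sigma$ at distance at least $\diam(\Sigma)/2$; by connectedness of $\Sigma$ the sphere $\partial B_{\rho r_n}(x_n)$ then meets $\Sigma$ for every $\rho\in(0,R]$, i.e.\ after rescaling $\Sigma_n\cap\partial B_\rho(0)\ne\emptyset$. Choosing $z_n\in\Sigma_n\cap\partial B_\rho(0)$, a subsequence converges to some $z\in\partial B_\rho(0)$, and $z\in\Sigma_0$ by Kuratowski convergence. Thus $\Sigma_0$ meets every sphere about the origin and is unbounded.

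The heart of the argument, and the step I expect to be the main obstacle, is arcwise connectedness — precisely because (as the introduction stresses) blow-up limits of connected sets need not stay connected, and this is exactly what the chord-arc estimate is designed to repair. Given $a,b\in\Sigma_0$, I would pick $a_n,b_n\in\Sigma_n$ with $a_n\to a$, $b_n\to b$ (lower Kuratowski convergence), and use the scale-invariant chord-arc estimate to obtain geodesics $\gamma_n\subset\Sigma_n$ from $a_n$ to $b_n$ with $\H(\gamma_n)\le C|a_n-b_n|$, a bounded quantity. Parametrizing by constant speed on $[0,1]$ makes the $\gamma_n$ uniformly Lipschitz, so Arzel\`a--Ascoli yields a uniform limit $\gamma\colon[0,1]\to\R^2$ with $\gamma(0)=a$, $\gamma(1)=b$; and $\gamma(t)=\lim_n\gamma_n(t)$ with $\gamma_n(t)\in\Sigma_n$ forces $\gamma(t)\in\Sigma_0$ (upper Kuratowski convergence). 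This gives a path in $\Sigma_0$ joining $a$ to $b$, whence $\Sigma_0$ is path-connected, and therefore arcwise connected.

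Finally I would deduce simple connectedness of each component $U$ of $\R^2\setminus\Sigma_0$ by a soft planar-topology argument. If $U$ were not simply connected, there would be a Jordan curve $J\subset U$ whose bounded interior region $D$ is not contained in $U$; a short case analysis (a complementary component $W$ meeting $D$ must satisfy $W\subset D$, and then $\emptyset\ne\partial W\subset\Sigma_0\cap D$) shows $\Sigma_0\cap D\ne\emptyset$. Picking $p\in\Sigma_0\cap D$ and, using unboundedness, a point $q\in\Sigma_0\setminus\overline D$, arcwise connectedness provides an arc $\gamma\subset\Sigma_0$ from $p$ to $q$; being connected and meeting both $D$ and its exterior, $\gamma$ must cross $\partial D=J$, which is impossible since $J\subset U$ is disjoint from $\Sigma_0\supset\gamma$. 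This closes the argument. The only genuinely delicate point is the arcwise-connectedness step, where scale-invariance of the chord-arc constant and Kuratowski upper semicontinuity together guarantee that the limiting curve stays inside $\Sigma_0$.
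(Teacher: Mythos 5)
Your proposal is correct and takes essentially the same route as the paper: the scale-invariant chord-arc bound applied to geodesics in $\Sigma_n$ plus a compactness argument (you via Arzel\`a--Ascoli on constant-speed parametrizations, the paper via Hausdorff convergence of the curves $\Gamma_n$) yields connectedness, unboundedness follows from $\diam(\Sigma_n)\to\infty$ together with Kuratowski convergence, and simple connectedness is the same Jordan-curve argument exploiting that the connected unbounded set $\Sigma_0$ must lie entirely in the unbounded complementary region of the curve. No gaps.
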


\begin{proof} Let $\Sigma$ be
a minimizer for  Problem~\ref{pb_compl_pen1}.
We first prove that $\Sigma_0$ is connected.
Let $x$ and $y$ be two arbitrary points in $\Sigma_0$, and set
$R:=|x|+2|x-y|$. Since $\Sigma_n \to \Sigma$ for the Hausdorff distance in $B_R(0)$, we can find two sequences  $x_n\to x$ and $y_n \to y$ satisfying $\{x_n,y_n\} \subset \Sigma_n$ for all $n\in \N$.
Let $\Gamma_n\subset \Sigma_n$ be a geodesic curve connecting $x_n$ to $y_n$ in $\Sigma_n$.
By Proposition~\ref{chordarc} there exists a constant $C>0$ (depending only on $\Sigma$) such that
\[
\H(\Gamma_n)\leq C |x-y|,
\]
and hence $\Gamma_n\subset \overline{B}_{CR}(0)$. We can therefore extract a subsequence such that $\Gamma_n\to \Gamma$ in Hausdorff distance
 for some compact connected $\Gamma\subset \overline{B}_{CR}(0)$.
 But then $\Gamma$ is a connected set contained in $\Sigma_0$ which necessarily contains $x$ and $y$.  Since $x$ and $y$ were arbitrary, we have that $\Sigma_0$ is a connected set.

It is also quite clear that $\Sigma_0$ must be  unbounded because  $\diam(\Sigma_n)\to +\infty $ and $\Sigma_n$ converges to $\Sigma_0$ in the sense of Kuratowski.  It remains to prove that any connected component of $\R^2\setminus \Sigma_0$ is simply connected.
 Let $U$ be a connected component of $\R^2\setminus \Sigma_0$, and let $\Gamma \subset U$ be a simple closed curve. Without loss of generality, we may assume that this curve is polygonal. Since $\Gamma$ is a Jordan curve,  $\R^2\setminus \Gamma$ has two connected components, $A^-$ and $A^+$.  Let $A^-$ be the bounded one.
 Since $\Sigma_0$ is unbounded, we must have $\Sigma_0\cap A^+ \neq  \emptyset$. And since  $\Sigma_0$ is connected we must have $\Sigma_0 \subset A^+$, because otherwise we could write  $\Sigma_0= (\Sigma_0\cap A^+)\cup (\Sigma_0\cap A^-)$,
 a union of two relatively closed disjoint sets, which would give a contradiction. But now it is clear that,  $\Gamma$ being polygonal, it can easily be retracted to a point in $A^-$, proving the claim.
\end{proof}


\section{Dual formulation and classification of blow-up limits }
\label{dual}

Blow-up limits are not easy to handle in the original  formulation of the problem because it is of min-max type, so that the functional is not easily localizable.  To overcome this difficulty  we use a dual formulation of the problem to transform the min-max into a min-min, and then in many aspects we follow the strategy pursued by Bonnet~\cite{b} for Mumford-Shah minimizers.

More precisely, we prove, using a duality argument, that  the blow-up limits converges to the following notion of global minimizers. In analogy with the Mumford-Shah problem we define the following notion of a \emph{compliance global minimizer}.

\begin{definition} \label{global} A (compliance) global minimizer in $\R^2$ is a pair $(u,\Sigma)$ with $\Sigma\subset \R^2$ closed, and  $u \in H^1_{0,\Sigma,loc}(\R^2)$   such that for every ball $B\subset \R^2$ we have
\[
\frac{1}{2}\int_{B}|\nabla u|^2 \,dx + \H(\Sigma \cap B)\leq \frac{1}{2}\int_{B}|\sigma|^2 \,dx + \H(\Sigma' \cap B),
\]
 for all  vector fields  $\sigma \in L^2_{loc}(\R^2;\R^2)$ satisfying $\sigma=\nabla u$ in $\R^2\setminus B$ and ${\rm div } \sigma =0$ in $\R^2\setminus \Sigma'$,  where $\Sigma'$ is any closed set  $\Sigma'\subset \R^2$ satisfying $\Sigma \cap B = \Sigma'\cap B$ and $\Sigma' \cap\overline{B}$ is connected.
\end{definition}

Assuming furthermore that $\Sigma$ is connected (which actually occurs in our case), our notion of global minimizer $(u,\Sigma)$ turns out to be exactly the dual one of Mumford-Shah global minimizers, thus $\Sigma$ must be of the same type of the ones from the Mumford-Shah functional.

\subsection{Dual formulation}

We introduce the following dual formulation of our problem.
\begin{proposition}\label{duality1} Let $f\in L^2(\Om)$. Then
Problem~\ref{pb_compl_pen1} is equivalent  to the minimization problem
\begin{eqnarray}
\min_{(\sigma, \Sigma)\in \mathcal{B}} \frac{1}{2}\int_{\Omega }|\sigma|^2 \, dx +  \H(\Sigma) \label{dualcompliance}
\end{eqnarray}
where
\[
\mathcal{B}:=\{(\sigma,\Sigma) \; :\; \Sigma\subset \overline{\Omega} \text{ compact connected and }  \sigma \in L^2(\Omega;\R^2) \text{ satisfies } {\rm div}\, \sigma = f \text{ in  } \mathcal{D}'(\Omega \setminus \Sigma) \}
\]
in the sense the minimum value of the latter is equal to that of the original Problem~\ref{pb_compl_pen1}, and
once $(\sigma,\Sigma)\in \mathcal{B}$ is a minimizer for~\eqref{dualcompliance}, then $\Sigma$ solves the original Problem~\ref{pb_compl_pen1}.

In addition, for a given compact connected $\Sigma\subset \overline{\Om}$ fixed, the choice $\sigma := \nabla u_{\Sigma}$  solves
\[\min \left\{ \frac{1}{2}\int_{\Omega}|\sigma|^2 dx \colon {\rm div}\, \sigma = f \text{ in  } \mathcal{D}'(\Omega \setminus \Sigma) \right\}.\]
\end{proposition}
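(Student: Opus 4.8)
The plan is to reduce Proposition~\ref{duality1} to the computation of the \emph{inner} minimization over $\sigma$ for a fixed $\Sigma$, after which the equivalence of the two global problems and the correspondence of their minimizers follows formally. First I would fix a compact connected $\Sigma\subset\overline{\Om}$ and study the admissible set $A_\Sigma:=\{\sigma\in L^2(\Om;\R^2)\colon {\rm div}\,\sigma = f \text{ in } \mathcal{D}'(\Om\setminus\Sigma)\}$. Since $u_\Sigma$ solves~\eqref{eq_DeltSigf1} in the weak sense, the field $\nabla u_\Sigma$ satisfies the divergence constraint defining $\mathcal{B}$, so $A_\Sigma$ is a nonempty affine subspace, and one can write $A_\Sigma = \nabla u_\Sigma + V$, where $V:=\{\tau\in L^2(\Om;\R^2)\colon {\rm div}\,\tau = 0 \text{ in } \mathcal{D}'(\Om\setminus\Sigma)\}$ is a closed linear subspace of $L^2(\Om;\R^2)$. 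The inner problem is thus the minimization of a strictly convex coercive quadratic over a closed affine subspace, whose solution is the orthogonal projection of the origin onto $A_\Sigma$.

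The crux is the orthogonality $\nabla u_\Sigma \perp V$ in $L^2(\Om;\R^2)$, which identifies $\nabla u_\Sigma$ as the minimal-norm element of $A_\Sigma$. To establish it, I would note that for every $\tau\in V$ and $\varphi\in C^\infty_0(\Om\setminus\Sigma)$ one has $\int_\Om \tau\cdot\nabla\varphi\,dx = -\langle {\rm div}\,\tau,\varphi\rangle = 0$; since $u_\Sigma$ lies in the $H^1$-closure of $C^\infty_0(\Om\setminus\Sigma)$ and $\tau\in L^2$, passing to the limit gives $\int_\Om \nabla u_\Sigma\cdot\tau\,dx = 0$. Then for any $\sigma = \nabla u_\Sigma + \tau\in A_\Sigma$ the expansion
\[
\int_\Om |\sigma|^2\,dx = \int_\Om |\nabla u_\Sigma|^2\,dx + 2\int_\Om \nabla u_\Sigma\cdot\tau\,dx + \int_\Om |\tau|^2\,dx = \int_\Om |\nabla u_\Sigma|^2\,dx + \int_\Om |\tau|^2\,dx
\]
shows $\tfrac12\int_\Om|\sigma|^2\,dx \ge \tfrac12\int_\Om|\nabla u_\Sigma|^2\,dx = \C(\Sigma)$, with equality if and only if $\tau=0$. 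Hence $\nabla u_\Sigma$ is the unique solution of the inner problem and its value equals $\C(\Sigma)$, which is exactly the last assertion of the Proposition. (Equivalently, one may present this step as a Fenchel–Rockafellar duality for the strictly convex functional $E$, recovering $\C(\Sigma)=-\min_u E(u)$ as the value of the dual program; the orthogonal-projection argument above is the self-contained version of the same fact.)

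Finally I would assemble the global statement. Splitting the joint minimization over $(\sigma,\Sigma)\in\mathcal{B}$ into the inner minimization over $\sigma$ (value $\C(\Sigma)$, by the previous step) followed by the minimization over the compact connected sets $\Sigma$ yields
\[
\min_{(\sigma,\Sigma)\in\mathcal{B}}\Bigl(\tfrac12\textstyle\int_\Om|\sigma|^2\,dx + \H(\Sigma)\Bigr) = \min_{\Sigma}\bigl(\C(\Sigma)+\H(\Sigma)\bigr),
\]
the right-hand side being precisely Problem~\ref{pb_compl_pen1} (with $\lambda=1$), so the two minimal values coincide. For the correspondence of minimizers, any $(\sigma,\Sigma)\in\mathcal{B}$ satisfies $\tfrac12\int_\Om|\sigma|^2\,dx + \H(\Sigma)\ge \C(\Sigma)+\H(\Sigma)=\mathcal{F}_\lambda(\Sigma)\ge \min\mathcal{F}_\lambda$; if $(\sigma,\Sigma)$ is optimal for~\eqref{dualcompliance} then both inequalities are equalities, which forces $\Sigma$ to minimize $\mathcal{F}_\lambda$ (and moreover $\sigma=\nabla u_\Sigma$ by the uniqueness in the inner problem), while conversely $(\nabla u_\Sigma,\Sigma)$ is admissible and attains the common value whenever $\Sigma$ solves Problem~\ref{pb_compl_pen1}. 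I do not expect a serious obstacle here: the only point requiring care is the distributional integration by parts underlying the orthogonality $\nabla u_\Sigma\perp V$, together with keeping the sign convention in the constraint ${\rm div}\,\sigma=f$ consistent with~\eqref{eq_DeltSigf1}; everything else is a routine orthogonal-projection computation that does not use any regularity of $\Sigma$.
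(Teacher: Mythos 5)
Your proposal is correct, and it reaches Proposition~\ref{duality1} by a genuinely different route from the paper. The paper's proof is a min--max argument: it writes $-\C(\Sigma)=\min_{u}\max_{\sigma}F(u,\sigma)$ using the Legendre identity $\int_\Om|\nabla u|^2\,dx=\max_{\sigma}\bigl(2\int_\Om\nabla u\cdot\sigma\,dx-\int_\Om|\sigma|^2\,dx\bigr)$, obtains the weak-duality inequality $\min\max\geq\sup\inf$ (the inner infimum over $u$ being $-\infty$ unless $\sigma$ satisfies the divergence constraint, which is how the set $D$ appears), and then closes the duality gap by checking that the Euler--Lagrange equation for $u_\Sigma$ places $\nabla u_\Sigma$ in $D$ while the Legendre maximum is attained at $\sigma=\nabla u_\Sigma$. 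You bypass the min--max exchange entirely: you parametrize the constraint set as the closed affine subspace $\nabla u_\Sigma+V$, prove the orthogonality $\nabla u_\Sigma\perp V$ by testing the divergence-free condition against gradients of $C^\infty_0(\Om\setminus\Sigma)$ functions and passing to the limit via the very definition of $H^1_0(\Om\setminus\Sigma)$ as the Dirichlet-norm closure of such test functions, and conclude by Pythagoras. Your version buys a self-contained Hilbert-space argument that yields uniqueness of the optimal $\sigma$ for fixed $\Sigma$ for free (equality in the expansion iff $\tau=0$), which the paper invokes only implicitly at the end, and it makes transparent that no regularity of $\Sigma$ enters; the paper's presentation buys the conceptual origin of the dual problem as the Legendre transform of the compliance min--max structure emphasized in the introduction. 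One point you rightly flag deserves emphasis: since $-\Delta u_\Sigma=f$, one has $\mathrm{div}\,\nabla u_\Sigma=-f$, so with the constraint literally as written in $\mathcal{B}$ it is $-\nabla u_\Sigma$ that is admissible (the paper's own proof in fact works with the set $D$ defined by $\int_\Om\sigma\cdot\nabla\varphi\,dx=\int_\Om f\varphi\,dx$, i.e.\ $\mathrm{div}\,\sigma=-f$); as the functional is even in $\sigma$, this sign ambiguity is immaterial for both the value and the correspondence of minimizers, and your treatment of it is consistent.
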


\begin{proof}  For any  given $u\in H^1_0(\Omega \setminus \Sigma)$ and $\sigma \in L^2(\Omega; \R^2)$, we can write
\begin{eqnarray*}
\int_{\Omega} |\nabla u|^2\, dx &=& \int_{\Omega} |\nabla u-\sigma|^2\, dx+2 \int_{\Omega } \nabla u \cdot \sigma \, dx-\int_{\Omega} | \sigma|^2\, dx \\
&\geq & 2 \int_{\Omega }  \nabla u \cdot  \sigma \rangle\, dx-\int_{\Omega} |  \sigma |^2\, dx,
\end{eqnarray*}
and since we have equality for $\sigma=\nabla u$, we then have just proven  the standard Legendre transform identity
\begin{eqnarray}
\int_{\Omega} |\nabla u|^2\, dx =\max_{\sigma \in L^2(\Om;\R^2)} \left( 2 \int_{\Omega } \nabla u \cdot \sigma \, dx-\int_{\Omega} | \sigma |^2\, dx\right). \label{legendre}
\end{eqnarray}
Therefore, 
\begin{align*}
-\C(\Sigma) & = \min_{u\in H^1_{0}(\Omega \setminus \Sigma)} \left( \max_{\sigma \in L^2(\Om;\R^2)} \left(  \int_{\Omega } \nabla u \cdot \sigma \, dx-\frac{1}{2}\int_{\Omega} | \sigma |^2\, dx\right) -\int_{\Omega} u f \, dx\right) \\
& = \min_{u\in H^1_{0}(\Omega \setminus \Sigma)} \max_{\sigma \in L^2(\Om;\R^2)} F(u,\sigma),\quad\text{where}\\
F(u,\sigma)& :=   \int_{\Omega } \nabla u \cdot \sigma \, dx-\frac{1}{2}\int_{\Omega} | \sigma |^2\, dx
-\int_{\Omega} u f \, dx.
\end{align*}
Now we want to exchange $\min$ and $\max$ in the above formula. 
Although it follows from quite standard results of convex analysis,
in this particular setting the proof is elementary: it is clear that
we always have 
\begin{eqnarray*}
 \min_{u\in H^1_{0}(\Omega \setminus \Sigma)}  \max_{\sigma \in L^2(\Om;\R^2)}F(u,\sigma) \geq \sup_{\sigma \in L^2(\Om;\R^2)}  \inf_{u\in H^1_{0}(\Omega \setminus \Sigma)}  F(u,\sigma)
= \sup_{\sigma \in D} \left(-\frac{1}{2}\int_{\Omega}|\sigma|^2 \, dx\right),
\end{eqnarray*}
where $D$ stands for the space of $\sigma \in L^2(\Omega;\R^2)$ such that
\begin{eqnarray}
\int_{\Omega} \sigma \cdot \nabla \varphi \,dx= \int_{\Omega}f \varphi \, dx \quad \text{for all } \varphi \in H^1_0(\Omega\setminus \Sigma),\label{divu=f}
\end{eqnarray}
otherwise the infimum in $u$ would be $-\infty$.

To verify  the reverse inequality, we consider the minimizer $u_\Sigma$ for the problem
\[\min_{u\in H^1_0(\Omega \setminus \Sigma)} \frac{1}{2} \int_{\Omega}|\nabla u|^2 \, dx - \int_{\Omega} uf \, dx .\]
We observe  that the optimality conditions on $u_\Sigma$ yields $\nabla u_\Sigma \in D$. Thus, recalling that the maximum in~\eqref{legendre} is attained at $\sigma:= \nabla u$ for $u\in H_0^1(\Om\setminus\Sigma)$, we get
\begin{align*}
 \min_{u\in H^1_{0}(\Omega \setminus \Sigma)}  \max_{\sigma \in L^2(\Om;\R^2)}F(u,\sigma) &=\min_{u\in H^1_0(\Omega \setminus \Sigma)} \left(\frac{1}{2} \int_{\Omega}|\nabla u|^2 \, dx - \int_{\Omega} uf \, dx \right)\\
 & =-\frac{1}{2}\int_{\Omega}|\nabla u_\Sigma|^2 \, dx  \leq  \sup_{\sigma \in D} \left(- \frac 1 2 \int_{\Omega}|\sigma|^2 \, dx\right).
\end{align*}
In conclusion, we have proven that
\[-\frac 1 2 \int_{\Omega}|\nabla u_\Sigma|^2 \, dx = \min_{u\in H^1_0(\Omega \setminus \Sigma)} \left(\frac{1}{2} \int_{\Omega}|\nabla u|^2 \, dx - \int_{\Omega} uf \, dx \right)= \max_{\sigma \in D} \left(- \frac 1 2 \int_{\Omega}|\sigma|^2 \, dx\right),\]
and $\sigma = \nabla u_\Sigma$ is a maximizer. It follows that
\[\min_{\Sigma} \frac 1 2 \int_{\Omega} |\nabla u_\Sigma|^2 +\H(\Sigma) =\min_{\sigma \in \mathcal{D}, \Sigma\in \mathcal{K}(\Omega)} \frac 1 2 \int_{\Omega} |\sigma |^2 +\H(\Sigma),\]
and  the  proposition follows from the uniqueness of $u_\Sigma$ and the minimizer $\sigma$.
\end{proof}

\begin{remark} The dual formulation yields another proof for the Ahlfors regularity of minimizers. Indeed, if $B:=B_{r}(x)$,
$x\in \Sigma$, recalling  that  $(\nabla u_\Sigma,\Sigma)$ is a minimizer for the Problem~\eqref{dualcompliance},  we can consider the competitor $(\sigma',\Sigma')$ defined by
\begin{align*}
\Sigma' &:=(\Sigma \setminus B) \cup (\partial B\cap\Om),\\
\sigma' &:=\left\{
\begin{array}{l}
\nabla u_\Sigma \text{ in } \Omega \setminus B, \\
\nabla u \text{ in } B\cap \Om, \text{ where } u \in H^1_{0}(B) \text{ solves } -\Delta u = f.
\end{array}
\right.
\end{align*}
Then
 \begin{equation}\label{eq_estlendual1}
 \frac 1 2 \int_{B\cap \Om}|\nabla u_\Sigma|^2 \, dx+\H(\Sigma \cap B) \leq 2\pi r + \frac 1 2 \int_{B\cap\Om}|\nabla u|^2 \, dx,
 \end{equation}
and it remains to estimate $\int_{B}|\nabla u|^2 \, dx$. For this we observe that for every $\varphi \in H^1_0(B\cap\Om)$ one has
\[
\int_{B\cap \Om}\nabla u \cdot \nabla \varphi \,dx = \int_{B\cap \Om} \varphi f \,dx,
\]
so that taking $\varphi := u$ and using the Poincar\'e inequality we obtain
\begin{equation*}
\int_{B\cap\Om}|\nabla u |^2 \,dx = \int_{B\cap\Om} u f \,dx \leq \|u\|_{L^2(B)}\|f\|_{L^2(B\cap\Om)}\leq Cr \|\nabla u\|_{L^2(B\cap\Om)}\|f\|_2,
\end{equation*}
and thus
\[
\int_{B\cap\Om}|\nabla u |^2 \,dx \leq  C r^2 \|f\|_2,
\]
where $C>0$ is universal. We conclude then from~\eqref{eq_estlendual1} the estimate
\begin{eqnarray}
\H(\Sigma\cap B)\leq \frac 1 2 \int_{B\cap\Om}|\nabla u_\Sigma|^2 \, dx+\H(\Sigma \cap B) \leq 2\pi r +  Cr^2 \|f\|_2^2, \label{Thebound}
\end{eqnarray}
which in particular implies Ahlfors regularity of $\Sigma$.
\end{remark}

\subsection{Minimization problem for the blow-up limit}

The aim of this section is to prove the following assertion.

\begin{proposition} \label{global1}The limit  $(u_0,\Sigma_0)$ of Proposition~\ref{blowuplimit} is a compliance global minimizer in the sense of Definition~\ref{global}. In addition, for any ball $B\subset \R^2$ one has that
\begin{eqnarray}
 \H(\Sigma_{n_k}\cap B)\to \H(\Sigma_0 \cap B), \label{limiteForte}
\end{eqnarray}
where $\Sigma_{n_k}$ is  as in Proposition~\ref{blowuplimit}.
\end{proposition}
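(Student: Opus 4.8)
The plan is to transfer the minimality of the rescaled pairs $(\nabla u_n,\Sigma_n)$ to the limit by working in the \emph{dual} formulation of Proposition~\ref{duality1}, which, unlike the original min--max, is localizable. Each pair $(\nabla u_n,\Sigma_n)$ minimizes $\tfrac12\int_{\Omega_n}|\sigma|^2\,dx+\H(\Sigma)$ among compact connected $\Sigma$ and fields with ${\rm div}\,\sigma=f_n$ in $\mathcal D'(\Omega_n\setminus\Sigma)$, where $f_n(x)=r_n^{3/2}f(r_nx+x_n)$ satisfies $\|f_n\|_{L^2(B_R)}^2=r_n\int_{B_{Rr_n}(x_n)}|f|^2\,dy\to0$, hence $f_n\to0$ in $L^2_{loc}(\R^2)$. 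Fixing a ball $B=B_R(0)$ and a competitor $(\sigma,\Sigma')$ admissible in the sense of Definition~\ref{global} (so $\Sigma'$ agrees with $\Sigma_0$ outside $B$, $\Sigma'\cap\overline B$ is connected, $\sigma=\nabla u_0$ off $B$ and ${\rm div}\,\sigma=0$ off $\Sigma'$), I would compare $(\nabla u_n,\Sigma_n)$ with a pair $(\sigma_n,\Sigma_n')$ coinciding with it outside $B$ and approximating $(\sigma,\Sigma')$ inside $B$; cancelling the contributions outside $B$, minimality reads
\[
\tfrac12\int_B|\nabla u_n|^2\,dx+\H(\Sigma_n\cap B)\le\tfrac12\int_B|\sigma_n|^2\,dx+\H(\Sigma_n'\cap B).
\]

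To build the competitor, set $\Sigma_n':=(\Sigma_n\setminus B)\cup(\Sigma'\cap\overline B)$, so that $\Sigma_n'\cap B=\Sigma'\cap B$, and $\sigma_n:=\sigma+\nabla\psi_n$ on $B$, glued with $\nabla u_n$ on $\R^2\setminus B$, where $\psi_n$ repairs both the divergence and the normal trace: it solves weakly in $B\setminus\Sigma_n'$ the relation ${\rm div}(\sigma+\nabla\psi_n)=f_n$ with Neumann datum $\partial_\nu\psi_n=(\nabla u_n-\nabla u_0)\cdot\nu$ on $\partial B$, using $\sigma\cdot\nu=\nabla u_0\cdot\nu$ there. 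This makes the normal traces of $\sigma_n$ match across $\partial B$, so that ${\rm div}\,\sigma_n=f_n$ in $\Omega_n\setminus\Sigma_n'$ and $(\sigma_n,\Sigma_n')$ is admissible. Since $f_n\to0$ in $L^2(B)$ and, by the strong convergence $u_n\to u_0$ of Proposition~\ref{blowuplimit}(ii), the boundary datum $(\nabla u_n-\nabla u_0)\cdot\nu\to0$ in $H^{-1/2}(\partial B)$, elliptic estimates give $\psi_n\to0$ in $H^1(B)$, whence $\int_B|\sigma_n|^2\,dx\to\int_B|\sigma|^2\,dx$.

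Passing to the limit, Proposition~\ref{blowuplimit}(ii) gives $\int_B|\nabla u_n|^2\to\int_B|\nabla u_0|^2$, while Go\l ab's theorem yields $\H(\Sigma_0\cap B)\le\liminf_n\H(\Sigma_n\cap B)$; combining these with the displayed minimality and the convergence of its right-hand side produces
\[
\tfrac12\int_B|\nabla u_0|^2\,dx+\H(\Sigma_0\cap B)\le\tfrac12\int_B|\sigma|^2\,dx+\H(\Sigma'\cap B),
\]
so $(u_0,\Sigma_0)$ is a global minimizer. The strong length convergence~\eqref{limiteForte} then follows by taking the trivial competitor $\Sigma'=\Sigma_0$, $\sigma=\nabla u_0$: the same inequality with $\Sigma_n'=(\Sigma_n\setminus B)\cup(\Sigma_0\cap\overline B)$ gives $\limsup_n\H(\Sigma_n\cap B)\le\H(\Sigma_0\cap B)$, which together with the lower-semicontinuity bound forces $\H(\Sigma_n\cap B)\to\H(\Sigma_0\cap B)$.

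The main obstacle is the connectedness constraint inherited from the dual problem: the surgery $(\Sigma_n\setminus B)\cup(\Sigma'\cap\overline B)$ must yield a compact \emph{connected} competitor, and the length on $\partial B$ must be controlled so that the contributions outside $B$ cancel exactly. I would resolve both by selecting a good radius $R$ (admissible for a.e.\ $R$) at which $\Sigma_n\cap\partial B_R$ is a controlled finite set, using the Ahlfors regularity of $\Sigma$ (Theorem~\ref{th_compl_ahl1}) and the density/structure estimates of Proposition~\ref{TheProp}, so that $\H(\Sigma_n\cap\partial B_R)=0$ and gluing the connected piece $\Sigma'\cap\overline B$ along $\partial B_R$ to the components of $\Sigma_n\setminus B$ preserves connectedness; here the connectedness of $\Sigma_0$ (Proposition~\ref{connected}) and the convergence $(\Sigma_n\cap B)\cup\partial B\to(\Sigma_0\cap B)\cup\partial B$ of Proposition~\ref{blowuplimit}(i) are what make this choice uniform in $n$.
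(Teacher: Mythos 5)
Your overall strategy is the paper's: pass to the dual formulation of Proposition~\ref{duality1}, perform a surgery competitor inside a ball, correct the vector field so that its divergence is exactly $f_n$, and conclude via Go\l ab. But there is a genuine gap at the very point you flag as ``the main obstacle'': the glued set $\Sigma_n':=(\Sigma_n\setminus B)\cup(\Sigma'\cap\overline B)$ is in general \emph{disconnected}, and the fix you sketch does not repair it. The traces $\Sigma_n\cap\partial B$ and $\Sigma'\cap\partial B=\Sigma_0\cap\partial B$ need not match: Kuratowski/Hausdorff convergence only puts the endpoints of the strands of $\Sigma_n\setminus B$ within distance $o(1)$ of $\Sigma_0\cap\partial B$, never on $\Sigma'$. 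Choosing a good radius $R$ with $\Card\,\Sigma_n\cap\partial B_R<\infty$ (hence $\H(\Sigma_n\cap\partial B_R)=0$) changes nothing about this mismatch, so ``gluing along $\partial B_R$'' cannot preserve connectedness, uniformly in $n$ or otherwise. The paper's resolution is to \emph{add material}: pick $s\in(1,1+\varepsilon)$ with $N:=\Card\,\Sigma_0\cap\partial B_s<\infty$ (possible for a.e.\ $s$ since $\Sigma_0$ has locally finite length), insert the small arcs $S_\delta:=\{x\in\partial B_s:\dist(x,\Sigma_0)\le\delta\}$, and set $\Sigma_n':=(\Sigma_n\setminus B)\cup(\Sigma'\cap B_s)\cup S_\delta$; Hausdorff convergence forces the strands of $\Sigma_n$ to hit the $\delta$-neighborhood of $\Sigma_0$ near $\partial B_s$, so $\Sigma_n'$ is arcwise connected, and the added length $\H(S_\delta)\le cN\delta$ is bookkept on the right-hand side and killed by letting $\delta\to0^+$ and then $\varepsilon\to0^+$. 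The same repair is needed in your limsup argument for~\eqref{limiteForte}. Without some such added connector of quantitatively negligible length, your competitor is inadmissible and the comparison inequality never gets off the ground.

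A secondary, fixable-but-unaddressed issue concerns your field $\sigma_n=\sigma+\nabla\psi_n$ with the Neumann matching $\partial_\nu\psi_n=(\nabla u_n-\nabla u_0)\cdot\nu$ on $\partial B$. As stated, the problem for $\psi_n$ has no Dirichlet portion, so either a compatibility condition must be verified (Fredholm alternative) or $\psi_n$ is only determined up to constants, and in either case the energy estimate giving $\nabla\psi_n\to0$ requires a Poincar\'e inequality that you have not secured. Moreover the convergence of the normal trace in $H^{-1/2}(\partial B)$ is delicate here: the distributional divergence of $\nabla u_n$ on a neighborhood of $\partial B$ carries a singular layer on $\Sigma_n$ (the jump of the normal derivative), so the standard continuity of normal traces for $L^2$ fields with $L^2$ divergence does not apply directly where $\Sigma_n$ meets $\partial B$. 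The paper sidesteps both difficulties: it never matches traces across $\partial B$, but interpolates $\sigma_n=\varphi\sigma+(1-\varphi)\nabla u_n$ in an annulus, producing a divergence defect $g_n=(\sigma-\nabla u_n)\cdot\nabla\varphi+(1-\varphi)f_n\to0$ in $L^2$, and then corrects by a mixed problem for $v_n$ with \emph{Dirichlet} condition $v_n=0$ on $\Sigma_n'$, for which the annulus Poincar\'e inequality (Lemma~\ref{lm_compl_poincare2}) applies precisely because the connected competitor crosses the annulus, yielding $\nabla v_n\to0$ in $L^2$. You should adopt this cutoff-plus-corrector scheme, or else supply the missing solvability, normalization, and trace-convergence arguments for your Neumann construction.
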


\begin{proof}  Let    $\Sigma_{n_k}$, $u_{n_k}$, $\Sigma_0$ and $u_0$  be as in Proposition~\ref{blowuplimit}, and we further write $n$ instead
   of $n_k$ for brevity. We will use the dual formulation~\eqref{dualcompliance} which says that for all $n\in\N$, the pair $(\nabla u_n,\Sigma_n)$ solves
\begin{equation}\label{minprobN}
\begin{split}
\min_{(\sigma, \Sigma)\in \mathcal{B}_n} &\frac{1}{2}\int_{\Omega_n }|\sigma|^2 \, dx +  \H(\Sigma),
\quad\text{where}\\
\mathcal{B}_n:= &\left\{(\sigma,\Sigma) \colon  \Sigma\in\mathcal{K}(\Omega_n) \text{ 
and }  \sigma \in L^2(\Omega_n,\R^2)
\text{ satisfies } {\rm div}\, \sigma = f_n \text{ in  } \mathcal{D}'(\Omega_n \setminus \Sigma) \right\}.
\end{split}
\end{equation}

Now let  $B \subset \R^2$ be a ball.
We may assume without loss of generality that $B=B_1(0)$. All the balls in this proof will by default be centered at the origin.
Let the pair $(\sigma, \Sigma')$ be a competitor to $(u_0, \Sigma_0)$ in $B$
  in the sense of Definition~\ref{global}, i.e. satisfying $\Sigma'\setminus B =  \Sigma_0\setminus B$ and $\Sigma'\cap \overline{B}$ is connected.

Let $\varepsilon \in (0,1)$ be a small parameter. We can choose an $s \in (1,1+\varepsilon)$ such that
\[N:=\Card\Sigma_0 \cap \partial B_s<+\infty\]
(notice that $\Sigma_0$ has finite length due to Go\l ab's theorem  and the Ahlfors regularity of $\Sigma_n$ with same constant). Let $\delta >0$ be another small parameter, and let us define the set
\[S_\delta:= \{x\in \partial B_s \colon \dist(x,\Sigma_0)\leq \delta\}.\]
Then since $\Sigma_n$ converges to $\Sigma_0$ in the Kuratowski sense in $\R^2$, it follows that for $n$ large enough, $\Sigma_n \cap B_2$ stays inside  a $\delta$-neighborhood  of $\Sigma_0\cap B_2$, thus it is easily seen  that  the set
\begin{equation*}
\Sigma'_n :=( \Sigma_n \setminus B) \cup (\Sigma'\cap B_{s})\cup S_\delta
\end{equation*}
is arcwise connected.

\begin{figure}[htbp]
\begin{center}
\includegraphics*[width=0.8\textwidth]{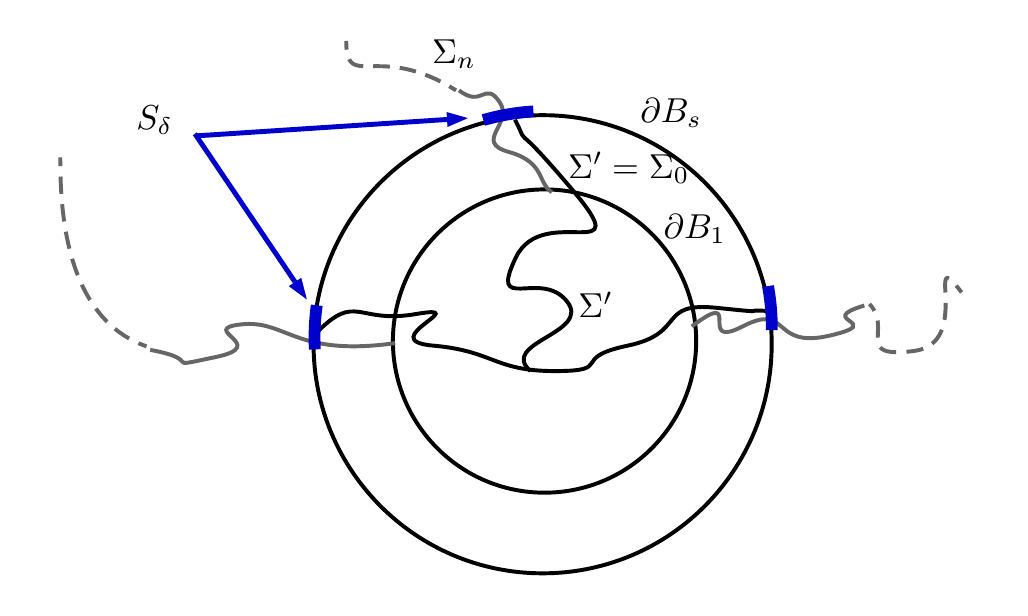}
\end{center}
\caption{Construction of the competitor $\Sigma'_n$ in Proposition~\ref{global1}.}\label{fig:Sigma'n}
\end{figure}%

Let now $\sigma \in L^2_{loc}(\R^2; \R^2)$ be as above, a competing vector field associated to $\Sigma'$
in the sense of Definition~\ref{global}, i.e.\ $\sigma=\nabla u_0$ in $\R^2 \setminus B$ and ${\rm div} \sigma =0$ in $\R^2\setminus \Sigma'$. We want to prove that
\begin{eqnarray}
\frac{1}{2}\int_{B}|\nabla u_0|^2 \,dx + \H(\Sigma_0 \cap B)\leq \frac{1}{2}\int_{B}|\sigma|^2 \,dx + \H(\Sigma' \cap B). \label{minglob}
\end{eqnarray}
For this purpose we modify $\sigma$ to make it admissible as a competitor for the minimization problem~\eqref{minprobN}, i.e.\ construct a vector field $\tilde\sigma_n$ such that $(\tilde\sigma_n, \Sigma_n')\in \mathcal{B}_n$.
We start by considering  a $2/(s-1)$-Lipschitz cut-off function $\varphi$ equal to $1$ on $B_1$ and to $0$ outside $B_{1+(s-1)/2}$, and we let
\[\sigma_n:=\varphi \sigma  + (1-\varphi)\nabla u_n.\]
Notice that
\[{\rm div } \sigma_n =( \sigma - \nabla u_n) \cdot \nabla \varphi +(1-\varphi) f_n  =:g_n\quad \text{ in } \mathcal{D}'(\Omega_n \setminus \Sigma'_n),\]
and that
\[g_n \to 0 \text{ strongly in } L^2_{loc}(\R^2),\]
because $\sigma =\nabla u_0$ a.e. in $\R^2\setminus  B$, $\nabla u_n \to \nabla u_0$ strongly in $L^2_{loc}(\R^2;\R^2)$
and $f_n \to 0$ strongly in $L^2_{loc}(\R^2)$ (recall that $u_0$ is harmonic by Proposition~\ref{blowuplimit}).
We now add a correction because we would like the divergence to be exactly equal to $f_n$. For this purpose we denote, for convenience, $r_0:=1+\varepsilon$ and let $v_n$ be the solution for the problem
\[\min_{v \in H^1_{0,\Sigma'_n}(B_{r_0}\setminus \Sigma'_n)} \int_{B_{r_0}} |\nabla v|^2 - 2\int_{B_{r_0}}(f_n- g_n) v.\]
In other words we are solving the problem  $-\Delta v_n= f_n-g_n$ in $B_{r_0}\setminus \Sigma'_n$, $v_n=0$ on $\Sigma'_n\cap B_{r_0}$ and
$\nabla v_n \cdot \nu = 0$ on $\partial B_{r_0}\setminus \Sigma_n'$. It follows that
\[- {\rm div} ({\bf 1}_{B_{r_0}}\nabla v_n)=(f_n- g_n){\bf 1}_{B_{r_0}} \text{ in }\mathcal{D}'(\R^2 \setminus \Sigma_n').\]

In addition one has the estimate
\begin{eqnarray}
\int_{B_{r_0}}|\nabla v_n|^2 \, dx = \int_{B_{r_0}} (f_n-g_n) v_n \leq \|f_n-g_n\|_{L^2(B_{r_0})} \|v_n\|_{L^2(B_{r_0})}. \label{ineqq}
\end{eqnarray}
Since $v_n$ vanishes on the substantially large piece of set $\Sigma_0\cap B_{1+\varepsilon}\setminus B_1$, by Poincar\'e inequality (Lemma~\ref{lm_compl_poincare2}) we infer that
\[\int_{B_{r_0}}|v_n|^2 \, dx \leq C(\varepsilon) \int_{B_{r_0}}|\nabla v_n|^2 \, dx, \]
thus~\eqref{ineqq} gives
\begin{eqnarray}
\int_{B_{r_0}} |\nabla v_n|^2\leq C(\varepsilon) \|f_n-g_n\|_{L^2(B_{r_0})}^2\to 0. \label{estimate5}
\end{eqnarray}
Consider now  the function
\[\tilde \sigma_n := \sigma_n - {\bf 1}_{B_{r_0}} \nabla v_n,\]
which, by construction, satisfies
\[{\rm div} \tilde \sigma_n = f_n \text{ in } \Omega_n \setminus \Sigma'_n,\]
i.e.\ $(\tilde\sigma_n, \Sigma_n')\in \mathcal{B}_n$.
Since $(\nabla u_n, \Sigma_n)$ is a minimizer for  the problem~\eqref{minprobN}, it follows that
\[\int_{\Omega_n}|\nabla u_n|^2 +  \H(\Sigma_n) \leq  \int_{\Omega_n}|\tilde \sigma_n|^2 +  \H(\Sigma'_n).\]
Since both $\tilde \sigma_n = \nabla u_n$ and $\Sigma'_n=\Sigma_n$ outside $B_{r_0}$, we get
\begin{eqnarray}
\int_{B_{r_0}}|\nabla u_n|^2 +  \H(\Sigma_n\cap B_{r_0}) &\leq&  \int_{B_{r_0}}|\tilde \sigma_n|^2 +  \H(\Sigma'_n\cap B_{r_0}). \notag \\
&\leq  &  \int_{B_{r_0}}|\tilde \sigma_n|^2 +  \H(\Sigma'\cap B_{r_0}) +\H(\Sigma_n\cap B_{r_0}\setminus B_1) + cN\delta,  \notag
\end{eqnarray}
where we have estimated  $\H(S_\delta)\leq cN\delta$, with $c>0$ being a universal constant. In other words,
\begin{eqnarray}
\int_{B_{r_0}}|\nabla u_n|^2 +  \H(\Sigma_n\cap B_{1})\leq
    \int_{B_{r_0}}|\tilde \sigma_n|^2 +  \H(\Sigma'\cap B_{r_0}) + cN\delta.  \label{apasser}
\end{eqnarray}

Notice that
\begin{eqnarray}
\H(\Sigma_0 \cap B) \leq \liminf_{n} \H(\Sigma_n \cap B), \label{Golabb}
\end{eqnarray}
thanks to Go\l ab's Theorem (one can just use the classical Go\l ab theorem applied to the sequence of compact connected sets $(\Sigma_n\cap B)\cup \partial B$ which converges to $(\Sigma\cap B) \cup \partial B$ for the Hausdorff distance by Proposition~\ref{blowuplimit} ($i$)).

Taking the liminf in $n$ of~\eqref{apasser}, using that $\tilde \sigma_n\to \sigma$ strongly in $L^2(B_{r_0};\R^2)$ we obtain that
\[
\int_{B_{r_0}}|\nabla u_0|^2 +  \H(\Sigma_0\cap B_{r_0})\leq
    \int_{B_{r_0}}|\sigma|^2 +  \H(\Sigma'\cap B_{r_0}) + cN\delta.
\]
We can now let $\delta \to 0^+$ and then $\varepsilon \to 0^+$ to get
\[
\int_{B_1 }|\nabla u_0|^2 +  \H(\Sigma_0\cap B_1)\leq
    \int_{B_1}|\sigma|^2 +  \H(\Sigma'\cap B_1) ,
\]
which concludes the proof of the fact that $(u_0, \Sigma_0)$ is a global minimizer.

It remains to prove that
\[\H(\Sigma_{n}\cap B)\to \H(\Sigma \cap B).\]
Inequality~\eqref{Golabb} gives the first half. To prove the reverse inequality, we pass to the limsup in $n$ in~\eqref{apasser}  with the special choice $\Sigma'=\Sigma_0$ and $\sigma=\nabla u_0$. Using $\nabla u_n \to \nabla u_0$ we obtain
\[ \limsup_n \H(\Sigma_n \cap B)\leq \H(\Sigma_{0}\cap B)+cN\delta\]
and letting $\delta \to 0^+$ we conclude the proof.
\end{proof}





\subsection{Another formulation and characterization of global minimizers}

In this section we  give another formulation of global minimizers and prove that it is equivalent to Mumford-Shah minimizers.

\begin{definition} \label{BonnetMin} A pair $(u,K)$ with $K\subset \R^2$ closed is called global Mumford-Shah minimizer, if $u\in L^2_{loc}(\R^2\setminus K)$ and $\nabla u \in L^2_{loc} (\R^2)$ satisfies
\[\int_{B}|\nabla u|^2 \, dx + \H(K\cap B) \leq \int_{B}|\nabla v|^2 \, dx + \H(L\cap B),\]
for every ball $B\subset \R^2$ and for all $(v,L)$ satisfying $v=u$  in $\R^2\setminus B$ and $K\setminus B=L\setminus B$, and such that $L\cap \overline{B}$ is connected.
\end{definition}

\begin{remark} The original definition of Bonnet~\cite{b} was slightly different, with a less restrictive topological condition of competitors, namely keeping $\{x, y\}\subset \R^2\setminus B$ separated by $L$ as soon as they were separated by $K$. We cannot use here exactly the same definition of competitors because we  need our competitors to be connected. However,  our definition is stronger, in the sense that any of our compliance global minimizers is automatically a global minimizer in the sense of Bonnet, which is enough to get the classification of global minimizers.
\end{remark}

\begin{definition} If $\Omega\subset \R^2$ is open and  $u$ is harmonic in $\Omega$, we call harmonic conjugate for $u$  in $\Omega$ a harmonic function $v$ in $\Omega$ such that $\nabla v =\nabla^T u$, where the notation $\nabla^Tu$ is used for the vector field $(-\partial_y u,\partial_x u)$.
\end{definition}

\begin{proposition}\label{bonnetM} Let $(u_0,\Sigma_0)$ be a global minimizer     in the sense of Definition~\ref{global}. Assume  moreover that every connected component of $\R^2\setminus \Sigma_0$ is simply connected.  Then, $u_0$ admits a harmonic conjugate   $u$  in $\R^2 \setminus \Sigma_0$, and $(u,\Sigma_0)$ is a Mumford-Shah global minimizer in the sense of Definition~\ref{BonnetMin}.
\end{proposition}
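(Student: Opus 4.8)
The plan is to exploit the elementary two-dimensional duality between divergence-free fields and gradients: rotating a planar vector field by $90^\circ$ turns the constraint $\mathrm{div}\,\sigma=0$ into the property of being a gradient, and it is an $L^2$-isometry, so it should carry the compliance minimality of Definition~\ref{global} into the Mumford--Shah minimality of Definition~\ref{BonnetMin}.

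First I would construct the harmonic conjugate. Since $(u_0,\Sigma_0)$ is a global minimizer, $u_0$ is harmonic in $\R^2\setminus\Sigma_0$ (this is recorded in Proposition~\ref{blowuplimit} for the blow-up limits; in general it follows from the minimality in Definition~\ref{global} tested on balls compactly contained in a component of $\R^2\setminus\Sigma_0$, where the length term is inert and $\nabla u_0$ is forced to be the $L^2$-smallest divergence-free field with its own trace, hence curl-free). On each connected component $U$ of $\R^2\setminus\Sigma_0$ the rotated field $\nabla^T u_0=(-\partial_y u_0,\partial_x u_0)$ is curl-free, because $\mathrm{curl}(\nabla^T u_0)=\Delta u_0=0$; as $U$ is simply connected by hypothesis, the Poincaré lemma yields a potential $u$ on $U$ with $\nabla u=\nabla^T u_0$, determined up to an additive constant. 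Patching these over all components defines $u$ on $\R^2\setminus\Sigma_0$; it is harmonic since $\Delta u=\mathrm{div}(\nabla^T u_0)=0$, and $|\nabla u|=|\nabla^T u_0|=|\nabla u_0|\in L^2_{loc}$, so the regularity requirements of Definition~\ref{BonnetMin}, namely $u\in L^2_{loc}(\R^2\setminus\Sigma_0)$ and $\nabla u\in L^2_{loc}(\R^2)$, both hold.

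Next I would set up the competitor correspondence. Given a ball $B$ and a Mumford--Shah competitor $(v,L)$ for $(u,\Sigma_0)$ — so that $v=u$ on $\R^2\setminus B$, $\Sigma_0\setminus B=L\setminus B$, and $L\cap\overline B$ connected — I define $\sigma:=-\nabla^T v=(\partial_y v,-\partial_x v)$. A one-line integration by parts gives $\mathrm{div}\,\sigma=0$ in $\mathcal D'(\R^2\setminus L)$ for every $v\in H^1_{loc}(\R^2\setminus L)$, and outside $B$ the conjugacy relation $\nabla u=\nabla^T u_0$ gives $\sigma=-\nabla^T u=\nabla u_0$. Hence, with $\Sigma':=L$ (which equals $\Sigma_0$ off $B$ and is connected in $\overline B$), the pair $(\sigma,\Sigma')$ is exactly an admissible competitor in the sense of Definition~\ref{global}, and since the rotation is an isometry one has $|\sigma|=|\nabla v|$ pointwise. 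Feeding this into the compliance inequality of Definition~\ref{global} and using $|\nabla u_0|=|\nabla u|$, $|\sigma|=|\nabla v|$ and $\Sigma'=L$ turns it into $\tfrac{1}{2}\int_B|\nabla u|^2+\H(\Sigma_0\cap B)\le \tfrac{1}{2}\int_B|\nabla v|^2+\H(L\cap B)$, which is the Mumford--Shah minimality. The innocuous factor $\tfrac{1}{2}$ is absorbed once and for all by replacing $u$ with $u/\sqrt{2}$ (still harmonic, with $\nabla(u/\sqrt{2})=\tfrac{1}{\sqrt{2}}\nabla^T u_0$), which makes the coefficients of Definition~\ref{BonnetMin} match verbatim; in any case it does not affect the eventual classification of $\Sigma_0$.

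I expect the main obstacle to be the global construction of the conjugate and of the dual field: both the potential $u$ and the representation $\sigma=-\nabla^T v$ rest on the Poincaré lemma on $\R^2\setminus\Sigma_0$, i.e. precisely on the simple connectedness hypothesis (supplied in the application by Proposition~\ref{connected}); without it one would only get multivalued conjugates, and the correspondence could fail. The remaining points — that rotation is an $L^2$-isometry, that it matches the two connectedness conditions (since $\Sigma'=L$), and that it respects the boundary datum outside $B$ — are then routine verifications, and note that only the direction ``Mumford--Shah competitor $\mapsto$ compliance competitor'' is needed to transfer minimality.
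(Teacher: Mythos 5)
Your proof is correct and follows essentially the same route as the paper: build the harmonic conjugate on each simply connected component of $\R^2\setminus\Sigma_0$ (Poincar\'e lemma, where the paper invokes De Rham), then rotate a Mumford--Shah competitor $v$ into the divergence-free field $\sigma=\pm\nabla^T v$ and use that the rotation is a pointwise isometry to transfer the minimality of Definition~\ref{global} into that of Definition~\ref{BonnetMin}. If anything, you are more careful than the paper on two harmless normalizations it glosses over: the sign (with the convention $\nabla^T u=(-\partial_y u,\partial_x u)$ one indeed needs $\sigma=-\nabla^T v$ to recover $\sigma=\nabla u_0$ outside $B$, whereas the paper writes $\sigma=\nabla^T v$) and the factor $\tfrac{1}{2}$ discrepancy between the two definitions, which you absorb by the rescaling $u/\sqrt{2}$.
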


\begin{proof} Since each component of $\R^2\setminus \Sigma_0$ is simply connected,
$u_0$ admits a harmonic conjugate. Namely, for any $U\subset \R^2\setminus \Sigma_0$
connected component, since     ${\rm div } \nabla u_0 =0$ in $\mathcal{D}'(U)$,
by De Rham's theorem we get the existence of  $u$ harmonic in $U$ such that
$\nabla^T u_0=\nabla u$ in $\R^2 \setminus \Sigma_0$. In particular, $\|\nabla u_0\|_{L^2(K)}=\|\nabla u\|_{L^2(K)}$ for all $K\subset \R^2$, compact.

Now let $(v,\Sigma')$ be a competitor in some ball $B\subset \R^2$, i.e. $v=u$ in $B\setminus \R^2$, $\Sigma'\setminus B=\Sigma_0\setminus B$  and $\Sigma'\cap \overline{B}$ is connected. Then we define $\sigma:=\nabla^T v$. In particular, ${\rm div} \sigma = 0$ in $\R^2 \setminus \Sigma'$ and $\sigma = \nabla^T u = \nabla u_0$ in $\R^2\setminus B$. By the minimality of $(u_0,\Sigma_0)$ in the sense  of Definition~\ref{global}
we have
\[\int_{B}|\nabla u_0|^2 \, dx +\H(\Sigma_0\cap B)\leq\int_{B}|\sigma|^2 \, dx +\H(\Sigma' \cap B),\]
which implies
\[\int_{B}|\nabla u|^2 \, dx +\H(\Sigma_0\cap B)\leq\int_{B}|\nabla v|^2 \, dx +\H(\Sigma' \cap B),\]
thus $(u,\Sigma_0)$ is a global Mumford-Shah minimizer.
\end{proof}

As a direct consequence we get the following.

\begin{proposition} \label{corr} Any blow-up limit $(u_0,\Sigma_0)$   given by Proposition~\ref{blowuplimit}  is one of the following list, up to a translation, rotation,  dilatation, or adding a constant for $u_0$:
\begin{enumerate}
\item[(i)]  $\Sigma_0$ is a line  and $u_0$ is a constant on each side of it;
\item[(ii)]  $\Sigma_0$ is a propeller (three half lines meeting  in a single point by number of 3 at  120 degree angles) and $u_0$ is a constant in each of the sectors  formed by these half lines;
\item[(iii)]  $\Sigma_0$ is a half-line and $u_0$ is the ``Dirichlet-craktip'' function $(r,\theta)\mapsto \sqrt{r/2\pi}\cos(\theta/2)$ written in polar coordinates $(r, \theta)\in [0,+\infty)\times (-\pi,\pi)$.
\end{enumerate}
 \end{proposition}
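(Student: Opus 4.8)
The plan is to chain together the structural results already established for the blow-up limit and then to reduce the classification to Bonnet's theorem for global Mumford--Shah minimizers. First I would recall that Proposition~\ref{blowuplimit} produces a limit pair $(u_0,\Sigma_0)$ with $\Sigma_0\subset\R^2$ closed, $u_0\in H^1_{0,\Sigma_0,loc}(\R^2)$ harmonic in $\R^2\setminus\Sigma_0$, and with $0\in\Sigma_0$ (since each $\Sigma_n$ contains a point tending to the origin). By Proposition~\ref{connected}, $\Sigma_0$ is unbounded and arcwise connected, and every connected component of $\R^2\setminus\Sigma_0$ is simply connected; by Proposition~\ref{global1}, the pair $(u_0,\Sigma_0)$ is a compliance global minimizer in the sense of Definition~\ref{global}.

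Next I would invoke Proposition~\ref{bonnetM}: since the complementary components are simply connected, $u_0$ admits a harmonic conjugate $u$ in $\R^2\setminus\Sigma_0$ (so that $\nabla u=\nabla^T u_0$, and in particular $|\nabla u|=|\nabla u_0|$ pointwise), and $(u,\Sigma_0)$ is a global Mumford--Shah minimizer in the sense of Definition~\ref{BonnetMin}. At this point the geometry of $\Sigma_0$ and the local form of $u$ are pinned down by Bonnet's classification theorem for \emph{connected} global minimizers of the Mumford--Shah functional in the plane~\cite{b}: up to rotation, translation and dilation, $\Sigma_0$ is a line, a half-line, or a propeller (together with the degenerate case $\Sigma_0=\emptyset$ with $u$ constant), where $u$ is locally constant on each side, resp. in each sector, in the line and propeller cases, and equal to the (Neumann) cracktip function in the half-line case. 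Since $0\in\Sigma_0$ and $\Sigma_0$ is unbounded, the empty case is excluded, leaving exactly the three admissible geometries of the statement.

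It then remains to transport the description of $u$ back to $u_0$ through the conjugacy relation. In the line and propeller cases $u$ is locally constant, so $\nabla u\equiv 0$ off $\Sigma_0$ and hence $\nabla u_0\equiv 0$ as well; thus $u_0$ is constant on each side, resp. in each sector, which after normalizing the additive constant(s) yields items (i) and (ii). In the half-line case, the harmonic conjugate of the Neumann cracktip is, up to sign, rotation and additive constant, the Dirichlet cracktip $(r,\theta)\mapsto\sqrt{r/2\pi}\,\cos(\theta/2)$; I would confirm this by the elementary identity $\sqrt{z}=\sqrt{r}\,(\cos(\theta/2)+i\sin(\theta/2))$, checking that $\nabla^T$ interchanges the two functions and that the resulting $u_0$ vanishes on $\Sigma_0$, consistently with $u_0\in H^1_{0,\Sigma_0,loc}(\R^2)$. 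This produces item (iii) and completes the list.

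The main obstacle is conceptual rather than computational: it lies in correctly matching the \emph{Dirichlet} boundary behaviour of $u_0$ (which vanishes on $\Sigma_0$) with the \emph{Neumann} boundary behaviour of its conjugate $u$ appearing in Bonnet's theorem, and in ensuring that our stronger, connectedness-preserving notion of competitor in Definition~\ref{global} genuinely feeds into Bonnet's classification. This last point is exactly the content of the remark following Definition~\ref{BonnetMin}, which guarantees that any compliance global minimizer is automatically a Bonnet minimizer; everything else reduces to invoking the deep classification theorem as a black box together with the conjugate-function bookkeeping above.
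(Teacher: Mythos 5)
Your proposal is correct and follows essentially the same route as the paper: combine Propositions~\ref{connected},~\ref{global1} and~\ref{bonnetM}, verify Bonnet's topological condition on competitors (which the paper re-proves in one line from the connectedness of $L\cap\overline{B}$, and which you instead attribute to the remark after Definition~\ref{BonnetMin} --- the same content), then invoke Bonnet's classification, excluding the empty set because one blows up a connected set at points of $\Sigma$. Your explicit bookkeeping transporting the classification from the harmonic conjugate $u$ back to $u_0$ (vanishing gradient in the line and propeller cases, conjugacy of the Neumann and Dirichlet cracktips via $\sqrt z$) is left implicit in the paper's proof but is exactly the intended argument.
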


\begin{proof}  Proposition~\ref{connected}, Proposition~\ref{global1} and Proposition~\ref{bonnetM} imply that $\Sigma_0$ is a global Mumford-Shah minimizer in the sense of Definition~\ref{BonnetMin}. To obtain that $\Sigma_0$ is the singular set of a global Mumford-Shah minimizer in the sense of Bonnet, it is enough to prove that it satisfyies the topological condition of Bonnet, namely, that each pair of points $x$ and $y$ in  $\Sigma_0\setminus B$ that are separated by $\Sigma_0$, are still separated by any competitor $L$. But this is clear due to the fact that $L\cap \overline{B}$ is connected.   It follows from Bonnet~\cite{b} that  it must be one of the list described above (notice that the blow-up limit is never empty since we blow-up a connected set).
\end{proof}


\subsection{Blow-up limit at a boundary point}

The purpose of this section   is to state results analogous to those of the previous sections,    in the particular  case of a boundary point, namely when  $x_n\to x_0 \in \partial \Omega$. Since the proofs are very similar, we shall only highlight the main differences. We shall put together the results  in the following unified statement. We first define the type of minimizing problem that arises at the limit.

\begin{definition}\label{definPropH} Let $H\subset \R^2$ be a closed half-plane. Then, a closed and arcwise connected set $\Sigma \subset H$ is a boundary compliance global minimizer in $H$ if it satisfies the following minimizing property: for every open ball $B\subset \R^2$ and every competitor $L\subset H$,  satisfying $L\setminus  B=\Sigma \setminus B$  and such that $L\cap \overline{B}$ is connected, one has
\begin{eqnarray}
\H(\Sigma\cap B) \leq \H(L\cap B). \label{minimal0}
\end{eqnarray}
\end{definition}

\begin{remark} A direct consequence of~\eqref{minimal0} is that $\Sigma$ has locally finite Hausdorff measure (take $(\Sigma \setminus B)  \cup\partial (B\cup H)$ as a competitor).
\end{remark}

\begin{remark} Our class of boundary compliance global minimizers looks similar to the one dimensional sliding minimal sets studied in~\cite{FANG,davAgain}. Notice however that our class is quite different (and, actually, is simpler) because we do not impose any sliding condition: in our situation, the competitors are not obliged to preserve the points of $\Sigma\cap \partial H$ on the boundary, they are free to be detached and move everywhere including inside the domain.   We indeed will arrive to a different classification than the one of~\cite[Lemma 4.3]{FANG}.
\end{remark}

\begin{theorem}  \label{global2} Assume that $f\in L^p(\Om)$, $p>2$, where
$\Omega$ is a $C^1$ domain. Let $(u_n,\Sigma_n)$ be the blow-up sequence in $\Omega_n$ defined in~\eqref{blowup1} and~\eqref{blowup2} with $x_n \to  x_0 \in \partial \Omega$  and assume that
$d(x_{n_k}, \partial \Omega)\leq  r_n$ for a subsequence (otherwise the blow-up analysis is exactly the same as the interior case). Then there exists a further subsequence (still indexed by $n_k$) such that $\Omega_{n_k}$ converges to a half-plane and
\begin{enumerate}
\item[(i)]   $\Sigma_{n_k}  \to\Sigma_0 $ in the Kuratowski sense in $\R^2$,
\item[(ii)] $u_{n_k}\to 0$ strongly in $H^1_{loc}(\R^2)$,
\item[(iii)] $\Sigma_0$ is a boundary compliance global minimizer in the sense of Definition~\ref{definPropH},
\item[(iv)]  $\mathcal{H}^1(\Sigma_{n_k} \cap B)\to \mathcal{H}^1(\Sigma_0 \cap B)$ for every ball $B$.
\end{enumerate}
\end{theorem}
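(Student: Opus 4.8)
The overall strategy is to mimic the interior blow-up analysis (Propositions~\ref{blowuplimit} and~\ref{global1}) while exploiting one drastic simplification: at the boundary the potential degenerates, so the limiting object carries \emph{no} energy. First I would establish the convergence of the rescaled domains. Since $\Omega$ is a $C^1$ domain and $d(x_{n_k},\partial\Omega)\le r_n$, the rescaled boundary $\partial\Omega_{n_k}$ flattens: by the $C^1$ regularity, $\beta_{\partial\Omega}(x,s)\to 0$ uniformly as $s\to 0$ (exactly as in the proof of Lemma~\ref{decayboundary}), so $\partial\Omega_{n_k}$ converges locally to a line through the (bounded) limit of $(x_0-x_{n_k})/r_n$, and $\Omega_{n_k}$ converges to a half-plane $H$. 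For~(i), I would extract a subsequence using Blaschke's principle in the one-point compactification of $\R^2$, precisely as in Proposition~\ref{blowuplimit}, to obtain Kuratowski convergence $\Sigma_{n_k}\to\Sigma_0\subset H$.

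The key new point is~(ii), the vanishing of the energy. Here I would use Lemma~\ref{decayboundary}: for $x\in\partial\Omega$ and $r<r_0$ one has $\int_{B_r(x)}|\nabla u_\Sigma|^2\,dx\le C_1(r/r_0)^{1+\nu}+C_2 r^{2/p'}$, with both exponents strictly greater than $1$. Rescaling, $\int_{B_R(0)}|\nabla u_{n}|^2\,dx=\frac1{r_n}\int_{B_{Rr_n}(x_n)}|\nabla u_\Sigma|^2\,dx$, and since $d(x_n,\partial\Omega)\le r_n$ there is a boundary point within distance $r_n$ of $x_n$, so the integral over $B_{Rr_n}(x_n)$ is controlled by the same decay estimate applied at that boundary point over a slightly larger ball. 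Because $1+\nu>1$ and $2/p'>1$, this quantity is $O(r_n^{\min(\nu,2/p'-1)})\to 0$. Hence $\|\nabla u_n\|_{L^2(B_R)}\to 0$, and combined with the boundary Poincaré inequality (Lemma~\ref{lm_compl_poincare1}, applicable because $u_n=0$ on the nearly-flat $\partial\Omega_n$) this gives $u_{n_k}\to 0$ strongly in $H^1_{loc}(\R^2)$. This is the feature that makes the boundary limit \emph{purely geometric}.

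For~(iii), the absence of energy means the dual/global-minimality inequality collapses to a pure length inequality. I would adapt the proof of Proposition~\ref{global1}: given a competitor $L\subset H$ with $L\setminus B=\Sigma_0\setminus B$ and $L\cap\overline B$ connected, I build $\Sigma'_n$ by the same surgery $(\Sigma_n\setminus B)\cup(L\cap B_s)\cup S_\delta$, slightly enlarging the ball to a radius $s\in(1,1+\varepsilon)$ with $\Card\Sigma_0\cap\partial B_s<\infty$ and adding the thin connector $S_\delta$ to restore connectedness. Since $u_n\to 0$, there is no vector-field construction to perform; the minimality of $\Sigma_n$ for the dual problem~\eqref{dualcompliance} in $\Omega_n$ gives, after dropping the vanishing energy terms,
\[
\H(\Sigma_n\cap B_1)\le \int_{B_{r_0}}|\tilde\sigma_n|^2\,dx+\H(L\cap B_{r_0})+cN\delta,
\]
and the energy integral tends to $0$. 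Passing to the liminf with Go\l ab's theorem (as in~\eqref{Golabb}) yields $\H(\Sigma_0\cap B)\le\H(L\cap B)$, i.e.~\eqref{minimal0}. For~(iv), the upper bound $\limsup_n\H(\Sigma_n\cap B)\le\H(\Sigma_0\cap B)$ follows by taking $L=\Sigma_0$ in the same inequality and letting $\delta,\varepsilon\to0$, while the lower bound is precisely Go\l ab's theorem; together they give the strong length convergence.

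The main obstacle I anticipate is the competitor surgery near the boundary: one must ensure that $\Sigma'_n\subset H_n$ (the rescaled half-plane) and remains connected while the boundary $\partial\Omega_n$ is only \emph{approximately} flat, not exactly a line. I would handle this exactly as in Case~B of the proof of Proposition~\ref{TheProp}, composing with a small bi-Lipschitz vertical push $\Phi$ supported near the ball that maps the competitor into $\overline{\Omega_n}$; the flatness $\beta_{\partial\Omega}\to 0$ guarantees the extra length introduced by $\Phi$ is $o(1)$ and does not affect the limiting length inequality. The connectedness of $L\cap\overline B$ and the thin set $S_\delta$ together guarantee $\Sigma'_n$ is arcwise connected, so it is an admissible competitor for the rescaled Problem~\ref{pb_compl_pen1}.
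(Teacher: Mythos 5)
Your plan is essentially the paper's own proof: the same comparison of balls around a nearby boundary point $z_n$ (via $B_{r_n}(z_n)\subset B_{2r_n}(x_n)\subset B_{4r_n}(z_n)$) to get the half-plane limit, the same Blaschke/Kuratowski extraction for~(i), the same use of Lemma~\ref{decayboundary} to make the rescaled energy vanish for~(ii), and the same adaptation of the surgery of Proposition~\ref{global1} for~(iii)--(iv), including the small Lipschitz push to keep the competitor inside $\overline{\Omega_n}$ (the paper uses the cone-avoiding map $\Phi_\eta(x,y)=(x-\eta|y|,y)$, globally $(1+\eta_n)$-Lipschitz, in place of your Case~B-style vertical translation; the two devices are interchangeable here and both introduce only $o(1)$ extra length).

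Two caveats. First, your sentence ``there is no vector-field construction to perform'' is wrong as stated, and is in fact contradicted by your own displayed inequality, which contains $\tilde\sigma_n$: admissibility for the dual problem~\eqref{dualcompliance} requires $\mathrm{div}\,\tilde\sigma_n=f_n$ in $\mathcal{D}'(\Omega_n\setminus\Sigma'_n)$, and since $\Sigma'_n$ differs from $\Sigma_n$ inside the ball, $\nabla u_n$ itself is not admissible there. One must still run the construction of Proposition~\ref{global1}, now with limit field $\sigma=0$ (so $\sigma_n=(1-\varphi)\nabla u_n$ plus the divergence correction $\nabla v_n$, whose smallness again uses a Poincar\'e inequality on the annulus); it is the limit of $\tilde\sigma_n$, not the construction, that disappears, and this is exactly where your item~(ii) enters. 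Second, Definition~\ref{definPropH} requires $\Sigma_0$ to be \emph{arcwise connected}, and nothing in your plan delivers this: it follows from the chord-arc estimate of Proposition~\ref{chordarc} (proved up to the boundary precisely because $\Omega$ is a $C^1$ domain) through the argument of Proposition~\ref{connected}, which the paper invokes verbatim at this point. With these two repairs your argument coincides with the paper's.
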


\begin{proof} Let us index the  sequence by $n$ instead of $n_k$. The proof is the same as in the case of interior points. The only difference is that now $\Omega_{n}$ blows-up to a half-plane instead of  the whole $\R^2$. Indeed, the assumption $d(x_{n}, \partial \Omega)\leq r_n$ says that for all $n$, there exists some $z_n\in \partial \Omega$ with $d(z_n,x_n) \leq r_n$ and in particular $d(z_n,x_0)\to 0$.
 Since $\partial \Omega$ is $C^1$, it is clear that the blow-up of $\Omega$ at $z_n$ is the tangent line of $\partial \Omega$ at this point, and converges to the tangent line at point $x_0$. Comparing the blow-up at $z_n$ and the one at $x_n$ (for instance by noticing $B_{r_n}(z_n)\subset B_{2r_n}(x_n)\subset B_{4r_n}(z_n)$), we deduce that $\Omega_n$ blows up to a half-plane.

Let us assume without loss of generality that this half-plane is
\[H=\{(x,y) \in \R^2 \; : \; x\leq 0\}.\]
The proofs of Proposition~\ref{blowuplimit}  and Proposition~\ref{connected} can be followed line by line without any change even at the boundary, leading to the fact that $\Sigma_n  \to\Sigma_0 $ in the Kuratowski sense in $\R^2$,  $\Sigma_0$ is arcwise connected and $\R^2\setminus \Sigma_0$ is simply connected,  and finally $u_n$ converges strongly in $H^1_{loc}(\R^2)$ to a harmonic function $u_0$ in $H\setminus \Sigma_0$.

We prove now 
that $\nabla u_0=0$. Indeed, Lemma~\ref{decayboundary} implies
\begin{align*}
 \int_{B_{r}(z_n)} |\nabla u_{\Sigma}|^2\, dx  \leq  C_1\left(\frac{r}{r_0}\right)^{1+\nu} + C_2r^{\frac{2}{p'}},
\end{align*}
 where  
 $C_1$ and $C_2$ depend only on $|\Omega|$, $p$, $\|f\|_p$. This estimate, since $B_{r_n}(z_n)$ and $B_{r_n}(x_n)$ are comparable
 in the sense that $B_{r_n}(z_n)\subset B_{2r_n}(x_n)\subset B_{4r_n}(z_n)$, implies
\begin{align*}
\frac{1}{r_n} \int_{B_{r_n}(x_n)} |\nabla u_{\Sigma}|^2 dx \underset{n\to +\infty}{\longrightarrow} 0,
\end{align*}
and this yields $\nabla u_0=0$ as claimed.

Now comes the identification of the minimization problem arising at the limit. For this purpose we follow the proof of Proposition~\ref{global1}. Everything works the same way except that now the competitor $\Sigma'_n$ may not be admissible, since it may not be inside $\Omega$. We then modify the proof as follows: by the $C^1$ regularity of $\partial \Omega$ , we know that there exists some $\eta_n \to 0$ such that  $\partial \Omega \cap B_{r_n}(z_n)\subset C_{\eta_n}$, where $C_\eta$ is a cone of aperture $ \eta$, namely
\[C_\eta:=\{(x,y)\in \R^2 \colon  |x|\leq \eta |y|    \}.\]
For every $\eta$, let $\Phi_\eta:H\to H\setminus C_{\eta}$ be the following  $(1+\eta)$-Lisphchitz mapping
\[\Phi_\eta(x,y)=(x-\eta|y|,y).\]
Let now $\Sigma'$ be a competitor for $\Sigma_0$ in some ball $B\subset \R^2$. Then $\Sigma'':=\Phi_{\eta_n}(\Sigma_0)\subset \Omega_n$ and
\[\H(\Sigma'')\leq (1+\eta_n)\H(\Sigma').\]
The rest of the proof then follows the same way with  $\Sigma''$ in place of $\Sigma'$.
\end{proof}

\begin{proposition}\label{classification1} Let $\Sigma$ be a compliance global
minimizer in the sense of Definition~\ref{definPropH} with $\Sigma \cap \partial H \neq \emptyset$. Then
$\Sigma=\partial H$.
\end{proposition}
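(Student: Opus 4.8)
The plan is to exploit that at a boundary blow-up the potential vanishes (Theorem~\ref{global2}(ii)), so that $\Sigma$ is a \emph{pure length minimizer} in $H$ in the sense of Definition~\ref{definPropH}; the statement then becomes a rigidity result for such minimizers anchored on $\partial H$. First I would fix coordinates so that $0\in\Sigma\cap\partial H$, $H=\{x\le 0\}$ and $\partial H=\{x=0\}$, and record that $\Sigma$ is closed, arcwise connected, locally of finite length (Remark after Definition~\ref{definPropH}) and Ahlfors regular (as a limit of uniformly Ahlfors-regular minimizers, Theorem~\ref{global2}(iv)); in particular, for a.e.\ $r$ the slice $\Sigma\cap\partial B_r(0)$ is finite. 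Write $N(r):=\#(\Sigma\cap\partial B_r(0))$ and $m(r):=\H(\Sigma\cap B_r(0))$.

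The core is a monotonicity formula at the boundary vertex $0$. For a.e.\ $r$ I would use the cone competitor $L_r:=(\Sigma\setminus B_r(0))\cup\bigcup_{q\in\Sigma\cap\partial B_r(0)}[0,q]$. Because the vertex $0$ lies in the convex set $H$ and every $q\in\Sigma\subset H$, each segment $[0,q]\subset H$, so $L_r\subset H$; moreover the cone part is star-shaped from $0$, hence connected, each component of $\Sigma\setminus B_r(0)$ meets $\partial B_r(0)$ and is thereby attached, and $L_r\setminus B_r(0)=\Sigma\setminus B_r(0)$, so $L_r$ is admissible. Minimality gives $m(r)\le \H(L_r\cap B_r(0))=r\,N(r)$, while the Eilenberg coarea inequality gives $m'(r)\ge N(r)$ a.e.; combining, $m(r)\le r\,m'(r)$, i.e.\ $\theta(r):=m(r)/r$ is nondecreasing, and it is bounded by Ahlfors regularity. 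The standard discussion of the equality case shows that if $\theta$ is constant on an interval then $\Sigma$ coincides there with the cone over its slice, i.e.\ it is a cone with vertex $0$.

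Next I would pin down the density at the extreme scales by blowing up ($t\to0^+$) and blowing down ($1/R$, $R\to+\infty$) about $0$: both rescalings keep $0\in\partial H$ and keep $H$ fixed, and by the compactness/Go\l ab argument of Proposition~\ref{blowuplimit} and Theorem~\ref{global2} a subsequence converges to a boundary length minimizer whose density is constant (equal to $\theta(0^+)$, resp.\ $\theta(+\infty)$), hence a \emph{minimal cone} in $H$ with vertex on $\partial H$. Here I invoke the structure of one-dimensional minimal cones (segments and $120^\circ$ triple junctions, the constant-potential cases of Proposition~\ref{corr}): a single ray retracts to its tip and is not minimizing; a $120^\circ$ propeller cannot be contained in $H$, since three directions $120^\circ$ apart cannot all lie in a half-plane of directions; and two rays are minimizing only if collinear, in which case they form a line through $0$ contained in $H$, necessarily $\partial H$. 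Thus the only such cone is $\partial H$, of density $2$, so $\theta(0^+)=\theta(+\infty)=2$; monotonicity then forces $\theta\equiv2$ on $(0,\infty)$, the rigidity above makes $\Sigma$ a cone with vertex $0$, and the cone classification gives $\Sigma=\partial H$.

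The main obstacle I anticipate is making the ``constant density implies cone'' rigidity and the cone classification rigorous in the constrained half-plane setting: one must check that every comparison set used (cones, chords, Steiner shortcuts) remains inside the convex set $H$ and stays connected, which is precisely what excludes the $120^\circ$ propeller and the oblique two-ray wedges. An alternative, possibly shorter route is to reflect: set $\tilde\Sigma:=\Sigma\cup\mathcal{R}\Sigma$ with $\mathcal{R}$ the reflection across $\partial H$, show by symmetrizing competitors that $\tilde\Sigma$ (with constant potential $0$) is a global minimizer in the sense of Definition~\ref{global} with simply connected complementary components, apply Proposition~\ref{bonnetM} and Proposition~\ref{corr} to get that $\tilde\Sigma$ is a line or a propeller, and finally use the reflection symmetry together with the minimality of $\Sigma$ to discard every case but $\tilde\Sigma=\partial H$ (a perpendicular line would make $\Sigma$ a retractable half-line, and a symmetric propeller would make $\Sigma$ a $120^\circ$ wedge beaten by its chord). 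In this route the delicate point is the symmetrization of competitors across $\partial H$ for balls not centered on the boundary.
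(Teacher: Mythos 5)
Your argument is correct in outline, but it is not the paper's proof: it is, almost verbatim, the ``standard strategy'' that the authors sketch --- and deliberately decline --- in the remark placed immediately after their proof of Proposition~\ref{classification1}. The paper's actual proof uses no monotonicity, no blow-ups and no compactness: for any $R>0$, a connected component $S$ of $\Sigma\cap\overline{B}_R(x_0)$ through $x_0\in\partial H$ must be a Steiner minimizer for its trace $N=S\cap\partial B_R(x_0)$ (otherwise replacing $S$ by a shorter Steiner set gives an admissible competitor), and Lemma~\ref{lm_SteinerConvex0} confines $S$ to the closed convex hull of $N\subset H\cap\partial B_R(x_0)$; that hull can reach the boundary point $x_0$ only if both points of $\partial B_R(x_0)\cap\partial H$ belong to $N$, which immediately yields $\partial H\subset\Sigma$, and any extra branch attached to $\partial H$ is then excluded by the $120^\circ$ angle property of Steiner trees (\cite[Theorem~7.4]{PaoSte13-steiner}) together with arcwise connectedness. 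The trade-off: the Steiner argument is short and purely geometric, exploiting the connectedness of competitors exactly where it bites (convex hull of terminals), whereas your density route is heavier but more robust --- it is the one that would generalize, e.g., to angular sectors as in the closing remark of that section.

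If you write up your route, three steps you labelled ``standard'' genuinely need proof. (a) Passing from the a.e.\ inequality $m(r)\le r\,m'(r)$ to monotonicity of $m(r)/r$: this works because $m$ is nondecreasing, so $\log m(b)-\log m(a)\ge\int_a^b (m'/m)\,dr\ge\log(b/a)$, but the direction of the BV inequality must be checked. (b) The rigidity ``constant density $\Rightarrow$ cone'' via equality in the Eilenberg/coarea inequality, forcing radial tangents $\H$-a.e.; note also that connectedness of the blow-down limit, which you cite from the compliance setting, is \emph{not} available for an abstract boundary minimizer (Proposition~\ref{chordarc} concerns the original minimizer) --- fortunately you do not need it as an input, since your deletion competitor kills free endpoints, so the rigid cone is a union of full rays from the origin and is automatically connected. (c) Your cone classification is sound: deletion excludes a single ray, the chord $[q_1,q_2]\subset H$ excludes two non-collinear rays, and three directions pairwise $120^\circ$ apart span $240^\circ$ and cannot fit in a half-plane; a line through a point of $\partial H$ contained in $H$ is $\partial H$. (Like the paper's proof, yours implicitly assumes $\Sigma$ is not a single point; this is harmless since the blow-up limits to which the proposition is applied are unbounded.) Finally, your fallback reflection route founders exactly where you say: $\Sigma\cup\mathcal{R}\Sigma$ is only seen to be minimal against symmetric competitors, and a ball not centered on $\partial H$ cannot be symmetrized while preserving the constraint outside a single ball, so Proposition~\ref{bonnetM} cannot be invoked off the shelf; keep the density route as primary.
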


\begin{proof}
Let $x_0 \in \Sigma \cap \partial H$, $S$ be a connected component of
$\Sigma\cap \overline{B}_R$ containing $x_0$ and
$N:=S\cap \partial B_R$, where $R>0$ is arbitrary, the balls here and below being centered at $x_0$.
Then one has, in the notation of Lemma~\ref{lm_SteinerConvex0}, that
$S\in \St(N)$,
since otherwise for an arbitrary $K\subset \St(N)$ taking
$L:=(\Sigma\setminus S)\cup \overline{K}$, one gets, recalling that
$K\subset \overline{B}_R$ by Lemma~\ref{lm_SteinerConvex0} (see below), the estimate
\[
\H(L\cap \overline{B}_R) = \H((\Sigma\setminus S)\cap \overline{B}_R) + \H(K)
<\H((\Sigma\setminus S)\cap \overline{B}_R) + \H(S\cap \overline{B}_R)
= \H(\Sigma\cap \overline{B}_R)
\]
contradicting the assumption that $\Sigma$ be a compliance global minimizer.
But then by Lemma~\ref{lm_SteinerConvex0} the set
 $S$ may not contain $x_0$
unless $\{A_R^+, A_R^-\}\subset N$, where
$\{A_R^+, A_R^-\}:= \partial B_R\cap \partial H$.
In fact, if either of the points $A_R^\pm$ does not belong to $N$, then
$x_0$ does not belong to the closed convex envelope of $N$.
This shows $\{A_R^+, A_R^-\}\subset N$, and hence, since
$R>0$ is arbitrary, then $\partial H\subset \Sigma$.

To show that in fact $\Sigma=\partial H$, we assume the contrary,
and let now $R>0$ be such that
$\#(\Sigma\cap \partial B_R)$ is finite
(by coarea inequality, a.e.\ $R>0$ would suit for this purpose
because $\Sigma$ has locally
finite length)
and
a connected component $S$
of $\Sigma\cap B_R$ containing the line segment $l:=\partial H\cap B_R$
does not coincide with the latter.
Then this component belongs to $\St(N'\cup \bar l)$,
with $N'\subset \Sigma\cap \partial B_R$
and hence by~\cite[Theorem~7.4]{PaoSte13-steiner}
is a finite embedded graph consisting of line segments
with exactly one endpoint (we denote it by $A$) over
$\bar l$. But this cannot happen
since $S\in\St(N)$, where $N:=S\cap \partial B_R$; in fact, then there is a
line segment $(AB)\in S$ such that the angle between $(AB)$ and $\bar l$ is less than 120 degrees, which is impossible for Steiner sets connecting a finite number of points. This contradiction
shows that the connected component of $\Sigma \cap B_R$ containing $l$ must be $l$ itself.

Finally it is easy to see that there exist no other connected components of
$\Sigma \cap B_R(x_0)$ different from  $l \cap B_R(x_0)$.  Indeed, assuming the contrary, and letting $x$ be a point belonging to the other component, using that $\Sigma$ is arcwise connected, one can find a curve in $\Sigma$ connecting $x$ to $x_0$. This curve has to branch on $\partial H$ at some point $y_0$. Then reasoning as before with a ball centered at $y_0$ instead of $x_0$, we get a contradiction  that concludes the proof.
\end{proof}

\begin{lemma}\label{lm_SteinerConvex0}
For every $N\subset \R^n$ denote by $\St(N)$ the set of minimizers of the Steiner problem
\[
\min \{ \H(K) \; :  K\subset \R^n,\, K\cup N \text{ connected}\}.
\]
Then for every $K\in \St(N)$ of finite length
one has that $\overline{K}$ belongs
to the closed convex envelope of $N$.
\end{lemma}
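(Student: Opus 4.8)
The plan is to use the nearest-point projection onto $C := \clco N$ and to analyse the equality case in the length inequality it produces. First I would set $C := \clco N$ and let $\pi \colon \R^n \to C$ be the metric projection, which is single-valued and $1$-Lipschitz since $C$ is closed and convex, and which fixes $C$ pointwise, so $\pi(N) = N$. Given $K \in \St(N)$ with $\H(K) < \infty$, the image $\pi(K)$ is again a competitor: $\pi(K) \cup N = \pi(K \cup N)$ is connected as a continuous image of a connected set, and it is contained in $C$. Since $\pi$ is $1$-Lipschitz one has $\H(\pi(K)) \le \H(K)$, while minimality of $K$ gives the reverse inequality, whence
\[
\H(\pi(K)) = \H(K).
\]
If $\pi$ were strictly contracting off $C$ this would already force $K \subset C$; it is not, and that is precisely the crux of the matter (see the last paragraph).

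To exploit the equality I would first invoke the area formula for the $1$-rectifiable set $K$: writing $\tau(x)$ for the approximate unit tangent at $\H$-a.e.\ $x \in K$, the chain $\H(\pi(K)) \le \int_K |D_\tau \pi|\, d\H \le \H(K)$ together with the equality above forces the tangential Jacobian to satisfy $|D_\tau \pi| = 1$ for $\H$-a.e.\ $x \in K$. Next I would introduce $g := \dist(\cdot, C)$ and the augmented map $\Phi := (\pi, g) \colon \R^n \to \R^n \times \R$, and show that $\Phi$ is $1$-Lipschitz. This reduces to $|\pi(x) - \pi(y)|^2 + (g(x) - g(y))^2 \le |x - y|^2$, which follows from the variational characterisation of the projection: setting $n_x := (x - \pi(x))/g(x)$ for $x \notin C$, the obtuse-angle inequality $(x - \pi(x)) \cdot (c - \pi(x)) \le 0$ for all $c \in C$ gives $(\pi(x) - \pi(y)) \cdot n_x \ge 0$ and $(\pi(x) - \pi(y)) \cdot n_y \le 0$, and a short expansion (whose remainder is the nonnegative term $g(x) g(y)(1 - n_x \cdot n_y)$) closes the estimate. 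Being $1$-Lipschitz, $\Phi$ satisfies $|D_\tau \pi|^2 + |\nabla g \cdot \tau|^2 = |D_\tau \Phi|^2 \le 1$ at $\H$-a.e.\ $x \in K$; combined with $|D_\tau \pi| = 1$ this yields $\nabla g \cdot \tau = 0$ for $\H$-a.e.\ $x \in K \setminus C$ (where $g$ is differentiable with $\nabla g = n_x$).

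Finally I would argue by contradiction. Suppose $x_0 \in K \setminus C$, so $\delta := g(x_0) > 0$. Using that a connected set of finite length is arcwise connected, and that by minimality $x_0$ lies in the same component as $N$ (a floating component of nonzero length could be deleted), I would choose an arclength arc $\gamma \colon [0, L] \to K \cup N$ with $\gamma(0) = x_0$ and $\gamma(L) \in N \subset C$, and set $h := g \circ \gamma$, a Lipschitz function with $h(0) = \delta$ and $h(L) = 0$. For a.e.\ $s$ with $\gamma(s) \notin C$ one has $h'(s) = \nabla g(\gamma(s)) \cdot \gamma'(s) = 0$ by the previous step (the exceptional $\H$-null set on $K$ pulls back to a Lebesgue-null parameter set since $\gamma$ is injective and arclength), while $h' = 0$ a.e.\ on $\{h = 0\} = \{ s : \gamma(s) \in C\}$ by the standard fact that a Lipschitz function has vanishing derivative a.e.\ on each of its level sets. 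Hence $h' = 0$ a.e.\ on $[0, L]$, so $h$ is constant, contradicting $h(0) = \delta > 0 = h(L)$. Therefore $K \subset C$, and since $C$ is closed, $\overline{K} \subset C = \clco N$.

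The main obstacle, as signalled above, is exactly that $\pi$ is $1$-Lipschitz but not strictly contracting near $\partial C$: arcs of $K$ lying just outside $C$ and running parallel to its boundary are mapped isometrically, so the equality $\H(\pi(K)) = \H(K)$ alone cannot exclude points outside $C$. The device resolving this is the augmented map $\Phi = (\pi, \dist(\cdot, C))$, which promotes the tangential information to $\nabla g \cdot \tau = 0$ and then lets connectedness finish the job — to leave $C$ and return to $N$ the set must travel in the normal direction, which the integration of $g$ along the connecting arc detects.
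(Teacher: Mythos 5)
Your proof is correct, and it opens with the same device as the paper: project $K$ onto $C=\clco N$ and compare lengths. But it diverges exactly where the paper's argument is thinnest. The paper's entire proof is the one-line assertion that the projection ``has strictly lower length''; as you correctly observe, strict decrease is not a property of the projection alone, because $\pi$ maps arcs lying just outside $C$ and parallel to $\partial C$ isometrically. So the honest output of the paper's comparison is only $\H(\pi(K))\le \H(K)$, which minimality turns into an equality, not a contradiction. Your equality-case analysis --- the area formula forcing $|D_\tau \pi|=1$ $\H$-a.e.\ on $K$, the $1$-Lipschitz property of the augmented map $\Phi=(\pi,\dist(\cdot,C))$ (your obtuse-angle expansion is the right proof of it), and the resulting orthogonality $\nabla g\cdot\tau=0$ a.e.\ on $K\setminus C$ --- supplies precisely the justification the paper omits. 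In that sense yours is not a different strategy but a completed version of the paper's sketch.

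One step to tighten: the final appeal to an arc from $x_0$ to $N$ presupposes arcwise connectivity of $K\cup N$. This is fine when $N$ is compact (as in both applications in the paper, where $N$ is a finite subset of a sphere, possibly together with a segment), after replacing $K$ by $\overline K$, which costs nothing since $\H(\overline K)=\H(K)$ for connected sets of finite length; but for a completely arbitrary $N\subset\R^n$, topological connectedness of $K\cup N$ does not yield arcs. You can avoid paths altogether with the coarea inequality: $g$ is continuous on the connected set $K\cup N$ and takes the values $\delta>0$ at $x_0$ and $0$ on $N\subset C$, so for every $t\in(0,\delta)$ the level set $\{g=t\}$ meets $K\cup N$, necessarily in $K\setminus C$; then
\[
0=\int_{K\setminus C}|\nabla g\cdot\tau|\,d\H=\int_0^{\infty}\Card\bigl((K\setminus C)\cap\{g=t\}\bigr)\,dt\geq\delta,
\]
the desired contradiction, with no injective parametrization needed. (Both your area-formula step and this coarea step use the rectifiability of $K$; that is automatic for the connected, finite-length competitors relevant here and is tacitly assumed by the paper as well.)
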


\begin{proof}
If
$\overline{K}$ does not belong
to the closed convex envelope of $N$, then its projection to the latter still connects $N$ and has strictly lower length, contradicting the optimality of $K$.
\end{proof}

\begin{remark}
A standard strategy to prove Proposition~\ref{classification1} would be to follow the usual
classification of minimal cones. We used here a different and more elementary approach
based on Lemma~\ref{lm_SteinerConvex0}, but for the sake of completeness  we describe the standard one:
it is not difficult to see that if $\Sigma$ is a cone centered at some point $x_0\in \partial H$,
then $\Sigma$ can only be a half-line or a line (the latter being true only if $\Sigma=\partial H$).
It is not difficult to exclude the half-line by a competitor which would cut the ``corner'' near $x_0$,
showing that the only minimal cone is $\partial H$.  Next, for an arbitrary
compliance global minimizer $\Sigma$ and $x_0\in \partial H\cap \Sigma$,
comparing as usual $\Sigma$ with a cone over its trace on the boundary of a sphere,
one shows that the density   $\H(\Sigma\cap B_r)/r$ is monotone in $r$.
Therefore the limiting densities at $r\to 0^+$ and $r\to +\infty$ do exists.
Considering the blow-in and blow-up limits,  due to the strong convergence of $\H$ along those
sequences one can  see that those limit densities can only be equal to two, because they are densities of a minimal cone.
But then by monotonicity, the density of $\Sigma$ itself is constant  and equal to two so that $\Sigma$ must be a line,
leading to the conclusion $\Sigma =\partial H$.
\end{remark}

\begin{remark} Without any attempt to make it more precise here,   one could also consider global minimizers in angular sectors instead of half-planes leading to some regularity issues in Lipschitz domains instead of $C^1$ domains. For instance, it is not difficult to see that  in any convex angular sector, the only possible global minimizer is the empty set. This means, for instance, if $\Omega$ is a convex polygone, a minimizer $\Sigma$ of Problem~\ref{pb_compl_pen1} will never go through a corner of $\partial \Omega$.
\end{remark}


\section{Conclusion and full regularity}

In this section, we prove our main result on characterization of minimizers. In particular we are interested in the following statement.

\begin{theorem} \label{main} Assume that $f\in L^p(\Om)$, $p>2$,
where  $\Omega$ is a $C^1$ domain. Then every minimizer $\Sigma$ for Problem~\ref{pb_compl_pen1} consists of a    finite  number of embedded curves whihc are locally $C^{1,\alpha}$ inside $\Omega$ for some $\alpha\in (0,1)$, meeting only by number of three at 120 degree angles. In particular, $\Sigma$ has finite number of endpoints, finite number of branching points (which are all triple points where smooth curves meet at
$120$ degree angles), and at all the other points $\Sigma$ is locally $C^{1,\alpha}$-smooth.
\end{theorem}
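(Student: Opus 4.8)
The plan is to assemble Theorem~\ref{main} from the machinery already developed, treating it as a synthesis rather than a fresh argument. The statement asserts three things about a minimizer $\Sigma$: (a) every point is either a ``flat'' ($C^{1,\alpha}$) point, an endpoint, or a triple (branching) point with $120$ degree angles; (b) there are only finitely many such endpoints and branching points; and (c) the smooth pieces are $C^{1,\alpha}$ curves.

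First I would establish the pointwise trichotomy. Fix an interior point $x_0\in\Sigma\cap\Omega$ and take any blow-up sequence as in~\eqref{blowup1}--\eqref{blowup2}. By Proposition~\ref{blowuplimit} a subsequence converges to a pair $(u_0,\Sigma_0)$, which by Proposition~\ref{connected} is connected with simply connected complementary components, and by Proposition~\ref{global1} is a compliance global minimizer. Proposition~\ref{corr} then forces $\Sigma_0$ to be exactly one of: a line, a propeller, or a half-line (craktip). These three cases correspond respectively to a flat point, a triple point, and an endpoint. For the flat case, once \emph{some} blow-up is a line, Proposition~\ref{gouter2}, Proposition~\ref{gouter1} and Corollary~\ref{TheCor} (packaged as Theorem~\ref{C1reg}) upgrade this to genuine $C^{1,\alpha}$ regularity of $\Sigma$ in a whole neighborhood, giving~(c). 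The characterization of branching by $120$ degree angles comes from the propeller case of Proposition~\ref{corr}, made rigorous by the uniqueness of the blow-up type (Proposition~\ref{typeS}), which guarantees the classification is independent of the chosen subsequence. Boundary points are handled by Theorem~\ref{global2} together with Proposition~\ref{classification1}: the only boundary global minimizer meeting $\partial H$ is $\partial H$ itself, so $\Sigma$ touches $\partial\Omega$ tangentially and contributes no extra singular behavior.

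The main obstacle is~(b), the \emph{finiteness} of the number of endpoints and branching points, which I expect to be the genuinely hard step and which I would invoke from Theorem~\ref{finite}. The difficulty is that the pointwise analysis above only says each point has one of three local types; it does not a priori bound how many non-flat points there can be. The strategy here is a density/length-counting argument: at a triple point the blow-up is a propeller whose $\H$-density equals $3/2$ (versus $1$ at a flat point), and by the strong convergence of length~\eqref{limiteForte} together with the monotonicity of $\omega_\Sigma$ (Corollary~\ref{corYeah2}) and of the density (via Remark~\ref{remYeah} and the dual bound~\eqref{Thebound}), the excess length concentrated near each branching point is bounded below by a fixed amount. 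Since $\H(\Sigma)<\infty$ by Ahlfors regularity (Theorem~\ref{th_compl_ahl1}), only finitely many such points can occur; the chord-arc property (Proposition~\ref{chordarc}) ensures these points cannot accumulate pathologically. Endpoints are counted similarly, or by noting that the absence of loops (Theorem~\ref{th_compl_noloop1}) makes $\Sigma$ a finite topological graph once the branch points are finite.

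Finally I would stitch these together: away from the finitely many special points, $\Sigma$ is locally a $C^{1,\alpha}$ embedded curve by Theorem~\ref{C1reg}; the finitely many branch points are triple points with $120$ degree angles; the finitely many endpoints are craktip-type; and, being a finite graph with no loops, $\Sigma$ decomposes into finitely many embedded $C^{1,\alpha}$ arcs joining these vertices. This yields exactly the statement of Theorem~\ref{main}. The one point I would flag as delicate is ensuring the $C^{1,\alpha}$ regularity statement is \emph{uniform} enough near triple points and endpoints to conclude the arcs are genuinely embedded and meet cleanly; as the authors note, full regularity \emph{up to} the branch point or tip is not claimed, so I would state the conclusion only for the open arcs and treat the vertices via their blow-up classification.
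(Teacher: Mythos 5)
Your overall assembly is exactly the paper's: Theorem~\ref{main} is obtained there by combining the blow-up trichotomy (Propositions~\ref{blowuplimit}, \ref{connected}, \ref{global1}, \ref{corr}, with uniqueness of the type from Proposition~\ref{typeS}) and the $\varepsilon$-regularity Theorem~\ref{C1reg} at points admitting a line as blow-up, with the finiteness Theorem~\ref{finite}. So to the extent that you invoke Theorem~\ref{finite} as a black box, your proof is the paper's proof, and your caveat about not claiming regularity up to the tips and branch points matches the authors' own. Your side remark on endpoints is also a legitimate small variant: since $\Sigma$ is a continuum of finite length with no loops (Theorem~\ref{th_compl_noloop1}), finitely many triple points force $\Sigma$ to be a finite trivalent tree, whose number of leaves is controlled by the number of branch points.

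The genuine gap is in the mechanism you sketch for the finiteness itself, which you correctly identify as the hard step but then argue incorrectly. The claim that ``the excess length concentrated near each branching point is bounded below by a fixed amount'' has no support in the toolbox you cite: Remark~\ref{remYeah} and Corollary~\ref{corYeah2} give monotonicity of the normalized \emph{Dirichlet energy} $\omega_\Sigma$, not of the length density $r\mapsto \H(\Sigma\cap B_r(x))/r$, and \eqref{Thebound} is only an upper bound. Without monotonicity of the length density you cannot propagate the small-scale density $3$ at a triple point up to a scale $r_0$ uniform over all triple points: the scale below which blow-up convergence forces density close to $3$ depends on the point, and a priori triple points could accumulate so that any candidate balls overlap at every scale; summing ``excess quanta'' against $\H(\Sigma)<\infty$ therefore does not close (the chord-arc property plays no role here either; it is used for connectedness of blow-ups). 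What the paper actually does in Theorem~\ref{finite} is Bonnet's double blow-up: given distinct triple points $T_n$, blow up at scale $\varepsilon_n=|T_n-T_{n+1}|$, so the rescaled sets have triple points at $0$ and $(1,0)$; by the classification at least one of them, say $0$, is not a triple point of the limit, and the strong length convergence \eqref{limiteForte}, combined with the monotonicity in $r$ of $r\mapsto\H(\Sigma_{\varepsilon_n}\cap B_r)$, yields \emph{uniform} convergence of $\H(\Sigma_{\varepsilon_n}\cap B_r)/r$ to $1$ or $2$ on $(0,1)$. Since near $0$ the density is $3$ at small scales, the crossing radius $\rho_n:=\inf\{r:\H(\Sigma_{\varepsilon_n}\cap B_r)\leq 2.5\,r\}$ tends to $0$, and a second blow-up at scale $\rho_n\varepsilon_n$ produces a global minimizer whose mass is pinched, $2.5\,t\leq\H(\Sigma_0\cap B_t)\leq 2.5$ for all $t<1$, which no element of the list (line, propeller, half-line, of densities $2$, $3$, $1$) satisfies. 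It is this pinching contradiction, not an additive excess count, that rules out accumulation; a self-contained proof of Theorem~\ref{main} needs this argument (or an equivalent), and your sketch as written would fail.
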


\begin{remark}\label{rem_regloc1}
From the proof of  Theorem~\ref{main} we deduce that for an arbitrary bounded open $\Omega\subset \R^2$, without any smoothness condition of the boundary, one has
that  the result analogous to Theorem~\ref{main} holds locally inside $\Omega$, namely, for every $\Omega'\Subset \Omega$
and every minimizer $\Sigma$ for Problem~\ref{pb_compl_pen1} one has
 that $\Sigma\cap \Omega'$ has finite number of branching points (which are all triple points where smooth curves meet at
$120$ degree angles), and at all the other points $\Sigma$ is locally $C^{1,\alpha}$-smooth for some $\alpha\in (0,1)$.
\end{remark}

\begin{remark}\label{rem_regconvOm1}
If under conditions of Theorem~\ref{main} one has additionally that $\overline{\Omega}$ is convex, then every minimizer $\Sigma$ for Problem~\ref{pb_compl_pen1} consists of a    finite  number of $C^{1,\alpha}$ embedded curves in $\overline{\Omega}$ for some $\alpha\in (0,1)$, meeting only by number of three at 120 degree angles.
The fact that the curves of $\Sigma$ are $C^{1,\alpha}$ in $\overline{\Omega}$
comes from Proposition~\ref{TheCor2} which says that $\Sigma$ is
$C^{1,\alpha}$ locally around any ``flat point'' with low energy $\omega_\Sigma$,
and this holds true in $\overline{\Omega}$ provided that $\Omega$ is convex (Proposition~\ref{TheCor2}).
The conclusion then follows similarly to that of Theorem~\ref{main},
in particular any point which is not an endpoint nor a triple point
is a flat point and  Lemma~\ref{decayboundary} (for the boundary case)
or Proposition~\ref{gouter2}
(for the interior case) says  that any ``flat point'' has low energy $\omega_\Sigma$
so that Proposition~\ref{TheCor2} applies.
\end{remark}

The rest of the section is dedicated to the proof of the above Theorem~\ref{main}.

\subsection{Finite number of curves}

To prove the assertion on the finite number of curves,  we follow the approach of Bonnet~\cite{b}
 and start with the following observation.

\begin{proposition} \label{typeS}
Assume that  $f\in L^p(\Om)$, $p>2$,
where  $\Omega$ is a $C^1$ domain, $\Sigma\subset \overline{\Omega}$ be a minimizer for the Problem~\ref{pb_compl_pen1} and let $x\in \Sigma$. Then  all the possible blow-up limits at point $x$  are  of same type.
The same result holds without any condition on the boundary of $\partial\Omega$, once $x\in \Sigma\cap\Omega$.
\end{proposition}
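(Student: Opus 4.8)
The plan is to attach to the point $x$ a single real number, obtained as a monotone limit of the normalized energy, which coincides for every blow-up sequence and already separates the ``crack-tip'' type from the other two; the residual ambiguity between a line and a propeller will then be removed through the $\varepsilon$-regularity theorem. I first dispose of the boundary case $x\in\Sigma\cap\partial\Omega$. Taking $x_n\equiv x$ in the blow-up one always has $d(x_n,\partial\Omega)=0\le r_n$, so Theorem~\ref{global2} applies: along any subsequence $\Omega_n\to H$ a half-plane, $u_n\to 0$, and $\Sigma_0\subset H$ is a boundary compliance global minimizer. Since $x\in\Sigma$ gives $0\in\Sigma_n$ for all $n$, hence $0\in\Sigma_0$, and since $x\in\partial\Omega$ forces $0\in\partial H$, we have $\Sigma_0\cap\partial H\neq\emptyset$, so Proposition~\ref{classification1} yields $\Sigma_0=\partial H$ for every blow-up limit. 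The type is therefore unique at boundary points, and it remains to treat an interior point.

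For $x\in\Sigma\cap\Omega$, by Proposition~\ref{corr} every blow-up limit is a line, a propeller, or a crack-tip. Using Remark~\ref{remYeah} (monotonicity with $\gamma=2\pi$), the function $r\mapsto \frac{1}{r}\int_{B_r(x)}|\nabla u|^2\,dx + Cr^{2/p'-1}$ is nondecreasing for small $r$; since $2/p'-1>0$ the correction vanishes, so the limit $e(x):=\lim_{r\to 0^+}\frac{1}{r}\int_{B_r(x)}|\nabla u|^2\,dx$ exists. For any blow-up limit $(u_0,\Sigma_0)$ along $r_n\to 0$, the strong $H^1_{loc}$ convergence of Proposition~\ref{blowuplimit}\,(ii) and the change of variables $\frac{1}{R}\int_{B_R}|\nabla u_n|^2\,dx=\frac{1}{Rr_n}\int_{B_{Rr_n}(x)}|\nabla u|^2\,dx$ give, for every $R>0$, the identity $\frac{1}{R}\int_{B_R}|\nabla u_0|^2\,dx=e(x)$. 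Hence the normalized energy of the blow-up equals $e(x)$, independently of the chosen sequence. A direct computation shows that the crack-tip $u_0=\sqrt{r/2\pi}\cos(\theta/2)$ has $\frac{1}{R}\int_{B_R}|\nabla u_0|^2\,dx=\tfrac14$ for all $R$, whereas for a line or a propeller $u_0$ is locally constant and this quantity vanishes.

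The conclusion then follows from a trichotomy. If $e(x)>0$, no blow-up can be a line or a propeller, so every blow-up at $x$ is a crack-tip and the type is unique. If $e(x)=0$, every blow-up is a line or a propeller. Suppose some blow-up along $r_n$ is a line $P$; then the Hausdorff convergence of $(\Sigma_n\cap B_R)\cup\partial B_R$ to $(P\cap B_R)\cup\partial B_R$ from Proposition~\ref{blowuplimit}\,(i) forces $\beta_\Sigma(x,Rr_n)\to 0$, so at some small scale $\rho$ one has $\beta_\Sigma(x,\rho)\le\varepsilon$, and Theorem~\ref{C1reg} makes $\Sigma\cap B_{a\rho}(x)$ a $C^{1,\alpha}$ curve passing through $x$. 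Thus $x$ is a regular point and every blow-up at $x$ is its tangent line. Consequently, when $e(x)=0$ the blow-ups are either all lines or all propellers, and the type is again unique; this also proves the interior statement without any assumption on $\partial\Omega$.

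I expect the delicate point to be the line-versus-propeller alternative in the zero-energy case: the normalized energy cannot separate these two types (both give $0$), and no length-density monotonicity is available, so one must instead invoke the $\varepsilon$-regularity theorem to propagate the line type to all scales. The care needed there is to check that a line blow-up genuinely produces a fixed scale at which $\beta_\Sigma(x,\cdot)$ falls below the regularity threshold, and that the resulting $C^{1,\alpha}$ curve crosses $x$ in its interior rather than terminating at it (which is guaranteed because $\beta_\Sigma(x,\cdot)$ measures proximity to lines \emph{through} $x$). The remaining steps—existence and sequence-independence of $e(x)$, and its transfer to the blow-up limit—are routine consequences of the monotonicity formula and the strong convergence already established.
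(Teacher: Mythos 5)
Your proposal is correct and follows essentially the same route as the paper's own proof: the monotonicity formula of Remark~\ref{remYeah} yields the sequence-independent limit $e(x)$, which separates the crack-tip (positive normalized energy, which you compute explicitly as $1/4$) from lines and propellers (zero energy), and the line-versus-propeller ambiguity is resolved exactly as in the paper by the $\varepsilon$-regularity Theorem~\ref{C1reg}. Your explicit treatment of the boundary case via Theorem~\ref{global2} and Proposition~\ref{classification1} spells out what the paper only states parenthetically, but it is the same argument.
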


\begin{proof} By Remark~\ref{remYeah} we know that, for any $x\in \Sigma$ the limit
\[e(x):=\lim_{r\to 0^+}\frac{1}{r}\int_{B_r(x)} |\nabla u_{\Sigma} |^2 \, dx\]
exists, and is finite (due to~\eqref{Thebound}). By the strong convergence of the blow-ups of $u_\Sigma$ in $H^1_{loc}(\R^2)$ to their limit,   and
 recalling Proposition~\ref{corr},
we deduce that $e(x)=0$ in the case when there exists a blow-up limit which is a line or a propeller, and $e(x)>0$ only if all the blow-up limits are  half-lines.

On the other hand, if a blow-up limit at point $x \in \Sigma \cap \Omega$ is a line, by local $C^1$ regularity Theorem~\ref{C1reg} we know that all other blow-up limits must also be a line (and if $x \in \partial \Omega$ we also know that all blow-up sequences converge to the tangent lines to $\partial \Omega$).  We then easily conclude that all the blow-up limits have same type: indeed, either    $e(x)>0$ and all blow-ups must be a half-line, or $e(x)=0$. In the latter case, either there exists a blow-up which is a line, and then all other blow-ups must be also lines, or there is no lines and then all blow-ups are propellers.
\end{proof}

Proposition~\ref{typeS} motivates the following terminology.

\begin{definition} We define the type of  $x \in \Sigma$ as follows.
\begin{itemize}
\item[(i)] $x$ is called a regular point if all blow-up limits at point $x$ are lines.
\item[(ii)] $x$ is called a triple point if all blow-up limits at point $x$ are propellers.
\item[(iii)] $x$ is called an endpoint if all blow-up limits at point $x$ are half-lines.
\end{itemize}
\end{definition}

The proof of Theorem~\ref{main} now directly follows from Theorem~\ref{C1reg} together with  the following result.

\begin{theorem}\label{finite}
Assume that  $f\in L^p(\Om)$, $p>2$,
where  $\Omega$ is a $C^1$ domain, and $\Sigma$ be a minimizer for the Problem~\ref{pb_compl_pen1}. Then the set of triple points and endpoints is a finite set.
\end{theorem}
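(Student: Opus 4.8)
The plan is to prove that the set $\Sigma_{\mathrm{sing}}$ of triple points and endpoints — which, by Propositions~\ref{corr} and~\ref{typeS}, is exactly the set of points whose blow-up is not a line — has no accumulation point; since $\overline\Omega$ is compact this makes $\Sigma_{\mathrm{sing}}$ finite. So I argue by contradiction: assume there are points $x_n\in\Sigma_{\mathrm{sing}}$, $x_n\neq x_0$, with $x_n\to x_0\in\overline\Omega$, and set $\rho_n:=|x_n-x_0|\to 0^+$. Two facts orient the whole argument. First, a point is regular (line blow-up) if and only if it is \emph{flat}, since a line blow-up yields $C^{1,\alpha}$ regularity by Theorem~\ref{C1reg}, while the propeller and half-line have flatness bounded away from $0$. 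Second, the blow-up limit $\Sigma_0$ of Propositions~\ref{blowuplimit} and~\ref{global1} is a \emph{cone centered at the origin}: this follows from the monotonicity of $r\mapsto \frac1r\int_{B_r}|\nabla u|^2+Cr^{2/p'-1}$ (Remark~\ref{remYeah}) together with the strong convergence of both energy and length along the blow-up (Proposition~\ref{global1}), which forces homogeneity. Hence the only possible singular point of $\Sigma_0$ is the origin.

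\emph{Interior accumulation ($x_0\in\Omega$).} I would blow up $\Sigma$ at $x_0$ at scale $\rho_n$, so that along a subsequence $\Sigma_n:=(\Sigma-x_0)/\rho_n\to\Sigma_0$, a line, propeller, or half-line with apex at $0$. The rescaled singular points $z_n:=(x_n-x_0)/\rho_n$ satisfy $|z_n|=1$, so $z_n\to z_\infty$ with $|z_\infty|=1$; as $z_\infty\in\Sigma_0$ is distinct from the apex, it is a flat point. By the Hausdorff convergence of Proposition~\ref{blowuplimit}(i) on a fixed ball and the scale invariance of $\beta$, this gives $\beta_\Sigma(x_n,\rho_0\rho_n)=\beta_{\Sigma_n}(z_n,\rho_0)\le\varepsilon$ for large $n$, with $\varepsilon$ the threshold of Theorem~\ref{C1reg} and $\rho_0$ a fixed radius. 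Since $B_{\rho_0\rho_n}(x_n)\subset\Omega$ eventually, Theorem~\ref{C1reg} makes $\Sigma$ a $C^{1,\alpha}$ curve near $x_n$, so $x_n$ is regular, contradicting $x_n\in\Sigma_{\mathrm{sing}}$. This step is blind to the type of $x_n$.

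\emph{Boundary accumulation ($x_0\in\partial\Omega$).} By Theorem~\ref{global2} and Proposition~\ref{classification1} every boundary blow-up equals $\partial H$, a full line; the blow-up at $x_0$ therefore has density $2$ and zero energy, and I split by the type of the $x_n$. If infinitely many $x_n$ are triple points (density $3$), then using the almost-monotonicity of $\theta(x,r):=\H(\Sigma\cap B_r(x))/r$ (an almost length-minimality consequence of~\eqref{Thebound}) together with $\H(\Sigma_n\cap\cdot)\to\H(\partial H\cap\cdot)$, passing to $z_n\to z_\infty$ gives $\tfrac12=\H(\partial H\cap B_{1/4}(z_\infty))\ge \tfrac34-o(1)$, absurd. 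If infinitely many $x_n$ are endpoints, I would first prove a \emph{universal} lower bound $\liminf_{s\to0}\omega_\Sigma(x,s)\ge c_0>0$ at every endpoint $x$: taking $\Sigma':=\Sigma\setminus B_s(x)$ (which removes the single pendant arc, stays connected, and is admissible in the definition of $\omega_\Sigma(x,2s)$), minimality and the compliance estimates of Lemmas~\ref{lm_compl_loc1} and~\ref{lm_compl_estabove1a} give $s\le\H(\Sigma\cap B_s(x))\le \mathcal{C}(\Sigma')-\mathcal{C}(\Sigma)\le C\,s\,\omega_\Sigma(x,2s)+o(s)$, whence $\omega_\Sigma(x,2s)\ge c_0$ for small $s$. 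On the other hand, because Lemma~\ref{decayboundary} holds with constants depending only on $|\Omega|,p,\|f\|_p$, it applies uniformly to the maximizing competitors that define $\omega_\Sigma$; combined with the monotonicity of $\omega_\Sigma$ (Corollary~\ref{corYeah2}) this yields $\omega_\Sigma(x_n,s)\to 0$ uniformly for all $s\le d(x_n,\partial\Omega)$ as $x_n\to\partial\Omega$, contradicting the lower bound.

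The main obstacle is the boundary accumulation of endpoints, where interior $\varepsilon$-regularity is unavailable because $B_{\rho_0\rho_n}(x_n)\not\subset\Omega$. Overcoming it requires both the universal energy lower bound at endpoints and genuine control of $\omega_\Sigma$ — itself a supremum over competitors — near $\partial\Omega$; the crux is that the decay in Lemma~\ref{decayboundary} is uniform over all admissible sets, so it transfers to $\omega_\Sigma$. The two remaining technical points needing care are the conicity of the blow-up limit (so that the rescaled singular points avoid the apex in the interior case) and the almost-monotonicity of the density (for the boundary triple-point case); beyond these, the finiteness reduces entirely to the classification of Proposition~\ref{corr} and the $\varepsilon$-regularity Theorem~\ref{C1reg}.
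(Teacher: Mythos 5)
Your route is genuinely different from the paper's: the paper follows Bonnet \cite{b}, blowing up at a singular point $T_n$ at the scale $\varepsilon_n=|T_n-T_{n+1}|$ of the mutual distance, so that the rescaled sets carry \emph{two} marked singular points; since every global minimizer in the classification has at most one non-regular point, one marked point must become regular (or a cracktip) in the limit, and an intermediate rescaling by $\rho_n:=\inf\{r>0:\H(\Sigma_{\varepsilon_n}\cap B_r)\le 2.5\,r\}$ pins the limit between $2.5\,t\le\H(\Sigma_0\cap B_t)$ for all $t<1$ and $\H(\Sigma_0\cap B_1)\le 2.5$, which no element of the classified list satisfies. Crucially, this pinching uses only the trivial monotonicity of $r\mapsto\H(\Sigma_{\varepsilon_n}\cap B_r)$ together with the strong length convergence \eqref{limiteForte}; no monotonicity of the density $\theta(x,r):=\H(\Sigma\cap B_r(x))/r$ of $\Sigma$ itself is ever invoked. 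Your argument leans on exactly such a monotonicity at two places, and at both it breaks. First, the conicity of the blow-up limit: in the zero-energy case ($e(x_0)=0$, i.e.\ precisely the line/propeller case relevant to accumulating triple points), the constancy of the normalized Dirichlet energy carries no information, and the length convergence \eqref{limiteForte} only says $\H(\Sigma\cap B_{R\rho_n}(x_0))/(R\rho_n)\to\H(\Sigma_0\cap B_R)/R$ \emph{along the chosen scales}; without knowing that $\lim_{r\to 0^+}\theta(x_0,r)$ exists, different scale sequences may a priori yield propellers with different apex positions, and nothing you wrote excludes that your $z_\infty$ is itself the apex, which would void the flatness-transfer step. (This is repairable: if some blow-up at $x_0$ had its singular point at $a\ne 0$, a zoom-out diagonal argument produces a full line as another blow-up limit at $x_0$, and Proposition~\ref{typeS} together with Theorem~\ref{C1reg} then forces all blow-ups at $x_0$ to be lines, a contradiction; but that is not the justification you gave.)

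Second, and more seriously, your boundary triple-point case rests on ``almost-monotonicity of $\theta(x,r)$ as a consequence of \eqref{Thebound}''. Estimate \eqref{Thebound} is only the upper Ahlfors bound $\H(\Sigma\cap B_r)\le 2\pi r+Cr^2\|f\|_2^2$ and yields no monotonicity; and the natural derivation, comparing $\Sigma\cap B_r$ with the cone over $\Sigma\cap\partial B_r$, fails with the estimates actually available: Proposition~\ref{prop_compl_estabove1} with $\gamma=2\pi$ gives a compliance error $C(r/r_1)+Cr^{2/p'}=O(r)$, of the \emph{same order} as $r\,\theta(x,r)$ itself, while the superlinear energy decay of Lemma~\ref{lm_compl_monot13} requires flatness, which is exactly what is unavailable near a triple point — the argument is circular there. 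So this step is a genuine gap, not a technicality, and it is precisely what the paper's intermediate-scale pinching is designed to avoid; substituting that pinching for your density-monotonicity steps would close both gaps. On the positive side, your interior argument (flatness transfer from the blow-up limit plus Theorem~\ref{C1reg}) is sound once centeredness is established, and your endpoint-at-boundary argument is correct and rather elegant: the universal lower bound $\omega_\Sigma(x,2s)\ge c_0$ obtained by deleting the pendant arc is valid (the cut set is connected when $\Card\,\Sigma\cap\partial B_s(x)=1$, which holds for a.e.\ small $s$ at an endpoint), and it does contradict the uniform smallness of $\omega_\Sigma$ near $\partial\Omega$, since Lemma~\ref{decayboundary} holds for \emph{every} closed set and hence applies to the maximizing competitor defining $\omega_\Sigma$ at scales $s\gtrsim d(x_n,\partial\Omega)$, while Corollary~\ref{corYeah2} propagates that smallness down to all smaller scales.
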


\begin{remark}\label{rem_regloc2}
The proof of  Theorem~\ref{finite}
in the case of an arbitrary bounded open $\Omega\subset \R^2$ without any smoothness condition of the boundary gives that for every $\Omega'\Subset \Omega$
and every minimizer $\Sigma$ for Problem~\ref{pb_compl_pen1} one has
 that $\Sigma\cap \Omega'$  has finite number of endpoints and
finite number of branching points (which are all triple points where smooth curves meet at
$120$ degree angles).
\end{remark}

\begin{proof} We have now all the ingredients at hand to follow the    same proof as~\cite[Theorem 5.1]{b}. The proof is exactly the same as the one in~\cite{b} but let us write the full details here for the convenience of the reader. The strategy is to argue by contradiction. Assuming that there exists a sequence $T_n$ of distinct triple points  in $\Sigma$,   we consider the  blow-up limit
\[u_{\varepsilon_n}(x)=\varepsilon_n^{-\frac{1}{2}}u(T_n+\varepsilon_n R_n(x))\]
where $R_n$ is the rotation that maps ${\bf e}_1$ on $T_n-T_{n+1}$ and $\varepsilon_n=|T_n-T_{n+1}|$. For this transformation, the rescaled set $\Sigma_{\varepsilon_n}=\{x \in \R^2 : T_n+\varepsilon_n R_n(x) \in \Sigma\}$ has at least two triple points, one at $(0,0)$ and another one at $(1,0)$.  Along a subsequence (not relabelled), $\Sigma_{\varepsilon_n}$ converges to some global minimizer $\Sigma_0$ in the plane containing $(0,0)$ and $(0,1)$, and by the classification of blow-up limits, at least one of those two points is not a triple point for $\Sigma_0$. Let us assume that $(0,0)$ is not a triple point. Then it is a cracktip or a regular point, thus, thanks to Proposition~\ref{global1} and Theorem~\ref{global2} respectively,
\begin{eqnarray}
\mathcal{H}^1(\Sigma_{\varepsilon_n}\cap B_r )\underset{n\to +\infty}{\longrightarrow} r \text{ or } 2r \text{, uniformly in } r\in (0,1), \label{conconv}
\end{eqnarray}
because the functions $r\mapsto \mathcal{H}^1(\Sigma_{\varepsilon_n}\cap B_r )$ are all nondecreasing (the balls here and below are centered in the origin).

Now since $T_n$ is a triple point for $\Sigma_{\varepsilon_n}$, we also have, for any $n$ fixed,
$$\lim_{r\to 0^+} \frac{\mathcal{H}^1(\Sigma_{\varepsilon_n}\cap B_r)}{r}=3.$$
Let us define
$$\rho_n=\inf\{r >0 \colon \mathcal{H}^1(\Sigma_{\varepsilon_n}\cap B_r)\leq 2.5 r\}.$$
Because of~\eqref{conconv}, we deduce that $\rho_n\to 0$. Now let $\eta_n:=\rho_n \varepsilon_n$ and let us consider the blow-up sequence $u_{\eta_n}=u(T_n+\eta_n R_n(x))/\sqrt{\eta_n}$, $\Sigma_{\eta_n}=\{x \in \R^2 :  T_n+\eta_n R_n(x) \in \Sigma\}$.
By definition of $\eta_n$ as an infimum we deduce that
$$
2.5 s \leq \mathcal{H}^1(\Sigma_{\varepsilon_n}\cap B_s)\quad \text{for all } s < \rho_n,
$$
which implies, in view of the relation $\mathcal{H}^1(\Sigma_{\eta_n}\cap B_t)=\frac{1}{\rho_n}\mathcal{H}^1(\Sigma_{\varepsilon_n}\cap B_{t\rho_n})$, the estimate
$$
2.5 t \leq \mathcal{H}^1(\Sigma_{\eta_n}\cap B_t)\quad \text{for all } t < 1 .$$
By taking $s=\rho_n$ we also get $ \mathcal{H}^1(\Sigma_{\varepsilon_n}\cap B_{\rho_n}) \leq 2.5$ which in total yields
\begin{eqnarray}
2.5 t \leq \mathcal{H}^1(\Sigma_{\eta_n}\cap B_t) \leq \mathcal{H}^1(\Sigma_{\eta_n}\cap B_1)\leq 2.5 \quad \text{for all } t < 1. \label{densityO}
\end{eqnarray}
By taking now  a subsequence of $\Sigma_{\eta_n}$ converging to a global minimizer, we get a contradiction because none of the list verifies such a density estimate as the one in~\eqref{densityO}
\end{proof}

\subsection{Further regularity}
\label{further}

In this section we derive some regularity of higher order. It relies on the classical elliptic regularity theory and the Euler-Lagrange equation associated to our problem.

\begin{proposition}\label{prop_furtherReg1}
Let $\Omega$ be an open set, $\lambda\in(0,\infty)$ and $f\in H^1(\Om)\cap L^p(\Om)$   with $p>2$.
Let $\Sigma$ be a minimizer for Problem~\ref{pb_compl_pen1}, and $x\in\Sigma\cap\Omega$, $r>0$, $\alpha_{0}\in(0,1)$ such that $\gamma:=\Sigma\cap B_{r}(x)$ is $C^{1,\alpha_{0}}$. Then $\gamma$ is $C^{2,\alpha}$ for $\alpha=1-\frac{2}{p}$, and
\begin{equation}\label{eq:EL}
\left(\frac{\partial u^+}{\partial\nu}\right)^2-\left(\frac{\partial u^-}{\partial\nu}\right)^2+\lambda H_{\gamma}=0\;\;\;\;\;\textrm{ over }\;\gamma,
\end{equation}
where $u^+,u^-$ are the restrictions of $u$ on the two connected components of $B_{r}(x)\setminus\gamma$ oriented in a suitable way, and $H_{\gamma}$ denote the mean curvature. \\
If moreover $f$ is $C^{k,\beta}$ in $B_{r}(x)$, for $k\in\N$ and $\beta\in(0,1)$ then $\gamma$ is $C^{k+3,\beta}$.
\end{proposition}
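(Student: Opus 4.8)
The plan is to obtain the Euler--Lagrange equation \eqref{eq:EL} in a weak form by an inner/outer variation argument, and then to upgrade the regularity of $\gamma$ by a bootstrap based on elliptic (Schauder/Calder\'on--Zygmund) estimates for the two one-sided Dirichlet problems solved by $u^{\pm}$. Throughout, since $x\in\Sigma\cap\Om$ and $\gamma=\Sigma\cap B_r(x)$ is an embedded $C^{1,\alpha_0}$ curve with no endpoint nor triple point inside $B_r(x)$, we may (shrinking $r$) assume $B_r(x)\subset\Om$ and that $\gamma$ splits $B_r(x)$ into two open regions $B^{+},B^{-}$, on which $u=u_\Sigma$ restricts to $u^{+},u^{-}$ solving $-\Delta u^{\pm}=f$ with $u^{\pm}=0$ on $\gamma$.

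\textbf{Step 1: weak Euler--Lagrange equation.} Fix $V\in C_c^\infty(B_r(x);\R^2)$ and let $\Phi_t=\mathrm{Id}+tV$, a diffeomorphism for small $|t|$ equal to the identity outside $B_r(x)$. Then $\Sigma_t:=\Phi_t(\Sigma)$ is closed, connected and contained in $\overline{\Om}$, hence admissible, and $\mathcal{F}_\lambda(\Sigma)\le\mathcal{F}_\lambda(\Sigma_t)$ forces $\frac{d}{dt}\big|_{t=0}\mathcal{F}_\lambda(\Sigma_t)=0$. For the length term, $\frac{d}{dt}\big|_{t=0}\H(\Phi_t(\gamma))=\int_\gamma\mathrm{div}_\gamma V\,d\H$, which is well defined for a $C^{1,\alpha_0}$ curve and is the weak expression of $H_\gamma$ tested against $V\cdot\nu$ (tangential components integrate to zero as $V$ vanishes near $\partial B_r(x)$). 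For the compliance, writing $\C(\Sigma_t)=-\min_{v\in H^1_0(\Om\setminus\Sigma)}E_t(v)$ with $E_t$ the energy transported by $\Phi_t$ (coefficients $(D\Phi_t)^{-1}(D\Phi_t)^{-T}\det D\Phi_t$ and source $f\circ\Phi_t\,\det D\Phi_t$), the envelope theorem gives $\frac{d}{dt}\big|_{t=0}\C(\Sigma_t)=-\partial_t E_t(u_\Sigma)\big|_{t=0}$; differentiating the source produces a term $\int(\nabla f\cdot V)u_\Sigma$, which is exactly where the hypothesis $f\in H^1(\Om)$ is used. Contracting the resulting volume integral with the energy--momentum (Eshelby) tensor $\tfrac12|\nabla u|^2\,\mathrm{Id}-\nabla u\otimes\nabla u$ and using $-\Delta u^{\pm}=f$ together with $u^{\pm}=0$ on $\gamma$ (so $\nabla u^{\pm}=(\partial_\nu u^{\pm})\nu$ there), one converts it into a boundary integral and obtains, for every $V$,
\[
\lambda\int_\gamma\mathrm{div}_\gamma V\,d\H \;=\; -c\int_\gamma\big[(\partial_\nu u^{+})^2-(\partial_\nu u^{-})^2\big]\,(V\cdot\nu)\,d\H ,
\]
with a universal constant $c>0$. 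This is the weak form of \eqref{eq:EL}, the precise constant and sign being fixed by the conventions for $H_\gamma$.

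\textbf{Step 2: from $C^{1,\alpha_0}$ to $C^{2,\alpha}$.} Near any point of $\gamma$, flatten the curve by writing it as a graph $\{x_2=\phi(x_1)\}$ with $\phi\in C^{1,\alpha_0}$ and straightening the boundary; the equations for $u^{\pm}$ become divergence-form equations $-\mathrm{div}(A\nabla\tilde u^{\pm})=\tilde f$ with $A\in C^{0,\alpha_0}$ and $\tilde f\in L^p$. Boundary Schauder/$L^p$ estimates then give $\tilde u^{\pm}\in C^{1,\beta}$ up to the flattened boundary with $\beta=\min(\alpha_0,1-\tfrac2p)$, so $\partial_\nu u^{\pm}\in C^{0,\beta}(\gamma)$ and hence $g:=(\partial_\nu u^{+})^2-(\partial_\nu u^{-})^2\in C^{0,\beta}(\gamma)$. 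Feeding this into the weak equation of Step~1, and using that the graph curvature is $H_\gamma=\phi''(1+\phi'^2)^{-3/2}$, one sees that $\phi''=-\tfrac{c}{\lambda}g\,(1+\phi'^2)^{3/2}$ holds weakly with right-hand side in $C^{0,\beta}$; thus $\phi\in C^{2,\beta}$, i.e.\ $\gamma\in C^{2,\beta}$. If $\beta<1-\tfrac2p$ one iterates once more: now $A\in C^{1,\beta}$, so $\tilde u^{\pm}\in C^{1,1-2/p}$ up to $\gamma$, whence $g\in C^{0,1-2/p}$ and $\gamma\in C^{2,1-2/p}=C^{2,\alpha}$ with $\alpha=1-\tfrac2p$. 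This proves the first claim and, a posteriori, that \eqref{eq:EL} holds classically.

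\textbf{Step 3: higher regularity and the main difficulty.} If moreover $f\in C^{k,\beta}(B_r(x))$, the same loop becomes a gain of one derivative per iteration: assuming $\gamma\in C^{m,\beta}$ with $2\le m\le k+2$, interior and boundary Schauder estimates give $u^{\pm}\in C^{\min(m,k+2),\beta}$ up to $\gamma$, hence $g\in C^{\min(m,k+2)-1,\beta}(\gamma)$ and $H_\gamma\in C^{\min(m,k+2)-1,\beta}$; since $H_\gamma$ carries two derivatives of $\phi$, the curvature equation upgrades $\phi$ to $C^{\min(m+1,k+3),\beta}$. Iterating until the cap is reached yields $\gamma\in C^{k+3,\beta}$. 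The main obstacle is Step~1: making the shape derivative of the \emph{nonlocal} compliance rigorous at the low initial regularity $C^{1,\alpha_0}$ (justifying the envelope step, the existence of the state's shape derivative, and the meaning of the boundary traces $\partial_\nu u^{\pm}$, which is precisely where $f\in H^1$ is invoked), and correctly passing from the weak curvature identity to a genuine ODE for the graph $\phi$; once \eqref{eq:EL} is established, the bootstrap of Steps~2--3 is routine elliptic regularity.
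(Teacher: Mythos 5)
Your proposal is correct and follows essentially the same route as the paper: the paper's (very terse) proof simply cites \cite{ButMaiSte09} for the weak Euler--Lagrange identity \eqref{eq:EL} (which is obtained there by precisely the inner-variation/shape-derivative computation you sketch in Step~1, with $f\in H^1$ entering through the differentiated source term) and \cite[Theorem~8.34 and the remark at the end of Section~8.11]{gt} for the one-sided regularity of $u^{\pm}$ up to $\gamma$, and then concludes, as you do, by reading \eqref{eq:EL} as a prescribed mean curvature equation for $\gamma$ and bootstrapping. Your exponent bookkeeping (reaching $\alpha=1-2/p$ via a second pass when $\alpha_0<1-2/p$) and the $C^{k+3,\beta}$ induction are consistent with the paper's argument, which leaves these details to the reader.
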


\begin{proof}
Equality~\eqref{eq:EL} is proven in~\cite{ButMaiSte09}, where $H_{\gamma}$ is understood in a weak sense.
From~\cite[Theorem 8.34 and the remark at the end of section 8.11]{gt}, $u$ is $C^{2,\alpha}$ up to $\gamma$ from each side, where $\alpha=1-\frac{n}{p}$. The regularity theory of the mean curvature equation gives the regularity of $\gamma$. The case $f\in C^{k,\alpha}$ follows from a bootstrap argument.
\end{proof}





\appendix

\section{Auxiliary results}

We introduce the following notation.
For a relatively open set $S\subset \partial B_r(0)\subset\R^n$
$\lambda_1(S)$ the first eigenvalue of the Laplace-Beltrami operator over $H_0^1(S)$.
For instance $n=2$ and $S$ is an arc of a circumference, 
then it is easy to compute
\[
 \lambda_1(S)=\frac{\pi^2}{\H(S)^2}.
\]
This gives a possibility to formulate the following immediate version of the
Poincar\'{e} inequality.

\begin{lemma}\label{lm_compl_poincare1}
For a function $u\in H^1(B_R(0))\cap H_0^1 (K^c)$  for some
closed $K\subset \R^n$.
Then for a.e.\ $r\in (0,R)$ such that $K\cap \partial B_r(x_0)$ has positive harmonic capacity
in $\partial B_r(x_0)$, letting
\[
C:=\sup\left\{\frac{1}{\lambda_1(S)}\colon S\mbox{ connected component of } \partial B_r(x)\setminus K\right\},
\]
one has
\begin{equation}\label{eq_compl_poinc1}
\int_{\partial B_r(x)} u^2\, d\HH^{n-1}\leq C \int_{\partial B_r(x)} |\nabla_\tau u|^2\, d\HH^{n-1}.
\end{equation}
In particular, for $n=2$ one has~\eqref{eq_compl_poinc1} for
a.e.\ $r\in (0,R)$ such that $K\cap \partial B_r(x_0)\neq\emptyset$
with
\begin{equation}\label{eq_compl_poinc2}
C:=\sup\left\{ \left(\frac{\H(S)}{\pi}\right)^2  \colon S\mbox{ connected component of } \partial B_r(x)\setminus K\right\}.
\end{equation}
\end{lemma}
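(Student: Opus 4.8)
The plan is to reduce the statement to the one-dimensional (for $n=2$) or spherical (for general $n$) Poincar\'e inequality on each connected component of $\partial B_r(x)\setminus K$ through the variational characterization of $\lambda_1$, after first establishing that the trace of $u$ on the sphere $\partial B_r(x)$ inherits the Dirichlet condition on $K$ for a.e.\ $r$. All balls are concentric about the fixed center, and the tangential gradient $\nabla_\tau u$ is understood as the intrinsic gradient of the trace on the sphere, so that $|\nabla_\tau u|\le|\nabla u|$ pointwise.

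First I would carry out a slicing argument to produce the traces. Writing the energy in polar coordinates gives
\[
\int_{B_R} \big(u^2 + |\nabla u|^2\big)\, dx = \int_0^R \Big( \int_{\partial B_r} \big(u^2 + |\nabla u|^2\big)\, d\HH^{n-1}\Big)\, dr < \infty,
\]
so that for a.e.\ $r$ the trace $u|_{\partial B_r}$ lies in $H^1(\partial B_r)$. To see that this trace vanishes on $K$, I would use the hypothesis $u\in H_0^1(K^c)$: choose $\varphi_n\in C_0^\infty(\R^n\setminus K)$ with $\varphi_n\to u$ in $H^1$, set $g_n(r):=\int_{\partial B_r}\big(|\varphi_n-u|^2+|\nabla_\tau(\varphi_n-u)|^2\big)\,d\HH^{n-1}$, and observe $\int_0^R g_n(r)\,dr\le \|\varphi_n-u\|_{H^1(B_R)}^2\to 0$. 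Hence, along a fixed subsequence, $g_n(r)\to 0$ for a.e.\ $r$ simultaneously. For each such $r$ the smooth functions $\varphi_n|_{\partial B_r}$, each vanishing in a neighbourhood of $K\cap\partial B_r$, converge to $u|_{\partial B_r}$ in $H^1(\partial B_r)$, which shows $u|_{\partial B_r}\in H_0^1(\partial B_r\setminus K)$.

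Next I would decompose $\partial B_r(x)\setminus K=\bigsqcup_S S$ into its open connected components, so that $H_0^1(\partial B_r(x)\setminus K)=\bigoplus_S H_0^1(S)$ and $u|_S\in H_0^1(S)$ for every $S$. The variational characterization of the first Dirichlet eigenvalue of the Laplace--Beltrami operator gives
\[
\lambda_1(S)\int_S u^2\,d\HH^{n-1}\le \int_S |\nabla_\tau u|^2\,d\HH^{n-1}
\]
for each $S$. Dividing by $\lambda_1(S)$, bounding $1/\lambda_1(S)$ by the supremum $C$, and summing over all components yields
\[
\int_{\partial B_r(x)} u^2\,d\HH^{n-1}=\sum_S\int_S u^2\le C\sum_S\int_S|\nabla_\tau u|^2 = C\int_{\partial B_r(x)}|\nabla_\tau u|^2\,d\HH^{n-1},
\]
which is~\eqref{eq_compl_poinc1}. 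The positive-capacity assumption on $K\cap\partial B_r(x)$ is precisely what guarantees $\lambda_1(S)>0$ on the component(s) meeting $K$, for otherwise constants would belong to $H_0^1$ and the bound would be vacuous. In the planar case $n=2$ each component $S$ is an arc of $\partial B_r(x)$ on which the Laplace--Beltrami operator is $\partial_{ss}$ in arclength, with first Dirichlet eigenvalue $\lambda_1(S)=\pi^2/\H(S)^2$; substituting $1/\lambda_1(S)=(\H(S)/\pi)^2$ gives the constant~\eqref{eq_compl_poinc2}, and the mere nonemptiness of $K\cap\partial B_r(x)$ suffices since points carry positive capacity in dimension one.

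The step I expect to be the main obstacle is the trace/slicing argument of the second paragraph: verifying that the restriction of $u$ to $\partial B_r(x)$ genuinely inherits the Dirichlet condition on $K$ for a.e.\ $r$, with the approximating subsequence chosen uniformly over a full-measure set of radii. Once this is settled, the remainder follows directly from the variational principle for $\lambda_1$ together with the orthogonal decomposition of $H_0^1$ over connected components.
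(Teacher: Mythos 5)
Your proof is correct and follows essentially the same route as the paper, whose entire argument is the variational inequality $\int_S u^2\,d\HH^{n-1}\leq \lambda_1(S)^{-1}\int_S|\nabla_\tau u|^2\,d\HH^{n-1}$ applied on each connected component and then summed. The only difference is that you carefully verify the slicing/trace step (that $u|_{\partial B_r}\in H_0^1(\partial B_r\setminus K)$ for a.e.\ $r$, via an approximating sequence with $g_n\to 0$ a.e.\ along a subsequence), which the paper takes for granted; this is a welcome addition, not a deviation.
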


\begin{proof}
One has
\begin{equation}\label{eq_compl_poinc1a}
\int_S u^2\, d\HH^{n-1}\leq \frac{1}{\lambda_1(S)} \int_S |\nabla_\tau u|^2\, d\HH^{n-1}\leq C
\int_S |\nabla_\tau u|^2\, d\HH^{n-1}
\end{equation}
for every connected component $S$ of $\partial B_r(x)\setminus K$
so that  summing~\eqref{eq_compl_poinc1a} over all connected components of $\partial B_r(x)\setminus K$,
we get~\eqref{eq_compl_poinc1}.
\end{proof}

A corollary is the following Poincar\'{e} inequality on the annulus.

\begin{lemma}\label{lm_compl_poincare2}
Let $\Sigma\subset \R^2$ be a closed connected set, 
$\Sigma\cap \overline{B}_r(x_0)\neq\emptyset$ and $\Sigma\setminus B_R(x_0)\neq \emptyset$.
For a function $u\in H^1(B_R(x_0))\cap H_0^1(\Sigma^c)$,
one has
\begin{equation}\label{eq_compl_poinc3a}
\int_{B_R(x_0)\setminus B_r(x_0)} u^2\, dx\leq 4R^2\int_{(B_R(x_0)\setminus B_r(x_0))\setminus\Sigma} |\nabla u|^2\, dx
\end{equation}
for every $r\in (0,R)$.
\end{lemma}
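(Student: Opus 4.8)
The plan is to slice the annulus into concentric circles and apply the one–dimensional Poincar\'e inequality of Lemma~\ref{lm_compl_poincare1} on each of them, with a constant that is uniform in the radius thanks to the connectedness of $\Sigma$, and then to integrate in the radial variable.

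First I would check that $\Sigma$ meets every circle $\partial B_\rho(x_0)$ with $\rho\in(r,R)$. Indeed, the function $z\mapsto|z-x_0|$ is continuous on the connected set $\Sigma$ and, by hypothesis, takes some value $\le r$ (on $\Sigma\cap\overline{B}_r(x_0)$) and some value $\ge R$ (on $\Sigma\setminus B_R(x_0)$); by the intermediate value theorem it attains every value $\rho\in(r,R)$, that is $\Sigma\cap\partial B_\rho(x_0)\neq\emptyset$. Consequently every connected component $S$ of $\partial B_\rho(x_0)\setminus\Sigma$ is a proper sub-arc of a circle of radius $\rho\le R$, so $\H(S)\le 2\pi\rho\le 2\pi R$, and hence the constant $C$ in~\eqref{eq_compl_poinc2} satisfies $C\le(2\pi R/\pi)^2=4R^2$ uniformly in $\rho\in(r,R)$.

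Next, for a.e.\ $\rho\in(r,R)$ the trace of $u$ on $\partial B_\rho(x_0)$ lies in $H^1(\partial B_\rho(x_0))\cap H_0^1(\Sigma^c)$ (with $\Sigma\cap\partial B_\rho(x_0)\neq\emptyset$ by the previous step), so Lemma~\ref{lm_compl_poincare1} applies and, using the uniform bound $C\le 4R^2$, yields
\[
\int_{\partial B_\rho(x_0)} u^2\, d\HH^{1}\le 4R^2\int_{\partial B_\rho(x_0)} |\nabla_\tau u|^2\, d\HH^{1}\le 4R^2\int_{\partial B_\rho(x_0)} |\nabla u|^2\, d\HH^{1},
\]
the last inequality because the tangential gradient satisfies $|\nabla_\tau u|\le|\nabla u|$ pointwise.

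Finally I would integrate this estimate in $\rho$ over $(r,R)$. By the coarea formula (equivalently, polar coordinates centered at $x_0$) one has $\int_r^R\big(\int_{\partial B_\rho(x_0)}u^2\,d\HH^{1}\big)\,d\rho=\int_{B_R(x_0)\setminus B_r(x_0)}u^2\,dx$ and similarly for the gradient term; since the closed connected set $\Sigma$ has zero Lebesgue measure, the gradient integral over the annulus coincides with the one over the annulus minus $\Sigma$, which gives exactly~\eqref{eq_compl_poinc3a}. The only delicate point is the \emph{uniformity} of the circular Poincar\'e constant, which is precisely what connectedness of $\Sigma$ provides via the slicing argument above; the passage to the annulus by Fubini and the bound $|\nabla_\tau u|\le|\nabla u|$ are routine.
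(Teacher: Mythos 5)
Your proof is correct and follows essentially the same route as the paper's: connectedness of $\Sigma$ plus the intermediate value theorem gives $\Sigma\cap\partial B_\rho(x_0)\neq\emptyset$ for every $\rho\in(r,R)$, so each circular component satisfies $\H(S)\leq 2\pi\rho$, yielding the uniform constant $C\leq 4\rho^2\leq 4R^2$ in Lemma~\ref{lm_compl_poincare1}, and integration in $\rho$ concludes. Your explicit justifications of the intermediate-value step and of $|\nabla_\tau u|\leq|\nabla u|$ are fine points the paper leaves implicit.
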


\begin{proof}
For every $\rho\in (r,R)$ one has $\Sigma\cap \partial B_\rho(x_0)\neq\emptyset$, and hence, by Lemma~\ref{lm_compl_poincare1},
observing that in view of~\eqref{eq_compl_poinc2} the Poincar\'{e} constant over each sphere
$\partial B_\rho(x_0)$ with $\rho\in (r,R)$ satisfies
 $C\leq 4\rho^2\leq 4 R^2$,
we get
\[
\int_{\partial B_\rho(x_0)} u^2\, d\H\leq 4R^2 \int_{\partial B_\rho(x_0)\setminus\Sigma} |\nabla_\tau u|^2\, d\H\leq
4R^2 \int_{\partial B_\rho(x_0)\setminus\Sigma} |\nabla u|^2\, d\H.
\]
Integrating the latter equation in $d\rho$ over $\rho\in (r,R)$, one gets~\eqref{eq_compl_poinc3a}.
\end{proof}

We now prove the integration by parts formula.

\begin{lemma}\label{lm_compl_intparts1}
Let $\Sigma$  be a closed set and $f\in L^2(\Omega)$. If $u\in H_0^1(\Omega\setminus\Sigma)$ is a minimizer of $E$ over
$H_0^1(\Omega\setminus\Sigma)$, then for every 
$x_0\in \R^n$ and
a.e. $r>0$ 
one has
\begin{align*}
\int_{B_{r}(x_0)\setminus\Sigma} |\nabla u|^2\, dx = \int_{\partial B_{r}(x_0)\setminus\Sigma} u\frac{\partial u}{\partial\nu}\,d\HH^{n-1} +  \int_{B_{r}(x_0)} f u\, dx,
\end{align*}
where $\frac{\partial u}{\partial\nu}:=\nabla u\cdot \nu$, $\nu$ standing for the normal to $\partial B_r(x_0)$.
\end{lemma}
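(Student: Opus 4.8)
The plan is to use the weak Euler--Lagrange equation satisfied by the minimizer $u=u_\Sigma$ and to integrate by parts against a sharpening radial cutoff. Since $u$ minimizes $E$ over $H^1_0(\Omega\setminus\Sigma)$, it is a weak solution of $-\Delta u=f$, i.e.
\[
\int_{\Omega\setminus\Sigma}\nabla u\cdot\nabla\varphi\,dx=\int_\Omega f\varphi\,dx\qquad\text{for all }\varphi\in H^1_0(\Omega\setminus\Sigma).
\]
First I would fix $x_0$ and, for small $\eps>0$, introduce the Lipschitz cutoff
\[
\chi_\eps(x):=\max\Big(0,\min\big(1,\tfrac{r+\eps-|x-x_0|}{\eps}\big)\Big),
\]
which equals $1$ on $B_r(x_0)$, vanishes outside $B_{r+\eps}(x_0)$, and satisfies $\nabla\chi_\eps=-\tfrac1\eps\,\nu\,\mathbf 1_{B_{r+\eps}(x_0)\setminus \overline{B}_r(x_0)}$, where $\nu:=(x-x_0)/|x-x_0|$ is the outer radial direction. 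The function $\varphi_\eps:=u\chi_\eps$ is then an admissible test function, since $H^1_0(\Omega\setminus\Sigma)$ is stable under multiplication by a compactly supported Lipschitz function.

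Plugging $\varphi_\eps$ into the weak equation and expanding $\nabla\varphi_\eps=\chi_\eps\nabla u+u\nabla\chi_\eps$ gives
\[
\int\chi_\eps|\nabla u|^2\,dx-\frac1\eps\int_{B_{r+\eps}(x_0)\setminus \overline{B}_r(x_0)} u\,(\nabla u\cdot\nu)\,dx=\int f\,u\,\chi_\eps\,dx .
\]
I would then let $\eps\to 0^+$ term by term. As $\chi_\eps\to\mathbf 1_{\overline{B}_r(x_0)}$ pointwise with $0\le\chi_\eps\le1$, dominated convergence (using $|\nabla u|^2,\,fu\in L^1$) shows that the first and third integrals tend to $\int_{B_r(x_0)}|\nabla u|^2\,dx$ and $\int_{B_r(x_0)}fu\,dx$ respectively. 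For the annular term I would rewrite it via the coarea formula as
\[
\frac1\eps\int_{B_{r+\eps}(x_0)\setminus \overline{B}_r(x_0)}u\,(\nabla u\cdot\nu)\,dx
=\frac1\eps\int_r^{r+\eps}\Big(\int_{\partial B_\rho(x_0)}u\,\tfrac{\partial u}{\partial\nu}\,d\HH^{n-1}\Big)d\rho .
\]
Since $u,\nabla u\in L^2(\R^n)$, the product $|u|\,|\nabla u|$ lies in $L^1$, so by coarea the inner integral defines an $L^1_{loc}$ function of $\rho$; the Lebesgue differentiation theorem then yields, for a.e.\ $r$, convergence of these one-sided averages to $\int_{\partial B_r(x_0)}u\,\tfrac{\partial u}{\partial\nu}\,d\HH^{n-1}$. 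Combining the three limits and rearranging signs gives exactly the claimed identity, once one observes that $\Sigma$ is Lebesgue-null (so the volume integrals over $B_r$ and $B_r\setminus\Sigma$ agree) and that $u=0$ on $\Sigma$ (so only $\partial B_r(x_0)\setminus\Sigma$ contributes to the boundary term).

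The main obstacle, and the sole reason for the restriction to a.e.\ $r$, is this last passage to the limit: one must ensure that the slice $u|_{\partial B_\rho}$ and the normal trace $\nabla u\cdot\nu|_{\partial B_\rho}$ exist and are integrable, and that the radial averages converge to the value on $\partial B_r(x_0)$. All of this is supplied by the coarea formula together with Lebesgue differentiation, requiring only $u,\nabla u\in L^2$ and no regularity of $\Sigma$ nor of $u$ up to $\Sigma$. The remaining steps — the product rule and the dominated-convergence estimates — are routine.
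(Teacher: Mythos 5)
Your proof is correct and follows essentially the same route as the paper's: test the weak equation with $u$ times a radial Lipschitz cutoff (the paper shrinks an inner annulus $B_r(x_0)\setminus B_{r-\eps}(x_0)$ rather than your outer one, an immaterial difference), then pass to the limit using dominated convergence for the volume terms and the coarea formula plus Lebesgue differentiation for the annular term, which is exactly the paper's source of the a.e.-$r$ restriction. One cosmetic caveat: a closed set $\Sigma$ need not be Lebesgue-null, but your identification of $\int_{B_r(x_0)}$ with $\int_{B_r(x_0)\setminus\Sigma}$ still holds for the correct reason that $u$, extended by zero, vanishes a.e.\ on $\Sigma$, whence $\nabla u=0$ a.e.\ on $\Sigma$ as well.
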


\begin{proof}
From the relationship
\[
\int_{\Omega\setminus \Sigma} \nabla u\cdot\nabla\varphi\, dx =  \int_{\Omega} f \varphi\, dx
\]
for every $\varphi\in H_0^1(\Omega\setminus\Sigma)$, taking $\varphi(x):=g_\varepsilon(|x-x_0|)u(x)$, where
$g_\varepsilon\in \Lip(\R^+)$ be defined as
\[
g_\varepsilon(x) :=
\left\{
\begin{array}{rl}
1, & x\leq r-\varepsilon, \\
0, & x\geq r,\\
-\frac{1}{\varepsilon}(x-r), & r-\varepsilon<x<r
\end{array}
\right. ,
\]
and $\varepsilon\in (0,r)$ is arbitrary, we get
\begin{equation}\label{eq_compl_parts1}
\begin{aligned}
\int_{\Omega\setminus \Sigma} |\nabla u|^2 g_\varepsilon(|x-x_0|)\, dx  & +
\int_{\Omega\setminus \Sigma} u\nabla u\cdot g'_\varepsilon(|x-x_0|)\frac{x-x_0}{|x-x_0|}\, dx
\\
& =  \int_{\Omega} f u g_\varepsilon(|x-x_0|)\, dx.
\end{aligned}
\end{equation}
Letting $\varepsilon\to 0^+$, we have
\begin{equation}\label{eq_compl_parts2}
\begin{aligned}
\int_{\Omega\setminus \Sigma} |\nabla u|^2 g_\varepsilon(|x-x_0|)\, dx & \to
\int_{B_r(x_0)\cap\Omega\setminus \Sigma} |\nabla u|^2 \, dx\\
\int_{\Omega} f u g_\varepsilon(|x-x_0|)\, dx & \to \int_{\Omega} f u dx,
\end{aligned}
\end{equation}
and
\begin{equation}\label{eq_compl_parts3}
\begin{aligned}
\int_{\Omega\setminus \Sigma} u\nabla u\cdot g'_\varepsilon(|x-x_0|)\frac{x-x_0}{|x-x_0|}\, dx & =
-\frac{1}{\varepsilon}\int_{(B_r(x_0)\setminus
\overline{B}_{r-\varepsilon}(x_0))\cap\Omega \setminus \Sigma} u\nabla u \cdot \frac{x-x_0}{|x-x_0|}\, dx \\
&=
-\frac{1}{\varepsilon}\int_{r-\varepsilon}^r \,d\rho \int_{\partial B_\rho(x_0)\cap\Omega \setminus \Sigma} u\nabla u \cdot \frac{x-x_0}{\rho}\, d\HH^{n-1} \\
& \qquad\qquad\qquad \mbox{ by coarea formula }\\
& \to \int_{\partial B_{r}(x_0)\cap\Omega \setminus\Sigma} u\frac{\partial u}{\partial\nu}\,d\HH^{n-1}(x),
\end{aligned}
\end{equation}
for a.e. $r>0$,
because
the function
\[
\rho\in (0, r)\mapsto
\int_{\partial B_\rho(x_0) \setminus \Sigma} u\nabla u \cdot \frac{x-x_0}{\rho}\, d\HH^{n-1}(x)
\]
defined for a.e.\ $\rho\in (0,r)$
belongs to $L^1(0,r)$ for every $r>0$.

Plugging~\eqref{eq_compl_parts2} and~\eqref{eq_compl_parts3} into~\eqref{eq_compl_parts1}, we get the desired assertion.
\end{proof}

\bibliographystyle{plain}

\end{document}